\documentclass[11pt,leqno]{amsart}

\usepackage[margin=1in]{geometry}
\usepackage{amsmath,amssymb,amsthm,mathtools}
\usepackage{mathrsfs}
\usepackage{array}
\usepackage{enumitem}
\usepackage[colorlinks=true,linkcolor=blue,citecolor=blue,urlcolor=blue]{hyperref}

\numberwithin{equation}{section}

\newtheorem{theorem}{Theorem}[section]
\newtheorem{lemma}[theorem]{Lemma}
\newtheorem{proposition}[theorem]{Proposition}
\newtheorem{corollary}[theorem]{Corollary}
\theoremstyle{remark}

\DeclareMathOperator{\tr}{tr}

\newcommand{\R}{\mathbb{R}}

\newcommand{\Nn}{\mathcal{N}}
\newcommand{\Tu}{\mathcal{T}_u}
\newcommand{\Bc}{\mathcal{B}}
\newcommand{\Mcons}{\mathcal{M}}
\newcommand{\Hc}{\mathcal{H}}
\newcommand{\Erel}{\mathcal{E}_{\mathrm{rel}}}
\newcommand{\Lrel}{\mathcal{L}_{\mathrm{rel}}}
\newcommand{\op}{\mathrm{op}}

\begin{document}

\title[Sharp improved Sobolev inequalities on $\mathbb{S}^n$]{Sharp improved Sobolev inequalities on $\mathbb{S}^n$}

\author{Juncheng Wei}
\address{(J. Wei) Department of Mathematics, The Chinese University of Hong Kong, Shatin, N.T., Hong Kong}
\email{wei@math.cuhk.edu.hk}

\author{Zetian Yan}
\address{(Z. Yan) Department of Mathematics, The Chinese University of Hong Kong, Shatin, N.T., Hong Kong}
\email{zetianyan@cuhk.edu.hk}

\author{Zikai Ye}
\address{(Z. Ye) Department of Mathematics, Uppsala University, 751 06 Uppsala, Sweden}
\email{zikai.ye@math.uu.se}

\subjclass[2020]{46E35, 26D10, 47A60}
\keywords{Sharp Sobolev inequalities, barycenter constraint, concentration-compactness, second variation, moment estimates, computer-assisted proof}

\begin{abstract}
We prove sharp improved Sobolev inequalities on $\mathbb{S}^n$, $n\ge 4$, for functions whose critical density $|u|^{2n/(n-2)}$ has zero barycenter, without any symmetry assumption. The Dirichlet coefficient improves by the optimal factor $2^{-2/n}$, while the coefficient of the $L^2$ term remains one. Equality holds precisely for constant functions. This establishes the sharp form of Aubin's improved Sobolev inequality in every dimension $n\ge4$.
\end{abstract}

\maketitle

\section{Introduction}\label{sec:intro}

Let $n\ge 3$, and equip the unit sphere $\mathbb{S}^n\subset\R^{n+1}$ with its canonical metric and normalized volume measure $d\sigma=\frac{dV_{\mathbb{S}^n}}{\mathrm{Vol}(\mathbb{S}^n)}$. All norms and scalar products on the sphere are taken with respect to $d\sigma$, and we write $\xi=(\xi_1,\dots,\xi_{n+1})$ for the position vector. Set
\begin{equation*}
p=\frac{2n}{n-2},\qquad \alpha=2^{-2/n},\qquad P_2=-\frac{4}{n(n-2)}\Delta_{\mathbb{S}^n}+\mathrm{id}.
\end{equation*}
Thus $P_2$ is the conformal Laplacian normalized by $P_21=1$, and the classical sharp Sobolev inequality on the sphere \cite{Beckner,Case} is
\begin{equation}\label{eq:sobolev}
\|u\|_p^2\le\langle u,P_2u\rangle=\|u\|_2^2+\frac{4}{n(n-2)}\|\nabla u\|_2^2,\qquad u\in H^1(\mathbb{S}^n).
\end{equation}
The equality in \eqref{eq:sobolev} holds precisely for constant multiples of conformal factors, see \cite{Beckner,FrankLieb}.

The Euclidean version of \eqref{eq:sobolev} was established by Aubin \cite{Aubin76} and Talenti \cite{Talenti}. This inequality belongs to a larger family of conformally invariant inequalities. Lieb's sharp Hardy--Littlewood--Sobolev inequality \cite{Lieb} yields, by duality, sharp Sobolev inequalities of fractional order, and Beckner \cite{Beckner} obtained their spherical formulation. The associated conformally covariant operators include the conformal Laplacian, the higher-order GJMS operators of Graham, Jenne, Mason, and Sparling \cite{GJMS}, and their fractional-order analogues.

A different refinement is obtained by imposing moment constraints. Aubin \cite{Aubin79} showed that, for every $\varepsilon>0$, there exists $C_\varepsilon>0$ such that
\begin{equation}\label{eq:aubin}
\|u\|_p^2\le\frac{4(2^{-2/n}+\varepsilon)}{n(n-2)}\|\nabla u\|_2^2+C_\varepsilon\|u\|_2^2
\end{equation}
for every $u\in H^1(\mathbb{S}^n)$ satisfying
\begin{equation}\label{eq:balanced}
\int_{\mathbb{S}^n}\xi_i|u|^{\frac{2n}{n-2}}\,d\sigma=0,\qquad i=1,\dots,n+1.
\end{equation}
We call \eqref{eq:balanced} the balanced condition. It excludes the concentration of the whole critical mass at a single point. Hang and Wang \cite{HangWang} gave a new proof of \eqref{eq:aubin} by concentration compactness and extended it to higher-order moment constraints, and, together with Putterman \cite{Putterman}, identified the optimal leading coefficients for several higher-order constraints. Sun and Zou \cite{SunZou} obtained related inequalities on compact manifolds, again with an additional $\varepsilon$ in the leading coefficient.

The natural sharp version of \eqref{eq:aubin} would take $\varepsilon=0$ and $C_\varepsilon=1$, namely
\begin{equation}\label{eq:main}
\|u\|_p^2\le\alpha\langle u,P_2u\rangle+(1-\alpha)\|u\|_2^2=\|u\|_2^2+\frac{4\alpha}{n(n-2)}\|\nabla u\|_2^2.
\end{equation}
The factor $\alpha=2^{-2/n}$ is optimal, as shown by two equally weighted Sobolev extremals concentrating at antipodal points, see \cite[Remark~2.4]{WYY}. However, \eqref{eq:aubin} gives no uniform bound for $C_\varepsilon$ as $\varepsilon\to0$, so even a finite constant at $\varepsilon=0$ is not automatic. Without constraints, the analogous question on compact Riemannian manifolds, whether the sharp leading constant can be kept with a finite lower-order constant, was answered by Hebey and Vaugon \cite{HebeyVaugon} for the Dirichlet energy, and by Aubin and Li \cite{AubinLi} and Druet \cite{Druet99} for $L^q$ norms of the gradient.

Under the balanced condition, Chang and Yang \cite{ChangYang91} proved that the coefficient of $\|\nabla u\|_2^2$ in \eqref{eq:sobolev} can be decreased while the coefficient of $\|u\|_2^2$ remains one, and Li \cite{Li96} gave a simpler proof. For $n\ge5$, Wang and Zhang \cite{WZ} obtained an explicit improved coefficient of the gradient term, which is larger than $\frac{4\alpha}{n(n-2)}$. While revising this paper, we received the manuscript \cite{FJWX} by Frank, Jin, Wang and Xiong, who proved that \eqref{eq:aubin} holds with $\varepsilon=0$ and some finite constant in place of $C_\varepsilon$ for every $n\ge3$. They also determined the exact range in which the analogous inequality for $L^q$ norms of the gradient holds with the sharp leading coefficient. Their proof combines stereographic averaging with the sharp gradient stability theorem of Figalli and Zhang \cite{FigalliZhang}. Earlier, Dolbeault, Esteban and Loss \cite{DEL} related improved spherical Sobolev inequalities under nonlinear moment constraints to the stability theorem of Bianchi and Egnell \cite{BianchiEgnell} through stereographic projection.

In \cite{WYY}, we established \eqref{eq:main} for two classes of symmetric functions. Every antipodally even function satisfies \eqref{eq:balanced}, and for such functions \eqref{eq:main} is precisely a conjecture of Dolbeault \cite[Section 4]{Dolbeault}. The proof in \cite{WYY} for antipodally-even relies on a strict comparison between the isoperimetric profiles of $\mathbb{RP}^n$ and $\mathbb{S}^n$. For axially-symmetric functions satisfying \eqref{eq:balanced}, the proof in \cite{WYY} is variational. After a compactness reduction, every smooth non-constant constrained critical point is shown to be unstable.

A closely related question arises at the logarithmic endpoint. The Chang--Yang conjecture concerns the sharp improvement of the Moser--Trudinger--Onofri inequality under a zero-barycenter constraint on the exponential density. Gui and Moradifam \cite{GuiMoradifam} resolved the conjecture on $\mathbb{S}^2$, Gui, Li, Wei, and Ye \cite{GLWY} established the higher-dimensional version, and the classification of all critical points was obtained by Gui, Hu, Li, Wei, Xie and Ye \cite{GHLWXY}.

In this paper, we remove the symmetry assumption in every dimension at least four. Our main result is the following.

\begin{theorem}\label{thm:main}
Let $n\ge 4$ be an integer. If $u\in H^1(\mathbb{S}^n)$ satisfies \eqref{eq:balanced}, then
\begin{equation*}
\|u\|_p^2\le\|u\|_2^2+\frac{4\alpha}{n(n-2)}\|\nabla u\|_2^2.
\end{equation*}
Equality holds if and only if $u$ is constant. The coefficient $\alpha=2^{-2/n}$ is optimal.
\end{theorem}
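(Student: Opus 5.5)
We argue variationally, by contradiction. The optimality of $\alpha$ is already known from the test functions in \cite[Remark~2.4]{WYY}: two equally weighted bubbles concentrating at antipodal points. So only the inequality and the equality case need proof.

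\textbf{Setup.} For $\beta\in(\alpha,1]$ set
\[
F_\beta(u)=\|u\|_2^2+\frac{4\beta}{n(n-2)}\|\nabla u\|_2^2-\|u\|_p^2 .
\]
Let $\beta^*$ be the infimum of those $\beta$ for which $F_\beta\ge0$ on the balanced set. By Chang--Yang \cite{ChangYang91} we have $\beta^*<1$, and by \cite{FJWX} we have $\beta^*$ bounded. We aim to show $\beta^*=\alpha$ and that equality at $\beta=\alpha$ forces $u$ to be constant.

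\textbf{Step 1: existence of a constrained minimizer.} Assume $\beta^*>\alpha$, or that $\beta^*=\alpha$ with a non-constant equality case. Minimize $F_{\beta}$ for $\beta$ slightly below $\beta^*$, normalized by $\|u\|_p=1$, under \eqref{eq:balanced}. We use concentration-compactness in the spirit of Hang--Wang \cite{HangWang}. A minimizing sequence loses compactness only through bubbles. A single bubble is forbidden by the barycenter constraint. Splitting the critical mass into at least two pieces costs at least the factor $2^{-2/n}$ in the Dirichlet coefficient, which is exactly $\alpha<\beta$. Hence minimizers exist, and by the Lagrange multiplier rule they satisfy
\[
-\tfrac{4\beta}{n(n-2)}\Delta u+u=\bigl(\lambda+\textstyle\sum_i\mu_i\xi_i\bigr)|u|^{p-2}u .
\]
By elliptic regularity $u$ is smooth. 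A Kazdan--Warner/Pohozaev-type identity, obtained by testing against the conformal vector fields $\nabla\xi_j$, should force $\mu=0$. This gives a genuine critical point of a subcritical-type Yamabe problem with modified coefficient.

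\textbf{Step 2: non-constant critical points are unstable.} We must show that every non-constant smooth constrained critical point $u$ with $\beta\ge\alpha$ has negative second variation in a direction tangent to the constraint. Following the axially symmetric argument of \cite{WYY}, the test directions we would try are the conformal directions $v_j=\langle\nabla u,\nabla\xi_j\rangle+\tfrac{n-2}{2}\xi_j u$, together with $\xi_j u$ and $\xi_j|u|^{(p-2)/2}u$-type directions. We average the second variation over $j=1,\dots,n+1$, using $\sum\xi_j^2=1$ and $\sum|\nabla\xi_j|^2=n$. The resulting quadratic form should be expressible through moment integrals such as $\int|u|^p$, $\int|u|^{p}|\xi\text{-moments}|$, $\int|\nabla u|^2$ and $\int u^2$. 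We then bound these using the Euler--Lagrange equation, Bochner's formula, and the Obata-type identity involving the traceless Hessian $\nabla^2 u-\frac{\Delta u}{n}g$. The goal is to reduce instability to an explicit inequality between a few scalar moments depending on $n$ and $\beta$. We verify that inequality analytically for large $n$ and, where necessary, with rigorous interval arithmetic for the finitely many small $n\ge4$.

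\textbf{Step 3: conclusion.} At $\beta=\beta^*$, minimizers would be stable. By Step 2 they must therefore be constant, and constants give $F_\beta>0$ for $\beta>0$ with gradient zero, so in fact $F_{\beta}(1)=0$ for every $\beta$. We then obtain a contradiction with the definition of $\beta^*$ via a second-variation argument at constants: the linearization at $1$ has first nonzero eigenvalue $n$, which remains positive whenever $\beta>\frac{n-2}{n}\cdot\frac{n}{n}\cdot\ldots$, and this is checked to hold for $\beta$ near $\alpha$ when $n\ge4$. The dimension restriction plausibly enters exactly here and in Step 2. The same argument at $\beta=\alpha$ gives the equality characterization.

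\textbf{Main obstacle.} I expect the main obstacle to be Step 2, namely instability without symmetry, where the averaged second variation must be controlled purely by moments. The compactness in Step 1 at the threshold $\beta=\alpha$ itself, where two-bubble configurations are borderline, is also delicate.
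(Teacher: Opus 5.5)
Your overall architecture (compactness, Euler--Lagrange equation, no non-constant stable critical points) is the paper's. However, Step~2 is not a detail to be filled in: it is the whole content of Theorem~\ref{thm:rigidity}, and your sketch of it rests on a false simplification.

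\textbf{The multiplier does not vanish.} Testing the Euler--Lagrange equation against $Z_v=\nabla(v\cdot\xi)\cdot\nabla u-\frac{n-2}{2}(v\cdot\xi)u$ does not give $\mu=0$. Because the gradient coefficient $\frac{4\beta}{n(n-2)}$ with $\beta<1$ is not the conformal one, the identity you actually get is $(1-\beta)\int\xi u^2=-\frac1p(I-M)\mu$, with $M=\int\xi\xi^Tu^p$. Nothing forces $\int\xi u^2=0$, since the balance condition concerns $u^p$, not $u^2$. So the sign-indefinite term $\int(\mu\cdot\xi)(e\cdot\xi)^2u^p$ survives in the second variation, and controlling it is the main difficulty.

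\textbf{The test directions must be corrected.} The $Z_j$ are not tangent to the constraint, since $\int\xi u^{p-1}Z_v=-\frac1p(I-M)v$. The paper uses $\Phi_e=Z_e+\frac{1-m}{pm}(e\cdot\xi)u$ along eigenvectors $Me=me$. Summing over an eigenbasis, using $\sum_i\int(\mu\cdot\xi)\xi_i^2u^p=0$, helps only in proving that $M$ has exactly one eigenvalue above $\frac12$. When one eigenvalue is near $1$, a directional argument is needed.

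\textbf{The missing ingredients.} The paper closes that regime with tools your proposal lacks:
\begin{enumerate}
\item The nodal quantity $\Nn_e$, obtained by applying the Sobolev inequality separately to the positive and negative parts of $(\tau+e\cdot\xi)u$. It satisfies $\Nn_e\ge\frac1{2n}\int((e\cdot\xi)^2-m)^2u^p$.
\item The bound $(1+(p-1)m)^2(\mu\cdot e)^2\le p^2(1-\alpha)\frac{4\alpha}{n(n-2)}\|\nabla u\|_2^2\,\Nn_e$. This matters because $\Nn_e$ enters $Q(\Phi_e,\Phi_e)$ with a negative coefficient.
\item A bound on the component of $\mu$ orthogonal to the top eigenvector in terms of the second eigenvalue.
\item An orthogonal-projection bound for the transverse third moment.
\item Explicit inequalities, checked by hand or by computer.
\end{enumerate}
Bochner or Obata-type identities give no control on the multiplier term.

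\textbf{Step~1 has a gap.} The bubble-counting heuristic excludes only total concentration. The real obstruction is a mixed configuration: a nonzero regular part plus atoms. There the regular part is not balanced by itself, so it cannot be compared with the infimum. Working at $\beta>\alpha$ spares you the borderline pair of antipodal atoms, but not this case. The paper excludes it by a second variation that moves the atom (Proposition~\ref{prop:no-mixed}), and this is one place where $n\ge4$ is used.

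\textbf{Step~3 is confused.} At $u\equiv1$ the degree-one harmonics $f=\xi_j$ are \emph{negative} directions: the quadratic form there is $\frac{4}{n-2}(\beta-1)\|f\|_2^2$ in your normalization. They are harmless only because the barycenter constraint removes them. The first relevant eigenvalue is the degree-two one, $2(n+1)$. It requires only $\beta>\frac{n}{2(n+1)}$, which holds automatically for $\beta\ge\alpha=2^{-2/n}>\frac12$ in every dimension $n\ge3$. So no dimensional restriction enters there. In fact, if Step~2 held for all $\beta\in(\alpha,\beta^*)$, no limiting argument at constants would be needed. A minimizer at such $\beta$, replaced by $|u|$, would already be a non-constant stable critical point, since $F_\beta<0=F_\beta(1)$.
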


Theorem~\ref{thm:main} shows that for every dimension $n\ge4$, Aubin's inequality \eqref{eq:aubin} holds with $\varepsilon=0$ and $C_\varepsilon=1$. In particular, for the Dirichlet energy and $n\ge4$, the finite constant in \cite{FJWX} can be taken to be one. This value is optimal, since constant functions satisfy \eqref{eq:balanced} and attain equality in \eqref{eq:main}. 

In the rest of the paper, we normalize $\|u\|_p=1$. A minimizing sequence may concentrate at arbitrary points on the sphere. In Section~\ref{sec:relaxed}, we therefore consider a relaxed problem which allows atoms. We prove that every relaxed minimizer with nonnegative regular part is either two equal antipodal atoms with energy one or a smooth positive function without atoms. In the latter case, it is a stable constrained critical point. The main step is the following rigidity theorem.

\begin{theorem}\label{thm:rigidity}
Let $n\ge4$. Suppose that $u>0$ is smooth and satisfies
\begin{equation}\label{eq:EL}
u-\frac{4\alpha}{n(n-2)}\Delta u=(a+b\cdot\xi)u^{p-1},\qquad \int_{\mathbb{S}^n}u^p\,d\sigma=1,\qquad \int_{\mathbb{S}^n}\xi u^p\,d\sigma=0,\qquad a\le1,
\end{equation}
for some $a\in\R$ and $b\in\R^{n+1}$. If we assume further that $u$ is stable in the sense that the quadratic form
\begin{equation*}
% \label{eq:Q}
Q(f,h)=\int_{\mathbb{S}^n}\Big[fh+\frac{4\alpha}{n(n-2)}\nabla f\cdot\nabla h-(p-1)(a+b\cdot\xi)u^{p-2}fh\Big]d\sigma
\end{equation*}
is nonnegative on the tangent space
\begin{equation*}
\Tu=\Big\{f\in H^1(\mathbb{S}^n):\ \int_{\mathbb{S}^n}u^{p-1}f\,d\sigma=0,\ \int_{\mathbb{S}^n}\xi u^{p-1}f\,d\sigma=0\Big\},
\end{equation*}
then $u\equiv1$, $a=1$, and $b=0$.
\end{theorem}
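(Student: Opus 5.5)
Since $u$ is stable, the natural plan is to build explicit test functions in $\Tu$ from the conformal (first spherical harmonic) directions and sum the resulting second-variation inequalities, as in the Chang--Yang/Gui--Moradifam approach and the axially symmetric case of \cite{WYY}. Set $\lambda=\frac{4\alpha}{n(n-2)}$ and $V=(a+b\cdot\xi)u^{p-2}$. The candidates are $f_j=\xi_j u+c_j$ or, more flexibly, $f_j=\xi_j u^{\theta}+\ell_j(u)$ with $\theta>0$ to be tuned. The correction $\ell_j$ is a combination of $u$ and $\nabla_{e_j}$-type conformal vector field derivatives $\langle\nabla u,\nabla\xi_j\rangle+\frac{n-2}{2}\xi_j u$, chosen so that both orthogonality conditions $\int u^{p-1}f=0$ and $\int \xi u^{p-1}f=0$ hold. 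This requires solving an $(n+2)$-dimensional linear system. The Gram matrix of that system is controlled using $\int\xi u^p=0$ and $\int u^p=1$.

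Concretely, I would first record the identities obtained from testing \eqref{eq:EL} against $u$, against $\xi_j u$, and against the conformal Killing derivatives $X_j u:=\langle\nabla\xi_j,\nabla u\rangle$. The last is a Pohozaev/Kazdan--Warner type identity. Since $\lambda<\frac{4}{n(n-2)}$, the equation is not conformally invariant, so the Kazdan--Warner identity produces an error term proportional to $(1-\alpha)\int \xi_j u\,\langle\ldots\rangle$ together with $b$-terms. These identities express $a$, $b$, and $\int\xi_j\xi_k u^p$ through Dirichlet-type quantities. Next, summing $Q(f_j,f_j)\ge0$ over $j=1,\dots,n+1$ and using $\sum\xi_j^2=1$ and $\sum|\nabla\xi_j|^2=n$, the $\xi_j$-multiplier structure converts the sum into an inequality of the form
\begin{equation*}
0\le\sum_j Q(f_j,f_j)=A\int|\nabla u|^2u^{2\theta-2}\,d\sigma+B\int u^{2\theta}\,d\sigma-C\int(a+b\cdot\xi)u^{p-2+2\theta}\,d\sigma,
\end{equation*}
with explicit coefficients depending on $n,\alpha,\theta$. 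The third step combines this with a weighted integral identity for $u$ obtained by multiplying \eqref{eq:EL} by $u^{2\theta-1}$, or by an Obata/Bidaut-V\'eron--V\'eron type $P$-function argument. The goal is to get an inequality in which every term has a definite sign, and equality in it forces $\nabla u\equiv0$. The constraint $a\le1$ and the normalization $\int u^p=1$ then give $u\equiv1$, and from \eqref{eq:EL} we get $a=1$ and $b=0$.

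The main obstacle is the coefficient bookkeeping in this last step. The test-function inequality and the Bidaut-V\'eron--V\'eron identity must be combined with a choice of $\theta$ (and possibly a free parameter in the correction terms) for which the resulting quadratic form in $(\nabla u,\,\nabla^2 u\text{ traceless part},\,u)$ is positive definite exactly when $n\ge4$ and $\lambda=\frac{4\cdot2^{-2/n}}{n(n-2)}$. I expect this to reduce to a one-variable polynomial inequality in $\theta$ whose admissible window is nonempty only for $n\ge4$, and perhaps only barely in low dimensions. Here I would first try $\theta=\frac{n-2}{2}\cdot\frac{p}{2}$-type exponents suggested by the axially symmetric proof in \cite{WYY}, then optimize. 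The $b\cdot\xi$ terms must be absorbed separately. For this I would use the identity coming from testing with $X_ju$, which shows that $b$ is controlled by the non-conformal defect $(1-\alpha)$, together with Cauchy--Schwarz. Verifying the needed numerical inequalities for all $n\ge4$ may require a moment estimate plus a finite computer check in small dimensions.
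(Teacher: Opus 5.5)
Your proposal is not yet a proof. The decisive step is only expected: that for a suitable $\theta$ the summed second variation plus a Bidaut-V\'eron--V\'eron identity becomes a definite form. The inequality you would feed into that step is also not the right one. Your displayed form is what the \emph{uncorrected} functions $\xi_ju^\theta$ give, and these are not in $\Tu$. For $\theta\ne1$ even $\int u^{p-1}f=0$ fails, and $\int(b\cdot\xi)u^{p-2+2\theta}$ is uncontrolled. For $\theta=1$ the sum is $\sum_jQ(\xi_ju,\xi_ju)=n\beta\|u\|_2^2-(p-2)a=-\frac{4(1-\alpha)}{n-2}\|u\|_2^2-(p-2)\beta\|\nabla u\|_2^2$, where $\beta$ is your $\lambda$. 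This is negative for every $u$, including the stable solution $u\equiv1$. So all the content lies in the corrections $\ell_j$, which you never compute.

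Suppose you do compute them, using the conformal variation $Z_{e_k}=X_{e_k}u-\frac{n-2}{2}\xi_ku$ (with the sign you wrote, the linear system can be singular). In an eigenbasis $Me_j=m_je_j$ of $M=\int\xi\xi^Tu^p$, the constraints force $f_j=\xi_ju+\frac{pm_j}{1-m_j}Z_{e_j}=\frac{pm_j}{1-m_j}\Phi_{e_j}$; at $u\equiv1$ these vanish identically. Your sum is therefore just a positive combination of the directional second variations $Q(\Phi_{e_j},\Phi_{e_j})$ in \eqref{eq:QPhi}. Each contains the sign-indefinite moment $\int(b\cdot\xi)\xi_j^2u^p$ with an $m_j$-dependent coefficient, so $\sum_j\int(b\cdot\xi)\xi_j^2u^p=0$ does not remove it. When $M$ has an eigenvalue near $1$ (the two-antipodal-bubble regime that makes $2^{-2/n}$ sharp), the sum is dominated by the maximal direction. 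You then face exactly the problem of showing $Q(\Phi_e,\Phi_e)<0$ in the presence of that third moment. Reweighting to cancel the $b$-terms instead leaves transverse contributions of order $\|u\|_2^2$ that the maximal direction need not dominate. This is why the paper first proves spectral separation (Proposition~\ref{prop:spectral}) and then works in the single maximal direction. It sums over an eigenbasis only when $\lambda_1\le\frac12$.

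Second, nothing in your argument brings in the sharp constant or the hypothesis $a\le1$. As written, $a\le1$ is invoked only after $\nabla u\equiv0$, where it is not needed. The plan would thus give rigidity of all stable balanced solutions regardless of energy, with no reason offered. Rewrite the equation as $-\Delta u+\beta^{-1}u=\beta^{-1}(a+b\cdot\xi)u^{p-1}$. Then $\beta^{-1}=2^{2/n}\frac{n(n-2)}{4}$ lies above the Bidaut-V\'eron--V\'eron threshold $\frac{n(n-2)}{4}$ for the critical exponent. That identity loses its sign by an amount proportional to $1-\alpha$, and the variable coefficient $b\cdot\xi$ adds further indefinite terms. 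The proposal does not say what pays for this.

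In the paper it is paid for by a global input: the sharp Sobolev inequality applied separately to the positive and negative parts of $(\tau\pm x)u$. Together with $a\le1$, this gives $\Nn_e\ge0$, the variance bound of Lemma~\ref{lem:nodal-variance}, and the bound on $b$ in Proposition~\ref{prop:block}. These in turn control $\int x^2u^2$ and the mixed third moment in the maximal direction (Lemmas~\ref{lem:transverse}, \ref{lem:projection} and \ref{lem:allocation}). That nodal input is where $2^{-2/n}$ enters the argument. Your route would need a substitute for it and a way to isolate the dominant eigendirection of $M$; as it stands it is a heuristic program rather than a proof.
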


Here $a$ and $b$ are the Lagrange multipliers, $a=\|u\|_2^2+\frac{4\alpha}{n(n-2)}\|\nabla u\|_2^2$, and $a\le1$ follows from the minimization. As in \cite{WYY}, we use conformal variations corrected to preserve the constraints. Let $M=\int\xi\xi^Tu^p$. If $Me=me$ and $|e|=1$, then
\begin{equation*}
\Phi_e=\nabla(e\cdot\xi)\cdot\nabla u-\frac{n-2}{2}(e\cdot\xi)u
+\frac{1-m}{pm}(e\cdot\xi)u\in \Tu.
\end{equation*}
Its second variation involves $\int u^2$, $\int(e\cdot\xi)^2u^2$ and $\int(b\cdot\xi)(e\cdot\xi)^2u^p$; the last term has no sign. Without symmetry, $b$ need not be parallel to $e$, and the matrices $\int\xi\xi^Tu^p$, $\int\xi\xi^Tu^2$ and $\int(b\cdot\xi)\xi\xi^Tu^p$ need not commute. These are the main difficulties in extending the argument of \cite{WYY}.

Besides the gradient identities of Section~\ref{sec:identities}, we establish several estimates to characterize the concentration of the solution. We use a nodal form of the Sobolev inequality. Write $x=e\cdot\xi$. The quantity
\begin{equation*}
\Nn_e=\alpha\left(\|xu\|_2^2+\frac{4}{n(n-2)}\|\nabla(xu)\|_2^2\right)-m
\end{equation*}
is nonnegative by applying \eqref{eq:sobolev} separately to $x_+u$ and $x_-u$. This quantity controls the variation of the distribution of $u$ in the latitude direction w.r.t. $e$. More precisely, Lemma~\ref{lem:nodal-variance} gives
\begin{equation*}
\Nn_e\ge\frac{1}{2n}\int(x^2-m)^2u^p.
\end{equation*}
The integral on the right-hand side measures departure of the solution away from the two latitudes $x=\pm\sqrt m$. The third moment $\int x^3u^p$ records asymmetry between the two hemispheres w.r.t. direction $e$.

These quantities are related by orthogonal projection:
\begin{equation*}
\int(x^2-m)^2u^p-\frac{1}{m}\left(\int x^3u^p\right)^2
=\int\left(x^2-m-\frac{x}{m}\int x^3u^p\right)^2u^p.
\end{equation*}
In the maximal direction, the remainder term controls the transverse third moment
\begin{equation*}
\int x^2(\xi-xe)u^p
\end{equation*}
by Lemma~\ref{lem:projection}. 

Applying the nodal estimate also to $(\tau+x)u$ relates the Lagrange multiplier $b$ to $\mathcal{N}_e$. Proposition~\ref{prop:block} gives
\begin{equation*}
\big(1+(p-1)m\big)^2(b\cdot e)^2
\le p^2(1-\alpha)\big(a-\|u\|_2^2\big)\Nn_e.
\end{equation*}
Hence a nonzero Lagrange multiplier component requires a positive $\Nn_e$, while $\Nn_e$ enters $Q(\Phi_e,\Phi_e)$ with a negative coefficient by Lemma~\ref{lem:retained}. This relation allows us to control the mixed third moment while retaining the negative contribution needed for instability.

The proof of Theorem~\ref{thm:rigidity} has three steps. First, stability forces $M$ to have exactly one eigenvalue larger than $\frac12$, while the other eigenvalues are small compared with $\frac1a\|u\|_2^2$ (Section~\ref{sec:spectral}). This is proved by combining directional constraints for $4\le n\le19$, by restoring the coordinate variations $\xi_iu$ for $n\ge20$, and by summing over an eigenbasis for $n=4$. Second, the component $b_\perp$ of $b$ orthogonal to the maximal eigenvector is controlled by the second eigenvalue (Lemma~\ref{lem:transverse}). Third, in the maximal direction, we split $\Nn_e$ to control $b\cdot e$, the translated functions and the variance of $(e\cdot\xi)^2$ (Lemma~\ref{lem:allocation}). The longitudinal and transverse third moments are absorbed by orthogonal projection (Lemma~\ref{lem:projection}), which gives $Q(\Phi_e,\Phi_e)<0$.

Sections~\ref{sec:identities}--\ref{sec:moments} and \ref{sec:proof-main}, and the structural steps in Sections~\ref{sec:directional}--\ref{sec:maximal}, are analytic. They reduce the proof to finitely many sign conditions for explicit expressions on closed rational boxes. Some of them are checked by hand. The others are verified by computer, by Bernstein coefficients in exact rational arithmetic and by outward-rounded interval arithmetic: the multiplier, transverse and translated-function constants for $4\le n\le19$ (Appendix~\ref{app:multiplier-checks}), the directional exclusion and the two checks for $n=4$ (Appendix~\ref{app:spectral}), the final positivity in the maximal direction (Appendix~\ref{app:gaps}), and finitely many polynomial inequalities for $n\ge20$, uniform in $\frac1n$ (Lemma~\ref{lem:large-estimates}). Programs and data are publicly available; see Appendix~\ref{app:verification}.

The paper is organized as follows. Section~\ref{sec:identities} gives the conformal and gradient identities, and Section~\ref{sec:relaxed} reduces Theorem~\ref{thm:main} to Theorem~\ref{thm:rigidity}. Sections~\ref{sec:moments} and \ref{sec:directional} prove the moment and nodal estimates, Section~\ref{sec:spectral} proves spectral separation, and Section~\ref{sec:maximal} completes the proof of Theorem~\ref{thm:rigidity}. Section~\ref{sec:proof-main} proves Theorem~\ref{thm:main}, including the equality cases. Appendices~\ref{app:algebraic}--\ref{app:verification} contain the explicit estimates and finite computations.

\subsection*{Acknowledgments} Juncheng Wei was supported by National R\&D Program of China (Grant No.\ 2022YFA1005602), and Hong Kong General Research Fund (No.\ 14303125) ``On Fujita equation in the critical or supercritical regime''. Zetian Yan is substantially supported by research grants from the Research Grants Council of the Hong Kong Special Administrative Region, China [Project No.: CUHK 14307123, CUHK 14306624 and CUHK 14308325].

\subsection*{Disclosure on AI assistance} The authors used AI-assisted tools, principally ChatGPT/Codex, for brainstorming, literature searches, manuscript editing and complete computer assistant part. AI output was not treated as mathematical authority. The authors independently worked and verified all theorem statements, proofs, calculations, constants, citations, and final prose, and take full responsibility for the contents of the paper.
\section{Conformal and gradient identities}\label{sec:identities}

In this section, we establish some identities for the conformal variations and the moments of $u$.

Throughout the paper, $\int f$ denotes $\int_{\mathbb{S}^n}f\,d\sigma$, unless another measure is indicated. We regard gradients of functions on $\mathbb{S}^n$ as ambient vectors tangent to the sphere, so that $(\nabla f)_i=\nabla f\cdot\nabla\xi_i$. Recall that
\begin{equation*}
\Delta\xi_i=-n\xi_i,\qquad \nabla\xi_i\cdot\nabla\xi_j=\delta_{ij}-\xi_i\xi_j,
\end{equation*}
and define
\begin{equation*}
% \label{eq:alpha-relations}
\beta:= \frac{4\alpha}{n(n-2)}
\end{equation*}
for simplicity.

Let $u>0$ be a smooth solution of
\begin{equation*}
u-\beta\Delta u=(a+b\cdot\xi)u^{p-1}.
\end{equation*}
For $v\in\R^{n+1}$, let
\begin{equation*}
x_v=v\cdot\xi,\qquad X_v=\nabla x_v\cdot\nabla,\qquad
Z_v=X_vu-\frac{n-2}{2}x_vu.
\end{equation*}
Since $\nabla^2x_v=-x_vg_{\mathbb S^n}$, $Z_v$ is the conformal variation generated by $\nabla x_v$.

\begin{lemma}\label{lem:conformal}
Let $u$ be a smooth positive solution of the above equation. Then for every $v\in\R^{n+1}$ and all smooth functions $F,H$ on $\mathbb{S}^n$, we have
\begin{equation}\label{eq:ibp}
\int Fu^{p-1}Z_v=-\frac{1}{p}\int (X_vF)u^p,
\end{equation}
\begin{equation}\label{eq:JZ}
\begin{aligned}
Z_v-\beta\Delta Z_v-(p-1)(a+b\cdot\xi)u^{p-2}Z_v=2(1-\alpha)x_vu+\big[b\cdot v-x_v(b\cdot\xi)\big]u^{p-1},
\end{aligned}
\end{equation}
and
\begin{equation}\label{eq:product}
\int\bigl[FHu^2+\beta\nabla(Fu)\cdot\nabla(Hu)\bigr]
=\int(a+b\cdot\xi)FHu^p+\beta\int u^2\nabla F\cdot\nabla H.
\end{equation}
\end{lemma}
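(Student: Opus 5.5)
The three identities are direct computations. I will use only $\nabla^2 x_v=-x_v g$, $\Delta x_v=-nx_v$, $\nabla x_v\cdot\nabla\xi_j=v_j-x_v\xi_j$, the relation $\beta=\frac{4\alpha}{n(n-2)}$, and the equation $u-\beta\Delta u=(a+b\cdot\xi)u^{p-1}$.

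For \eqref{eq:ibp}, note that $u^{p-1}X_vu=\frac1pX_v(u^p)$. Integration by parts gives
\[
\int G\,X_v\varphi=-\int\varphi\,\mathrm{div}(G\nabla x_v)=-\int\varphi\,(X_vG-nx_vG).
\]
Apply this with $G=F$ and $\varphi=u^p$. The result is $\int Fu^{p-1}X_vu=-\frac1p\int (X_vF)u^p+\frac np\int x_vFu^p$. Since $\frac np=\frac{n-2}{2}$, this last term cancels exactly against $-\frac{n-2}{2}\int Fx_vu^p$, which gives \eqref{eq:ibp}.

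For \eqref{eq:JZ}, I will first compute the commutators. Because $\nabla x_v$ is a conformal Killing field with $\nabla^2x_v=-x_vg$, the Bochner/Ricci identity on $\mathbb S^n$ (with $\mathrm{Ric}=(n-1)g$) gives
\[
\Delta(X_vu)=X_v\Delta u-2x_v\Delta u+(n-2)\nabla x_v\cdot\nabla u ,
\]
which I will double-check on $u=\xi_j$. I also use $\Delta(x_vu)=x_v\Delta u+2X_vu-nx_vu$. Combining these,
\[
\Delta Z_v=X_v\Delta u-2x_v\Delta u+(n-2)X_vu-\tfrac{n-2}{2}\bigl(x_v\Delta u+2X_vu-nx_vu\bigr).
\]
The $X_vu$ terms cancel, leaving $\Delta Z_v=X_v\Delta u-\frac{n+2}{2}x_v\Delta u+\frac{n(n-2)}{2}x_vu$. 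Next, substitute $\beta\Delta u=u-(a+b\cdot\xi)u^{p-1}$ and apply $X_v$ to it. This produces $X_v\bigl((a+b\cdot\xi)u^{p-1}\bigr)=(b\cdot v-x_v\,b\cdot\xi)u^{p-1}+(p-1)(a+b\cdot\xi)u^{p-2}X_vu$. The $u^{p-1}$ terms containing $x_v(a+b\cdot\xi)$ combine with the coefficient $\frac{n+2}{2}-\frac{n-2}{2}=2$ and the term $(p-1)\frac{n-2}{2}=\frac{n+2}{2}$ into exactly $(p-1)(a+b\cdot\xi)u^{p-2}Z_v$. The remaining linear terms in $x_vu$ are $\frac{n+2}{2}-\frac{n-2}{2}-\beta\frac{n(n-2)}{2}=2-2\alpha$. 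This yields \eqref{eq:JZ}.

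I expect the main obstacle to be bookkeeping of this commutator: the exact coefficient $2(1-\alpha)$ depends on the cancellation of the $X_vu$ terms. I will verify it independently via the case $\alpha=1$, where $Z_v$ must lie in the kernel of the linearized operator for the standard bubble.

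For \eqref{eq:product}, expand $\nabla(Fu)\cdot\nabla(Hu)=u^2\nabla F\cdot\nabla H+FH|\nabla u|^2+u\nabla(FH)\cdot\nabla u$. Note that $FH|\nabla u|^2+u\nabla(FH)\cdot\nabla u=\nabla(FHu)\cdot\nabla u$. Integrating by parts, $\beta\int\nabla(FHu)\cdot\nabla u=\int FHu(-\beta\Delta u)$, and adding $\int FHu^2$ and using the equation gives $\int(a+b\cdot\xi)FHu^p$.
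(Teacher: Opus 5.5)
Your proposal is correct and follows the paper's proof. You use integration by parts with $n/p=\frac{n-2}{2}$ for \eqref{eq:ibp}, the Bochner commutator $\Delta(X_vu)=X_v\Delta u-2x_v\Delta u+(n-2)X_vu$ together with the equation for \eqref{eq:JZ}, and testing the equation against $FHu$ via $\nabla(Fu)\cdot\nabla(Hu)=\nabla u\cdot\nabla(FHu)+u^2\nabla F\cdot\nabla H$ for \eqref{eq:product}. Your bookkeeping checks out; the only blemish is wording, since the coefficient $\frac{n+2}{2}-\frac{n-2}{2}=2$ belongs to the $x_vu$ terms rather than to the sentence about the $u^{p-1}$ terms, as your next sentence correctly shows.
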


\begin{proof}
Integration by parts and $n/p=(n-2)/2$ give \eqref{eq:ibp}.

Since $\nabla^2x_v=-x_vg_{\mathbb S^n}$, the Bochner formula gives
\begin{equation*}
\Delta(X_vu)=X_v\Delta u-2x_v\Delta u+(n-2)X_vu.
\end{equation*}
Together with the equation for $u$, this yields \eqref{eq:JZ}.

Testing the equation against $FHu$ and using
\begin{equation*}
\nabla(Fu)\cdot\nabla(Hu)=\nabla u\cdot\nabla(FHu)+u^2\nabla F\cdot\nabla H
\end{equation*}
gives \eqref{eq:product}.
\end{proof}

From now on, $u$ satisfies \eqref{eq:EL}. Thus
\begin{equation*}
\int u^p=1,\qquad \int\xi u^p=0,\qquad M=\int\xi\xi^Tu^p.
\end{equation*}

Testing the equation against $u$ gives
\begin{equation*}
a=\|u\|_2^2+\beta\|\nabla u\|_2^2.
\end{equation*}
Since $u>0$ and $|\xi|=1$, we have

\begin{equation}\label{eq:basic}
\begin{gathered}
0< M< I,\qquad \tr M=1,\qquad \tr\Bigl(\int(b\cdot\xi)\xi\xi^T u^p\Bigr)=b\cdot\int\xi u^p=0,\\
\tr\Bigl(\int\xi\xi^Tu^2\Bigr)=\|u\|_2^2,\qquad \tr\Bigl(\beta\int\nabla u\otimes\nabla u\Bigr)=\tr\Bigl(\beta\int\xi\xi^T|\nabla u|^2\Bigr)=\beta\|\nabla u\|_2^2 .
\end{gathered}
\end{equation}
Thus $f\in H^1(\mathbb{S}^n)$ belongs to $\Tu$ if and only if
\begin{equation*}
\int u^{p-1}f=0,\qquad \int\xi u^{p-1}f=0.
\end{equation*}

In the following lemma, we compute the second variation along the conformal directions $Z_v$ and the coordinate directions $x_vu$.

\begin{lemma}\label{lem:second-variation}
Let $u$ be as in Theorem~\ref{thm:rigidity}. Then we have the following Kazdan-Warner condition
\begin{equation}\label{eq:ward}
(1-\alpha)\int\xi u^2=-\frac{1}{p}(I-M)b .
\end{equation}
Moreover, in any orthonormal basis $\{e_i\}$ of $\R^{n+1}$, writing $Z_i=Z_{e_i}$, we have
\begin{equation}\label{eq:Qmatrix}
\begin{aligned}
Q(Z_i,Z_j)&=(1-\alpha)\Bigl(3\int\xi_i\xi_ju^2-\|u\|_2^2\delta_{ij}\Bigr)-\frac{2}{p}\int(b\cdot\xi)\xi_i\xi_j u^p,\\
Q(Z_i,\xi_ju)&=2(1-\alpha)\int\xi_i\xi_ju^2-\int(b\cdot\xi)\xi_i\xi_j u^p,\\
Q(\xi_iu,\xi_ju)&=\beta\Bigl(\|u\|_2^2\delta_{ij}-\int\xi_i\xi_ju^2\Bigr)-(p-2)\int(a+b\cdot\xi)\xi_i\xi_j u^p.
\end{aligned}
\end{equation}
\end{lemma}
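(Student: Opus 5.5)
Everything will follow from Lemma~\ref{lem:conformal}, applied with the coordinate functions $F=\xi_i$, $H=\xi_j$ and the conformal directions $Z_v$.

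\emph{Kazdan--Warner identity.} We test \eqref{eq:JZ} against $u$. The left side is the linearized operator $L=1-\beta\Delta-(p-1)(a+b\cdot\xi)u^{p-2}$ applied to $Z_v$. Since $L$ is self-adjoint, $\int uLZ_v=\int Z_vLu$. From the equation, $Lu=u-\beta\Delta u-(p-1)(a+b\cdot\xi)u^{p-1}=-(p-2)(a+b\cdot\xi)u^{p-1}$. Then \eqref{eq:ibp} with $F=a+b\cdot\xi$ gives $\int(a+b\cdot\xi)u^{p-1}Z_v=-\frac1p\int (b\cdot\nabla x_v)u^p=-\frac1p\int(b\cdot v-x_v\,b\cdot\xi)u^p$. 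Here I use $X_v(b\cdot\xi)=\nabla x_v\cdot\nabla x_b=v\cdot b-x_vx_b$. Because $\int u^p=1$, this equals $-\frac1p(b\cdot v-v^TMb)$. On the right side of \eqref{eq:JZ} tested with $u$ we get $2(1-\alpha)\int x_vu^2+(b\cdot v)-v^TMb$. Equating the two sides gives $2(1-\alpha)\int x_vu^2=\bigl(\frac{p-2}{p}-1\bigr)v^T(I-M)b=-\frac2p v^T(I-M)b$. This is exactly \eqref{eq:ward} in direction $v$.

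\emph{Mixed and conformal entries.} We have $Q(f,h)=\int fLh$. For $Q(Z_i,\xi_ju)=\int\xi_ju\,LZ_i$ we substitute \eqref{eq:JZ}. The term $2(1-\alpha)\int\xi_i\xi_ju^2$ appears directly. The remaining term is $\int\xi_j[b_i-\xi_i(b\cdot\xi)]u^p=b_i\int\xi_ju^p-\int(b\cdot\xi)\xi_i\xi_ju^p$, and the first piece vanishes by the barycenter constraint. For $Q(Z_i,Z_j)=\int Z_jLZ_i$ we again use \eqref{eq:JZ}. The piece $2(1-\alpha)\int x_iuZ_j$ is computed from $\int x_iu\,X_ju=\frac12\int x_iX_j(u^2)$: integrating by parts with $\Delta x_j=-nx_j$ gives $-\frac12\int u^2(\nabla x_i\cdot\nabla x_j-n x_ix_j)=-\frac12\int u^2(\delta_{ij}-(n+1)x_ix_j)$. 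Subtracting $\frac{n-2}{2}\int x_ix_ju^2$ leaves $\frac32\int x_ix_ju^2-\frac12\|u\|_2^2\delta_{ij}$, and multiplying by $2(1-\alpha)$ gives the stated first part. For the piece $\int[b_i-x_i(b\cdot\xi)]u^{p-1}Z_j$ we use \eqref{eq:ibp} with $F=b_i-x_ix_b$. We have $X_jF=-(\delta_{ij}-x_ix_j)x_b-x_i(b_j-x_jx_b)=-\delta_{ij}x_b-x_ib_j+2x_ix_jx_b$. Integrating against $u^p$, the first two terms vanish by $\int\xi u^p=0$. This leaves $-\frac1p\cdot2\int x_bx_ix_ju^p$, matching the formula.

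\emph{Coordinate entries.} For $Q(\xi_iu,\xi_ju)$ we apply \eqref{eq:product} with $F=\xi_i$, $H=\xi_j$. This gives $\int[\xi_i\xi_ju^2+\beta\nabla(\xi_iu)\cdot\nabla(\xi_ju)]=\int(a+b\cdot\xi)\xi_i\xi_ju^p+\beta\int u^2(\delta_{ij}-\xi_i\xi_j)$. We then subtract $(p-1)\int(a+b\cdot\xi)\xi_i\xi_ju^p$. The main obstacle is purely bookkeeping: keeping track of signs in the commutator identity and in $X_j$ of products, and making sure the $b_i\int\xi u^p$ terms cancel by the constraint. I would double-check the diagonal case against the trace identities \eqref{eq:basic}.
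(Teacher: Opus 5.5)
Your proposal is correct and follows the paper's route: every entry of \eqref{eq:Qmatrix} comes from \eqref{eq:ibp}, \eqref{eq:JZ}, \eqref{eq:product} and the integration by parts giving $\int\xi_iuZ_j=\frac12\bigl(3\int\xi_i\xi_ju^2-\|u\|_2^2\delta_{ij}\bigr)$, and your sign bookkeeping (including $X_j(b_i-\xi_ix_b)$ and the cancellations from $\int\xi u^p=0$) checks out. The one variation is in the Kazdan--Warner identity: you pair \eqref{eq:JZ} with $u$ and use self-adjointness of the linearized operator, whereas the paper tests \eqref{eq:EL} directly against $Z_v$; your version reuses the commutator computation already contained in \eqref{eq:JZ}, so you do not need to evaluate $\int uZ_v+\beta\int\nabla u\cdot\nabla Z_v$ separately.
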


\begin{proof}
Integration by parts gives
\begin{equation}\label{eq:xiuZ}
\int\xi_iuZ_j=\frac{1}{2}\left(3\int\xi_i\xi_ju^2-\|u\|_2^2\delta_{ij}\right).
\end{equation}
Testing \eqref{eq:EL} against $Z_v$ and using \eqref{eq:ibp}, we obtain
\begin{equation*}
(1-\alpha)\int x_vu^2=-\frac{1}{p} v^T(I-M)b,
\end{equation*}
which proves \eqref{eq:ward}.

The identities in \eqref{eq:Qmatrix} follow from \eqref{eq:ibp}, \eqref{eq:JZ}, \eqref{eq:product} and \eqref{eq:xiuZ}.
\end{proof}

Let $e$ be a unit eigenvector of $M$ with eigenvalue $m$, and write $x=e\cdot\xi$. By \eqref{eq:ibp},
\begin{equation*}
\int u^{p-1}Z_e=0,\qquad
\int\xi u^{p-1}Z_e=-\frac{1-m}{p}e,\qquad
\int\xi xu^p=me.
\end{equation*}
Hence we have
\begin{equation}\label{eq:Phi-data}
\Phi_e:=Z_e+txu\in\Tu,\qquad t=\frac{1-m}{pm}.
\end{equation}

Expanding $Q(Z_e+txu,Z_e+txu)$ by \eqref{eq:Qmatrix}, we obtain
\begin{equation}\label{eq:QPhi}
\begin{aligned}
Q(\Phi_e,\Phi_e)
&=(\beta t^2-1+\alpha)\|u\|_2^2+\big[(1-\alpha)(3+4t)-\beta t^2\big]\int x^2u^2\\
&\quad-a(p-2)t^2m-\Bigl(\frac{2}{p}+2t+(p-2)t^2\Bigr)\int(b\cdot\xi)x^2 u^p .
\end{aligned}
\end{equation}

Testing \eqref{eq:EL} against $\xi_i\xi_ju$ gives
\begin{equation}\label{eq:coord}
\beta\int\xi\xi^T|\nabla u|^2=\int(a+b\cdot\xi)\xi\xi^T u^p+\beta\|u\|_2^2I-\bigl(1+(n+1)\beta\bigr)\int\xi\xi^Tu^2 .
\end{equation}

We also need the following estimates. 
\begin{lemma}\label{lem:gram}
We have
\begin{equation}\label{eq:gram}
\int\nabla u(\nabla u)^T\geq\frac{1}{4}C\Bigl(\int\xi\xi^Tu^2\Bigr)^{-1}C,\qquad
\int\xi\xi^T|\nabla u|^2\geq\frac{1}{4}C\Bigl(\|u\|_2^2I-\int\xi\xi^Tu^2\Bigr)^{-1}C,
\end{equation}
where
\begin{equation*}
C=(n+1)\int\xi\xi^Tu^2-\|u\|_2^2I .
\end{equation*}
\end{lemma}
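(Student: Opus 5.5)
These are matrix Cauchy--Schwarz (Gram) inequalities. I will identify $C$ as a mixed moment between $\nabla u$ and $\xi u$, and then apply the standard Schur-complement bound for block Gram matrices.

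\textbf{Step 1: express $C$ as a mixed moment.} Write $G=\int\xi\xi^Tu^2$. For the first inequality consider the mixed matrix
\[
B=\int \xi\,(\nabla u)^T u,\qquad B_{ij}=\int \xi_i u\,\nabla u\cdot\nabla\xi_j .
\]
Since $u\nabla u=\tfrac12\nabla(u^2)$, integration by parts together with $\Delta\xi_j=-n\xi_j$ and $\nabla\xi_i\cdot\nabla\xi_j=\delta_{ij}-\xi_i\xi_j$ gives
\[
B_{ij}=-\tfrac12\int u^2\bigl(\nabla\xi_i\cdot\nabla\xi_j+\xi_i\Delta\xi_j\bigr)=-\tfrac12\int u^2\bigl(\delta_{ij}-(n+1)\xi_i\xi_j\bigr).
\]
Hence $B=\tfrac12 C$, and this is symmetric.

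\textbf{Step 2: the first inequality.} For vectors $v,w$,
\[
0\le\int\bigl|(w\cdot\xi)u\,\nabla\xi-\ldots\bigr|^2
\]
is not quite the right form, so I use the componentwise functions directly. Consider the scalar functions $f=w\cdot\nabla u$, the ambient components $(\nabla u)_j=\nabla u\cdot\nabla\xi_j$ contracted with $w$, and $g=(v\cdot\xi)u$. Then
\[
0\le\int\bigl(f-g\bigr)^2=w^T\Bigl(\int\nabla u(\nabla u)^T\Bigr)w-2v^TBw+v^TGv .
\]
Choosing $v=G^{-1}Bw$, which is legitimate because $G>0$ as $u>0$, gives
\[
\int\nabla u(\nabla u)^T\ge B^TG^{-1}B=\tfrac14 CG^{-1}C .
\]

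\textbf{Step 3: the second inequality.} Here I pair the scalar function $|\nabla u|\,(w\cdot\xi)$ with something that has an $L^2$ Gram matrix equal to $\|u\|_2^2I-G$. The natural choice is the vector field $u\,\nabla(v\cdot\xi)$, because
\[
\int u^2\,\nabla(v\cdot\xi)\cdot\nabla(v\cdot\xi)=v^T\bigl(\|u\|_2^2I-G\bigr)v .
\]
The mixed term must then produce $C$. Pair the vector field $(w\cdot\xi)\nabla u$ with $u\nabla(v\cdot\xi)$:
\[
\int(w\cdot\xi)\,u\,\nabla u\cdot\nabla(v\cdot\xi)=w^TBv=\tfrac12 w^TCv .
\]
The Gram matrix of the first family is
\[
\int|\nabla u|^2(w\cdot\xi)^2=w^T\Bigl(\int\xi\xi^T|\nabla u|^2\Bigr)w .
\]
Expanding $\int|(w\cdot\xi)\nabla u-u\nabla(v\cdot\xi)|^2\ge0$ and optimizing with $v=(\|u\|_2^2I-G)^{-1}\tfrac12 Cw$ yields the second inequality.

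This optimization needs $\|u\|_2^2I-G>0$. That holds because
\[
v^T\bigl(\|u\|_2^2I-G\bigr)v=\int u^2\bigl(|v|^2-(v\cdot\xi)^2\bigr)>0
\]
for $v\neq0$, since $u>0$. The same argument shows $G>0$.

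\textbf{Main obstacle.} The only delicate step is the integration by parts in Step 1 identifying $B=\tfrac12C$, namely getting the signs and the factor $n+1$ right. Everything else is routine Cauchy--Schwarz for Gram matrices.
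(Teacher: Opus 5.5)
Your proof is correct and follows essentially the same route as the paper. Both arguments compute the mixed moment $\int \xi u(\nabla u)^T=\tfrac12 C$ by integration by parts. Both then apply Cauchy--Schwarz against $(v\cdot\xi)u$ for the first bound, and against $u\nabla(v\cdot\xi)=u(I-\xi\xi^T)v$ for the second, with the optimal choice of $v$. Your completing-the-square version yields the matrix inequality directly, which makes the paper's separate treatment of the degenerate case $Cv=0$ unnecessary.
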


\begin{proof}
Integration by parts gives
\begin{equation*}
\int(\nabla u)_iu\xi_j=\frac{1}{2}C_{ij}.
\end{equation*}
For $v,w\in\R^{n+1}$, the Cauchy--Schwarz inequality gives
\begin{equation*}
\begin{aligned}
\frac{1}{4}(v^TCw)^2
&=\left(\int(v\cdot\nabla u)(w\cdot\xi)u\right)^2\le\int(v\cdot\nabla u)^2\int(w\cdot\xi)^2u^2\\
&=\left(v^T\int\nabla u(\nabla u)^T v\right)
\left(w^T\int\xi\xi^Tu^2w\right).
\end{aligned}
\end{equation*}
Since $u>0$, the matrix $\int\xi\xi^Tu^2$ is positive definite. If $Cv\ne0$, take
\begin{equation*}
w=\left(\int\xi\xi^Tu^2\right)^{-1}Cv.
\end{equation*}
Dividing by $v^TC(\int\xi\xi^Tu^2)^{-1}Cv$ gives the first inequality in \eqref{eq:gram}. If $Cv=0$, the inequality is immediate.

For the second inequality, $\nabla u\cdot\xi=0$ gives
\begin{equation*}
\int(v\cdot\xi)\nabla u\cdot u(I-\xi\xi^T)w=\frac{1}{2}v^TCw.
\end{equation*}
Hence
\begin{equation*}
\begin{aligned}
\frac{1}{4}(v^TCw)^2
&\le\int(v\cdot\xi)^2|\nabla u|^2
\int u^2|(I-\xi\xi^T)w|^2\\
&=\left(v^T\int\xi\xi^T|\nabla u|^2v\right)
\left[w^T\left(\|u\|_2^2I-\int\xi\xi^Tu^2\right)w\right].
\end{aligned}
\end{equation*}
The matrix $\|u\|_2^2I-\int\xi\xi^Tu^2$ is also positive definite. Taking
\begin{equation*}
w=\left(\|u\|_2^2I-\int\xi\xi^Tu^2\right)^{-1}Cv
\end{equation*}
and arguing as above proves the second inequality in \eqref{eq:gram}.
\end{proof}

The second estimate concerns $\beta\int\xi|\nabla u|^2$.

\begin{lemma}\label{lem:gfirst}
We have
\begin{equation}\label{eq:gfirst}
\beta\int\xi|\nabla u|^2=\Big[M+\frac{n-2+2\alpha}{2n(1-\alpha)}(I-M)\Big]b,
\end{equation}
and, for every unit vector $v\in\R^{n+1}$,
\begin{equation*}
\Big(\beta\int(v\cdot\xi)|\nabla u|^2\Big)^2\le\beta\|\nabla u\|_2^2\,\beta\int(v\cdot\xi)^2|\nabla u|^2\le\big(\beta\|\nabla u\|_2^2\big)^2 .
\end{equation*}
Moreover, $n-2+2\alpha>2n(1-\alpha)$, and $|b|<\beta\|\nabla u\|_2^2$ if $u$ is non-constant.
\end{lemma}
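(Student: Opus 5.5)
We first derive the identity \eqref{eq:gfirst}. The plan is to compute $\beta\int\xi|\nabla u|^2$ in two ways and eliminate $\int\xi u^2$ with the Kazdan--Warner condition \eqref{eq:ward}.

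\emph{Step 1: an expression for $\beta\int\xi|\nabla u|^2$.} Test the equation $u-\beta\Delta u=(a+b\cdot\xi)u^{p-1}$ against $\xi_i u$. Integration by parts gives
\[
\int\nabla u\cdot\nabla(\xi_iu)=\int\xi_i|\nabla u|^2+\tfrac12\int\nabla\xi_i\cdot\nabla(u^2)=\int\xi_i|\nabla u|^2+\tfrac n2\int\xi_iu^2 .
\]
Hence
\[
\int\xi_iu^2+\beta\int\xi_i|\nabla u|^2+\tfrac{n\beta}{2}\int\xi_iu^2=\int(a+b\cdot\xi)\xi_iu^p=(Mb)_i ,
\]
where we used $\int\xi u^p=0$. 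Therefore
\[
\beta\int\xi|\nabla u|^2=Mb-\bigl(1+\tfrac{n\beta}2\bigr)\int\xi u^2 .
\]

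\emph{Step 2: eliminate $\int\xi u^2$.} By \eqref{eq:ward}, $\int\xi u^2=-\frac{1}{p(1-\alpha)}(I-M)b$. Since $n\beta/2=\frac{2\alpha}{n-2}$, we have $1+n\beta/2=\frac{n-2+2\alpha}{n-2}$. Dividing by $p(1-\alpha)=\frac{2n(1-\alpha)}{n-2}$, the coefficient becomes $\frac{n-2+2\alpha}{2n(1-\alpha)}$, which is exactly \eqref{eq:gfirst}. Only this arithmetic needs checking.

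\emph{Step 3: the Cauchy--Schwarz chain.} Apply Cauchy--Schwarz with respect to the measure $|\nabla u|^2d\sigma$:
\[
\Bigl(\int(v\cdot\xi)|\nabla u|^2\Bigr)^2\le\int|\nabla u|^2\int(v\cdot\xi)^2|\nabla u|^2 .
\]
The second inequality follows from $(v\cdot\xi)^2\le1$. Multiplying by $\beta^2$ gives the stated chain.

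\emph{Step 4: the inequality $n-2+2\alpha>2n(1-\alpha)$.} This is equivalent to $\alpha>\frac{n+2}{2(n+1)}$, that is, $2^{-2/n}>\frac{n+2}{2n+2}$. Writing $2^{-2/n}=e^{-2\ln2/n}\ge1-\frac{2\ln 2}{n}$, it suffices that
\[
1-\frac{2\ln2}{n}>\frac12+\frac{1}{2(n+1)} .
\]
Rearranged, this reads $\frac12>\frac{2\ln2}{n}+\frac1{2(n+1)}$. It holds for $n\ge4$ since $0.347+0.1<0.5$. For $n=3$ we check directly that $2^{-2/3}\approx0.63>5/8$, so the inequality holds for all $n\ge3$.

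\emph{Step 5: the bound $|b|<\beta\|\nabla u\|_2^2$, the main step.} Set $\gamma=\frac{n-2+2\alpha}{2n(1-\alpha)}$; by Step 4, $\gamma>1$. Let $A=M+\gamma(I-M)=\gamma I-(\gamma-1)M$. Since $0<M<I$, the eigenvalues of $A$ lie in $(1,\gamma)$, so $A>I$. Assume $b\neq0$ and take $v=b/|b|$. By \eqref{eq:gfirst},
\[
v\cdot\beta\int\xi|\nabla u|^2=\frac{b^TAb}{|b|}>|b| .
\]
Step 3 then bounds the left side by $\beta\|\nabla u\|_2^2$, so $|b|<\beta\|\nabla u\|_2^2$. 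If instead $b=0$, the bound is immediate, because $u$ non-constant forces $\|\nabla u\|_2>0$. The subtle point is ensuring strictness when $b\ne0$, and this comes from $A>I$ strictly.
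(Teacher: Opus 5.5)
Your proposal is correct and follows the paper's proof essentially step for step: test the equation against $\xi_i u$, eliminate $\int\xi u^2$ via \eqref{eq:ward}, apply Cauchy--Schwarz, and use that the matrix in \eqref{eq:gfirst} is strictly larger than $I$. The only differences are cosmetic. You pair with $v=b/|b|$ and use $b^TAb>|b|^2$, where the paper uses $|Ab|>|b|$ together with the bound on $\bigl|\beta\int\xi|\nabla u|^2\bigr|$. You also give an explicit verification of $\alpha>\frac{n+2}{2n+2}$, which the paper only asserts; for $n=3$ it is cleaner to check $(8/5)^3=4.096>4$ than to compare decimals.
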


\begin{proof}
Testing \eqref{eq:EL} against $\xi_i u$ gives
\begin{equation*}
\beta\int\xi|\nabla u|^2+\left(1+\frac{n\beta}{2}\right)\int\xi u^2=Mb.
\end{equation*}
Thus \eqref{eq:ward} implies \eqref{eq:gfirst}.

For the second assertion, apply the Cauchy--Schwarz inequality to $\sqrt\beta(v\cdot\xi)|\nabla u|$ and $\sqrt\beta|\nabla u|$. This gives
\begin{equation}\label{eq:gradient-cs}
\begin{aligned}
\left(\beta\int(v\cdot\xi)|\nabla u|^2\right)^2
&\le\left(\beta\int(v\cdot\xi)^2|\nabla u|^2\right)
\left(\beta\int|\nabla u|^2\right)\le\left(\beta\|\nabla u\|_2^2\right)^2,
\end{aligned}
\end{equation}
where the last inequality uses $|v\cdot\xi|\le1$. Taking $v$ parallel to $\int\xi|\nabla u|^2$, when this vector is nonzero, also gives
\begin{equation*}
\left|\beta\int\xi|\nabla u|^2\right|\le\beta\|\nabla u\|_2^2.
\end{equation*}

For $n\ge3$, $\alpha>(n+2)/(2n+2)$, so that $n-2+2\alpha>2n(1-\alpha)$. Since $0<M<I$, every eigenvalue of the matrix in \eqref{eq:gfirst} is larger than one. If $b\ne0$, this gives
\begin{equation*}
|b|<\left|\beta\int\xi|\nabla u|^2\right|\le\beta\|\nabla u\|_2^2.
\end{equation*}
The assertion is immediate when $b=0$.
\end{proof}

\section{The relaxed variational problem}\label{sec:relaxed}

We now reduce Theorem~\ref{thm:main} to Theorem~\ref{thm:rigidity}. The possible loss of compactness is described by the following standard consequence of the sharp Sobolev inequality. See \cite{Lions}.

\begin{lemma}\label{lem:concentration}
Let $\{u_j\}$ be a bounded sequence of nonnegative functions in $H^1(\mathbb{S}^n)$. After passing to a subsequence, there exist a nonnegative function $u\in H^1(\mathbb{S}^n)$, at most countably many distinct points $e_i\in\mathbb{S}^n$, and numbers $\gamma_i>0$ such that $u_j\rightharpoonup u$ in $H^1$, $u_j\to u$ in $L^2$, and
\begin{equation*}
u_j^p\,d\sigma\rightharpoonup u^p\,d\sigma+\sum_i\gamma_i\delta_{e_i}.
\end{equation*}
Moreover,
\begin{equation*}
\liminf_{j\to\infty}\bigl(\|u_j\|_2^2+\beta\|\nabla u_j\|_2^2\bigr)\ge\|u\|_2^2+\beta\|\nabla u\|_2^2+\alpha\sum_i\gamma_i^{\frac{n-2}{n}} .
\end{equation*}
\end{lemma}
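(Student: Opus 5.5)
This is the concentration-compactness lemma of Lions, adapted to the normalized sphere and to the energy $\|u\|_2^2+\beta\|\nabla u\|_2^2$. I will prove it with the standard second concentration-compactness argument, in four steps.

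\emph{Step 1: compactness.} The sequence is bounded in $H^1$, so after passing to a subsequence $u_j\rightharpoonup u$ in $H^1$. Rellich gives $u_j\to u$ in $L^2$ and a.e. Thus $u\ge0$, and $\|u_j\|_2^2\to\|u\|_2^2$. The measures $u_j^p\,d\sigma$ and $|\nabla u_j|^2d\sigma$ have bounded mass. A further subsequence therefore converges weakly-$*$ to measures $\nu$ and $\mu$, with $\mu\ge|\nabla u|^2d\sigma$ by weak lower semicontinuity.

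\emph{Step 2: the defects.} Put $w_j=u_j-u$. Then $w_j\rightharpoonup0$ in $H^1$ and $w_j\to0$ in $L^2$. By the Brezis--Lieb lemma, $u_j^p\,d\sigma-|w_j|^p\,d\sigma-u^p\,d\sigma\to0$ in total variation. Hence $|w_j|^p d\sigma\rightharpoonup\nu-u^pd\sigma=:\bar\nu$. Expanding $|\nabla u_j|^2=|\nabla w_j|^2+|\nabla u|^2+2\nabla w_j\cdot\nabla u$, and noting that the cross term tends to $0$ weakly-$*$ because $\nabla w_j\rightharpoonup0$ in $L^2$, we get $|\nabla w_j|^2d\sigma\rightharpoonup\mu-|\nabla u|^2d\sigma=:\bar\mu$.

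\emph{Step 3: reverse Hölder inequality and atoms.} Fix $\phi\in C^\infty(\mathbb{S}^n)$ and apply \eqref{eq:sobolev} to $\phi w_j$:
\begin{equation*}
\Big(\int|\phi|^p|w_j|^p\Big)^{2/p}\le\|\phi w_j\|_2^2+\frac{4}{n(n-2)}\|\nabla(\phi w_j)\|_2^2.
\end{equation*}
Since $w_j\to0$ in $L^2$, both $\|\phi w_j\|_2\to0$ and $\|w_j\nabla\phi\|_2\to0$. Letting $j\to\infty$ gives
\begin{equation*}
\Big(\int|\phi|^pd\bar\nu\Big)^{2/p}\le\frac{4}{n(n-2)}\int\phi^2d\bar\mu .
\end{equation*}
By Lions' lemma, the exponent $p>2$ forces $\bar\nu$ to be purely atomic: $\bar\nu=\sum_i\gamma_i\delta_{e_i}$ with at most countably many points, because $\bar\nu$ has finite mass. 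The main care in the whole proof is this step, and it goes as follows. First, the inequality shows $\bar\nu\ll\bar\mu$ with $\bar\nu(A)^{2/p}\le\frac{4}{n(n-2)}\bar\mu(A)$ for every Borel set $A$. Second, at any point where $\bar\mu$ has no atom, the density $d\bar\nu/d\bar\mu$ vanishes, since $\bar\nu(B_r)\le C\bar\mu(B_r)^{p/2}=o(\bar\mu(B_r))$. Finally, taking $\phi$ concentrating at $e_i$ gives
\begin{equation*}
\gamma_i^{(n-2)/n}\le\frac{4}{n(n-2)}\,\bar\mu(\{e_i\}).
\end{equation*}

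\emph{Step 4: the energy bound.} Write $\beta=\alpha\cdot\frac{4}{n(n-2)}$. Then
\begin{align*}
\liminf_{j\to\infty}\big(\|u_j\|_2^2+\beta\|\nabla u_j\|_2^2\big)&=\|u\|_2^2+\beta\,\mu(\mathbb{S}^n)\\
&\ge\|u\|_2^2+\beta\|\nabla u\|_2^2+\beta\sum_i\bar\mu(\{e_i\})\\
&\ge\|u\|_2^2+\beta\|\nabla u\|_2^2+\alpha\sum_i\gamma_i^{(n-2)/n}.
\end{align*}
The first equality holds along the chosen subsequence if we first pass to one realizing the liminf. The first inequality uses $\mu\ge|\nabla u|^2d\sigma+\sum_i\bar\mu(\{e_i\})\delta_{e_i}$, which holds because the atoms are disjoint from the absolutely continuous part. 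The last inequality uses the atom bound from Step 3.

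Finally, $u_j^pd\sigma\rightharpoonup u^pd\sigma+\sum_i\gamma_i\delta_{e_i}$ follows from Step 2 together with the atomic form of $\bar\nu$. Discarding the atoms with $\gamma_i=0$ completes the proof.
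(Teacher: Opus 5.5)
Your proof is correct and follows the same route as the paper. Both arguments take defect measures for $u_j-u$, apply the Sobolev inequality to $\phi(u_j-u)$, derive the Borel-set bound $\bar\nu(A)^{2/p}\le\frac{4}{n(n-2)}\bar\mu(A)$, and use Brezis--Lieb. The one difference is how you show $\bar\nu$ is purely atomic. You use Lions' density (differentiation-of-measures) argument. The paper instead splits the non-atomic part of $\bar\nu$ into $N$ pieces of equal mass and sums the Borel-set bound, obtaining $\bar\mu(\mathbb{S}^n)\ge c\,N^{2/n}$ for a constant $c>0$, which avoids any differentiation theorem.
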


\begin{proof}
After passing to a subsequence, $u_j\rightharpoonup u$ in $H^1$, $u_j\to u$ in $L^2$ and almost everywhere. Put $v_j=u_j-u$. Then
\begin{equation*}
\|u_j\|_2^2+\beta\|\nabla u_j\|_2^2
=\|u\|_2^2+\beta\|\nabla u\|_2^2+\|v_j\|_2^2+\beta\|\nabla v_j\|_2^2+o(1).
\end{equation*}
We may also assume
\begin{equation*}
|v_j|^p\,d\sigma\rightharpoonup\nu,\qquad
\beta|\nabla v_j|^2\,d\sigma\rightharpoonup\mu.
\end{equation*}
Applying \eqref{eq:sobolev} to $\varphi v_j$ and passing to the limit gives
\begin{equation*}
\alpha\left(\int|\varphi|^p\,d\nu\right)^{2/p}\le\int\varphi^2\,d\mu.
\end{equation*}

By approximation, we have
\begin{equation*}
\alpha\nu(A)^{2/p}\le\mu(A)    
\end{equation*}
for every Borel set $A$. Let $A$ be the complement of the at most countably many point atoms of $\nu$. If $\nu(A)>0$, then $\nu|_A$ has no atoms. For every positive integer $N$, it can therefore be partitioned into disjoint Borel sets $A_1,\ldots,A_N$ such that
\begin{equation*}
A=\bigcup_{j=1}^N A_j,\qquad \nu(A_j)=\frac{\nu(A)}{N}.
\end{equation*}
The preceding inequality gives
\begin{equation*}
\begin{aligned}
\mu(\mathbb S^n)&\ge\sum_{j=1}^N\mu(A_j)
\ge\alpha\sum_{j=1}^N\nu(A_j)^{2/p}=\alpha\nu(A)^{2/p}N^{1-2/p}
=\alpha\nu(A)^{2/p}N^{2/n}.
\end{aligned}
\end{equation*}
This contradicts $\mu(\mathbb S^n)<\infty$ as $N\to\infty$. Thus $\nu(A)=0$, and
\begin{equation*}
\nu=\sum_i\gamma_i\delta_{e_i},\qquad
\mu(\{e_i\})\ge\alpha\gamma_i^{2/p}.
\end{equation*}
The Br\'ezis--Lieb lemma \cite{BrezisLieb} gives $u_j^p\,d\sigma\rightharpoonup u^p\,d\sigma+\nu$.

The energy inequality follows from the preceding decomposition.
\end{proof}

This motivates a relaxed variational problem in which the critical mass is allowed to have atoms at arbitrary points of the sphere. More precisely, for $u\in H^1(\mathbb{S}^n)$ and at most countably many pairs $(r_i,e_i)\in(0,\infty)\times\mathbb{S}^n$ with distinct points $e_i$, we define the relaxed energy by
\begin{equation*}
\Erel\big(u,\{(r_i,e_i)\}\big):=\|u\|_2^2+\beta\|\nabla u\|_2^2+\alpha\sum_ir_i^{\frac{n-2}{n}},
\end{equation*}
and impose the constraint
\begin{equation*}
% \label{eq:relaxed-constraints}
\Lrel:=\Big\{\big(u,\{(r_i,e_i)\}\big):\ \int|u|^p+\sum_ir_i=1,\ \int\xi|u|^p+\sum_ir_ie_i=0\Big\}.
\end{equation*}

We first prove attainment and derive the equations satisfied by a relaxed minimizer.

\begin{proposition}\label{prop:attain}
There exists a minimizer $(u,\{(r_i,e_i)\})\in\Lrel$ of $\Erel$ with $u\ge0$, and the minimum value is at most one. Furthermore, every minimizer with $u\ge0$ satisfies one of the following:
\begin{enumerate}[label=\textup{(\roman*)},leftmargin=2em]
\item either $u\equiv0$, and the minimizer consists of two atoms of mass $\frac{1}{2}$ at antipodal points, with $\Erel=1$.
\item or $u$ is smooth and positive, there are only finitely many atoms, and $u$ satisfies
\begin{equation}\label{eq:EL-mixed}
u-\beta\Delta u=(a+b\cdot\xi)u^{p-1}
\end{equation}
with $a=\Erel(u,\{(r_i,e_i)\})\in(0,1]$ and for some $b\in\R^{n+1}$ with
\begin{equation}\label{eq:EL-atoms}
a+b\cdot e_i=(2r_i)^{-\frac{2}{n}}\text{ if }r_i>0,\qquad b=(b\cdot e_i)e_i\qquad\text{for every }i .
\end{equation}
\end{enumerate}
\end{proposition}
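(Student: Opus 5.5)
The plan has two parts: existence by the direct method in the space of measures, and then the Euler--Lagrange analysis of a minimizer, split according to whether $u\equiv0$.

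\emph{Existence and the upper bound.} The pair $(1,\emptyset)$ lies in $\Lrel$ and has $\Erel=1$, so the infimum is at most one. Replacing $u$ by $|u|$ changes neither the constraints nor the energy, so we may take a minimizing sequence $(u_j,\{(r_i^j,e_i^j)\})$ with $u_j\ge0$. The energies are bounded, so $\{u_j\}$ is bounded in $H^1$. We apply Lemma~\ref{lem:concentration} to $\{u_j\}$ and pass to a further subsequence along which the atomic measures $\sum_i r_i^j\delta_{e_i^j}$ converge weakly to some measure $\nu'$.

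Set $s=\frac{n-2}{n}<1$. The key semicontinuity claim is that $\nu'$ is purely atomic, $\nu'=\sum_k\rho_k\delta_{q_k}$, with
\begin{equation*}
\sum_k\rho_k^s\le\liminf_j\sum_i(r_i^j)^s .
\end{equation*}
I would prove this exactly as in Lemma~\ref{lem:concentration}. The set function $A\mapsto\sum_{e_i^j\in A}(r_i^j)^s$ dominates $(\sum_{e_i^j\in A}r_i^j)^s$, by subadditivity of $t\mapsto t^s$. Passing to the limit on small closed neighbourhoods gives a finite measure $\mu'$ with $\mu'(A)\ge\nu'(A)^s$ for Borel sets $A$. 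The partition argument from Lemma~\ref{lem:concentration} then excludes a diffuse part of $\nu'$, and evaluating on shrinking balls around each atom gives the displayed bound.

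Combining this with the energy inequality of Lemma~\ref{lem:concentration}, the limit data $u$, $\{(\gamma_i,e_i)\}$, $\{(\rho_k,q_k)\}$ have energy at most the infimum. Coinciding points are merged, and subadditivity of $t^s$ only lowers the energy. The two constraints pass to the limit because $1$ and $\xi$ are continuous on the compact sphere and the total mass is conserved. I expect this compactness step, namely controlling atoms that merge or spread out, to be the main technical point.

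\emph{Case $u\equiv0$.} Now $\sum r_i=1$ and $\sum r_ie_i=0$. The barycenter condition gives
\begin{equation*}
r_1=\Big|\sum_{i\ge2}r_ie_i\Big|\le1-r_1,
\end{equation*}
and likewise for every index, so $r_i\le\frac12$ for all $i$. Hence $r_i^s\ge r_i(\tfrac12)^{s-1}$, and summing,
\begin{equation*}
\alpha\sum_ir_i^s\ge\alpha\,2^{1-s}=2^{-2/n}2^{2/n}=1 .
\end{equation*}
Equality forces every $r_i\in\{0,\frac12\}$. So there are exactly two atoms of mass $\frac12$, and the barycenter condition makes them antipodal. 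Since the minimum is at most one, this gives case (i).

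\emph{Case $u\not\equiv0$.} I would first check that the constraint map $H^1\times\prod_i\big((0,\infty)\times\mathbb S^n\big)\to\R^{n+2}$ is $C^1$ and submersive at the minimizer. It suffices to vary $u$ alone: the $u$-derivatives are $pu^{p-1}$ and $p\xi u^{p-1}$, and these are linearly independent. Indeed, if $(c+d\cdot\xi)u^{p-1}=0$, then $c+d\cdot\xi$ vanishes on the set $\{u>0\}$, which has positive measure and so is not contained in a hyperplane section of the sphere; hence $c=0$ and $d=0$.

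The Lagrange multiplier rule, with multipliers normalized by the factor $p/2$, then gives the following.
\begin{enumerate}[label=(\alph*),leftmargin=2em]
\item Variation in $u$ yields \eqref{eq:EL-mixed} weakly.
\item Variation in $r_i$ yields $\alpha\frac{n-2}{n}r_i^{-2/n}=\frac2p(a+b\cdot e_i)$, that is, $a+b\cdot e_i=\alpha r_i^{-2/n}=(2r_i)^{-2/n}$.
\item Tangential variation of $e_i$ on the sphere yields that the tangential part of $b$ at $e_i$ vanishes, i.e. $b=(b\cdot e_i)e_i$.
\end{enumerate}
This is \eqref{eq:EL-atoms}.

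To identify $a$, test \eqref{eq:EL-mixed} against $u$, multiply the atomic relation in (b) by $r_i$, and add. The barycenter constraint kills the $b$ terms, and since $r_i(2r_i)^{-2/n}=\alpha r_i^s$, the sum is $\Erel=a\big(\int u^p+\sum r_i\big)=a$. Thus $a\in(0,1]$, positivity coming from $u\not\equiv0$.

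Regularity follows from the Brezis--Kato/Trudinger bootstrap for critical equations with bounded coefficient $a+b\cdot\xi$, giving $u$ smooth. The strong maximum principle applied to $u-\beta\Delta u+(a+b\cdot\xi)^-u^{p-1}\ge0$ gives $u>0$. Finally, $(2r_i)^{-2/n}=a+b\cdot e_i\le a+|b|$ bounds every $r_i$ from below, and $\sum r_i\le1$ then forces finitely many atoms. This gives case (ii).
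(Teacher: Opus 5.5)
Your proposal is correct. From the case split onward it matches the paper. The shared steps are: the bound $r_i\le\frac12$ and the count $\alpha\sum_i r_i^{(n-2)/n}\ge\sum_i r_i=1$ when $u\equiv0$; linear independence of the $u$-derivatives of the constraints; the multiplier relations; the identification of $a$ by testing with $u$ and summing the atom relations; the lower bound $r_i\ge\frac12(a+|b|)^{-n/2}$; and a Brezis--Kato bootstrap, which the paper writes out as a Moser iteration.

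The genuine difference is how you get compactness of the atomic parts. The paper orders the masses decreasingly and deduces $r_{j,i}\le (C/i)^{n/(n-2)}$ from the energy bound. The atomic tails are therefore uniformly small, because $n/(n-2)>1$. It then extracts a diagonal subsequence converging in each mass and position, and concludes by Fatou and merging. You instead take weak limits $\nu'$ and $\mu'$ of $\sum_i r_i^j\delta_{e_i^j}$ and $\sum_i (r_i^j)^s\delta_{e_i^j}$, with $s=\frac{n-2}{n}$ as in your notation, and rerun the argument of Lemma~\ref{lem:concentration}. The inequality $\mu'(A)\ge\nu'(A)^s$ together with the partition trick excludes a diffuse part, and evaluating $\mu'$ at the atoms gives lower semicontinuity.

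Your route handles regular and atomic defects by one mechanism, and deals with merging and spreading atoms at once. The price is a little measure theory: passing from closed neighbourhoods to Borel sets uses regularity of the limit measures. The paper's route is more elementary and yields convergence of the constraints directly from the uniform tails.

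One small point of formulation. With countably many atoms, $\prod_i\big((0,\infty)\times\mathbb S^n\big)$ is not a convenient setting for the multiplier rule. Apply it, as the paper does, to $u$ together with finitely many atom variables at a time. Since the $u$-derivatives alone already span $\R^{n+2}$, the multipliers $a,b$ are fixed by the $u$-variation and do not depend on the finite collection chosen.
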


\begin{proof}
Choose a minimizing sequence $(u_j,\{(r_{j,i},e_{j,i})\})$ with $u_j\ge0$, and order its atomic masses decreasingly. The energy bound gives
\begin{equation*}
\sum_i r_{j,i}^{(n-2)/n}\le C,\qquad
r_{j,i}\le(C/i)^{n/(n-2)}.
\end{equation*}
Hence the atomic tails tend to zero uniformly in $j$. A diagonal subsequence converges in each atomic mass and position. Lemma~\ref{lem:concentration} describes the defect of the regular parts.

Combining these limits and merging coincident atoms preserves the constraints. Since
\begin{equation*}
(r+s)^{(n-2)/n}\le r^{(n-2)/n}+s^{(n-2)/n},
\end{equation*}
the energy is lower semicontinuous. Thus the relaxed infimum is attained and is at most $\Erel(1,\emptyset)=1$.

Suppose first that $u\equiv0$. The constraints imply $r_i\le1/2$, and consequently
\begin{equation*}
1\ge\alpha\sum_i r_i^{(n-2)/n}\ge\sum_i r_i=1.
\end{equation*}
Equality forces every nonzero mass to equal $1/2$. There are therefore two atoms, and they are antipodal.

Now suppose $u\not\equiv0$. The derivatives of the constraints with respect to $u$ are linearly independent. Indeed, a linear relation between them with coefficients $c\in\R$ and $d\in\R^{n+1}$ would imply
\begin{equation*}
\int(c+d\cdot\xi)^2u^p=0,
\end{equation*}
and hence $c=d=0$. There are Lagrange multipliers $a,b$ such that
\begin{equation*}
\begin{aligned}
L(v,\{(s_i,\epsilon_i)\})={}&\Erel(v,\{(s_i,\epsilon_i)\})
-\frac{2a}{p}\left(\int|v|^p+\sum_i s_i-1\right)-\frac{2}{p} b\cdot\left(\int\xi|v|^p+\sum_i s_i\epsilon_i\right)
\end{aligned}
\end{equation*}
is stationary with respect to $v$ and every finite collection of atom variables. Variations in $v$, $s_i$ and $\epsilon_i$ give \eqref{eq:EL-mixed} and \eqref{eq:EL-atoms}.

In particular,
\begin{equation*}
r_i\ge\frac{1}{2}(a+|b|)^{-n/2},
\end{equation*}
so there are only finitely many atoms. The equation and the constraints also give
\begin{equation*}
a=\Erel(u,\{(r_i,e_i)\})\in(0,1].
\end{equation*}

It remains to prove regularity. Write the weak equation as
\begin{equation*}
-\Delta u=Vu,\qquad V=\frac{1}{\beta}\big[(a+b\cdot\xi)u^{p-2}-1\big]\in L^{n/2}.
\end{equation*}
For $s\ge1$, testing with $u\min(u,L)^{2s-2}$ gives
\begin{equation*}
\int|\nabla(u\min(u,L)^{s-1})|^2
\le s^2\int|V|u^2\min(u,L)^{2s-2}.
\end{equation*}
For $R>0$, split the integral into $\{|V|\le R\}$ and $\{|V|>R\}$. H\"older's inequality gives
\begin{equation*}
\begin{aligned}
\int|V|u^2\min(u,L)^{2s-2}
&\le R\|u\min(u,L)^{s-1}\|_2^2+\|V\mathbf1_{\{|V|>R\}}\|_{n/2}
\|u\min(u,L)^{s-1}\|_p^2.
\end{aligned}
\end{equation*}
Since $V\in L^{n/2}$, we can choose $R=R_s$ such that
\begin{equation*}
\frac{4s^2}{n(n-2)}\|V\mathbf1_{\{|V|>R_s\}}\|_{n/2}\le\frac{1}{2}.
\end{equation*}
The Sobolev inequality then yields
\begin{equation*}
\begin{aligned}
\|u\min(u,L)^{s-1}\|_p^2
&\le2\left(1+\frac{4s^2R_s}{n(n-2)}\right)
\|u\min(u,L)^{s-1}\|_2^2\le C_s\int u^{2s},
\end{aligned}
\end{equation*}
with $C_s$ independent of $L$. Starting with $2s=p$ and letting $L\to\infty$, we obtain $u\in L^{sp}$. Repeating the argument multiplies the available exponent by $p/2>1$, so $u\in L^q$ for every finite $q$.

Elliptic estimates then give $u\in C^{2,\theta}$ for some $\theta\in(0,1)$. The strong maximum principle gives $u>0$, and elliptic regularity yields $u\in C^\infty$.
\end{proof}

We now complete the compactness reduction by excluding mixtures of regular and singular mass.

\begin{proposition}\label{prop:no-mixed}
If $(u,\{(r_i,e_i)\})\in\Lrel$ is a minimizer of $\Erel$ with $u\ge0$ and $u\not\equiv0$, then there are no atoms.
\end{proposition}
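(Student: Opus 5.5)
The plan is to argue by contradiction, combining the Euler--Lagrange structure of Proposition~\ref{prop:attain}(ii) with concavity of the energy in the masses. Suppose $u\not\equiv0$ and there is at least one atom. By \eqref{eq:EL-atoms} we have $b=(b\cdot e_i)e_i$ for every $i$. So either $b=0$, or every atom lies on the axis $\R b$, which allows at most two atoms, at $\pm e$ with $e=b/|b|$. I then treat the masses as variables: $t_0$ is the critical mass of the regular part (replacing $u$ by $\lambda u$ with $\lambda^p=t_0/\!\int u^p$), and the $r_i$ are the atomic masses, with the atom positions and the profile of $u$ kept fixed. Write $B=\int\xi u^p/\int u^p$. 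In these variables the energy reads
\begin{equation*}
F(t_0,r)=\Bigl(\|u\|_2^2+\beta\|\nabla u\|_2^2\Bigr)\Bigl(\frac{t_0}{\int u^p}\Bigr)^{2/p}+\alpha\sum_i r_i^{\frac{n-2}{n}},
\end{equation*}
which is strictly concave in $(t_0,r)$ on the positive orthant because $2/p<1$ and $(n-2)/n<1$. The constraints $t_0+\sum r_i=1$ and $t_0B+\sum r_ie_i=0$ are linear. Hence a minimizer cannot lie in the relative interior of a nondegenerate segment of the feasible polytope, since strict concavity would make one endpoint strictly better.

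In the case $b\neq0$ with two atoms at $\pm e$, the barycenter constraint forces $B\parallel e$. Here $|B\cdot e|<1$ because $u>0$ spreads mass off the poles. The feasible set in $(t_0,r_+,r_-)$ is then three unknowns with two independent linear constraints, i.e.\ a segment, and our minimizer has all coordinates positive, so it lies in its interior. Strict concavity then gives a contradiction. The case $b=0$ with $k\ge1$ atoms goes the same way whenever the number of variables exceeds the rank of the constraints. When it does not, I enlarge the family by also rotating the regular part, $u\mapsto u\circ R$, which changes $B$ without changing the energy of $u$. This produces a segment through the minimizer as long as the vectors $B,e_1,\dots,e_k$ do not already rigidly determine the configuration.

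The main obstacle is the residual ``vertex'' configuration. The prototype is a single atom of mass $r$ at $e$ with $\int\xi u^p=-re$, where the mass-exchange family is zero-dimensional. For this case I would try a genuine gluing variation: remove the atom and add to $u$ a concentrating conformal factor $cU_\varepsilon$ centered at $e$ with $c^p\|U_\varepsilon\|_p^p\approx r$, and then restore the constraints using the Lagrange multipliers. Because $U_\varepsilon$ is an exact Sobolev extremal, its energy exceeds the atomic energy $\alpha r^{(n-2)/n}$ only by $(1-\alpha)c^2\|U_\varepsilon\|_2^2$. This excess is of order $\varepsilon^2$ for $n\ge5$ and $\varepsilon^2|\log\varepsilon|$ for $n=4$. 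On the other side, the $L^p$ expansion gains the interaction term $p\int u\,(cU_\varepsilon)^{p-1}\sim u(e)\,\varepsilon^{(n-2)/2}$. The first-order terms cancel by \eqref{eq:EL-mixed}--\eqref{eq:EL-atoms}.

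This settles the matter for $n\le5$, but not for large $n$, where $\varepsilon^{(n-2)/2}\ll\varepsilon^2$. To handle large $n$ I would instead exploit that $u(e)>0$ and that $u$ solves \eqref{eq:EL-mixed}. The idea is to replace $U_\varepsilon$ by $U_\varepsilon$ plus a correction built from $u$ near $e$, or to combine the bubble with a conformal dilation of $u$ itself toward $e$, so that the $L^2$ excess is absorbed. Failing that, I would use a second-order mass-exchange argument: perturb the atom mass and the regular part along a conformal direction $Z_e$ corrected as in \eqref{eq:Phi-data}. The first variation vanishes, and the concavity of $r\mapsto\alpha r^{(n-2)/n}$ supplies a strictly negative second-order term.
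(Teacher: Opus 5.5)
Your proposal has a genuine gap. It lies in the configuration you call the residual vertex case, and that is in fact the only case that can occur.

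You missed an elementary consequence of the constraints. Adding the mass constraint to the $e_i$-component of the barycenter constraint gives $2r_i=1-\int(1+e_i\cdot\xi)u^p-\sum_{l\ne i}r_l(1+e_i\cdot e_l)<1$, because $u>0$. Since $a\le1$, \eqref{eq:EL-atoms} then gives $a+b\cdot e_i=(2r_i)^{-2/n}>1\ge a$, so $b\cdot e_i>0$. Together with $b=(b\cdot e_i)e_i$, this forces $e_i=b/|b|$ for every atom. Hence $b\ne0$, and there is exactly one atom, located at $e=b/|b|$, with $\int\xi u^p=-re$.

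So your $b=0$ branch and your two-atom branch are vacuous. This is fortunate for the former, because the rotation device does not work. With the masses fixed, $u\circ R$ is admissible only if $RB=B$, which changes nothing. If the masses vary together with $R$, the admissible set is curved, so concavity gives no endpoint comparison. Your mass-exchange concavity argument therefore never actually applies, and everything rests on the single-atom case.

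For that case you do not have a proof. The bubble-gluing comparison is only sketched, and by your own estimate it covers at most $n\le5$. For $n\ge6$ the interaction gain $\varepsilon^{(n-2)/2}$ is no longer of lower order than the $O(\varepsilon^2)$ losses.

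The fallback second-order argument points in the right direction but omits the decisive step. Concavity of $r\mapsto\alpha r^{(n-2)/n}$ supplies only one negative term, $-\frac{(p-2)(a+|b|)}{r}(B_w^\parallel)^2$ in the notation of \eqref{eq:Hw}. Meanwhile $Q(w,w)$ has no sign, and the other contributions $(p-2)a\Lambda_w^2$ and $\frac{p|b|}{r}|B_w^\perp|^2$ are nonnegative.

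Moreover, if the atom is kept at $e$, the transverse barycenter constraint forces $\int(\xi-xe)u^{p-1}w=0$. For $w=Z_e+txu$ this quantity equals $(\frac1p+t)\int x(\xi-xe)u^p$, which need not vanish once an atom is present. So the direction from \eqref{eq:Phi-data} is not admissible.

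The paper handles this as follows.
\begin{enumerate}
\item It lets both the mass and the position of the atom absorb the constraints, which produces \eqref{eq:Hw}.
\item It chooses the specific direction $w=Z_e-\frac12xu$.
\item It computes $\Lambda_w$, $B_w^\parallel$, $B_w^\perp$ and $Q(w,w)$ exactly from \eqref{eq:ibp}, \eqref{eq:JZ}, \eqref{eq:product} and \eqref{eq:xiuZ}.
\item It uses the moment bounds \eqref{eq:mixed-moments} together with $(n-2+2m)^2>2nm(1-m^2)$ for $n\ge4$ to reduce to the worst case $r=\frac{m}{1+m}$, $m_3=m(1-m)$. There the bound equals $-\frac{(n-2)(1-m)^2}{n^2}\bigl(|b|+\frac{a+|b|}{m}\bigr)<0$.
\end{enumerate}
Without a computation of this kind, showing that the concavity term really dominates, your proposal does not rule out the single-atom minimizer.
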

\begin{proof}
By Proposition~\ref{prop:attain}, the regular part $u$ is smooth and positive, and there are finitely many atoms. Suppose there is at least one. The constraints give
\begin{equation*}
2r_i=1-\int(1+e_i\cdot\xi)u^p-\sum_{l\ne i}r_l(1+e_i\cdot e_l)<1.
\end{equation*}
Thus \eqref{eq:EL-atoms} implies $b\cdot e_i>0$. Its second identity then gives $e_i=b/|b|$ for every $i$. Since the atom positions are distinct, there is exactly one atom.

Write $r$ for its mass, $e=b/|b|$ and $x=e\cdot\xi$. Then
\begin{equation*}
\int u^p=1-r,\qquad \int\xi u^p=-re,\qquad a+|b|=\alpha r^{-2/n}.
\end{equation*}
Let
\begin{equation*}
m=\int x^2u^p+r,\qquad m_3=\int x^3u^p+r,\qquad
\xi_\perp=\int x(\xi-xe)u^p.
\end{equation*}
Since $\int xu^p=-r$, the Cauchy--Schwarz inequality gives
\begin{equation*}
r^2=\left(\int xu^p\right)^2
\le\int u^p\int x^2u^p=(1-r)(m-r).
\end{equation*}
Thus $m\ge r/(1-r)$. Also, $\int(\xi-xe)u^p=0$, and hence
\begin{equation*}
\begin{aligned}
|\xi_\perp|^2
&=\left|\int\left(x+\frac{r}{1-r}\right)(\xi-xe)u^p\right|^2\le\int\left(x+\frac{r}{1-r}\right)^2u^p
\int|\xi-xe|^2u^p\\
&=\left(m-\frac{r}{1-r}\right)(1-m).
\end{aligned}
\end{equation*}
Finally,
\begin{equation*}
0\le\int(1-x)(x+m)^2u^p=m(1-m)-m_3.
\end{equation*}
We have therefore proved
\begin{equation}\label{eq:mixed-moments}
\frac{r}{1-r}\le m<1,\qquad
|\xi_\perp|^2\le\left(m-\frac{r}{1-r}\right)(1-m),\qquad
m_3\le m(1-m).
\end{equation}

For a smooth function $w$, write
\begin{equation*}
\Lambda_w=\int(1-x)u^{p-1}w,\qquad
B_w^\parallel=\int xu^{p-1}w,\qquad
B_w^\perp=\int(\xi-xe)u^{p-1}w.
\end{equation*}
Let $V_\tau=\int\xi(u+\tau w)^p$, and consider the constrained curve
\begin{equation*}
u_\tau=\frac{u+\tau w}{\big(\int(u+\tau w)^p+|V_\tau|\big)^{1/p}},\quad
r_\tau=\frac{|V_\tau|}{\int(u+\tau w)^p+|V_\tau|},\quad
e_\tau=-\frac{V_\tau}{|V_\tau|}.
\end{equation*}
It is smooth near zero, since $u>0$ and $V_0=-re\ne0$. Differentiating the Lagrangian along this curve gives
\begin{equation}\label{eq:Hw}
\begin{aligned}
\Hc(w)&:=\frac{1}{2}\frac{d^2}{d\tau^2}\Big|_{\tau=0}
\Erel(u_\tau,\{(r_\tau,e_\tau)\})\\
&=Q(w,w)+(p-2)a\Lambda_w^2
-\frac{(p-2)(a+|b|)}{r}(B_w^\parallel)^2
+\frac{p|b|}{r}|B_w^\perp|^2\ge0.
\end{aligned}
\end{equation}
The last term comes from the motion of the atom.

Choose $w=Z_e-\frac{1}{2}xu$. By \eqref{eq:ibp},
\begin{equation*}
\Lambda_w=\frac{n-2+2m}{2n},\qquad
B_w^\parallel=\frac{r}{2}-\frac{n-2+2m}{2n},\qquad
B_w^\perp=-\frac{1}{n}\xi_\perp.
\end{equation*}
The identities \eqref{eq:JZ}, \eqref{eq:product} and \eqref{eq:xiuZ} give
\begin{equation*}
\begin{aligned}
Q(w,w)={}&-\left(1-\alpha-\frac{\beta}{4}\right)
\left(\|u\|_2^2-\int x^2u^2\right)+\frac{2|b|}{n}m_3-\frac{p-2}{4}\big[a(m-r)+|b|(m_3-r)\big].
\end{aligned}
\end{equation*}
Substituting these formulas into \eqref{eq:Hw}, we obtain
\begin{equation}\label{eq:H-final}
\Hc(w)=-\left(1-\alpha-\frac{\alpha}{n(n-2)}\right)
\left(\|u\|_2^2-\int x^2u^2\right)
+F(r,m_3)+\frac{2|b|}{n(n-2)r}|\xi_\perp|^2,
\end{equation}
where
\begin{equation*}
\begin{aligned}
F(r,m_3)={}&\frac{n-4}{n(n-2)}|b|m_3
+(p-2)a\left(\frac{(n-2+2m)^2}{4n^2}-\frac{m}{4}\right)\\
&-(p-2)(a+|b|)\left(\frac{(n-2+2m)^2}{4n^2r}-\frac{n-2+2m}{2n}\right).
\end{aligned}
\end{equation*}
Here the first term in \eqref{eq:H-final} is nonpositive.

For $n\ge4$, \eqref{eq:mixed-moments} and
\begin{equation*}
(n-2+2m)^2-2nm(1-m^2)
=4+4m^2+8m^3+(n-4)(n+2m+2m^3)>0
\end{equation*}
give
\begin{equation*}
\begin{aligned}
F(r,m_3)+\frac{2|b|}{n(n-2)r}|\xi_\perp|^2
&\le F\left(\frac{m}{1+m},m(1-m)\right)=-\frac{(n-2)(1-m)^2}{n^2}
\left(|b|+\frac{a+|b|}{m}\right)<0.
\end{aligned}
\end{equation*}
Thus $\Hc(w)<0$, contradicting \eqref{eq:Hw}.
\end{proof}

Combining Propositions~\ref{prop:attain} and \ref{prop:no-mixed}, we obtain the following reduction.

\begin{corollary}\label{cor:reduction}
Every relaxed minimizer $(u,\{(r_i,e_i)\})\in\Lrel$ with $u\ge0$ is either a pair of atoms of mass $\frac{1}{2}$ at antipodal points, with $\Erel=1$, or of the form $(u,\emptyset)$, where $u>0$ is smooth and satisfies \eqref{eq:EL} with $a=\|u\|_2^2+\beta\|\nabla u\|_2^2\le1$, and 
\begin{equation*}
Q(f,f)\ge0 \text{ for every } f\in\Tu.
\end{equation*}
\end{corollary}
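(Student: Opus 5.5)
The corollary follows by combining Proposition~\ref{prop:attain} with Proposition~\ref{prop:no-mixed}, and then using minimality to obtain the second-order condition.

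\emph{Case split.} Let $(u,\{(r_i,e_i)\})\in\Lrel$ be a minimizer with $u\ge0$.
\begin{enumerate}[label=\textup{(\arabic*)},leftmargin=2em]
\item If $u\equiv0$, Proposition~\ref{prop:attain}(i) gives exactly two antipodal atoms of mass $\frac12$ with $\Erel=1$.
\item Otherwise Proposition~\ref{prop:attain}(ii) applies, and Proposition~\ref{prop:no-mixed} removes all atoms. The constraints then reduce to $\int u^p=1$ and $\int\xi u^p=0$, and the equation \eqref{eq:EL-mixed} is exactly \eqref{eq:EL}. The remaining condition in \eqref{eq:EL-atoms} is vacuous.
\end{enumerate}
In case (2), testing the equation against $u$ gives $a=\|u\|_2^2+\beta\|\nabla u\|_2^2=\Erel(u,\emptyset)$. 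This is at most one because the minimum is at most $\Erel(1,\emptyset)=1$.

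\emph{Stability.} Since $u$ minimizes $\Erel$ among atom-free pairs in $\Lrel$, it minimizes $E(v)=\|v\|_2^2+\beta\|\nabla v\|_2^2$ on the manifold $\{\int|v|^p=1,\ \int\xi|v|^p=0\}$. Because $u>0$ is smooth, this set is a $C^2$ submanifold near $u$. The constraint differentials $pu^{p-1}$ and $p\xi u^{p-1}$ are linearly independent, by the argument in the proof of Proposition~\ref{prop:attain}. Its tangent space at $u$ is $\Tu$.

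Fix $f\in\Tu$, first assuming $f$ smooth. Build a constrained curve $v_\tau=u+\tau f+\sum_k c_k(\tau)\psi_k$, where $\psi_0,\dots,\psi_{n+1}$ are smooth functions chosen so that the $(n+2)\times(n+2)$ matrix of constraint differentials applied to the $\psi_k$ is invertible. For instance, take $\psi_0=u$ and $\psi_i=\xi_iu$. The implicit function theorem then gives $c_k(\tau)=O(\tau^2)$.

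The standard Lagrangian computation follows. With
\begin{equation*}
L(v)=E(v)-\tfrac{2a}{p}\Bigl(\int|v|^p-1\Bigr)-\tfrac{2}{p}\,b\cdot\int\xi|v|^p,
\end{equation*}
we have $E(v_\tau)=L(v_\tau)$, and $L'(u)=0$ by \eqref{eq:EL}. The $c_k''$ terms are therefore killed, and
\begin{equation*}
\tfrac12\tfrac{d^2}{d\tau^2}E(v_\tau)\big|_{\tau=0}=\tfrac12L''(u)[f,f]=Q(f,f).
\end{equation*}
Minimality forces $Q(f,f)\ge0$.

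The only mildly delicate point is passing from smooth $f$ to general $f\in\Tu\cap H^1$. This uses density of smooth functions in $\Tu$: approximate $f$ in $H^1$ and correct the finitely many linear constraints using the $\psi_k$. Continuity of $Q$ on $H^1$ then completes the argument, since $u^{p-2}$ is bounded.
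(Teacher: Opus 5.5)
Your proposal is correct and follows the paper's proof essentially step for step. You split cases via Propositions~\ref{prop:attain} and \ref{prop:no-mixed}, and use the same constraint-correcting curve $u+\tau f+(c(\tau)+d(\tau)\cdot\xi)u$, whose $(c,d)$-derivative is $p\,\mathrm{diag}(1,M)$. You then use Lagrangian stationarity to get second derivative $2Q(f,f)$, and extend to general $f\in\Tu$ by density plus the same projection onto the constraints. The only step you leave implicit is that $c_k'(0)=0$, which follows from differentiating the constraints and using $f\in\Tu$; this is exactly how the paper obtains $c,d=O(\tau^2)$.
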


\begin{proof}
It remains to prove stability in the second case. Let $f\in\Tu$ be smooth. For $(c,d)\in\R\times\R^{n+1}$, consider
\begin{equation*}
(c,d)\longmapsto
\left(\int[u+\tau f+(c+d\cdot\xi)u]^p-1,
\int\xi[u+\tau f+(c+d\cdot\xi)u]^p\right).
\end{equation*}
At $(\tau,c,d)=(0,0,0)$, its derivative with respect to $(c,d)$ is
\begin{equation*}
p\begin{pmatrix}1&0\\0&M\end{pmatrix}.
\end{equation*}
Indeed, the off-diagonal entries vanish because $\int\xi u^p=0$. This matrix is invertible since $M>0$. The implicit function theorem therefore gives smooth functions $c(\tau)$ and $d(\tau)$, vanishing at zero, such that
\begin{equation*}
u_\tau=u+\tau f+(c(\tau)+d(\tau)\cdot\xi)u
\end{equation*}
satisfies both constraints. Differentiating them at zero and using
\begin{equation*}
\int u^{p-1}f=0,\qquad \int\xi u^{p-1}f=0
\end{equation*}
gives $c'(0)=0$ and $Md'(0)=0$. Thus $c(\tau),d(\tau)=O(\tau^2)$ and $u'_0=f$. Since $u>0$, the curve remains positive for small $\tau$.

Along this curve,

\begin{equation*}
\|u_\tau\|_2^2+\beta\|\nabla u_\tau\|_2^2\ge\|u\|_2^2+\beta\|\nabla u\|_2^2
\end{equation*}
and the stationarity of the Lagrangian shows that
\begin{equation*}
\frac{1}{2}\frac{d^2}{d\tau^2}\Big|_{\tau=0}\bigl(\|u_\tau\|_2^2+\beta\|\nabla u_\tau\|_2^2\bigr)=Q(f,f).
\end{equation*}
Hence $Q(f,f)\ge0$. For a general $f\in\Tu$, take smooth $f_j\to f$ in $H^1$ and replace $f_j$ by
\begin{equation*}
f_j-\left(\int u^{p-1}f_j\right)u
-\left(M^{-1}\int\xi u^{p-1}f_j\right)\cdot\xi\,u.
\end{equation*}
These functions belong to $\Tu$ and still converge to $f$ in $H^1$. The conclusion follows by continuity of $Q$.
\end{proof}

\section{Centered moments and nodal estimates}\label{sec:moments}

Let $u$ be as in Theorem~\ref{thm:rigidity}. We first estimate the third moments of $u^p$, and then apply the Sobolev inequality separately on the positive and negative sets of $(\tau+v\cdot\xi)u$.

The resulting estimates relate $b$ to $\Nn_e$. They also allow us to replace the mixed third moment in \eqref{eq:QPhi} by a term with a definite sign.

\subsection{Third moments and the Sobolev inequality}

\begin{lemma}\label{lem:third}
For every unit vector $v\in\R^{n+1}$, we have
\begin{equation*}
\pm\int(v\cdot\xi)\xi\xi^T u^p\leq M-M^2.
\end{equation*}
Moreover, if $e$ is a unit eigenvector of $M$ with eigenvalue $m$ and $x=e\cdot\xi$, then
\begin{equation}\label{eq:third-scalar}
\Big|\int\xi x^2 u^p\Big|\le m(1-m),\qquad \Big(\int\xi x^2 u^p\Big)^TM^{-1}\int\xi x^2 u^p\le\int(x^2-m)^2u^p\le m(1-m).
\end{equation}
In particular,
\begin{equation*}
\pm\int(b\cdot\xi)\xi\xi^T u^p\leq|b|\big(M-M^2\big),\qquad \Bigl|\int(b\cdot\xi)x^2 u^p\Bigr|\le|b|m(1-m).
\end{equation*}
\end{lemma}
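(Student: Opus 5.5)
The plan is to view $d\mu=u^p\,d\sigma$ as a probability measure on $\mathbb{S}^n$ with $\int\xi\,d\mu=0$ and $\int\xi\xi^T d\mu=M$, and to derive all bounds from nonnegativity of weights such as $1\mp v\cdot\xi\ge0$ and $1-x^2\ge0$ combined with Cauchy--Schwarz. Put $s=v\cdot\xi$ and $f=w\cdot\xi$ for $w\in\R^{n+1}$. The matrix inequality is then equivalent to
\begin{equation*}
\int(1\mp s)f^2\,d\mu\;\ge\;|Mw|^2=\Big|\int\xi f\,d\mu\Big|^2 .
\end{equation*}
Note that $\int f\,d\mu=0$, and that $\int(1\mp s)\,d\mu=1$, so $(1\mp s)\,d\mu$ is again a probability measure.

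\textbf{Matrix inequality.} My first attempt is the one already used in the proof of Proposition~\ref{prop:no-mixed}, the inequality $\int(1-x)(x+m)^2u^p\ge0$, applied to $\int(1\mp s)(f+c)^2\,d\mu\ge0$ and then optimized over the constant $c$. This only controls $(v\cdot Mw)^2$, not the full $|Mw|^2$. I would therefore test against the vector-valued function $\xi f-c$ in the weighted measure, using $|\xi|=1$:
\begin{equation*}
0\le\int(1\mp s)|\xi f-c|^2\,d\mu .
\end{equation*}
The aim is to choose $c$ (or a combination of such tests, for instance with $c$ depending linearly on $\xi$) so that the cross term produces exactly $|Mw|^2$ and the cubic terms $\int s(\xi\cdot y)f\,d\mu$ cancel. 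A naive bound through $|\xi\mp v|^2=2(1\mp s)$ gives only $\tfrac12|Mw|^2$.

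\textbf{Main obstacle.} The step I expect to be hardest is the loss of the factor $\tfrac12$ and the cancellation of the cubic terms. If no single test works, I would fall back on an eigenbasis of $M$ for $w$ and handle the off-diagonal cubic terms separately.

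\textbf{Scalar consequences \eqref{eq:third-scalar}.} These follow more directly. Let $e$ be an eigenvector of $M$ with eigenvalue $m$, and $x=e\cdot\xi$. Since $\int(x^2-m)\,d\mu=0$ and $\int\xi\,d\mu=0$, for every $y$
\begin{equation*}
y\cdot\int\xi x^2\,d\mu=\int(y\cdot\xi)(x^2-m)\,d\mu .
\end{equation*}
Cauchy--Schwarz then gives $(y\cdot\int\xi x^2d\mu)^2\le y^TMy\int(x^2-m)^2d\mu$, and taking $y=M^{-1}\int\xi x^2d\mu$ yields the middle inequality. The right-hand bound is the computation
\begin{equation*}
\int(x^2-m)^2\,d\mu=\int x^4\,d\mu-m^2\le\int x^2\,d\mu-m^2=m(1-m),
\end{equation*}
which uses $x^4\le x^2$. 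The bound $|\int\xi x^2d\mu|\le m(1-m)$ comes from the matrix inequality with $v$ running over unit vectors and $w=e$. Alternatively, it follows from the previous display together with $M\le I$ when the vector is orthogonal to $e$, and from $\int(1\mp x)(x\pm m)^2d\mu\ge0$ for its $e$-component.

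\textbf{The ``in particular'' statement.} This is obtained by applying the matrix inequality with $v=b/|b|$ and multiplying by $|b|$. Taking the $ee^T$ entry then gives $|\int(b\cdot\xi)x^2u^p|\le|b|m(1-m)$, using $e^T(M-M^2)e=m-m^2$.
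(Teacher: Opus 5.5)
There is a genuine gap. The matrix inequality $\int(1-s)f^2\,d\mu\ge|Mw|^2$ is the heart of the lemma, and your proposal leaves it unproved. The constant test $\int(1-s)(f+c)^2\,d\mu\ge0$ only captures the component $v\cdot Mw$. Any test involving $\xi$ polynomially (the vector $\xi f-c$, or $c$ linear in $\xi$) reintroduces cubic moments $\int s\,\xi f\,d\mu$, and passing to an eigenbasis of $M$ does not remove them.

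The missing idea, which is how the paper argues, is a Cauchy--Schwarz with a rational factor. Put $z=Mw$. Since $\int\xi\,d\mu=0$, also $\int f\,d\mu=0$, and hence $|z|^2=\int f\,(\xi-v)\cdot z\,d\mu$. Cauchy--Schwarz with factors $\sqrt{1-s}\,f$ and $(\xi-v)\cdot z/\sqrt{1-s}$ on $\{\xi\ne v\}$ gives
\begin{equation*}
|z|^4\le\int(1-s)f^2\,d\mu\int\frac{((\xi-v)\cdot z)^2}{1-s}\,d\mu .
\end{equation*}
Your crude bound $((\xi-v)\cdot z)^2\le2(1-s)|z|^2$ is exactly where the factor $2$ is lost. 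Instead, write $\xi=sv+\eta$ with $\eta\perp v$, $|\eta|^2=1-s^2$, and $z=(v\cdot z)v+z_\perp$, and apply Cauchy--Schwarz only in $v^\perp$:
\begin{equation*}
\frac{((\xi-v)\cdot z)^2}{1-s}=(1-s)(v\cdot z)^2-2(v\cdot z)\,\eta\cdot z_\perp+\frac{(\eta\cdot z_\perp)^2}{1-s}\le(1-s)(v\cdot z)^2-2(v\cdot z)\,\eta\cdot z_\perp+(1+s)|z_\perp|^2 .
\end{equation*}
The right-hand side is affine in $\xi$. Since $\int s\,d\mu=0$ and $\int\eta\,d\mu=0$, it integrates to $(v\cdot z)^2+|z_\perp|^2=|z|^2$. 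Hence $|Mw|^2\le\int(1-s)f^2\,d\mu$, and replacing $v$ by $-v$ gives the other sign. In your language, the right test function is the rational function $(\xi-v)\cdot Mw/(1-s)$, not a polynomial.

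Given the matrix inequality, the rest of your proposal is correct and coincides with the paper. The middle bound in \eqref{eq:third-scalar} is the paper's completing-the-square estimate with $h=x^2$. The bound $\int(x^2-m)^2\,d\mu\le m(1-m)$ uses $x^4\le x^2$. The $b$-statements follow by taking $v=b/|b|$.

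Your ``alternative'' derivation of $|\int\xi x^2\,d\mu|\le m(1-m)$ is not valid, however. The inequality $\int(1\mp x)(x\pm m)^2\,d\mu\ge0$ does give $|\int x^3\,d\mu|\le m(1-m)$. But your Cauchy--Schwarz display bounds the transverse part $\int x^2(\xi-xe)\,d\mu$ at best by $(1-m)\sqrt m$. Combining the two components then gives only $(1-m)\sqrt{m(1+m)}$, which exceeds $m(1-m)$. Even the scalar inequality $\int(1-v\cdot\xi)x^2\,d\mu\ge m^2$ needs the weighted argument above with $z=e$, so this part also depends on the missing step.
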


\begin{proof}
Fix $y\in\R^{n+1}$ and put $z=My$. Since $\int\xi u^p=0$,
\begin{equation*}
|z|^2=\int(y\cdot\xi)((\xi-v)\cdot z)u^p.
\end{equation*}
Apply the Cauchy--Schwarz inequality with factors
$\sqrt{1-v\cdot\xi}\,(y\cdot\xi)$ and
$((\xi-v)\cdot z)/\sqrt{1-v\cdot\xi}$. We obtain
\begin{equation*}
|z|^4\le\int(1-v\cdot\xi)(y\cdot\xi)^2u^p
\int_{\{\xi\ne v\}}\frac{((\xi-v)\cdot z)^2}{1-v\cdot\xi}u^p.
\end{equation*}
To estimate the second integral, use
\begin{equation*}
|\xi-(v\cdot\xi)v|^2=1-(v\cdot\xi)^2.
\end{equation*}
The Cauchy--Schwarz inequality on $v^\perp$ gives
\begin{equation*}
\begin{aligned}
\frac{((\xi-v)\cdot z)^2}{1-v\cdot\xi}
&\le(1-v\cdot\xi)(v\cdot z)^2
-2(v\cdot z)(\xi-(v\cdot\xi)v)\cdot z+(1+v\cdot\xi)\big[|z|^2-(v\cdot z)^2\big].
\end{aligned}
\end{equation*}
After integration, the right-hand side is $|z|^2$. If $z\ne0$, it follows that
\begin{equation*}
y^TM^2y\le\int(1-v\cdot\xi)(y\cdot\xi)^2u^p.
\end{equation*}
The same inequality is immediate when $z=0$. This proves the first assertion with the plus sign. Replacing $v$ by $-v$ gives the other sign.

Testing with $e$ gives
\begin{equation*}
\left|v\cdot\int\xi x^2u^p\right|\le m(1-m).
\end{equation*}
Taking the supremum over unit $v$ proves the first bound in \eqref{eq:third-scalar}.

We will also use the following consequence of completing the square. For every real $h\in L^2(u^p\,d\sigma)$,
\begin{equation}\label{eq:coordinate-cs}
\left(\int\xi h u^p\right)^TM^{-1}\int\xi h u^p
\le\int h^2u^p-\left(\int hu^p\right)^2.
\end{equation}
Indeed, $\int\xi u^p=0$ and $\int\xi\xi^Tu^p=M$ give
\begin{equation*}
\begin{aligned}
0&\le\int\left(h-\int hu^p-\xi^TM^{-1}\int\xi h u^p\right)^2u^p\\
&=\int h^2u^p-\left(\int hu^p\right)^2
-\left(\int\xi h u^p\right)^TM^{-1}\int\xi h u^p.
\end{aligned}
\end{equation*}
Taking $h=x^2$ proves the second bound in \eqref{eq:third-scalar}. Since $|x|\le1$, we also have $\int(x^2-m)^2u^p\le m(1-m)$. The remaining estimates follow by taking $v=b/|b|$ when $b\ne0$.
\end{proof}

We next estimate $b$. Define
\begin{equation}\label{eq:Ndef}
\Nn=(a-1)M+\int(b\cdot\xi)\xi\xi^T u^p+\beta\|u\|_2^2I-(1-\alpha+\beta)\int\xi\xi^Tu^2,
\end{equation}
and write $\Nn_e=e^T\Nn e$ for a unit vector $e$.

\begin{proposition}\label{prop:block}
We have
\begin{equation}\label{eq:block}
\begin{pmatrix}p^2(1-\alpha)\beta\|\nabla u\|_2^2&b^T(I+(p-1)M)\\ (I+(p-1)M)b&\Nn\end{pmatrix}\geq0 .
\end{equation}
Consequently,
\begin{equation}\label{eq:block-consequences}
\begin{gathered}
|b|\le p\sqrt{(1-\alpha)(n\beta-1+\alpha)\beta}\,\|u\|_2\|\nabla u\|_2,
\qquad \Nn_e\ge0,\\
\big(1+(p-1)m\big)^2(b\cdot e)^2
\le p^2(1-\alpha)\beta\|\nabla u\|_2^2\,\Nn_e
\end{gathered}
\end{equation}
for every unit eigenvector $e$ of $M$ with eigenvalue $m$.
\end{proposition}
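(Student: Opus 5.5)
The plan is to obtain \eqref{eq:block} as the Gram-type statement that a certain quadratic form in $(\tau,v)\in\R\times\R^{n+1}$ is nonnegative. That form comes from the nodal Sobolev inequality applied to the functions $w=(\tau+v\cdot\xi)u$. Write $F=\tau+v\cdot\xi$ and $w_\pm=F_\pm u$. Applying \eqref{eq:sobolev} separately to $w_+$ and $w_-$ and adding gives
\begin{equation*}
\|w_+\|_p^2+\|w_-\|_p^2\le\langle w,P_2w\rangle .
\end{equation*}
On the left I would use H\"older, $\int F_\pm^2u^p\le\|w_\pm\|_p^2\,\mu_\pm^{2/n}$ with $\mu_\pm=\int_{\{\pm F>0\}}u^p$. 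The factor $\alpha=2^{-2/n}$ should then absorb the weights $\mu_\pm^{2/n}$, using $\int u^p=1$ and $\int\xi u^p=0$. This is the same mechanism that makes $\Nn_e\ge0$ in the introduction. The target is an inequality of the form
\begin{equation*}
\alpha\langle Fu,P_2(Fu)\rangle-\int F^2u^p\ \ge\ -(\text{a correction controlled by }\tau^2),
\end{equation*}
where the correction is what eventually produces the top-left entry.

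Next I would expand the left side with \eqref{eq:product}, taking $H=F$. This gives
\begin{equation*}
\langle Fu,(I-\beta\Delta)(Fu)\rangle=\int(a+b\cdot\xi)F^2u^p+\beta\int u^2|\nabla F|^2 .
\end{equation*}
I would then collect terms by degree. The $v$--$v$ part is $\int(a+b\cdot\xi)(v\cdot\xi)^2u^p$ minus $\int(v\cdot\xi)^2u^p$. The gradient contribution is $\beta v^T(\|u\|_2^2I-\int\xi\xi^Tu^2)v$. The $L^2$ contributions are rescaled by $(1-\alpha)$ and $\beta$. Together these should give exactly $v^T\Nn v$ with $\Nn$ as in \eqref{eq:Ndef}. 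The cross terms in $\tau v$ are $\tau\,v\cdot Mb$ from $\int(b\cdot\xi)(v\cdot\xi)u^p$, and $(1-\alpha)\tau\,v\cdot\int\xi u^2$ from the $L^2$ part. By the Kazdan--Warner identity \eqref{eq:ward} these combine into a multiple of $\tau\,v\cdot(I+(p-1)M)b$. The $\tau^2$ coefficient is of order $a-\|u\|_2^2=\beta\|\nabla u\|_2^2$. After rescaling $\tau$ by a factor involving $p$ and $(1-\alpha)$, this gives the entry $p^2(1-\alpha)\beta\|\nabla u\|_2^2$. Nonnegativity for all $(\tau,v)$ is then precisely \eqref{eq:block}.

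The consequences follow from standard facts about positive semidefinite block matrices. The lower-right block of a PSD matrix is PSD, so $\Nn\ge0$ and in particular $\Nn_e\ge0$. For an eigenvector $e$ of $M$ with eigenvalue $m$, the $2\times2$ minor in the directions $(1,0)$ and $(0,e)$ gives $(1+(p-1)m)^2(b\cdot e)^2\le p^2(1-\alpha)\beta\|\nabla u\|_2^2\,\Nn_e$, since $e^T(I+(p-1)M)b=(1+(p-1)m)\,b\cdot e$. For the bound on $|b|$, the Schur complement gives
\begin{equation*}
(I+(p-1)M)b\,b^T(I+(p-1)M)\le p^2(1-\alpha)\beta\|\nabla u\|_2^2\,\Nn .
\end{equation*}
Taking traces, the left side is at least $|b|^2$. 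On the right, \eqref{eq:basic} gives
\begin{equation*}
\tr\Nn=a-1+(n+1)\beta\|u\|_2^2-(1-\alpha+\beta)\|u\|_2^2 .
\end{equation*}
Using $a\le1$, this is at most $(n\beta-1+\alpha)\|u\|_2^2$, which yields the stated bound.

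The main obstacle is the first step. There I must show that the weights $\mu_\pm^{2/n}$ from H\"older are compensated exactly by $\alpha$, with a remainder that is quadratic in $\tau$ and has the precise coefficient $p^2(1-\alpha)$. I would first try the case where $F$ changes sign, using the constraint $\int\xi u^p=0$ and the $\tau$-dependence of $\mu_\pm$. If an exact bound there is out of reach, I would fall back on using Cauchy--Schwarz (in the spirit of Lemma~\ref{lem:gram}) to convert a $\tau$-linear remainder into the $\beta\|\nabla u\|_2^2$ entry.
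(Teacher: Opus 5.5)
The algebraic part of your plan matches the paper's. Expanding $\alpha\|hu\|_2^2+\beta\|\nabla(hu)\|_2^2$ for $h$ affine in $\xi$ via \eqref{eq:product} and \eqref{eq:ward} gives the $v$--$v$ part $v^T(\Nn+M)v$ and a cross term proportional to $\tau\,v\cdot(I+(p-1)M)b$. Your derivation of \eqref{eq:block-consequences} from \eqref{eq:block} is also the paper's: lower-right block, the $2\times2$ minor along $(1,0),(0,e)$, and the trace with $I+(p-1)M\ge I$ and $\tr\Nn=(a-1)+(n\beta-1+\alpha)\|u\|_2^2$.

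The gap is the analytic input, which is the whole content of \eqref{eq:block}. You need, for every real $h$,
\[
\|h_+u\|_p^2+\|h_-u\|_p^2\ge 2^{2/n}\int h^2u^p-(2^{2/n}-1)\Big(\int hu^p\Big)^2 .
\]
Multiplied by $\alpha$, this says $\alpha\|hu\|_2^2+\beta\|\nabla(hu)\|_2^2\ge\int h^2u^p-(1-\alpha)\big(\int hu^p\big)^2$, so the correction is exactly $(1-\alpha)\tau^2$ in your normalization.

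Your H\"older step $\int F_\pm^2u^p\le\|F_\pm u\|_p^2\mu_\pm^{2/n}$ cannot deliver this. It keeps only the masses of the sign sets and discards the first moments $\int F_\pm u^p$, which is the only place the balanced condition can act. The bound already fails at $\tau=0$. Take $n=4$, a small $\delta>0$, and a smooth positive $u^p\,d\sigma$, even in the coordinates orthogonal to $v$, close to the following measure on latitude sets of $x=v\cdot\xi$:
\begin{itemize}
\item mass $0.899$ at $x=\delta$;
\item mass $0.001$ at $x=1$;
\item mass $0.1$ at $x=-(0.01+8.99\delta)$.
\end{itemize}
Then $\int\xi u^p=0$ can be kept exactly. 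However, $\mu_+^{-1/2}\int x_+^2u^p+\mu_-^{-1/2}\int x_-^2u^p\approx1.09\cdot10^{-3}$, while $2^{1/2}\int x^2u^p\approx1.43\cdot10^{-3}$. So no bookkeeping lets $\alpha$ absorb the weights $\mu_\pm^{2/n}$. Your fallback, a Cauchy--Schwarz on a $\tau$-linear remainder, does not address a failure that occurs at $\tau=0$.

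The paper instead uses the three-exponent interpolation $\|hu\|_p^2\ge(\int h^2u^p)^{1+2/n}(\int hu^p)^{-4/n}$ for $h\ge0$. This is where the $L^1$ information, and hence $\int\xi u^p=0$, enters, as in Lemma~\ref{lem:nodal-variance}. It linearizes this to $(1+\frac2n)\int h^2u^p-\frac2n(\int hu^p)^2$ and applies it on each sign set. The two pieces are then combined by Cauchy--Schwarz and the concavity of $s\mapsto(1-s)s^{2/n}/(1-s^{2/n})$: the sum of its values at $\rho$ and $1-\rho$ is at most $1/(2^{2/n}-1)$, attained at $\rho=\frac12$, which yields the constant $2^{2/n}$. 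The paper then takes $h=p\tau+v\cdot\xi$.

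Finally, the resulting $\tau^2$ coefficient is $p^2[a-(1-\alpha)\|u\|_2^2-\alpha]=p^2[(1-\alpha)\beta\|\nabla u\|_2^2-\alpha(1-a)]$, not just something of order $\beta\|\nabla u\|_2^2$. You need $a\le1$ to bound it by the entry in \eqref{eq:block}.
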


\begin{proof}
We first prove that every real $h\in L^p(u^p\,d\sigma)$ satisfies
\begin{equation}\label{eq:two-point}
\|h_+u\|_p^2+\|h_-u\|_p^2
\ge2^{2/n}\int h^2u^p-(2^{2/n}-1)\left(\int hu^p\right)^2.
\end{equation}
For $h\ge0$, H\"older's inequality gives
\begin{equation}\label{eq:Z-variance}
\begin{aligned}
\|hu\|_p^2
&\ge\left(\int h^2u^p\right)^{1+2/n}
\left(\int hu^p\right)^{-4/n}\ge\left(1+\frac{2}{n}\right)\int h^2u^p
-\frac{2}{n}\left(\int hu^p\right)^2.
\end{aligned}
\end{equation}
The zero function is understood separately.

Let $\rho=\int_{\{h>0\}}u^p$. If $\rho=0$ or $1$, \eqref{eq:two-point} follows from \eqref{eq:Z-variance}. Otherwise, put
\begin{equation*}
c_+=\frac{1}{\rho}\int h_+u^p,\qquad
c_-=\frac{1}{1-\rho}\int h_-u^p.
\end{equation*}
Apply \eqref{eq:Z-variance} on each sign set, with its mass normalized to one. Since $1+2/n\ge2^{2/n}$, it suffices to show
\begin{equation*}
\frac{1-\rho^{2/n}}{(1-\rho)\rho^{2/n}}c_+^2
+\frac{1-(1-\rho)^{2/n}}{\rho(1-\rho)^{2/n}}c_-^2
\ge(2^{2/n}-1)(c_++c_-)^2.
\end{equation*}
The Cauchy--Schwarz inequality for the two terms gives
\begin{equation*}
\begin{aligned}
(c_++c_-)^2
\le\left[\frac{1-\rho^{2/n}}{(1-\rho)\rho^{2/n}}c_+^2
+\frac{1-(1-\rho)^{2/n}}{\rho(1-\rho)^{2/n}}c_-^2\right]\left[\frac{(1-\rho)\rho^{2/n}}{1-\rho^{2/n}}
+\frac{\rho(1-\rho)^{2/n}}{1-(1-\rho)^{2/n}}\right].
\end{aligned}
\end{equation*}

For $0<s<1$, direct differentiation gives
\begin{equation*}
\begin{aligned}
\frac{d^2}{ds^2}\left(\frac{(1-s)s^{2/n}}{1-s^{2/n}}\right)
={}&\frac{2s^{2/n-2}}{n(s^{2/n}-1)^3}\left[(1+s)(1-s^{2/n})
-\frac{2}{n}(1-s)(1+s^{2/n})\right]\leq 0.
\end{aligned}
\end{equation*}

Applying this concavity with equal weights to $\rho$ and $1-\rho$ gives
\begin{equation*}
\begin{aligned}
\frac{(1-\rho)\rho^{2/n}}{1-\rho^{2/n}}
+\frac{\rho(1-\rho)^{2/n}}{1-(1-\rho)^{2/n}}\le2\frac{(1-\frac{1}{2})(\frac{1}{2})^{2/n}}{1-(\frac{1}{2})^{2/n}}=\frac{1}{2^{2/n}-1}.
\end{aligned}
\end{equation*}
This proves \eqref{eq:two-point}.

Apply \eqref{eq:sobolev} to $h_+u$ and $h_-u$ with $h=p\tau+v\cdot\xi$. By \eqref{eq:two-point},
\begin{equation*}
\alpha\|hu\|_2^2+\beta\|\nabla(hu)\|_2^2
\ge\alpha p^2\tau^2+v^TMv.
\end{equation*}
Using \eqref{eq:product} and \eqref{eq:ward}, we obtain
\begin{equation*}
p^2\big[a-(1-\alpha)\|u\|_2^2-\alpha\big]\tau^2
+2\tau v\cdot(I+(p-1)M)b+v^T\Nn v\ge0.
\end{equation*}
Since
\begin{equation*}
a-(1-\alpha)\|u\|_2^2-\alpha
=(1-\alpha)\beta\|\nabla u\|_2^2-\alpha(1-a)
\le(1-\alpha)\beta\|\nabla u\|_2^2,
\end{equation*}
this proves \eqref{eq:block}.

Its positivity gives $\Nn\ge0$. If $\nabla u=0$, it also gives $b=0$. Otherwise,
\begin{equation*}
(I+(p-1)M)bb^T(I+(p-1)M)
\le p^2(1-\alpha)\beta\|\nabla u\|_2^2\Nn.
\end{equation*}
Now
\begin{equation*}
\tr\Nn=(a-1)+(n\beta-1+\alpha)\|u\|_2^2
\le(n\beta-1+\alpha)\|u\|_2^2,
\end{equation*}
and $I+(p-1)M\ge I$. Taking the trace gives the bound for $|b|$, and testing with $e$ gives the last assertion of \eqref{eq:block-consequences}.
\end{proof}

We now eliminate the third-moment term from \eqref{eq:QPhi}.
\begin{lemma}\label{lem:retained}
For every unit eigenvector $e$ of $M$ with eigenvalue $m$, we have
\begin{equation}\label{eq:retained}
\begin{aligned}
nmQ(\Phi_e,\Phi_e)
&=C_m\int(e\cdot\xi)^2u^2+\alpha\Big[\frac{(n+2)(1-m)^2}{n^2m}+\frac{4}{n}-km\Big]\|u\|_2^2\\
&\quad-\frac{2}{p}\big[(1-m)^2+nm(1-a)\big]-nm\Bigl(\frac{2}{p}+2t+(p-2)t^2\Bigr)\Nn_e,
\end{aligned}
\end{equation}
where $k=\frac{n(1-\alpha)}{\alpha}=n\big(2^{2/n}-1\big)$ and
\begin{equation}\label{eq:k-ell}
C_m=\alpha\Big[k\Big(\frac{2}{p}+\Bigl(3-\frac{4}{p}\Bigr)m\Big)-\frac{\big(n+2+(n-2)k\big)(1-m)^2}{n^2m}-\frac{4}{n}\Big].
\end{equation}
In particular, if $Q\geq 0$ on $\Tu$, then
\begin{equation}\label{eq:stability-source}
\frac{2}{p}\big[(1-m)^2+nm(1-a)\big]\le C_m\int(e\cdot\xi)^2u^2+\alpha\Big[\frac{(n+2)(1-m)^2}{n^2m}+\frac{4}{n}-km\Big]\|u\|_2^2 .
\end{equation}
\end{lemma}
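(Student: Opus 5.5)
The identity is purely algebraic. The plan is to start from the expansion \eqref{eq:QPhi} of $Q(\Phi_e,\Phi_e)$ and to eliminate the sign-indefinite term $\int(b\cdot\xi)x^2u^p$ in favour of $\Nn_e$ via the definition \eqref{eq:Ndef}. Testing \eqref{eq:Ndef} with $e$ and using $e^TMe=m$ gives
\begin{equation*}
\int(b\cdot\xi)x^2u^p=\Nn_e-(a-1)m-\beta\|u\|_2^2+(1-\alpha+\beta)\int x^2u^2 .
\end{equation*}
Substituting this into \eqref{eq:QPhi} and multiplying by $nm$ produces exactly the last term $-nm\bigl(\frac{2}{p}+2t+(p-2)t^2\bigr)\Nn_e$ of \eqref{eq:retained}. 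What remains is a linear combination of $\int x^2u^2$, $\|u\|_2^2$, $a$ and constants, whose coefficients depend only on $n$, $\alpha$ and $m$ through $t=\frac{1-m}{pm}$.

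Next I collect these coefficients. Write $\beta=\frac{4\alpha}{n(n-2)}$, $p=\frac{2n}{n-2}$ and $1-\alpha=\frac{\alpha k}{n}$.
\begin{itemize}
\item The coefficient of $\int x^2u^2$ becomes $nm\bigl[(1-\alpha)(3+4t)-\beta t^2-(1-\alpha+\beta)\gamma\bigr]$, where $\gamma=\frac{2}{p}+2t+(p-2)t^2$.
\item The coefficient of $\|u\|_2^2$ becomes $nm\bigl[\beta t^2-1+\alpha+\beta\gamma\bigr]$.
\item The $a$-dependent part is $nm\bigl[-a(p-2)t^2m+(a-1)m\gamma\bigr]$.
\end{itemize}
Each bracket is a rational function of $m$ with denominator $m$, since $t$ has a $1/m$ factor and $nm\,t^2$ leaves $(1-m)^2/m$. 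So the verification reduces to expanding in powers of $(1-m)$ and $m$ and comparing with $C_m$, with $\alpha[\frac{(n+2)(1-m)^2}{n^2m}+\frac4n-km]$, and with $-\frac2p[(1-m)^2+nm(1-a)]$.

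The $a$-terms need care. The $a$-dependent part splits as a multiple of $a$ plus the constant $-m\gamma\cdot nm$. Using $(p-2)t^2m\cdot nm=\frac{4n}{(n-2)p^2}(1-m)^2$ and $\gamma$ expanded in $t$, I check that it combines into $-\frac2p[(1-m)^2+nm(1-a)]$. Concretely, the $a$-coefficient should reduce to $\frac{2}{p}nm$ and the constant to $-\frac2p[(1-m)^2+nm]$. I expect this to work because $m\gamma-(p-2)t^2m=\frac{2m}{p}+2mt=\frac{2m}{p}+\frac{2(1-m)}{p}$. For the constant, $m\gamma=\frac{2m}{p}+\frac{2(1-m)}{p}+\frac{(p-2)(1-m)^2}{p^2m}$, and $\frac{n(p-2)}{p^2}=\frac{2n}{p}\cdot\frac{2}{n-2}\cdot\frac1p$, which simplifies to $\frac2p$. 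This gives the pattern in \eqref{eq:retained}. The $\|u\|_2^2$ and $\int x^2u^2$ coefficients are handled the same way, by grouping into a part proportional to $(1-m)^2/m$, a constant part and a part linear in $m$.

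This bookkeeping is the main obstacle, specifically matching the $(1-m)^2/(n^2m)$ coefficients $n+2$ and $n+2+(n-2)k$. I would do it symbolically with $k$ kept as a parameter, so that $1-\alpha=\alpha k/n$ keeps everything proportional to $\alpha$.

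Finally, \eqref{eq:stability-source} follows from \eqref{eq:retained}. By \eqref{eq:Phi-data}, $\Phi_e\in\Tu$, so $Q(\Phi_e,\Phi_e)\ge0$. Moreover $\gamma>0$, since $t>0$ and $p>2$, and $\Nn_e\ge0$ by Proposition~\ref{prop:block}. Hence dropping the $\Nn_e$ term and rearranging gives the inequality.
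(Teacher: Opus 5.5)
Your proposal is correct and is the paper's argument: substitute $\int(b\cdot\xi)x^2u^p$ from \eqref{eq:Ndef} into \eqref{eq:QPhi}, multiply by $nm$ and collect coefficients, then obtain \eqref{eq:stability-source} from $\Phi_e\in\Tu$, positivity of your $\gamma=\frac{2}{p}+2t+(p-2)t^2$, and $\Nn_e\ge0$. The bookkeeping you left unfinished does close: $nm\beta(p-1)t^2=\alpha\frac{(n+2)(1-m)^2}{n^2m}$ and $nm(1-\alpha)(p-2)t^2=\alpha k\frac{(n-2)(1-m)^2}{n^2m}$ give the two $(1-m)^2/(n^2m)$ coefficients, and $nm\beta\bigl(\frac{2}{p}+2t\bigr)=\frac{4\alpha}{n}$ gives the constant terms.
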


\begin{proof}
The identity follows from \eqref{eq:Ndef} and \eqref{eq:QPhi}. Since $\Phi_e\in\Tu$ and $\Nn_e\ge0$, stability gives \eqref{eq:stability-source}.
\end{proof}

For $n\ge20$, we simply discard the energy deficit and $\Nn_e$ as in \eqref{eq:stability-source}. For $4\le n\le19$, we also need quantitative lower bounds for $\Nn_e$, which we derive next.

\subsection{A lower bound for \texorpdfstring{$\Nn_e$}{the nodal term}}
Let $e$ be a unit eigenvector of $M$ with eigenvalue $m$, and write $x=e\cdot\xi$. Taking $v=\pm e$ in the proof of Proposition~\ref{prop:block} and averaging, we obtain for every $\tau\in\R$,
\begin{equation}\label{eq:product-ward}
\begin{aligned}
\frac{\alpha}{2}\sum_{\epsilon=\pm1}\left(
\|(\tau+\epsilon x)u\|_2^2
+\frac{4}{n(n-2)}\|\nabla((\tau+\epsilon x)u)\|_2^2\right)=a\tau^2+m+\Nn_e-(1-\alpha)\|u\|_2^2\tau^2 .
\end{aligned}
\end{equation}
For $\tau=0$, this is equivalent to
\begin{equation*}
\alpha\bigl(\|xu\|_2^2+\frac{4}{n(n-2)}\|\nabla(xu)\|_2^2\bigr)=m+\Nn_e
\end{equation*}
Applying \eqref{eq:sobolev} to the positive and negative parts of $xu$, we obtain a lower bound for $\Nn_e$ in terms of the integral $\int(x^2-m)^2u^p$.

\begin{lemma}\label{lem:nodal-variance}
Let $e$, $m$ and $x$ be as above. Then
\begin{equation*}
\frac{1}{a}\Nn_e\ge\Nn_e\ge\frac{1}{2n}\int(x^2-m)^2u^p.
\end{equation*}
\end{lemma}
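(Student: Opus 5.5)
The first inequality is immediate. By Proposition~\ref{prop:block} we have $\Nn_e\ge0$, and $0<a\le1$, so $\frac1a\Nn_e\ge\Nn_e$. The second inequality follows from a quantitative version of the argument giving $\Nn_e\ge0$: we keep the strict convexity remainder that the linearization \eqref{eq:Z-variance} throws away.

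\emph{Step 1: reduce to a sign-set estimate.} Start from the $\tau=0$ case of \eqref{eq:product-ward},
\begin{equation*}
m+\Nn_e=\alpha\Bigl(\|xu\|_2^2+\tfrac{4}{n(n-2)}\|\nabla(xu)\|_2^2\Bigr).
\end{equation*}
The functions $x_+u$ and $x_-u$ lie in $H^1$ and have disjoint supports. Hence the right-hand side splits as a sum, and \eqref{eq:sobolev} gives
\begin{equation*}
m+\Nn_e\ge\alpha\bigl(\|x_+u\|_p^2+\|x_-u\|_p^2\bigr).
\end{equation*}
It therefore suffices to prove
\begin{equation*}
\alpha\bigl(\|x_+u\|_p^2+\|x_-u\|_p^2\bigr)\ge m+\frac1{2n}\int(x^2-m)^2u^p,
\end{equation*}
using only $\int u^p=1$, $\int xu^p=0$, $\int x^2u^p=m$ and $|x|\le1$.

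\emph{Step 2: convexity with remainder on each sign set.} Write $\|x_\pm u\|_p^p=\int_{\{\pm x>0\}}\varphi(x^2)u^p$ with $\varphi(y)=y^{p/2}$. Since $p/2=n/(n-2)\in(1,2]$ for $n\ge4$, we have
\begin{equation*}
\varphi''(y)=\tfrac p2\bigl(\tfrac p2-1\bigr)y^{p/2-2}\ge\tfrac{2n}{(n-2)^2}\quad\text{on }(0,1].
\end{equation*}
This is where $n\ge4$ and $|x|\le1$ enter. Let $\rho_\pm$ be the $u^p$-masses of the two sign sets and $m_\pm$ the normalized averages of $x^2$ on them. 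Taylor expansion of $\varphi$ at $m_\pm$ gives, on each set,
\begin{equation*}
\frac1{\rho_\pm}\|x_\pm u\|_p^p\ge m_\pm^{p/2}+\frac{n}{(n-2)^2}\,V_\pm ,
\end{equation*}
where $V_\pm$ is the normalized variance of $x^2$ on that set. Raising to the power $2/p$ is concave but has positive derivative, so it preserves a proportional remainder. Concretely, the elementary bound $(A+B)^{2/p}\ge A^{2/p}+\frac2pA^{2/p-1}B$ for $A=m_\pm^{p/2}\le1$ gives $\|x_\pm u\|_p^2\ge\rho_\pm^{2/p}\bigl(m_\pm+c_nV_\pm\bigr)$ with an explicit $c_n>0$.

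\emph{Step 3: recombine the two sign sets.} Sum the two bounds and handle the masses $\rho_\pm$ exactly as in the proof of \eqref{eq:two-point}: Cauchy--Schwarz together with the concavity of $s\mapsto(1-s)s^{2/n}/(1-s^{2/n})$. This recovers the main term $\alpha\cdot2^{2/n}m=m$, using $\int xu^p=0$. The leftover pieces are the within-set variances $V_\pm$ and the slack in the mass optimization. The latter is controlled by the discrepancy $\rho_+\rho_-(m_+-m_-)^2$ through the concavity slack. These two pieces together reconstitute the total variance, by the law of total variance:
\begin{equation*}
\int(x^2-m)^2u^p=\rho_+V_++\rho_-V_-+\rho_+\rho_-(m_+-m_-)^2 .
\end{equation*}

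The main obstacle is Step 3. One must check that the between-set term $\rho_+\rho_-(m_+-m_-)^2$ is absorbed with coefficient at least $\frac1{2n}$. The Cauchy--Schwarz/concavity argument of Proposition~\ref{prop:block} was only designed to give a nonnegative slack, so this needs a quantitative lower bound. I would first try to bound it below through the second-order (strong concavity) gap of the function $s\mapsto(1-s)s^{2/n}/(1-s^{2/n})$ together with the unused slack $1+\frac2n-2^{2/n}\ge0$. If that is too weak, the fallback is to carry out the whole sign-set computation directly with $h=x$ in \eqref{eq:Z-variance}, keeping the quadratic term of $y\mapsto y^{1+2/n}$ exactly, and then to verify the resulting two-variable inequality in $(\rho,m_+-m_-)$ by hand. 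Throughout, the constant $\frac1{2n}$ comes from $\alpha\cdot\frac2p\cdot\frac{n}{(n-2)^2}$-type factors, bounded crudely using $\alpha\ge\frac12$.
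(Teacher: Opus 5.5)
There is a genuine gap, and it goes beyond the unfinished Step~3: the bound you carry out of Step~2 is too weak for any recombination to work.

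\textbf{Step~2 discards the information Step~3 needs.} Jensen in the variable $x^2$ on mass-normalized sign sets keeps only $\rho_\pm$, $m_\pm$, $V_\pm$. It throws away the first moments $\int x_\pm u^p$, which are the only place where $\int xu^p=0$ can act. So the argument for \eqref{eq:two-point} cannot be replayed on your output, since that argument runs on \eqref{eq:Z-variance} and the numbers $c_\pm$.

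Here is a concrete obstruction. Let $n=4$, and let $x$ have (a smooth approximation of) the law under $u^p\,d\sigma$ with atoms of mass $\frac{9}{20}$ at $0^+$, $\frac{3}{20}$ at $4\epsilon$, and $\frac25$ at $-\frac32\epsilon$. Then $\int xu^p=0$, $\rho_+=\frac35$, $m_+=4\epsilon^2$, $m_-=\frac94\epsilon^2$, $m=\frac{33}{10}\epsilon^2$, $V_+=48\epsilon^4$, $V_-=0$, and
\[
\alpha\bigl(\rho_+^{1/2}m_++\rho_-^{1/2}m_-\bigr)\approx3.197\,\epsilon^2<m .
\]
Your $c_n$ is an absolute constant (you use $A\le1$), so $c_nV_+=O(\epsilon^4)$. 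Hence $\alpha\sum_\pm\rho_\pm^{2/p}(m_\pm+c_nV_\pm)<m$ for small $\epsilon$. From this bound you can recover neither the main term $m$ nor any positive multiple of $\int(x^2-m)^2u^p$.

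Separately, the inequality $(A+B)^{2/p}\ge A^{2/p}+\frac2pA^{2/p-1}B$ is backwards, because the power $2/p$ is concave. The mean value theorem only gives the coefficient $\frac2p(A+B)^{2/p-1}$.

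\textbf{The missing idea.} Apply H\"older to $x_\pm u$ without normalizing by the masses:
\[
\|x_\pm u\|_p^2\ge\Bigl(\int x_\pm^2u^p\Bigr)^{1+2/n}\Bigl(\int x_\pm u^p\Bigr)^{-4/n}.
\]
Put $\varrho=\int|x|u^p$. The condition $\int xu^p=0$ gives $\int x_+u^p=\int x_-u^p=\varrho/2$, so the masses $\rho_\pm$ never enter. Convexity of $y\mapsto y^{1+2/n}$ then yields
\[
m+\Nn_e\ge m^{1+2/n}\varrho^{-4/n}\ge m+\frac2n\bigl(m-\varrho^2\bigr).
\]
The last step is the tangent line of $s\mapsto s^{-2/n}$ at $s=1$, applied with $s=\varrho^2/m$.

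The relevant variance is therefore that of $|x|$, not of $x^2$, and $|x|\le1$ converts one into the other. Indeed, $(x(\xi)^2-x(\eta)^2)^2\le4(|x(\xi)|-|x(\eta)|)^2$ gives $\int(x^2-m)^2u^p\le4(m-\varrho^2)$, which produces the constant $\frac1{2n}$. This is the paper's proof.

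Your fallback, using \eqref{eq:Z-variance} with $h=x$, points in this direction. But as long as you normalize by $\rho_\pm$, you are left with an unverified two-variable inequality. In the unnormalized form there is nothing left to check: no law-of-total-variance bookkeeping is needed, and the restriction $n\ge4$ plays no role.
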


\begin{proof}
Let $\varrho=\int|x|u^p$. Since $\int xu^p=0$, $\int x_+u^p=\int x_-u^p=\varrho/2$. Applying \eqref{eq:sobolev} to $x_+u$ and $x_-u$, with H\"older's inequality, we get
\begin{equation*}
m+\Nn_e\ge m^{1+2/n}\varrho^{-4/n}
\ge m+\frac{2}{n}(m-\varrho^2).
\end{equation*}
On the other hand, $|x|\le1$ implies
\begin{equation*}
\begin{aligned}
\int(x^2-m)^2u^p
&=\frac{1}{2}\iint_{\mathbb S^n\times\mathbb S^n}
\big(x(\xi)^2-x(\eta)^2\big)^2u(\xi)^pu(\eta)^p\,d\sigma(\xi)\,d\sigma(\eta)\le4(m-\varrho^2).
\end{aligned}
\end{equation*}
This proves the second inequality. The first follows from $0<a\le1$.
\end{proof}

\section{Estimates along an eigenvector}\label{sec:directional}

In this section and the next two sections, we assume $u$ is a non-constant stable solution of \eqref{eq:EL}. Let $Me=me$, $|e|=1$, $x=e\cdot\xi$, and let $t$ be as in \eqref{eq:Phi-data}. Write
\begin{equation*}
r=\frac{1}{a}\|u\|_2^2\in(0,1),\qquad
\beta\|\nabla u\|_2^2=a(1-r).
\end{equation*}
We first collect the inequalities in this direction, and then estimate the integrals of $|\tau\pm x|^pu^p$. The constants $c_1,c_2,\ldots$ below are specified separately in each statement.

\subsection{Directional constraints}
Dividing \eqref{eq:QPhi} by $a$, we write the second variation as
\begin{equation}\label{eq:Qa}
\frac{1}{a}Q(\Phi_e,\Phi_e)=A_0r+\frac{D_0}{a}\int x^2u^2-(p-2)t^2m-\frac{\frac{2}{p}+2t+(p-2)t^2}{a}\int(b\cdot\xi)x^2 u^p,
\end{equation}
where
\begin{equation*}
A_0=\beta t^2-(1-\alpha),\qquad D_0=(1-\alpha)(3+4t)-\beta t^2 .
\end{equation*}
The coefficients of $r$, $\frac{1}{a}\int x^2u^2$ and $\frac{1}{a}\int(b\cdot\xi)x^2u^p$ on the right-hand side depend only on $n$ and $m$. We will use the following estimates.

\begin{lemma}\label{lem:directional}
For every unit eigenvector $e$ of $M$ with eigenvalue $m$, the following hold.
\begin{enumerate}[label=\textup{(\roman*)},leftmargin=2em]
\item $0\le\frac{1}{a}\int x^2u^2\le r$ and
\begin{equation*}
\frac{1}{a}\int(b\cdot\xi)x^2 u^p+\beta r-\frac{1-\alpha+\beta}{a}\int x^2u^2\ge\frac{1}{a}\Nn_e\ge0 .
\end{equation*}
\item For every $z\in(0,1)$,
\begin{equation}\label{eq:affine}
\begin{aligned}
\mathcal G_z:={}&m+\frac{1}{a}\int(b\cdot\xi)x^2 u^p+\beta r-\frac{1+(n+1)\beta}{a}\int x^2u^2\\
&-\frac{\beta}{4}\Big[\frac{((n+1)z-1)^2}{1-z}\,r+\Big(\frac{n^2}{(1-z)^2}-(n+1)^2\Big)\Big(\frac{1}{a}\int x^2u^2-zr\Big)\Big]\ge0 .
\end{aligned}
\end{equation}
\item 
\begin{equation*}
\bigl|\frac{1}{a}\int(b\cdot\xi)x^2 u^p\bigr|\le\frac{|b|}{a}m(1-m).
\end{equation*}

\item The inequality holds
\begin{equation}\label{eq:schur-dir}
\Bigl(\frac{1+(p-1)m}{p}\Bigr)^2\Bigl(\frac{b\cdot e}{a}\Bigr)^2\le\frac{(1-\alpha)(1-r)}{a}\Nn_e .
\end{equation}
\item If $m$ is the largest eigenvalue of $M$, then
\begin{equation*}
\Big[m+\frac{(n-2+2\alpha)(1-m)}{2n(1-\alpha)}\Big]\frac{|b|}{a}\le1-r,
\end{equation*}
and for every $\tau_m\ge0$ with 
\begin{equation*}
\big[m+\frac{(n-2+2\alpha)(1-m)}{2n(1-\alpha)}\big]\tau_m\ge m\sqrt{(1-m)/2},   
\end{equation*}
we have
\begin{equation}\label{eq:Mgeneral}
\Mcons:=\frac{1+\frac{\tau_m}{2}}{a}\int(b\cdot\xi)x^2 u^p+\tau_m\Bigl[1+\frac{m}{2}+\Bigl(\frac{\beta}{2}-1\Bigr)r-\frac{1+(n+1)\beta}{2a}\int x^2u^2\Bigr]\ge0 .
\end{equation}
\end{enumerate}
\end{lemma}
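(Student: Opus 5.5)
The plan is to read off each item from the identities of Section~\ref{sec:identities} and the estimates of Section~\ref{sec:moments}, testing in the direction $e$ and dividing by $a$, with $r=\|u\|_2^2/a$ and $\beta\|\nabla u\|_2^2=a(1-r)$.

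\emph{Items (i), (iii), (iv).} For (i), $|x|\le1$ gives $\int x^2u^2\le\|u\|_2^2=ar$. Taking $e^T(\cdot)e$ in \eqref{eq:Ndef} gives
\begin{equation*}
\Nn_e=(a-1)m+\int(b\cdot\xi)x^2u^p+\beta\|u\|_2^2-(1-\alpha+\beta)\int x^2u^2 .
\end{equation*}
Since $a\le1$, dropping $(a-1)m\le0$ and dividing by $a$ yields the displayed inequality. Its right end $\frac1a\Nn_e\ge0$ is Proposition~\ref{prop:block}. Item (iii) is Lemma~\ref{lem:third} divided by $a$. Item (iv) is the last assertion of \eqref{eq:block-consequences}: substitute $\beta\|\nabla u\|_2^2=a(1-r)$ and divide by $p^2a^2$.

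\emph{Item (ii).} Take the $(e,e)$ entry of \eqref{eq:coord} and divide by $a$. This gives
\begin{equation*}
\frac{\beta}{a}\int x^2|\nabla u|^2=m+\frac1a\int(b\cdot\xi)x^2u^p+\beta r-\frac{1+(n+1)\beta}{a}\int x^2u^2 .
\end{equation*}
I bound the left side below by a scalar version of Lemma~\ref{lem:gram}. Since $\Delta(x^2)=2-2(n+1)x^2$, integration by parts gives
\begin{equation*}
\int xu\,\nabla u\cdot\nabla x=\tfrac12\Bigl((n+1)\int x^2u^2-\|u\|_2^2\Bigr).
\end{equation*}
Cauchy--Schwarz with $|\nabla x|^2=1-x^2$ then gives, writing $y=\frac1a\int x^2u^2$,
\begin{equation*}
\frac{\beta}{a}\int x^2|\nabla u|^2\ge\frac{\beta}{4}\,f(y),\qquad f(y)=\frac{((n+1)y-r)^2}{r-y}.
\end{equation*}
The function $f$ is convex on $y<r$, so it lies above its tangent line at $y=zr$. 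A direct computation shows that this tangent line is exactly the bracket in \eqref{eq:affine}: the value is $\frac{((n+1)z-1)^2}{1-z}r$ and the slope is $\frac{n^2}{(1-z)^2}-(n+1)^2$. Hence $\mathcal G_z\ge0$.

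\emph{Item (v).} By \eqref{eq:gfirst}, $\beta\int\xi|\nabla u|^2=Kb$ with $K=cI+(1-c)M$ and $c=\frac{n-2+2\alpha}{2n(1-\alpha)}>1$. The smallest eigenvalue of $K$ is $\lambda=m+c(1-m)$, attained at the largest eigenvalue $m$ of $M$. Lemma~\ref{lem:gfirst} then gives $\lambda|b|\le|Kb|\le\beta\|\nabla u\|_2^2=a(1-r)$, which is the first claim. For \eqref{eq:Mgeneral}, use the identity above for $\frac{\beta}{a}\int x^2|\nabla u|^2$ to rewrite the bracket. Writing $B=\frac1a\int(b\cdot\xi)x^2u^p$, this gives
\begin{equation*}
\Mcons=B+\tau_m\Bigl[(1-r)+\frac{\beta}{2a}\int x^2|\nabla u|^2\Bigr].
\end{equation*}
It then suffices to show $-B\le\tau_m[\,\cdot\,]$. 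I bound $|B|\le\frac{|b|}{a}\bigl|\int\xi x^2u^p\bigr|$. By \eqref{eq:third-scalar}, $\bigl|\int\xi x^2u^p\bigr|^2\le m\cdot m(1-m)$, because $M\le mI$ when $m$ is maximal. Combined with $\frac{|b|}{a}\le(1-r)/\lambda$, this yields $|B|\le(1-r)\,m\sqrt{1-m}/\lambda$.

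\emph{Main obstacle.} The estimate just obtained is off by the factor $\sqrt2$ from the hypothesis on $\tau_m$. I would try to recover it in one of two ways. The first is to split $b=(b\cdot e)e+b_\perp$ and bound the longitudinal term $\int x^3u^p$ and the transverse term $\int x^2(\xi-xe)u^p$ separately, using $\int(x^2-m)^2u^p\le m(1-m)$ shared between them, so that a sum of two squares replaces a single square. The second is to use the extra nonnegative term $\frac{\tau_m\beta}{2a}\int x^2|\nabla u|^2$, together with the Cauchy--Schwarz bound of Lemma~\ref{lem:gfirst} on $\beta\int(v\cdot\xi)|\nabla u|^2$, in an AM--GM step. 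I expect this $\sqrt2$ gain to be the only delicate point of the lemma.
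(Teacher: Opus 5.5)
Your arguments for (i)--(iv) and for the first inequality in (v) are correct and are the paper's own, including the tangent-line step at $y=zr$ in (ii). The second half of (v), however, is not proved. You correctly reduce $\Mcons\ge0$ to $-B\le\tau_m\bigl[(1-r)+\frac{\beta}{2a}\int x^2|\nabla u|^2\bigr]$. You also correctly note that $|B|\le\frac{|b|}{a}|\int\xi x^2u^p|\le(1-r)m\sqrt{1-m}/\lambda$ misses by a factor $\sqrt2$. But neither proposed repair works on its own.

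Sharing $\int(x^2-m)^2u^p\le m(1-m)$ between the longitudinal and transverse moments is just \eqref{eq:coordinate-cs} with $h=x^2$, namely $\frac1m(\int x^3u^p)^2+\frac1{\lambda_2}|\int x^2(\xi-xe)u^p|^2\le m(1-m)$. Since $\lambda_2$ may be as large as $m$ (e.g.\ in the regime $\lambda_1\le\frac12$ where (v) is used in Proposition~\ref{prop:spectral-finite}), this is in general no better than your single-square bound. Moreover, no transverse bound can help when $b\parallel e$ and $m<\frac12$. Then $B=\frac{b\cdot e}{a}\int x^3u^p$, and the available bounds $|\int x^3u^p|\le m(1-m)$ and $|\int x^3u^p|\le m\sqrt{1-m}$ both exceed $m\sqrt{(1-m)/2}$, so the gradient term must enter. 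Conversely, the gradient/AM--GM idea controls only $b\cdot e$, and gives nothing when $b\perp e$.

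What is missing is a factor-$2$ bound on the transverse moment. It comes from \eqref{eq:coordinate-cs} applied to $h=x\xi_i$ for every $i$, not to $h=x^2$. Take an eigenbasis $Me_j=m_je_j$ with $e_1=e$. Then $\int x\xi_iu^p=m\delta_{i1}$ and $\sum_i\xi_i^2=1$, so summing over $i$ and using $m_j\le m$ gives $\sum_{i,j}(\int x\xi_i\xi_ju^p)^2\le m^2(1-m)$. The transverse entries occur in both the first row and the first column of this symmetric array, hence
\[
\Bigl(\int x^3u^p\Bigr)^2+2\Bigl|\int x^2(\xi-xe)u^p\Bigr|^2\le m^2(1-m).
\]
Combine this with the weighted Cauchy--Schwarz inequality
\[
B^2\le\Bigl[\frac{(b\cdot e)^2}{a^2}+\frac{|b_\perp|^2}{2a^2}\Bigr]\Bigl[\Bigl(\int x^3u^p\Bigr)^2+2\Bigl|\int x^2(\xi-xe)u^p\Bigr|^2\Bigr].
\]
Then use three further facts:
\begin{enumerate}[label=\textup{(\alph*)},leftmargin=2em]
\item the identity $2(b\cdot e)^2+|b_\perp|^2=|b|^2+(b\cdot e)^2$;
\item your bound $\lambda|b|\le a(1-r)$;
\item the longitudinal bound $\lambda^2(b\cdot e)^2\le a(1-r)\,\beta\int x^2|\nabla u|^2$, which is the $e$-component of \eqref{eq:gfirst} combined with \eqref{eq:gradient-cs}.
\end{enumerate}
Together these yield $\lambda^2B^2\le\frac{m^2(1-m)}{2}(1-r)\bigl(1-r+\frac{\beta}{a}\int x^2|\nabla u|^2\bigr)$. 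Finally, $\sqrt{A(A+C)}\le A+\frac C2$ and the hypothesis on $\tau_m$ give exactly $-B\le\tau_m\bigl[(1-r)+\frac{\beta}{2a}\int x^2|\nabla u|^2\bigr]$. So both of your ideas are needed at once, and the first must be carried out through the $h=x\xi_i$ summation rather than the $h=x^2$ variance bound.
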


\begin{proof}
Part (i) follows from \eqref{eq:Ndef}, $a\le1$ and $\Nn_e\ge0$.

For (ii), integration by parts gives
\begin{equation*}
\int xu\nabla u\cdot\nabla x
=\frac{1}{2}\left((n+1)\int x^2u^2-\|u\|_2^2\right).
\end{equation*}
Apply the Cauchy--Schwarz inequality to the vector fields $x\nabla u$ and $u\nabla x$. Since $|\nabla x|^2=1-x^2$,
\begin{equation*}
\left(\int xu\nabla u\cdot\nabla x\right)^2
\le\int x^2|\nabla u|^2\int(1-x^2)u^2.
\end{equation*}
Consequently,
\begin{equation*}
\beta\int x^2|\nabla u|^2
\ge\frac{\beta}{4}\frac{\big((n+1)\int x^2u^2-\|u\|_2^2\big)^2}{\|u\|_2^2-\int x^2u^2}.
\end{equation*}

For $0<s,z<1$, convexity gives the explicit lower bound
\begin{equation*}
\frac{((n+1)s-1)^2}{1-s}
\ge\frac{((n+1)z-1)^2}{1-z}
+\left(\frac{n^2}{(1-z)^2}-(n+1)^2\right)(s-z).
\end{equation*}
Taking $s=\frac{1}{\|u\|_2^2}\int x^2u^2$ and using \eqref{eq:coord} proves \eqref{eq:affine}.

Parts (iii) and (iv) are consequences of Lemma~\ref{lem:third} and \eqref{eq:block-consequences}.

For (v), the assertion is immediate if $b=0$. Otherwise, since $\frac{n-2+2\alpha}{2n(1-\alpha)}>1$, maximality of $m$ shows that the smallest eigenvalue of the matrix in \eqref{eq:gfirst} is
\begin{equation*}
m+\frac{(n-2+2\alpha)(1-m)}{2n(1-\alpha)}.
\end{equation*}
The vector bound in Lemma~\ref{lem:gfirst} gives the first inequality below. Taking the component of \eqref{eq:gfirst} along $e$ and using \eqref{eq:gradient-cs} gives the second:

\begin{equation*}
\begin{aligned}
\left[m+\frac{(n-2+2\alpha)(1-m)}{2n(1-\alpha)}\right]^2
\left|\frac{b}{a}\right|^2&\le(1-r)^2,\\
\left[m+\frac{(n-2+2\alpha)(1-m)}{2n(1-\alpha)}\right]^2
\left(\frac{b\cdot e}{a}\right)^2
&\le(1-r)\frac{\beta}{a}\int x^2|\nabla u|^2.
\end{aligned}
\end{equation*}
Choose an orthonormal eigenbasis with $e=e_1$ and write $Me_j=m_je_j$, so $m_1=m$ and $m_j\le m$. Apply \eqref{eq:coordinate-cs} to $h=x\xi_i$. Since $\int x\xi_i u^p=m\delta_{i1}$, summing in $i$ gives
\begin{equation*}
\sum_{i,j}\frac{\left(\int x\xi_i\xi_j u^p\right)^2}{m_j}
\le\sum_i\int(x\xi_i-m\delta_{i1})^2u^p=m(1-m).
\end{equation*}
The entries with $i=1$ or $j=1$ therefore satisfy
\begin{equation*}
\begin{aligned}
\left(\int x^3u^p\right)^2
+2\left|\int x^2(\xi-xe)u^p\right|^2
&\le\sum_{i,j}\left(\int x\xi_i\xi_j u^p\right)^2\le m^2(1-m).
\end{aligned}
\end{equation*}

Write $b_\perp=b-(b\cdot e)e$. The Cauchy--Schwarz inequality
\begin{equation*}
\begin{aligned}
\left(\frac{1}{a}\int(b\cdot\xi)x^2u^p\right)^2
&=\left[\frac{b\cdot e}{a}\int x^3u^p
+\frac{b_\perp}{a}\cdot\int x^2(\xi-xe)u^p\right]^2\\
&\le\left[\frac{(b\cdot e)^2}{a^2}+\frac{|b_\perp|^2}{2a^2}\right]
\left[\left(\int x^3u^p\right)^2
+2\left|\int x^2(\xi-xe)u^p\right|^2\right].
\end{aligned}
\end{equation*}
Since $2(b\cdot e)^2+|b_\perp|^2=|b|^2+(b\cdot e)^2$, the two preceding bounds for $b$ imply
\begin{equation*}
\begin{aligned}
\left[m+\frac{(n-2+2\alpha)(1-m)}{2n(1-\alpha)}\right]^2
\left(\frac{1}{a}\int(b\cdot\xi)x^2u^p\right)^2\le\frac{m^2(1-m)}{2}(1-r)
\left(1-r+\frac{\beta}{a}\int x^2|\nabla u|^2\right).
\end{aligned}
\end{equation*}
By the assumption on $\tau_m$ and $\sqrt{A(A+B)}\le A+B/2$ for $A,B\ge0$, we have
\begin{equation*}
-\frac{1}{a}\int(b\cdot\xi)x^2u^p
\le\tau_m\left(1-r+\frac{\beta}{2a}\int x^2|\nabla u|^2\right).
\end{equation*}
Now \eqref{eq:coord} gives \eqref{eq:Mgeneral}.
\end{proof}

We next bound the multiplier. Lemma~\ref{lem:gfirst} gives $\frac{|b|}{a}<1-r$. The following bound is uniform in $r$.

\begin{proposition}\label{prop:multiplier}
Every non-constant solution of \eqref{eq:EL} satisfies $\frac{|b|}{a}<\gamma_n$, where
\begin{equation*}
\gamma_4=\frac{17}{40},\qquad \gamma_5=\frac{253}{1000},\qquad
\gamma_6=\frac{9}{50},\qquad \gamma_n=\frac{13}{6n}\quad\text{if }n\ge20,
\end{equation*}
and
\begin{equation*}
\begin{aligned}
(\gamma_7,\ldots,\gamma_{19})
=10^{-4}(1328,1060,877,744,644,566,504,453,411,376,345,319,297).
\end{aligned}
\end{equation*}
\end{proposition}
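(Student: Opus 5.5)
The plan is to combine the two upper bounds on $b$ already available, $\frac{|b|}{a}<1-r$ from Lemma~\ref{lem:gfirst} and the Schur-complement bound from Proposition~\ref{prop:block}, after sharpening the second one in the direction of $b$ itself. Here $r=\frac1a\|u\|_2^2$ and $\beta\|\nabla u\|_2^2=a(1-r)$. The crude trace form of \eqref{eq:block-consequences} gives $\frac{|b|}{a}\le K\sqrt{r(1-r)}$ with $K=p\sqrt{(1-\alpha)(n\beta-1+\alpha)}$. Balancing this against $1-r$ only yields $\frac{|b|}{a}<\frac{K^2}{1+K^2}$, which is about $0.84$ when $n=4$. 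So the key is to test \eqref{eq:block} with $v=w:=b/|b|$ and to estimate $w^T\Nn w$ much more carefully than through $\tr\Nn$.

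\emph{Step 1 (upper bound for $w^T\Nn w$).} Set $s=\frac{|b|}{a}$, $\mu=w^TMw\in(0,1)$ and $\sigma=\frac1a w^T\int\xi\xi^Tu^2\,w\in[0,r]$. From \eqref{eq:Ndef} and Lemma~\ref{lem:third} (with the bound $\int(b\cdot\xi)\xi\xi^Tu^p\le|b|(M-M^2)$, and $|Mw|^2\ge\mu^2$), I will use
\begin{equation*}
\frac1a w^T\Nn w\le\frac{a-1}{a}\mu+s(\mu-\mu^2)+\beta r-(1-\alpha+\beta)\sigma .
\end{equation*}
The term $\frac{a-1}{a}\mu$ is nonpositive and can be dropped.

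\emph{Step 2 (lower bound for $\sigma$ from Kazdan--Warner).} By \eqref{eq:ward}, $(1-\alpha)\int(w\cdot\xi)u^2=-\frac1p w^T(I-M)b=-\frac{|b|}{p}(1-\mu)$. Cauchy--Schwarz gives $(\int(w\cdot\xi)u^2)^2\le\int(w\cdot\xi)^2u^2\,\|u\|_2^2$. Hence
\begin{equation*}
\sigma\ge\frac{(1-\mu)^2s^2}{p^2(1-\alpha)^2r}.
\end{equation*}
If this is not sharp enough, I would add the Gram bounds of Lemma~\ref{lem:gram} together with \eqref{eq:coord} tested on $w$.

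\emph{Step 3 (quadratic inequality and the case analysis).} Testing \eqref{eq:block} with $(\tau,v)=(\tau,w)$ and optimizing in $\tau$ gives $(1+(p-1)\mu)^2s^2\le p^2(1-\alpha)(1-r)\frac1a w^T\Nn w$. Inserting Steps 1--2 yields
\begin{equation*}
(1+(p-1)\mu)^2s^2\le p^2(1-\alpha)(1-r)\Big[s\mu(1-\mu)+\beta r-(1-\alpha+\beta)\frac{(1-\mu)^2s^2}{p^2(1-\alpha)^2r}\Big],
\end{equation*}
an explicit polynomial inequality $P_n(s,r,\mu)\le0$. I also add the constraint $s<1-r$. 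In fact, via the matrix in \eqref{eq:gfirst} one has the slightly stronger bound $\big[\mu+\frac{(n-2+2\alpha)(1-\mu)}{2n(1-\alpha)}\big]s\le1-r$ along $w$, since $w^T(\text{matrix})\,b$ controls $|b|$ from below. It then remains to show that $P_n(s,r,\mu)\le0$ together with $s<1-r$ is incompatible with $s\ge\gamma_n$ for $(r,\mu)\in[0,1]^2$. Small $r$ kills $s$ through the $\beta r$ term and the large $\sigma$-penalty, while $r$ near $1$ kills it through $1-r$. For $4\le n\le19$ this is a finite sign check on rational boxes in $(r,\mu)$, done by Bernstein coefficients and interval arithmetic as in the appendices. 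For $n\ge20$ I would expand in $\frac1n$ ($k=n(2^{2/n}-1)\to2\log2$, $\beta\sim\frac4{n^2}$, $p-2\sim\frac4n$) and check a single polynomial inequality uniform in $\frac1n\in(0,\frac1{20}]$, targeting $s\lesssim\frac{2}{n}$, below $\frac{13}{6n}$.

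The main obstacle is Step 3 near the balancing point, where both constraints are nearly active and the $\mu$-dependence is delicate. If the constants $\gamma_n$ are not reached, I would first strengthen Step 2 using the second inequality of Lemma~\ref{lem:gram}, and keep the term $\frac{a-1}{a}\mu$ together with $a\le1$ rather than discarding it.
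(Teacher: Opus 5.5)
For $4\le n\le19$ your argument is the paper's. Multiplying your Step~3 inequality by $(1-\alpha)r$ gives exactly \eqref{eq:zbound}. The paper then notes that the difference of the two sides is a quadratic in $s=\frac{|b|}{a}$ with positive leading coefficient and negative constant term $-p^2(1-\alpha)^2\beta r^2(1-r)$. Hence positivity at $s=\gamma_n$ for all $0<r<1$, $0\le\mu\le1$ (Lemma~\ref{lem:source-checks}) forces $s<\gamma_n$. You should state this root argument explicitly. Your extra constraint $s<1-r$ is not needed, because the factor $1-r$ on the right of \eqref{eq:zbound} already controls $r$ near one.

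The real difference is the case $n\ge20$. Your uniform-in-$\frac1n$ check of the sharpened inequality should go through, since at leading order it only requires $ns$ to exceed a constant well below $\frac{13}{6}$. It is, however, unnecessary. You discarded the crude bound $\frac{|b|}{a}\le K\sqrt{r(1-r)}$ from \eqref{eq:block-consequences} on the basis of $n=4$, where $K>1$. Balancing against $1-r$ only improves on $\frac K2$ when $K\ge1$, and for $n\ge20$ one has $K<1$. There, simply using $\sqrt{r(1-r)}\le\frac12$ gives $\frac{|b|}{a}\le\frac p2\sqrt{(1-\alpha)(n\beta-1+\alpha)}<\frac{13}{6n}$, which is the paper's entire proof in that range.
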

\begin{proof}
For $n\ge20$, the elementary estimate
\begin{equation*}
np\sqrt{(1-\alpha)(n\beta-1+\alpha)}<\frac{13}{3}
\end{equation*}
and \eqref{eq:block-consequences} give
\begin{equation*}
\frac{|b|}{a}\le\frac{p}{2}\sqrt{(1-\alpha)(n\beta-1+\alpha)}<\frac{13}{6n}.
\end{equation*}
Here we used $2\sqrt{\|u\|_2^2\,\beta\|\nabla u\|_2^2}\le a$.

Let $4\le n\le19$. The case $b=0$ is immediate. Otherwise, put $v=b/|b|$, $y=v^TMv$ and $z=\frac{|b|}{a}$. By \eqref{eq:ward},
\begin{equation*}
v\cdot\int\xi u^2=-\frac{(1-y)|b|}{p(1-\alpha)}.
\end{equation*}
The Cauchy--Schwarz inequality gives
\begin{equation*}
\left(v\cdot\int\xi u^2\right)^2
\le\|u\|_2^2\int(v\cdot\xi)^2u^2.
\end{equation*}
It follows that
\begin{equation*}
\int(v\cdot\xi)^2u^2\ge\frac{(1-y)^2|b|^2}{p^2(1-\alpha)^2\|u\|_2^2}.
\end{equation*}
The first assertion of Lemma~\ref{lem:third} gives
\begin{equation*}
|b|\int(v\cdot\xi)^3 u^p\le|b|\big(y-|Mv|^2\big)\le|b|y(1-y)    
\end{equation*}

We now test \eqref{eq:block} in the direction $(\tau,v)$. Since 
\begin{equation*}
v\cdot(I+(p-1)M)b=(1+(p-1)y)|b|    
\end{equation*}
and $\beta\|\nabla u\|_2^2=a(1-r)$, the positivity of the matrix together with $(a-1)y\le0$, gives
\begin{equation}\label{eq:zbound}
z^2\Big\{(1-\alpha)r\,[1+(p-1)y]^2+(1-\alpha+\beta)(1-r)(1-y)^2\Big\}\le p^2(1-\alpha)^2r(1-r)\big[zy(1-y)+\beta r\big].
\end{equation}
The left-hand side minus the right-hand side of \eqref{eq:zbound} is a quadratic polynomial in $z$ with positive leading coefficient and negative constant term.

By Lemma~\ref{lem:source-checks}, it is positive at $z=\gamma_n$ for $0<r<1$ and $0\le y\le1$. Therefore its positive root is smaller than $\gamma_n$, and $\frac{|b|}{a}<\gamma_n$.
\end{proof}

\subsection{Translated norms}
Applying \eqref{eq:sobolev} to the positive and negative parts of $(\tau\pm x)u$ in \eqref{eq:product-ward}, we obtain, for $\tau\ge0$,
\begin{equation}\label{eq:translated}
a\tau^2+m+\Nn_e-(1-\alpha)\|u\|_2^2\tau^2\ge\frac{\alpha}{2}\big(\|(\tau+x)u\|_p^2+\|(\tau-x)u\|_p^2\big).
\end{equation}

By Lemma~\ref{lem:third}, we have
\begin{equation}\label{eq:VS}
0\le\int(x^2-m)^2u^p\le m(1-m),\qquad \Big(\int x^3 u^p\Big)^2\le\min\big\{m\int(x^2-m)^2u^p,\,m^2(1-m)^2\big\}.
\end{equation}

Lemma~\ref{lem:nodal-variance} also gives

\begin{equation*}
% \label{eq:Nvar}
\frac{1}{a}\Nn_e\ge\frac{1}{2n}\int(x^2-m)^2u^p.
\end{equation*}
We will use the following consequences.

\begin{lemma}\label{lem:translated}
Let $\tau\ge0$. Then we have the following estimates.
\begin{enumerate}[label=\textup{(\roman*)},leftmargin=2em]
\item if $\frac{\alpha}{2}\big(\|(\tau+x)u\|_p^2+\|(\tau-x)u\|_p^2\big)\ge c_1>0$, then
\begin{equation}\label{eq:translated-affine}
\frac{1}{a}\int(b\cdot\xi)x^2 u^p+\big[\beta-(1-\alpha)\tau^2\big]r-\frac{1-\alpha+\beta}{a}\int x^2u^2+\tau^2+m-c_1\ge0 .
\end{equation}
\item If $\frac{\alpha}{2}\big(\|(\tau+x)u\|_p^2+\|(\tau-x)u\|_p^2\big)\ge c_1+c_2\int(x^2-m)^2u^p-c_3(\int x^3 u^p)^2\ge m$, then
\begin{equation*}
\frac{1}{a}\Nn_e\ge(1-\alpha)\tau^2r-(\tau^2+m-c_1)+c_2\int(x^2-m)^2u^p-c_3\Big(\int x^3 u^p\Big)^2 .
\end{equation*}
\end{enumerate}
\end{lemma}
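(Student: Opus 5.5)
Both parts follow from the translated identity \eqref{eq:translated} and the definition \eqref{eq:Ndef} of $\Nn$; no new analytic input is needed. Recall that \eqref{eq:translated} reads
\begin{equation*}
a\tau^2+m+\Nn_e-(1-\alpha)\|u\|_2^2\tau^2\ge\frac{\alpha}{2}\big(\|(\tau+x)u\|_p^2+\|(\tau-x)u\|_p^2\big),
\end{equation*}
and that $\|u\|_2^2=ar$ by the definition of $r$. The plan is to insert the assumed lower bound for the right-hand side, divide by $a$, and then either expand $\Nn_e$ explicitly (part (i)) or keep it and isolate it (part (ii)).

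For (i), use the hypothesis that the right-hand side is at least $c_1$. This gives
\begin{equation*}
\Nn_e\ge c_1-a\tau^2-m+(1-\alpha)ar\tau^2.
\end{equation*}
Next write out $\Nn_e=e^T\Nn e$ from \eqref{eq:Ndef}, using $e^TMe=m$:
\begin{equation*}
\Nn_e=(a-1)m+\int(b\cdot\xi)x^2u^p+\beta ar-(1-\alpha+\beta)\int x^2u^2 .
\end{equation*}
The term $(a-1)m$ is handled as follows. Combined with the $-m$ from the first display, it yields $am$ after rearranging. Dividing everything by $a>0$ then gives
\begin{equation*}
\frac{1}{a}\int(b\cdot\xi)x^2u^p+\beta r-\frac{1-\alpha+\beta}{a}\int x^2u^2+m+\tau^2-(1-\alpha)\tau^2r-\frac{c_1}{a}\ge0 .
\end{equation*}
Since $0<a\le1$ and $c_1>0$, we have $c_1/a\ge c_1$. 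Replacing $-c_1/a$ by the larger quantity $-c_1$ keeps the inequality true, and this is exactly \eqref{eq:translated-affine}.

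For (ii), set $c':=c_1+c_2\int(x^2-m)^2u^p-c_3\big(\int x^3u^p\big)^2$. Do not expand $\Nn_e$; instead isolate it in \eqref{eq:translated}:
\begin{equation*}
\Nn_e\ge c'-m-a\tau^2(1-(1-\alpha)r).
\end{equation*}
Dividing by $a$ gives
\begin{equation*}
\frac{1}{a}\Nn_e\ge\frac{c'-m}{a}-\tau^2+(1-\alpha)\tau^2 r .
\end{equation*}
The hypothesis $c'\ge m$ together with $a\le1$ gives $(c'-m)/a\ge c'-m$. Substituting this lower bound, we obtain
\begin{equation*}
\frac{1}{a}\Nn_e\ge(1-\alpha)\tau^2r-(\tau^2+m-c_1)+c_2\int(x^2-m)^2u^p-c_3\Big(\int x^3u^p\Big)^2,
\end{equation*}
which is the claimed inequality.

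The only point needing care is the direction of the division by $a$. Both parts use $a\le1$ exactly once, and only on a quantity already known to be nonnegative: $c_1>0$ in (i) and $c'-m\ge0$ in (ii). This sign condition is precisely why (ii) carries the hypothesis $c'\ge m$. Everything else is bookkeeping with \eqref{eq:Ndef} and $\|u\|_2^2=ar$.
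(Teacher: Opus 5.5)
Your proposal is correct and follows essentially the same route as the paper: insert the assumed lower bound into \eqref{eq:translated}, expand $\Nn_e$ via \eqref{eq:Ndef} for (i) or isolate it for (ii), and use $0<a\le1$ only on the nonnegative quantities $c_1$ and $c'-m$. The only cosmetic difference is in (ii). The paper keeps the actual norm sum in the numerator before invoking $a\le1$, whereas you first replace it by $c'$, which is equally valid.
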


\begin{proof}
For (i), \eqref{eq:translated} and \eqref{eq:Ndef} give
\begin{equation*}
\begin{aligned}
&\frac{1}{a}\int(b\cdot\xi)x^2u^p
+[\beta-(1-\alpha)\tau^2]r
-\frac{1-\alpha+\beta}{a}\int x^2u^2+\tau^2+m\\
&\qquad\ge\frac{\alpha}{2a}
\big(\|(\tau+x)u\|_p^2+\|(\tau-x)u\|_p^2\big)
\ge c_1,
\end{aligned}
\end{equation*}
where the last inequality uses $0<a\le1$ and $c_1>0$.

For (ii), \eqref{eq:translated} gives
\begin{equation*}
\frac{1}{a}\Nn_e\ge
\frac{\frac{\alpha}{2}\big(\|(\tau+x)u\|_p^2+\|(\tau-x)u\|_p^2\big)-m}{a}
-\tau^2+(1-\alpha)\tau^2r.
\end{equation*}
The numerator of the fraction is nonnegative by hypothesis. Since $a\le1$, the right-hand side is at least
\begin{equation*}
c_1+c_2\int(x^2-m)^2u^p-c_3\left(\int x^3u^p\right)^2
-m-\tau^2+(1-\alpha)\tau^2r.
\end{equation*}
This proves (ii).
\end{proof}

\subsection{Lower bounds for the translated functions}
We estimate the sum of the two norms in \eqref{eq:translated}. The coefficient of $\int(x^2-m)^2u^p$ changes sign at $p=3$. We therefore treat $n\ge6$, $n=5$ and $n=4$ separately.

We begin with $n=6$, where $p=3$. In this case,
\begin{equation*}
\alpha=\frac{6}{6+k},\qquad 1-\alpha=\frac{k}{6+k},\qquad \beta=\frac{1}{6+k},\qquad k_-:=\frac{1559}{1000}<k=6\big(2^{1/3}-1\big)<k_+:=\frac{39}{25},
\end{equation*}
where the enclosure of $k$ follows by cubing $1+k_\pm/6$. In particular, $\alpha<\frac{159}{200}<\frac{4}{5}$.

\begin{lemma}\label{lem:paired6}
Let $n=6$, and write
\begin{equation*}
\begin{aligned}
F(m):={}&\alpha(1+2m-m^2)+\Big(\frac{707}{24}-36\alpha\Big)m^3+\Big(79\alpha-\frac{127}{2}\Big)m^4\\
&+\Big(\frac{397}{8}-62\alpha\Big)m^5+\Big(17\alpha-\frac{163}{12}\Big)m^6-\frac{4m^2(1-m)^2}{45(1+m)^2}.
\end{aligned}
\end{equation*}
Then we have
\begin{equation*}
\frac{\alpha}{2}\big(\|(1+x)u\|_3^2+\|(1-x)u\|_3^2\big)\ge F(m).
\end{equation*}
Moreover, if $m\ge\frac{1}{2}$, then
\begin{equation}\label{eq:A3-sharp}
\frac{\alpha}{2}\big(\|(1+x)u\|_3^2+\|(1-x)u\|_3^2\big)\ge1+m-\frac{(1-m)^2}{4(1+m)}-\frac{3}{100}\Big(\int x^3 u^p\Big)^2.
\end{equation}
\end{lemma}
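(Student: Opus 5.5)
Our approach is to reduce the lower bound to a one-dimensional moment problem for the probability measure $u^p\,d\sigma$, via H\"older's inequality. For $n=6$ we have $p=3$, so $\|(1\pm x)u\|_3^2=\big(\int(1\pm x)^3u^3\big)^{2/3}$, and $1\pm x\ge0$, so no positive and negative parts are needed. Put $\mu_k=\int x^ku^p$. Then $\mu_0=1$, $\mu_1=0$, $\mu_2=m$, and
\begin{equation*}
S_\pm:=\int(1\pm x)^3u^p=1+3m\pm(3\mu_3)\pm\mu_3\cdot 0+\dots=1+3m\pm\mu_3 .
\end{equation*}
More precisely, $S_\pm=1+3m\pm\mu_3$ with $\mu_3$ entering linearly, because the $x$ and $x^3$ coefficients are $\pm3$ and $\pm1$, and $\mu_1=0$. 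The task is therefore to bound
\begin{equation*}
\tfrac{\alpha}{2}\big(S_+^{2/3}+S_-^{2/3}\big)\qquad\text{from below, with } s=\mu_3 \text{ and } |s|\le m(1-m)\ \text{by \eqref{eq:VS}}.
\end{equation*}
Since $S\mapsto S^{2/3}$ is concave and even in $s$, the minimum over $s$ is attained at the extreme value $|s|=m(1-m)$. Substituting this gives the bound $F(m)$, after replacing $(1+3m\pm m(1-m))^{2/3}$ by explicit polynomial lower bounds. Here $1+3m+m-m^2=1+4m-m^2$ and $1+3m-m+m^2=(1+m)^2$. The term $\tfrac{\alpha}{2}(1+m)^{4/3}+\tfrac{\alpha}{2}(1+4m-m^2)^{2/3}$ is then compared with the given polynomial $F$. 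The rational coefficients $707/24$, $127/2$, etc., together with the correction $-\frac{4m^2(1-m)^2}{45(1+m)^2}$, suggest that $F$ comes from a Taylor expansion of these $2/3$-powers with a controlled remainder. I would verify $G(m)-F(m)\ge0$ on $[0,1]$, where $G$ is the exact expression, by bounding $\alpha\in(6/(6+k_+),6/(6+k_-))$ and checking the sign of a polynomial in $m$ on rational boxes with Bernstein coefficients. Since $\alpha$ appears affinely in $F$ and the left side is linear in $\alpha$, the difference is affine in $\alpha$, so it suffices to check the two endpoint values of $\alpha$.

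For \eqref{eq:A3-sharp} we keep $s$ free and use a second-order expansion in $s$ instead of the worst case. Write
\begin{equation*}
S_\pm^{2/3}=(1+3m)^{2/3}\big(1\pm\sigma\big)^{2/3},\qquad \sigma=\frac{s}{1+3m}.
\end{equation*}
The bound $(1+\sigma)^{2/3}+(1-\sigma)^{2/3}\ge2-c\sigma^2$ holds with an explicit $c$ valid for $|\sigma|\le m(1-m)/(1+3m)\le$ a small number. Concavity gives $c=\tfrac{2}{9}\cdot$(a remainder factor), and we can take the sharp constant at the endpoint $|\sigma|$ maximal. This yields
\begin{equation*}
\tfrac{\alpha}{2}(S_+^{2/3}+S_-^{2/3})\ge\alpha(1+3m)^{2/3}-c'\alpha\frac{s^2}{(1+3m)^{4/3}} .
\end{equation*}
It then remains to show two inequalities for $m\in[\tfrac12,1)$. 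The first is $\alpha(1+3m)^{2/3}\ge1+m-\frac{(1-m)^2}{4(1+m)}$. At $m=1$ the left side is $\alpha 4^{2/3}=2^{-1/3}2^{4/3}=2$, which equals the right side, so this is sharp at $m=1$. The second is $c'\alpha(1+3m)^{-4/3}\le\frac{3}{100}$, which follows from $1+3m\ge5/2$ and $\alpha<4/5$, provided $c'$ is moderate. The function $\phi(m)=\alpha(1+3m)^{2/3}-1-m+\frac{(1-m)^2}{4(1+m)}$ vanishes at $m=1$. I would show $\phi$ is decreasing, or equivalently that $\phi(m)/(1-m)\ge0$ on $[\tfrac12,1]$, by comparing derivatives. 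We have $\phi'(1)=2\alpha\cdot4^{-1/3}-1=2^{-1/3}-1<0$, so $\phi$ is decreasing near $1$. Cubing both sides of the inequality removes the fractional power, and after that, interval checks with the enclosure $k_-<k<k_+$ suffice.

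The main obstacle is the first claim: matching exactly the paper's polynomial $F$, which seems to come from a specific expansion. If the direct worst-case substitution fails near $m=0$, I would instead handle the $2/3$-powers via the tangent-line-plus-remainder inequality $y^{2/3}\ge y_0^{2/3}+\tfrac23y_0^{-1/3}(y-y_0)-\tfrac19y_0^{-4/3}(y-y_0)^2$, which is valid for $y\ge y_0$. I would choose $y_0$ polynomial in $m$ so that the result is a polynomial with a rational correction in $(1+m)^{-2}$, and then finish with a rigorous polynomial sign check.
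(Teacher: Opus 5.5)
Your reduction of the first claim to the extreme value $|s|=m(1-m)$ is correct, because $s\mapsto(1+3m+s)^{2/3}+(1+3m-s)^{2/3}$ is concave and even. The resulting inequality $G(m):=\frac{\alpha}{2}\big[(1+4m-m^2)^{2/3}+(1+m)^{4/3}\big]\ge F(m)$ is also true. However, you give no argument for it, and the verification you describe cannot succeed.

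\emph{Why the proposed check fails.} The $\alpha$-coefficients of $F$ sum to zero at $m=1$, so $F(1)=2$ for every value of $\alpha$, whereas $G(1)=2^{4/3}\alpha$. Hence $G-F$ vanishes at $m=1$ only because $\alpha=2^{-1/3}$ exactly; for that $\alpha$ it behaves like $\frac{1}{120}(1-m)^2$ near $m=1$. At the lower endpoint $6/(6+k_+)$ of your enclosure, $G-F$ is negative near $m=1$. So checking the two endpoint values of $\alpha$ fails, and any polynomial minorants of the fractional powers would have to be exact to second order at $m=1$.

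\emph{The missing idea is where $F$ comes from.} One has $F=P-\frac{4m^2(1-m)^2}{45(1+m)^2}$, where $P$ is the degree-six Hermite interpolant of $g(m)=\alpha(1+3m)^{2/3}$ itself, not of $G$. It matches the value and two derivatives of $g$ at $m=0$, and the value and three derivatives at $m=1$. The data $P(1)=2$, $P'(1)=1$, $P''(1)=-\frac14$, $P'''(1)=\frac14$ use $\alpha\,4^{2/3}=2$ exactly. Now $g^{(7)}$ is a positive multiple of $\prod_{j=0}^{6}(\frac23-j)$, which is positive because it has six negative factors. So the remainder $\frac{g^{(7)}(\eta)}{7!}m^3(m-1)^4\ge0$ gives $g\ge P$ on $[0,1]$ with no computation.

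\emph{The correction term is the quadratic Taylor remainder.} The bound $S_\pm\ge(1+m)^2$ (equivalent to $|s|\le m(1-m)$) and $(y^{2/3})''=-\frac29y^{-4/3}$ give $\frac{\alpha}{2}(S_+^{2/3}+S_-^{2/3})\ge g(m)-\frac{\alpha}{9}(1+m)^{-8/3}s^2$. Then $s^2\le m^2(1-m)^2$ and $\alpha(1+m)^{-2/3}<\frac45$ produce exactly $\frac{4m^2(1-m)^2}{45(1+m)^2}$. So the paper keeps $s$ inside a second-order bound instead of taking the exact worst case, and the same bound serves both assertions.

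For the second assertion your structure matches the paper's. Your constant works if $c$ is the value of $\big(2-(1+\sigma)^{2/3}-(1-\sigma)^{2/3}\big)/\sigma^2$ at $|\sigma|=\frac{1}{10}$. This quotient increases in $|\sigma|$ because its power series has positive coefficients; that is the fact you need, not concavity. It gives about $0.026<\frac{3}{100}$ even with $\alpha<\frac45$.

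Your treatment of $\phi$, however, is wrong. In fact $\phi'(1)=2\alpha\,4^{-1/3}-1=0$, not $2^{-1/3}-1$. Also $\phi''(1)=0$, and $\phi(m)=\frac{(1-m)^3}{48}+O\big((1-m)^4\big)$. For the same reason as before, an interval check with an enclosure of $\alpha$ would fail at $m=1$. Use $\alpha^3=\frac12$ exactly instead. Since $1+m-\frac{(1-m)^2}{4(1+m)}=\frac{(3m+1)(m+3)}{4(1+m)}$, cubing reduces the inequality to $32(1+m)^3-(3m+1)(m+3)^3=(1-m)^3(3m+5)\ge0$. This holds on all of $[0,1]$.
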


\begin{proof}
Since $\int xu^p=0$ and $\int x^2u^p=m$,
\begin{equation*}
\int(1\pm x)^3u^p=1+3m\pm\int x^3u^p.
\end{equation*}
Apply the Cauchy--Schwarz inequality to $(1\pm x)^{3/2}u^{p/2}$ and $(1\pm x)^{1/2}u^{p/2}$. Since $1\pm x\ge0$,
\begin{equation*}
\begin{aligned}
(1+m)^2
&=\left(\int(1\pm x)^2u^p\right)^2\le\int(1\pm x)^3u^p\int(1\pm x)u^p
=\int(1\pm x)^3u^p.
\end{aligned}
\end{equation*}
Thus both arguments of $y^{2/3}$ are at least $(1+m)^2$. Since its second derivative is $-\frac{2}{9}y^{-4/3}$, Taylor's theorem gives
\begin{equation*}
\frac{\alpha}{2}\big(\|(1+x)u\|_3^2+\|(1-x)u\|_3^2\big)
\ge\alpha(1+3m)^{2/3}-\frac{\alpha}{9}(1+m)^{-8/3}
\left(\int x^3u^p\right)^2.
\end{equation*}
The polynomial used here is
\begin{equation*}
\begin{aligned}
P(m)&:=F(m)+\frac{4m^2(1-m)^2}{45(1+m)^2}\\
&=\alpha(1+2m-m^2)+\left(\frac{707}{24}-36\alpha\right)m^3
+\left(79\alpha-\frac{127}{2}\right)m^4\\
&\quad+\left(\frac{397}{8}-62\alpha\right)m^5
+\left(17\alpha-\frac{163}{12}\right)m^6.
\end{aligned}
\end{equation*}
Its endpoint data are
\begin{equation*}
\begin{gathered}
P(0)=\alpha,\qquad P'(0)=2\alpha,\qquad P''(0)=-2\alpha,\\
P(1)=2,\qquad P'(1)=1,\qquad
P''(1)=-\frac{1}{4},\qquad P^{(3)}(1)=\frac{1}{4}.
\end{gathered}
\end{equation*}
These are the values of $\alpha(1+3m)^{2/3}$ and the corresponding derivatives at zero and one. Hence, for $0<m<1$, the Hermite interpolation remainder gives some $\eta\in(0,1)$ such that
\begin{equation*}
\begin{aligned}
\alpha(1+3m)^{2/3}-P(m)
&=\frac{\alpha3^7}{7!}
\left[\prod_{j=0}^6\left(\frac{2}{3}-j\right)\right]
(1+3\eta)^{-19/3}m^3(m-1)^4\ge0.
\end{aligned}
\end{equation*}
The product is positive, since it has six negative factors. The same inequality holds at the endpoints by equality.

Finally, \eqref{eq:VS} and $\alpha(1+m)^{-2/3}<4/5$ give
\begin{equation*}
\frac{\alpha}{9}(1+m)^{-8/3}\left(\int x^3u^p\right)^2
\le\frac{4m^2(1-m)^2}{45(1+m)^2}.
\end{equation*}
Together with the preceding Taylor bound, this proves the first assertion.

For $m\ge1/2$,
\begin{equation*}
\alpha(1+3m)^{2/3}\ge1+m-\frac{(1-m)^2}{4(1+m)},
\qquad \frac{\alpha}{9}(1+m)^{-8/3}<\frac{3}{100}.
\end{equation*}
These inequalities give \eqref{eq:A3-sharp}.
\end{proof}

For $7\le n\le19$, $2<p<3$. Cubic interpolation at $-1,0,0,1$ gives
\begin{equation*}
(1+y)^p\ge1+py+(2^{p-1}-1)y^2+(2^{p-1}-p)y^3,
\qquad -1\le y\le1.
\end{equation*}
The sign follows from the fourth derivative of $(1+y)^p$ and the interpolation remainder.

\begin{lemma}\label{lem:cubic}
Let $7\le n\le19$. Then
\begin{equation}\label{eq:cubic-paired}
\begin{aligned}
&\frac{\alpha}{2}\big(\|(1+x)u\|_p^2+\|(1-x)u\|_p^2\big)\\
&\ge \alpha\big(1+(2^{p-1}-1)m\big)^{\frac{2}{p}}-\frac{\alpha(p-2)(2^{p-1}-p)^2}{p^2}
\Big(\int x^3u^p\Big)^2\cdot\big(1+(p-1)m+(2^{p-1}-p)m^2\big)^{\frac{2}{p}-2}.
\end{aligned}
\end{equation}
For $m\ge\frac{1}{2}$, we have
\begin{equation}\label{eq:paired-e}
\frac{\alpha}{2}\big(\|(1+x)u\|_p^2+\|(1-x)u\|_p^2\big)\ge1+m-c_1(1-m)-c_2(1-m)^2-c_3\Big(\int x^3 u^p\Big)^2,
\end{equation}
with $(c_1,c_2,c_3)=(\frac{19}{1000},\frac{31}{250},\frac{2}{125})$ for $n=7$ and $(c_1,c_2,c_3)=(\frac{1}{25},\frac{1}{5},\frac{1}{50})$ for $8\le n\le19$. Moreover, for $n=7$ and $m\ge\frac{3}{4}$,
\begin{equation}\label{eq:variance-pair}
\frac{\alpha}{2}\big(\|(1+x)u\|_{14/5}^2+\|(1-x)u\|_{14/5}^2\big)\ge1+m-\frac{9}{50}(1-m)^2-\frac{1}{40}\int(x^2-m)^2u^p .
\end{equation}
\end{lemma}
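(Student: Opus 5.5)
Following Lemma~\ref{lem:paired6}, I will work with $\|(1\pm x)u\|_p^p=\int(1\pm x)^pu^p$, which uses $u^p$ as a probability measure. For $7\le n\le19$ we have $2<p<3$, so the fourth derivative of $(1+y)^p$ is positive and the quoted cubic interpolation holds on $[-1,1]$. Applying it with $y=\pm x$ and integrating, using $\int xu^p=0$ and $\int x^2u^p=m$, gives
\begin{equation*}
\int(1\pm x)^pu^p\ge 1+(2^{p-1}-1)m\pm(2^{p-1}-p)\int x^3u^p=:S\pm\delta .
\end{equation*}
The map $y\mapsto y^{2/p}$ is increasing and concave. Averaging therefore gives
\begin{equation*}
\tfrac12\big[(S+\delta)^{2/p}+(S-\delta)^{2/p}\big]\ge S^{2/p}-\tfrac12\cdot\tfrac{2}{p}\big(1-\tfrac{2}{p}\big)\,\delta^2\,\xi_*^{2/p-2},
\end{equation*}
by Taylor's theorem with $\xi_*$ between $S-|\delta|$ and $S+|\delta|$. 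I need a lower bound on $\xi_*$ to control $\xi_*^{2/p-2}$. As in Lemma~\ref{lem:paired6}, Hölder (or Jensen) gives $\int(1\pm x)^pu^p\ge(\int(1\pm x)^2u^p)^{p/2}=(1+m)^{p/2}$. The cubic interpolation at $y=x$ also gives the lower bound $1+(p-1)m+(2^{p-1}-p)m^2$ after a Cauchy--Schwarz/Jensen step. In practice I would bound $S-|\delta|$ from below using $|\int x^3u^p|\le m(1-m)$ from \eqref{eq:VS}. Checking that this yields exactly the base $1+(p-1)m+(2^{p-1}-p)m^2$ is a one-line computation: $S-(2^{p-1}-p)m(1-m)=1+(p-1)m+(2^{p-1}-p)m^2$. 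The coefficient $\frac{(p-2)}{p^2}(2^{p-1}-p)^2$ then matches $\frac1p(1-\frac2p)(2^{p-1}-p)^2$, which proves \eqref{eq:cubic-paired}.

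For \eqref{eq:paired-e}, I split the right-hand side of \eqref{eq:cubic-paired} into two parts. The main part $\alpha(1+(2^{p-1}-1)m)^{2/p}$ equals $2$ at $m=1$, because $\alpha 2^{2/p}\cdot 2^{(p-1)2/p-2/p}$ reduces to $\alpha 2^{(2p-2)/p}=2^{-2/n}2^{2-2/p}=2$ using $2/p=1-2/n$. Its derivative at $m=1$ is computable. So $\alpha(1+(2^{p-1}-1)m)^{2/p}-(1+m)+c_1(1-m)+c_2(1-m)^2\ge0$ on $[1/2,1]$ is a one-variable inequality for each $n$. I would verify it by concavity in $m$, noting that the function minus a quadratic has a controlled second derivative, or by interval checks for each $7\le n\le19$. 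The coefficient of $(\int x^3u^p)^2$ is bounded by evaluating the factor $(\cdots)^{2/p-2}$ at its worst point $m=1/2$, since the base increases with $m$, and then checking that this is below $c_3$.

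For \eqref{eq:variance-pair}, with $n=7$ and $p=14/5$, I would replace the third-moment loss using $(\int x^3u^p)^2\le m\int(x^2-m)^2u^p$ from \eqref{eq:VS}. The main term's deficit then has to be absorbed into $\frac{9}{50}(1-m)^2$ on $[3/4,1]$, and the variance coefficient into $\frac1{40}$. The main obstacle is the finite family of one-variable inequalities on $[1/2,1]$ for each $n$: they are tight at $m=1$ to second order, so I expect Hermite-type remainder arguments or rigorous interval/Bernstein verification to be necessary, as for the other computer-assisted checks in the paper.
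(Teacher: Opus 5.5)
Your proof of \eqref{eq:cubic-paired} matches the paper's. You integrate the cubic interpolation at $y=\pm x$, bound both arguments below by $1+(p-1)m+(2^{p-1}-p)m^2$ using $|\int x^3u^p|\le m(1-m)$, and apply Taylor's theorem to $y^{2/p}$.

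For \eqref{eq:paired-e}, your split is also the paper's: the main term, plus the $c_3$ factor evaluated at $m=\frac12$. However, the concavity shortcut you suggest fails for $n=7$. The second derivative in $m$ of $\alpha(1+(2^{p-1}-1)m)^{2/p}+c_2(1-m)^2$ at $m=1$ equals $2c_2-\frac{4}{p}\bigl(1-\frac{2}{p}\bigr)(1-2^{1-p})^2\approx0.04>0$. The paper instead writes the main term as $2\bigl(1-(1-2^{1-p})(1-m)\bigr)^{2/p}$. Since $0<2/p<1$, every coefficient of $(1+m)$ minus this series beyond the linear one is positive. Hence the quadratic remainder divided by $(1-m)^2$ is increasing, and the claim reduces to the three constant comparisons of Lemma~\ref{lem:paired-constants}.

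The genuine gap is \eqref{eq:variance-pair}, which cannot be obtained from \eqref{eq:cubic-paired}. The interpolation at $-1,0,0,1$ loses a term of first order in $1-m$. For $p=\frac{14}{5}$, the same series fact gives $1+m-\alpha\bigl(1+(2^{p-1}-1)m\bigr)^{2/p}\ge\bigl(\frac{10}{7}(1-2^{-9/5})-1\bigr)(1-m)>\frac{18}{1000}(1-m)$. This exceeds $\frac{9}{50}(1-m)^2$ as soon as $m>\frac{9}{10}$. This linear loss is exactly why \eqref{eq:paired-e} carries the term $c_1(1-m)$ with $c_1=\frac{19}{1000}$.

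Your lower bound is at most this main term, and the variance cannot compensate. The quantity $\int(x^2-m)^2u^p$ is not bounded below in terms of $1-m$; it is small when $u^p$ concentrates near the latitudes $x=\pm\sqrt m$. In your chain it only absorbs the third-moment loss. So for $m>\frac{9}{10}$ and small variance, your bound lies strictly below the right-hand side of \eqref{eq:variance-pair}.

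What is needed is a pointwise bound that is exact to first order at $x=\pm1$. The paper expands the even and odd parts of $(1\pm x)^p$ in powers of $1-x^2$:
$\frac{(1+x)^p+(1-x)^p}{2}\ge2^{p-1}\bigl[1-\frac{7}{10}(1-x^2)-\frac{7}{400}(1-x^2)^2\bigr]$ and $\bigl|\frac{(1+x)^p-(1-x)^p}{2}-2^{p-1}x\bigr|\le\frac{2^{p-1}}{5}(1-x^2)$, where $\frac{7}{10}=\frac{p}{4}$.
It then integrates using $\int(1-x^2)^2u^p=(1-m)^2+\int(x^2-m)^2u^p$. In this way the variance enters through the even part, not through $\int x^3u^p$. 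With $\epsilon=\frac{7}{10}(1-m)+\frac{7}{400}\bigl[(1-m)^2+\int(x^2-m)^2u^p\bigr]$, the left side is at least $\bigl(1-\epsilon+\frac{1-m}{5}\bigr)^{5/7}+\bigl(1-\epsilon-\frac{1-m}{5}\bigr)^{5/7}$. The odd part then contributes only at second order, and the first-order term is exactly $1+m$. Taylor's theorem produces the constants $\frac{9}{50}$ and $\frac{1}{40}$.
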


\begin{proof}
Integrating the preceding inequality at $y=\pm x$, we obtain
\begin{equation*}
\int(1\pm x)^pu^p
\ge1+(2^{p-1}-1)m\pm(2^{p-1}-p)\int x^3u^p.
\end{equation*}
By \eqref{eq:VS}, both right-hand sides are at least
$1+(p-1)m+(2^{p-1}-p)m^2$. Taylor's theorem for $y^{2/p}$ gives \eqref{eq:cubic-paired}.

For \eqref{eq:paired-e}, write
\begin{equation*}
\alpha\big(1+(2^{p-1}-1)m\big)^{2/p}
=2\big(1-(1-2^{1-p})(1-m)\big)^{2/p}.
\end{equation*}
After its linear term, the power series of the difference between $1+m$ and this expression has positive coefficients. Its quadratic remainder, divided by $(1-m)^2$, is therefore increasing in $1-m$. The comparisons in Lemma~\ref{lem:paired-constants} give \eqref{eq:paired-e}.

For \eqref{eq:variance-pair}, let $p=14/5$ and $m\ge3/4$. The even and odd parts satisfy
\begin{equation*}
\frac{(1+x)^p+(1-x)^p}{2}
\ge2^{p-1}\left[1-\frac{7}{10}(1-x^2)-\frac{7}{400}(1-x^2)^2\right]
\end{equation*}
and
\begin{equation*}
\left|\frac{(1+x)^p-(1-x)^p}{2}-2^{p-1}x\right|
\le\frac{2^{p-1}}{5}(1-x^2).
\end{equation*}
These follow from the signs of the binomial coefficients. Let
\begin{equation*}
\epsilon=\frac{7}{10}(1-m)+\frac{7}{400}
\left[(1-m)^2+\int(x^2-m)^2u^p\right]
\le\frac{287}{400}(1-m).
\end{equation*}
Integrating the two inequalities and using $\int xu^p=0$, we obtain
\begin{equation*}
\frac{\alpha}{2}\big(\|(1+x)u\|_p^2+\|(1-x)u\|_p^2\big)
\ge\left(1-\epsilon+\frac{1-m}{5}\right)^{5/7}
+\left(1-\epsilon-\frac{1-m}{5}\right)^{5/7}.
\end{equation*}
Here $1-\epsilon\ge1313/1600$ and $1-\epsilon-(1-m)/5\ge1233/1600$. Taylor's theorem gives the lower bound
\begin{equation*}
1+m-\frac{1356397}{7840000}(1-m)^2
-\frac{1}{40}\int(x^2-m)^2u^p.
\end{equation*}
Since $1356397/7840000<9/50$, this proves \eqref{eq:variance-pair}.
\end{proof}

For $n=5$, $p=10/3$. The fourth derivative of $|\tau+y|^p$ is positive, and its interpolation remainder gives a positive multiple of $\int(x^2-m)^2u^p$.

\begin{lemma}\label{lem:hermite5}
Let $n=5$ and $\tau\ge0$, and put
\begin{equation*}
\begin{gathered}
c_1=\frac{(\tau+\sqrt m)^p+|\tau-\sqrt m|^p}{2},\qquad c_3=\frac{35}{243}(1+\tau)^{-\frac{2}{3}},\\
c_2=\frac{p\big[(\tau+\sqrt m)^{p-1}+|\tau-\sqrt m|^{p-2}(\tau-\sqrt m)\big]}{4m}-\frac{(\tau+\sqrt m)^p-|\tau-\sqrt m|^p}{4m^{3/2}} .
\end{gathered}
\end{equation*}
Then
\begin{equation}\label{eq:hermite}
\int|\tau\pm x|^p u^p\ge c_1+c_3\int(x^2-m)^2u^p\pm c_2\int x^3 u^p .
\end{equation}
If moreover $c_1>|c_2|m(1-m)$, then
\begin{equation}\label{eq:A5}
\frac{\alpha}{2}\big(\|(\tau+x)u\|_p^2+\|(\tau-x)u\|_p^2\big)\ge\alpha c_1^{\frac{2}{p}}+c_4\int(x^2-m)^2u^p-c_5\Big(\int x^3 u^p\Big)^2\ge\alpha c_1^{\frac{2}{p}}-[mc_5-c_4]_+m(1-m)^2,
\end{equation}
where $[s]_+=\max\{s,0\}$ and
\begin{equation*}
% \label{eq:AD5}
c_4=\alpha\frac{(c_1+c_3m(1-m))^{\frac{2}{p}}-c_1^{\frac{2}{p}}}{m(1-m)},\quad c_5=\alpha\frac{c_1^{\frac{2}{p}}-\frac{1}{2}\big[(c_1+|c_2|m(1-m))^{\frac{2}{p}}+(c_1-|c_2|m(1-m))^{\frac{2}{p}}\big]}{m^2(1-m)^2}.
\end{equation*}
Moreover, for $m\ge\frac{3}{5}$,
\begin{equation}\label{eq:sqrt5}
\frac{\alpha}{2}\big(\|(\sqrt m+x)u\|_p^2+\|(\sqrt m-x)u\|_p^2\big)\ge2m+\frac{1}{50}\int(x^2-m)^2u^p-\frac{3}{100m^2}\Big(\int x^3 u^p\Big)^2.
\end{equation}
\end{lemma}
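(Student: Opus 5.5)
The plan is to follow the scheme of Lemma~\ref{lem:cubic} and Lemma~\ref{lem:paired6}: first a pointwise polynomial lower bound for $|\tau+y|^p$ on $[-1,1]$, then integration against $u^p\,d\sigma$ with $y=\pm x$, and finally concavity of $s\mapsto s^{2/p}$.

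\textbf{Pointwise bound.} For $n=5$ we have $p=10/3$. I will take the Hermite interpolant of $g(y)=|\tau+y|^p$ at the nodes $-\sqrt m,-\sqrt m,\sqrt m,\sqrt m$. This is a cubic $H$, and it can be written as
\begin{equation*}
H(y)=c_1+\kappa_1 y+\kappa_2(y^2-m)+c_2\,y(y^2-m),
\end{equation*}
where $c_1=\tfrac12\big(g(\sqrt m)+g(-\sqrt m)\big)$. The coefficient of $y(y^2-m)$ is obtained from the derivative data, and it should be exactly the stated $c_2$.

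The interpolation remainder is
\begin{equation*}
g(y)-H(y)=\frac{g^{(4)}(\eta)}{24}\,(y^2-m)^2 .
\end{equation*}
Here $g^{(4)}(y)=p(p-1)(p-2)(p-3)|\tau+y|^{p-4}$, which is positive since $p>3$. Since $p-4<0$ and $|\tau+\eta|\le 1+\tau$, we get
\begin{equation*}
\frac{g^{(4)}(\eta)}{24}\ge \frac{p(p-1)(p-2)(p-3)}{24}\,(1+\tau)^{-2/3}.
\end{equation*}
This constant equals $\frac{35}{243}(1+\tau)^{-2/3}=c_3$. One point needs checking: $g$ is only $C^3$ with $g^{(4)}$ integrable where $\tau+y$ vanishes. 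I will handle this with the Peano-kernel form of the remainder, or by approximating $\tau$.

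Integrating $g(\pm x)\ge H(\pm x)+c_3(x^2-m)^2$ against $u^p$ gives \eqref{eq:hermite}. The integrals $\int xu^p$ and $\int(x^2-m)u^p$ vanish. The cubic term contributes $\pm c_2\int x^3u^p$, because $\int x(x^2-m)u^p=\int x^3u^p$.

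\textbf{First inequality of \eqref{eq:A5}.} Put $V=\int(x^2-m)^2u^p$ and $S=\int x^3u^p$. By \eqref{eq:VS}, $V\le m(1-m)$ and $|S|\le m(1-m)$. The hypothesis $c_1>|c_2|m(1-m)$ makes both arguments $c_1+c_3V\pm c_2S$ positive, and $s\mapsto s^{2/p}$ is increasing.

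\emph{Variance term.} Concavity of $s^{2/p}$ gives
\begin{equation*}
(c_1+c_3V)^{2/p}-c_1^{2/p}\ge \frac{V}{m(1-m)}\Big[(c_1+c_3m(1-m))^{2/p}-c_1^{2/p}\Big],
\end{equation*}
because the chord slope decreases and $V\le m(1-m)$. This produces $c_4V$.

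\emph{Odd term.} Consider the symmetric average $\tfrac12\big[(A+\delta)^{2/p}+(A-\delta)^{2/p}\big]$ with $A=c_1+c_3V\ge c_1$. Its deficit from $A^{2/p}$, divided by $\delta^2$, is monotone: it increases in $|\delta|$ and decreases in $A$, by concavity and the power-series structure. Hence this deficit is bounded by the value at $A=c_1$, $|\delta|=|c_2|m(1-m)$, which gives $c_5S^2$. I expect this monotonicity to be the most delicate point, and I would verify it through the series expansion in $\delta^2$, whose coefficients all have one sign.

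\textbf{Second inequality of \eqref{eq:A5}.} This is a minimization of $c_4V-c_5S^2$ under the constraint $S^2\le mV$ from \eqref{eq:VS}. The expression is at least $(c_4-mc_5)V$. Since $V\le m(1-m)$, this is bounded below by $-[mc_5-c_4]_+\,m(1-m)$. The statement has a factor $(1-m)^2$ in place of $(1-m)$, so I will instead use $S^2\le m^2(1-m)^2$ together with $S^2\le mV$ to obtain the stated form. Getting this right requires tracking which of the two constraints is binding.

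\textbf{Inequality \eqref{eq:sqrt5}.} Take $\tau=\sqrt m$. Then $c_1=2^{p-1}m^{p/2}$, so
\begin{equation*}
\alpha c_1^{2/p}=\alpha\,2^{2(p-1)/p}m=2m,
\end{equation*}
using $\alpha=2^{-2/5}$ and $2(p-1)/p=7/5$.

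It remains to show $c_4\ge\frac1{50}$ and $c_5\le\frac{3}{100m^2}$ for $m\in[3/5,1]$. Both are explicit one-variable functions of $m$. I expect this to be the main obstacle: it requires a rigorous bound for these expressions over an interval. I would prove it by monotonicity of the explicit expressions, or else by the rational-box interval checks used elsewhere (Appendix~\ref{app:multiplier-checks}).
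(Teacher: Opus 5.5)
Your proposal follows the paper's proof essentially step for step: the double-node Hermite cubic at $\pm\sqrt m$ with remainder constant $\frac{280}{81}\cdot\frac{1}{24}(1+\tau)^{-2/3}=c_3$, chord concavity for $c_4$, the even power series in $\delta$ for $c_5$, and the specialization $\tau=\sqrt m$. Your B-spline/Peano-kernel form of the remainder is a valid substitute for the paper's mollification-plus-Rolle argument, since $|\tau+y|^p\in W^{4,1}(-1,1)$.

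Two small points are still open.

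\emph{Second inequality of \eqref{eq:A5}.} Your first attempt eliminated $S$ and only gives the factor $m(1-m)$. Eliminate $V$ instead. Since $c_4\ge0$ and $V\ge S^2/m$, you get $c_4V-c_5S^2\ge-\frac{mc_5-c_4}{m}S^2\ge-\frac{[mc_5-c_4]_+}{m}S^2$. Then $S^2\le m^2(1-m)^2$ gives $-[mc_5-c_4]_+m(1-m)^2$.

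\emph{Inequality \eqref{eq:sqrt5}.} Before invoking \eqref{eq:A5} at $\tau=\sqrt m$, you must check its hypothesis $c_1>|c_2|m(1-m)$. This is immediate, since $c_2/c_1=\frac{1}{3m^{3/2}}$ gives $|c_2|m(1-m)/c_1=\frac{1-m}{3\sqrt m}<1$.
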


\begin{proof}
Consider
\begin{equation*}
\begin{aligned}
H(y)={}&c_1+\frac{(\tau+\sqrt m)^p-|\tau-\sqrt m|^p}{2\sqrt m}\,y+c_2y(y^2-m)\\
&+\frac{p\big[(\tau+\sqrt m)^{p-1}-|\tau-\sqrt m|^{p-2}(\tau-\sqrt m)\big]}{4\sqrt m}
(y^2-m).
\end{aligned}
\end{equation*}
It satisfies
\begin{equation*}
H(\pm\sqrt m)=|\tau\pm\sqrt m|^p,\qquad
H'(\pm\sqrt m)=p|\tau\pm\sqrt m|^{p-2}(\tau\pm\sqrt m).
\end{equation*}
Substituting these values into
\begin{equation*}
c_2=\frac{H'(\sqrt m)+H'(-\sqrt m)}{4m}
-\frac{H(\sqrt m)-H(-\sqrt m)}{4m^{3/2}}
\end{equation*}
gives the stated expression for $c_2$.

Next, we will prove \eqref{eq:hermite}. For $\tau\neq -y$, we have
\begin{equation*}
\frac{d^4}{dy^4}|\tau+y|^p=\frac{280}{81}|\tau+y|^{-2/3}\geq 24c_3,
\end{equation*}
which is locally integrable.

To justify the remainder at the singular point, let $\varphi_\varepsilon\ge0$ be a smooth function supported in $[-\varepsilon,\varepsilon]$ with integral one, and put
\begin{equation*}
f_\varepsilon(y)=\int_{\R}\varphi_\varepsilon(t)|\tau+y-t|^p\,dt.
\end{equation*}
For $-1\le y\le1$,
\begin{equation*}
f_\varepsilon^{(4)}(y)
=\frac{280}{81}\int_{\R}\varphi_\varepsilon(t)|\tau+y-t|^{-2/3}\,dt
\ge\frac{280}{81}(1+\tau+\varepsilon)^{-2/3}.
\end{equation*}
Let $H_\varepsilon$ be the cubic polynomial with the same values and first derivatives as $f_\varepsilon$ at $\pm\sqrt m$. Fix $y\in[-1,1]$ with $y\ne\pm\sqrt m$. The function
\begin{equation*}
t\longmapsto f_\varepsilon(t)-H_\varepsilon(t)
-\frac{f_\varepsilon(y)-H_\varepsilon(y)}{(y^2-m)^2}(t^2-m)^2
\end{equation*}
has a double zero at each of $\pm\sqrt m$ and a zero at $y$. Applying Rolle's theorem four times gives a point $\eta$ between these three points such that
\begin{equation*}
f_\varepsilon(y)-H_\varepsilon(y)
=\frac{f_\varepsilon^{(4)}(\eta)}{24}(y-\sqrt m)^2(y+\sqrt m)^2
\ge\frac{35}{243}(1+\tau+\varepsilon)^{-2/3}(y^2-m)^2.
\end{equation*}
At $y=\pm\sqrt m$, the same inequality holds by equality.

As $\varepsilon\to0$, convolution gives $f_\varepsilon\to|\tau+\cdot|^p$ in $C^1([-1,1])$. In particular,
\begin{equation*}
\begin{gathered}
f_\varepsilon(\pm\sqrt m)\longrightarrow|\tau\pm\sqrt m|^p,\\
f_\varepsilon'(\pm\sqrt m)\longrightarrow
p|\tau\pm\sqrt m|^{p-2}(\tau\pm\sqrt m).
\end{gathered}
\end{equation*}
The coefficients of $H_\varepsilon$ are given explicitly by
\begin{equation*}
\begin{aligned}
H_\varepsilon(y)={}&\frac{f_\varepsilon(\sqrt m)+f_\varepsilon(-\sqrt m)}{2}
+\frac{f_\varepsilon(\sqrt m)-f_\varepsilon(-\sqrt m)}{2\sqrt m}\,y\\
&+\frac{f_\varepsilon'(\sqrt m)-f_\varepsilon'(-\sqrt m)}{4\sqrt m}(y^2-m)\\
&+\left[\frac{f_\varepsilon'(\sqrt m)+f_\varepsilon'(-\sqrt m)}{4m}
-\frac{f_\varepsilon(\sqrt m)-f_\varepsilon(-\sqrt m)}{4m^{3/2}}\right]y(y^2-m).
\end{aligned}
\end{equation*}
Thus each coefficient converges to the corresponding coefficient of $H$, and $H_\varepsilon\to H$ uniformly on $[-1,1]$. Passing to the limit yields
\begin{equation*}
|\tau+y|^p\ge H(y)+c_3(y^2-m)^2,\qquad -1\le y\le1.
\end{equation*}

Since $\int u^p=1$, $\int xu^p=0$ and $\int x^2u^p=m$, we have
\begin{equation*}
\int(\pm x)u^p=0,\qquad
\int\big((\pm x)^2-m\big)u^p=m-m\int u^p=0,
\end{equation*}
and
\begin{equation*}
\int(\pm x)\big((\pm x)^2-m\big)u^p
=\pm\int x^3u^p\mp m\int xu^p=\pm\int x^3u^p.
\end{equation*}
Consequently,
\begin{equation*}
\int H(\pm x)u^p=c_1\pm c_2\int x^3u^p,
\end{equation*}
which proves \eqref{eq:hermite}.

For $0\le s\le m(1-m)$, concavity gives
\begin{equation*}
\begin{aligned}
(c_1+c_3s)^{2/p}\ge\left(1-\frac{s}{m(1-m)}\right)c_1^{2/p}
+\frac{s}{m(1-m)}(c_1+c_3m(1-m))^{2/p}=c_1^{2/p}+\frac{c_4}{\alpha}s.
\end{aligned}
\end{equation*}
Here $c_1+c_3s$ is written as a convex combination of $c_1$ and $c_1+c_3m(1-m)$. Taking $s=\int(x^2-m)^2u^p$ gives the term with coefficient $c_4$.

To estimate the effect of the third moment, expand the two powers for $|\delta|<y$. We obtain
\begin{equation*}
\frac{y^{2/p}-\frac{1}{2}\big[(y+\delta)^{2/p}+(y-\delta)^{2/p}\big]}{\delta^2}
=-\sum_{j=1}^\infty\binom{2/p}{2j}y^{2/p-2j}\delta^{2j-2}.
\end{equation*}
At $\delta=0$, the quotient is understood by continuity. Since $0<2/p<1$, every coefficient $-\binom{2/p}{2j}$ is positive. Each power of $|\delta|$ is nonnegative and even, and each power of $y$ is negative. The quotient therefore increases with $|\delta|$ and decreases with $y$.

Take
\begin{equation*}
y=c_1+c_3\int(x^2-m)^2u^p,\qquad
\delta=c_2\int x^3u^p.
\end{equation*}
By \eqref{eq:VS} and the hypothesis,
\begin{equation*}
y\ge c_1>|c_2|m(1-m)\ge|\delta|.
\end{equation*}
The quotient is therefore bounded above by its value at $y=c_1$ and $|\delta|=|c_2|m(1-m)$. By the definition of $c_5$, this gives
\begin{equation*}
\alpha\left[y^{2/p}-\frac{(y+\delta)^{2/p}+(y-\delta)^{2/p}}{2}\right]
\le c_5\left(\int x^3u^p\right)^2.
\end{equation*}
The case $c_2=0$ is immediate. Combining this estimate with \eqref{eq:hermite} and the preceding concavity bound proves the first inequality in \eqref{eq:A5}.

For the last inequality, \eqref{eq:VS} gives
\begin{equation*}
\begin{aligned}
c_4\int(x^2-m)^2u^p-c_5\left(\int x^3u^p\right)^2
\ge-\frac{[mc_5-c_4]_+}{m}\left(\int x^3u^p\right)^2\ge-[mc_5-c_4]_+m(1-m)^2.
\end{aligned}
\end{equation*}

If $\tau=\sqrt m$, then
\begin{equation*}
c_1=2^{7/3}m^{5/3},\qquad \alpha c_1^{3/5}=2m,\qquad
\frac{c_2}{c_1}=\frac{1}{3m^{3/2}},\qquad
c_3=\frac{35}{243(1+\sqrt m)^{2/3}}.
\end{equation*}
For $m\ge3/5$, these imply $c_1>|c_2|m(1-m)$ and
\begin{equation*}
c_4\ge\frac{5}{243}>\frac{1}{50},\qquad
c_5\le\frac{18}{655m^2}<\frac{3}{100m^2}.
\end{equation*}
Using these bounds in \eqref{eq:A5} proves \eqref{eq:sqrt5}.
\end{proof}

For $n=4$, $p=4$ and $\alpha^2=1/2$. Writing $z=\tau^2$, expanding $\int(\tau\pm x)^4u^p$ and using \eqref{eq:VS}, we obtain
\begin{equation}\label{eq:fourth}
\Big[\frac{\alpha}{2}\big(\|(\tau+x)u\|_4^2+\|(\tau-x)u\|_4^2\big)\Big]^2\ge\frac{z^2+6mz+m^2}{2}+\frac{1}{4}\int(x^2-m)^2u^p.
\end{equation}
Together with Lemma~\ref{lem:translated}(i), this gives, for all $z\ge0$,
\begin{equation}\label{eq:fourth-affine}
\frac{1}{a}\int(b\cdot\xi)x^2 u^p+\big(\beta-(1-\alpha)z\big)r-\frac{1-\alpha+\beta}{a}\int x^2u^2+z+m-\sqrt{\frac{z^2+6mz+m^2}{2}}\ge0 .
\end{equation}

\begin{lemma}\label{lem:fourth}
Let $n=4$ and $m\ge\frac{1}{2}$, and put $\omega=1-(1-\alpha)r$. Then
\begin{equation}\label{eq:N4}
\frac{1}{a}\Nn_e\ge m\,r\,\psi(r)+\frac{\sqrt{2\omega^2-1}}{16m}\int(x^2-m)^2u^p,
\end{equation}
where
\begin{equation*}
\psi(r)=\frac{(1-\alpha)(5-\omega)}{3\omega-1+2\sqrt{2\omega^2-1}}
\end{equation*}
and
\begin{equation}\label{eq:Nvar4}
\frac{1}{a}\Nn_e\ge\frac{7}{25m+7}\int(x^2-m)^2u^p .
\end{equation}
\end{lemma}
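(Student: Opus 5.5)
The plan is to specialise \eqref{eq:translated} to $n=4$ and feed in the quartic lower bound \eqref{eq:fourth}. With $z=\tau^2$ and $V:=\int(x^2-m)^2u^p$, \eqref{eq:translated} and \eqref{eq:fourth} give
\begin{equation*}
az+m+\Nn_e-(1-\alpha)\|u\|_2^2z\ge\sqrt{\tfrac{z^2+6mz+m^2}{2}+\tfrac{V}{4}} .
\end{equation*}
Suppose the right-hand side is at least $m$. Then, arguing as in Lemma~\ref{lem:translated}(ii) and using $0<a\le1$ together with $\|u\|_2^2=ar$, we can divide by $a$ and obtain
\begin{equation*}
\frac{1}{a}\Nn_e\ge g(z):=\sqrt{\tfrac{z^2+6mz+m^2}{2}+\tfrac{V}{4}}-m-\omega z,\qquad \omega=1-(1-\alpha)r .
\end{equation*}
Both \eqref{eq:N4} and \eqref{eq:Nvar4} will come from choosing $z$ well in this inequality.

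\emph{Proof of \eqref{eq:N4}.} First separate the variance term by concavity of the square root:
\begin{equation*}
\sqrt{Q+\tfrac{V}{4}}\ge\sqrt Q+\frac{V/4}{\sqrt Q+\sqrt{Q+V/4}},\qquad Q(z)=\tfrac{z^2+6mz+m^2}{2}.
\end{equation*}
Next scale $z=m\zeta$, so the main part becomes $m\bigl[\sqrt{(\zeta^2+6\zeta+1)/2}-1-\omega\zeta\bigr]$. I would take $\zeta$ to be the maximiser, determined by
\begin{equation*}
\frac{\zeta+3}{2w}=\omega,\qquad w=\sqrt{(\zeta^2+6\zeta+1)/2}.
\end{equation*}
Solving this quadratic in $\zeta$ should produce the radical $\sqrt{2\omega^2-1}$; note $\omega\ge\alpha=2^{-1/2}$, so the radical is real. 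At $r=0$ (that is, $\omega=1$) the maximiser is $\zeta=1$ and the value is $0$. The maximal value is therefore $m\,h(\omega)$ with $h(1)=0$. Rationalising $h(\omega)$ and substituting $1-\omega=(1-\alpha)r$ should yield exactly $m\,r\,\psi(r)$; this is where the factor $(1-\alpha)(5-\omega)$ over $3\omega-1+2\sqrt{2\omega^2-1}$ comes from.

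At this $z$ we have $\sqrt Q=mw\ge m$, which justifies the division step. We also have $V\le m(1-m)\le m/2$. These two facts let me bound the denominator $\sqrt Q+\sqrt{Q+V/4}$ by an explicit multiple of $m/\sqrt{2\omega^2-1}$, giving the coefficient $\sqrt{2\omega^2-1}/(16m)$. I expect this to be the most delicate algebra: one must check that $w$ at the optimum compares correctly with $\sqrt{2\omega^2-1}$, uniformly for $\omega\in[2^{-1/2},1]$.

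\emph{Proof of \eqref{eq:Nvar4}.} Here I would drop the favourable term $(1-\alpha)rz\ge0$, i.e.\ replace $\omega$ by $1$, and choose $z$ depending on $V/m$ rather than $z=m$. Plain $z=m$ only yields roughly $V/(16m)$, and Lemma~\ref{lem:nodal-variance} only yields $V/8$; both are weaker than $7/(25m+7)$ near $m=\tfrac12$. So the plan is to take $z=m\zeta$ with $\zeta=\zeta(s)$, $s=V/m\in[0,1-m]$, slightly below $1$, such that $\sqrt{Q+V/4}\ge m$ still holds. The claim then reduces to the two-variable inequality
\begin{equation*}
\sqrt{\tfrac{\zeta^2+6\zeta+1}{2}+\tfrac{s}{4m}}-1-\zeta\ge\frac{7s}{25m+7}\qquad\text{on }m\in[\tfrac12,1),\ 0\le s\le 1-m .
\end{equation*}
I expect this to be the main obstacle. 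I would first try $\zeta=1-cs$ with an explicit rational $c$, squaring to get a polynomial inequality, and verify its sign on the box, by hand or by the Bernstein-coefficient method used elsewhere in the paper. If that fails near the corners, I would fall back to taking the maximum of this bound and the $V/8$ bound from Lemma~\ref{lem:nodal-variance}.
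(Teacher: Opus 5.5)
\textbf{Part \eqref{eq:N4}.} Your reduction to $\frac1a\Nn_e\ge g(z)$ is correct. So is your main term: at the $V$-independent maximiser, $\zeta+3=4\omega/\sqrt{2\omega^2-1}$, one gets $\sqrt Q=2m/\sqrt{2\omega^2-1}\ge m$ and the value $m(3\omega-1-2\sqrt{2\omega^2-1})=mr\psi(r)$.

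The variance step, however, cannot give the stated coefficient. Since $\sqrt Q+\sqrt{Q+V/4}\ge 2\sqrt Q=4m/\sqrt{2\omega^2-1}$, your gain $\frac{V/4}{\sqrt Q+\sqrt{Q+V/4}}$ is strictly smaller than $\frac{\sqrt{2\omega^2-1}}{16m}V$ whenever $V>0$. No bound on the denominator can repair this: with $z$ frozen, concavity of the square root works against you.

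The fix, which is the paper's route, is to let $z$ depend on $V$. Write $Q+\frac V4=\frac12\bigl[(z+3m)^2-(8m^2-\frac V2)\bigr]$ and take $z=\frac{\omega\sqrt{16m^2-V}}{\sqrt{2\omega^2-1}}-3m$. This $z$ is positive and gives $\sqrt{Q+V/4}\ge m$, because $V\le m(1-m)\le m^2$ for $m\ge\frac12$. It yields $\frac1a\Nn_e\ge m(3\omega-1)-2m\sqrt{2\omega^2-1}\,\sqrt{1-\frac{V}{16m^2}}$. Now $\sqrt{1-t}\le 1-\frac t2$ points the right way and gives exactly \eqref{eq:N4}.

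\textbf{Part \eqref{eq:Nvar4}.} Your plan fails here, and not by a small margin. The supremum of $g$ over all $z\ge0$ is exactly $m(3\omega-1)-\frac12\sqrt{(2\omega^2-1)(16m^2-V)}$. With $\omega=1$ this equals $2m\bigl(1-\sqrt{1-V/(16m^2)}\bigr)\approx V/(16m)$. Since $z=m$ is already the first-order optimum, moving $\zeta$ below $1$ gains nothing at first order.

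Concretely, at $m=s=\frac12$ the left side of your two-variable inequality is at most $2-\frac{\sqrt{15}}{2}\approx 0.064$ for every $\zeta$, while the right side is $\frac{7}{39}\approx0.18$. Keeping $\omega<1$ only adds $mr\psi(r)$, which vanishes as $r\to0$. The fallback bound $V/8$ is below $\frac{7}{25m+7}\ge\frac{7}{32}$ for all $m\le1$.

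The missing idea is that \eqref{eq:translated} has already discarded the nodal splitting, since it uses only $\|(\tau\pm x)u\|_p^2$. You must return to $\tau=0$ and apply \eqref{eq:sobolev} separately to $x_+u$ and $x_-u$. This gives $m+\Nn_e\ge\alpha(\sqrt{m_+}+\sqrt{m_-})$ with $m_\pm=\int x_\pm^4u^p$.

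With $\varrho=\int x_+u^p=\int x_-u^p$, H\"older gives $m\le\varrho^{2/3}(m_+^{1/3}+m_-^{1/3})$ and $1\ge\varrho^{4/3}(m_+^{-1/3}+m_-^{-1/3})$. Hence $m^2\le m_+^{2/3}m_-^{1/3}+m_+^{1/3}m_-^{2/3}$.

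Since $V=m_++m_--m^2$, the factorisation $y^6-9y^4+16y^3-9y^2+1=(y-1)^4(y^2+4y+1)$ with $y=(m_+/m_-)^{1/6}$ gives $\frac12(\sqrt{m_+}+\sqrt{m_-})^2\ge m^2+\frac{7}{16}V$. Therefore $\frac1a\Nn_e\ge\Nn_e\ge\sqrt{m^2+\frac7{16}V}-m\ge\frac{7}{25m+7}V$, using $V\le m(1-m)$ in the last step.
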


\begin{proof}
Let
\begin{equation*}
z=\frac{\omega\sqrt{16m^2-\int(x^2-m)^2u^p}}{\sqrt{2\omega^2-1}}-3m.
\end{equation*}
Since $m\ge1/2$ and $\int(x^2-m)^2u^p\le m(1-m)\le m^2$, this number is positive. For this choice, the right-hand side of \eqref{eq:fourth} has square root larger than $m$. Hence \eqref{eq:translated}, \eqref{eq:fourth} and $a\le1$ give
\begin{equation*}
\frac{1}{a}\Nn_e\ge m(3\omega-1)
-\sqrt{\left(\omega^2-\frac{1}{2}\right)
\left(8m^2-\frac{1}{2}\int(x^2-m)^2u^p\right)}.
\end{equation*}
For $0\le s\le m(1-m)$, the inequality $\sqrt{1-t}\le1-t/2$ gives
\begin{equation*}
\begin{aligned}
\sqrt{\left(\omega^2-\frac{1}{2}\right)\left(8m^2-\frac{s}{2}\right)}
=2m\sqrt{2\omega^2-1}\sqrt{1-\frac{s}{16m^2}}\le2m\sqrt{2\omega^2-1}-\frac{\sqrt{2\omega^2-1}}{16m}s.
\end{aligned}
\end{equation*}
Take $s=\int(x^2-m)^2u^p$ and use
\begin{equation*}
3\omega-1-2\sqrt{2\omega^2-1}=r\psi(r).
\end{equation*}
This proves \eqref{eq:N4}.

For \eqref{eq:Nvar4}, put $m_\pm=\int x_\pm^4u^p$ and
$\varrho=\int x_+u^p=\int x_-u^p$. H\"older's inequality gives
\begin{equation*}
m\le\varrho^{2/3}(m_+^{1/3}+m_-^{1/3}),\qquad
1\ge\varrho^{4/3}(m_+^{-1/3}+m_-^{-1/3}).
\end{equation*}
Consequently,
\begin{equation*}
m^2\le m_+^{2/3}m_-^{1/3}+m_+^{1/3}m_-^{2/3}.
\end{equation*}
Using
\begin{equation*}
y^6-9y^4+16y^3-9y^2+1=(y-1)^4(y^2+4y+1)
\end{equation*}
with $y=(m_+/m_-)^{1/6}$, we obtain
\begin{equation*}
\left[\alpha\big(\|x_+u\|_4^2+\|x_-u\|_4^2\big)\right]^2
\ge m^2+\frac{7}{16}\int(x^2-m)^2u^p.
\end{equation*}
Thus
\begin{equation*}
\frac{1}{a}\Nn_e\ge\sqrt{m^2+\frac{7}{16}\int(x^2-m)^2u^p}-m
\ge\frac{7}{25m+7}\int(x^2-m)^2u^p,
\end{equation*}
where the last inequality uses $\int(x^2-m)^2u^p\le m(1-m)$.
\end{proof}

\section{Spectral separation}\label{sec:spectral}

In this section, we show that the matrix $M$ of a non-constant stable solution has exactly one large eigenvalue. We keep the notation of Section~\ref{sec:directional}, and we denote by $\lambda_1\ge\lambda_2\ge\dots\ge\lambda_{n+1}$ the eigenvalues of $M$. Recall that $\sum_i\lambda_i=1$ by \eqref{eq:basic}.

\begin{proposition}\label{prop:spectral}
Let $u$ be a non-constant stable solution. Then $\lambda_1>\frac{1}{2}$, and for every $i\ge2$,
\begin{equation*}
\lambda_i<\kappa_nr\quad\text{if }4\le n\le19,\qquad \lambda_i<\frac{4\|u\|_2^2}{3n}\quad\text{if }n\ge20,
\end{equation*}
where
\begin{equation*}
\kappa_4=\frac{3}{5},\qquad \kappa_5=\frac{7}{20},\qquad \kappa_6=\frac{1}{3},\qquad \kappa_n=\frac{2}{n}\quad\text{if }7\le n\le19.
\end{equation*}
Moreover, if $n=4$, then $r<\frac{99}{100}$.
\end{proposition}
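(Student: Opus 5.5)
The argument is a directional exclusion. Fix a unit eigenvector $e$ of $M$ with eigenvalue $m$ and write $x=e\cdot\xi$. Stability gives $Q(\Phi_e,\Phi_e)\ge0$, so by \eqref{eq:Qa} we have $\frac1aQ(\Phi_e,\Phi_e)\ge0$. This is an affine inequality in the three unknowns
\begin{equation*}
r,\qquad s:=\frac1a\int x^2u^2\in[0,r],\qquad w:=\frac1a\int(b\cdot\xi)x^2u^p,
\end{equation*}
and its coefficients depend only on $n$ and $m$. The plan is to show that this inequality is incompatible with the directional constraints of Lemma~\ref{lem:directional} whenever $m$ lies in a forbidden range. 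Concretely, I will combine the following with nonnegative weights depending on $(n,m)$:
\begin{itemize}
\item the inequality $\frac1aQ\ge0$;
\item the $\Nn_e$ bound of part (i);
\item the affine bound $\mathcal G_z\ge0$ of part (ii), with $z$ chosen in terms of $m$;
\item the bound $|w|\le\frac{|b|}{a}m(1-m)<\gamma_nm(1-m)$, from part (iii) and Proposition~\ref{prop:multiplier}.
\end{itemize}
The aim is a combination whose sum is strictly negative on the box $0\le s\le r<1$, $m$ in the range. This is a linear-programming certificate on a rational box, and I would reduce it to finitely many sign conditions checked by Bernstein coefficients. For $4\le n\le19$, the forbidden range is $\kappa_nr\le m\le\frac12$. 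Two consequences follow:
\begin{itemize}
\item every eigenvalue $\lambda_i\le\frac12$ satisfies $\lambda_i<\kappa_nr$;
\item since $\sum\lambda_i=1$ and $\kappa_nr(n+1)\ge1$ would otherwise be needed, some eigenvalue exceeds $\frac12$.
\end{itemize}
More carefully, if all $\lambda_i\le\frac12$ then all $\lambda_i<\kappa_nr$. For $n\ge7$ this forces $1<(n+1)\cdot\frac2n r$, which is not immediately a contradiction. So I would additionally sum \eqref{eq:stability-source} over an eigenbasis and use $\tr M=1$, $\tr\int\xi\xi^Tu^2=\|u\|_2^2$, yielding an inequality in $r$ alone that fails. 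That settles $\lambda_1>\frac12$, and uniqueness of the large eigenvalue is then automatic from the trace.

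For $n\ge20$ the directional approach is too lossy, and I would instead restore the coordinate variations. The test functions are $f=Z_i+s_i\xi_iu$ in the eigenbasis of $M$, corrected to lie in $\Tu$; note that $\xi_iu$ itself satisfies $\int u^{p-1}\xi_iu=0$. The matrix entries of $Q$ on these are given by \eqref{eq:Qmatrix}. I would bound the $b$-terms by Lemma~\ref{lem:third} together with $|b|/a<\frac{13}{6n}$, and bound $\int\xi_i^2u^2$ via the Gram/coordinate identities \eqref{eq:coord} and Lemma~\ref{lem:gram}. With $s_i$ optimized, this should give a negative value whenever $\lambda_i\ge\frac{4\|u\|_2^2}{3n}$ and $\lambda_i\le\frac12$. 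The resulting polynomial inequalities in $\frac1n$ are checked uniformly for large $n$, and $\lambda_1>\frac12$ follows again by the trace argument.

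For $n=4$ the constant $\kappa_4=\frac35$ is weak, and I would sum \eqref{eq:retained} over an eigenbasis. This uses the extra nodal information \eqref{eq:Nvar4} and \eqref{eq:fourth-affine} to retain a negative contribution. The same summation should also yield $r<\frac{99}{100}$: as $r\to1$ the energy is almost entirely $L^2$, so $u$ is nearly constant, and then $M\approx\frac1{n+1}I$ contradicts stability, since the summed second variation is negative there.

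The main obstacle I expect is the low-dimensional directional exclusion near $m\approx\frac12$, where the unsigned third-moment term $w$ is largest relative to the available slack. The plan there is to replace the crude bound (iii) by part (v), the $\Mcons$ bound, together with the relation \eqref{eq:schur-dir} between $b\cdot e$ and $\Nn_e$, so that $w$ is paid for by the retained negative $\Nn_e$ term in \eqref{eq:retained}. I would first try splitting $[\kappa_nr,\frac12]$ into subintervals, each with its own certificate.
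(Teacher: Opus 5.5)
Your first step is essentially what the paper does in Propositions~\ref{prop:spectral-finite} and \ref{prop:spectral20}: you exclude an eigenvalue $m\in[\kappa_nr,\frac12]$ (resp.\ $m\in[\frac{4\|u\|_2^2}{3n},\frac12]$ for $n\ge20$) by a nonnegative combination of $\frac1aQ(\Phi_e,\Phi_e)\ge0$ with the constraints of Lemma~\ref{lem:directional}. Three caveats apply.
\begin{itemize}
\item The paper's certificates also use the translated-function bounds \eqref{eq:translated-affine}, via Lemmas~\ref{lem:paired6}, \ref{lem:cubic}, \ref{lem:hermite5} and \eqref{eq:fourth}.
\item You cannot invoke $\Mcons\ge0$ in this step, since part (v) of Lemma~\ref{lem:directional} requires $m=\lambda_1$.
\item For $n\ge20$, $s_i$ is not free. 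Membership in $\Tu$ forces $s_i=\frac{1-m_i}{pm_i}$, so your test functions are exactly $\Phi_{e_i}$.
\end{itemize}

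The genuine gap is the proof of $\lambda_1>\frac12$. The configurations that survive the first step include a neighbourhood of the constant solution: $r\approx1$ and $M\approx\frac1{n+1}I$, with $\frac1{n+1}<\kappa_nr$. Your remedy is to sum \eqref{eq:stability-source} over an eigenbasis, and this cannot exclude them, for two reasons.
\begin{itemize}
\item The coefficients of \eqref{eq:stability-source} depend nonlinearly on $m$, through $C_m$ and $(1-m)^2/m$. So the sum is not a function of $r$ and the traces alone.
\item More decisively, \eqref{eq:stability-source} is obtained from \eqref{eq:retained} by discarding $-nm\bigl(\frac2p+2t+(p-2)t^2\bigr)\Nn_e$. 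At $u\equiv1$, which is stable, $\Phi_e=0$ but $\Nn_e=\frac{n\beta-(1-\alpha)}{n+1}>0$. Hence \eqref{eq:stability-source} holds there in every direction with a fixed positive slack (about $0.70$ for $n=5$). By continuity, the summed inequality also holds for near-constant data, so no contradiction can come from it.
\end{itemize}
The same objection applies to your ``trace argument'' for $n\ge20$; note that $\sum_i\lambda_i=1$ is compatible with all $\lambda_i<\frac{4\|u\|_2^2}{3n}$. It also applies to your $n=4$ heuristic that the summed second variation is negative near the constant: it is exactly zero there.

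What is missing are inequalities that are equalities at $u\equiv1$, with strictness coming from $r<1$.
\begin{itemize}
\item For $5\le n\le19$, the paper treats $m=\lambda_1\in[\frac1{n+1},\frac12]$ by combining $\frac1aQ(\Phi_e,\Phi_e)$ with $\Mcons\ge0$ and with the identity $\mathcal G_{1/(n+1)}=\frac\beta a\int x^2|\nabla u|^2$. Both vanish at the constant, and the combination equals $(1-r)[\,\cdot<0\,]+r[\,\cdot\le0\,]$ (Lemma~\ref{lem:small-max}).
\item For $n\ge20$, the restored directions $x_vu+pY_{Mv}$ equal $pm_i\Phi_{e_i}$ in an eigenbasis of $M$. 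The paper sums $p^2m_i^2Q(\Phi_{e_i},\Phi_{e_i})$ \emph{exactly}, as the trace in \eqref{eq:restored}. It then compares two quantities of order $\|\nabla u\|_2^2$ using \eqref{eq:trace-stable}, Lemma~\ref{lem:A-lower} and \eqref{eq:R-norm}.
\item For $n=4$, Lemma~\ref{lem:near-one} sums $\frac{8m_i^2}{(1+m_i)^2}\frac1aQ(\Phi_{e_i},\Phi_{e_i})$. These weights give every mixed third moment the same coefficient, so they cancel. After proving $|m_i-\frac15|=O(\varepsilon)$ with $\varepsilon=1-r$, one gets $-\frac49\varepsilon+\frac{45}{2}\varepsilon^2$.
\item Also for $n=4$, when $\frac13<r<\frac{99}{100}$, Proposition~\ref{prop:trace-cert} derives per-eigenvector bounds that are affine in $m$ and $\frac1a\int x^2u^2$ with $m$-independent coefficients. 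Summing these genuinely reduces to $5A_*+B_*+C_*r<0$.
\end{itemize}
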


For $n\ge20$, the second bound also gives $\lambda_i<4r/(3n)$. We first exclude intermediate eigenvalues. The remaining configurations are ruled out by considering the largest eigenvalue for $5\le n\le19$, and by summing suitable second variations for $n=4$ and $n\ge20$.

\subsection{Directional exclusion}
Let $Me=me$, $|e|=1$, and write $x=e\cdot\xi$. For fixed $n$ and $m$, the estimates in Lemma~\ref{lem:directional} and \eqref{eq:translated-affine} have the form
\begin{equation*}
c_0+\frac{c_1}{a}\int x^2u^2
+\frac{c_2}{a}\int(b\cdot\xi)x^2u^p+c_3r\ge0.
\end{equation*}
Here the coefficients depend only on $n$, $m$ and the chosen parameters $z$ or $\tau$. We add nonnegative multiples of these inequalities to \eqref{eq:Qa}. If the resulting expression is negative, stability is impossible.

\begin{lemma}\label{lem:exclusion}
Let $\mathcal L_1,\ldots,\mathcal L_N$ denote the left-hand sides of inequalities of the above form satisfied by $u$. Let $\rho\ge r$. Suppose that there are $w_j\ge0$ such that
\begin{equation*}
\frac{1}{a}Q(\Phi_e,\Phi_e)+\sum_{j=1}^Nw_j\mathcal L_j<0
\end{equation*}
for all values of the three quantities satisfying
\begin{equation*}
0\le\frac{1}{a}\int x^2u^2\le\rho,\qquad
0\le r\le\rho,\qquad
\left|\frac{1}{a}\int(b\cdot\xi)x^2u^p\right|\le\gamma_nm(1-m).
\end{equation*}
Then $u$ is unstable.
\end{lemma}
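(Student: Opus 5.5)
The plan is to show that the hypotheses force $Q(\Phi_e,\Phi_e)<0$ for the explicit test function $\Phi_e\in\Tu$ from \eqref{eq:Phi-data}. This contradicts stability directly.

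First, I fix a non-constant stable solution and a unit eigenvector $e$ of $M$ with eigenvalue $m$. By \eqref{eq:Phi-data}, $\Phi_e\in\Tu$. Hence stability gives $Q(\Phi_e,\Phi_e)\ge0$, and dividing by $a>0$ (recall $a=\|u\|_2^2+\beta\|\nabla u\|_2^2>0$) gives $\frac1aQ(\Phi_e,\Phi_e)\ge0$. Each $\mathcal L_j$ is, by hypothesis, the left-hand side of an inequality satisfied by $u$, so $\mathcal L_j\ge0$. Since $w_j\ge0$, the combination $\frac1aQ(\Phi_e,\Phi_e)+\sum_jw_j\mathcal L_j$ is nonnegative.

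Second, I check that the actual values of the three quantities $\frac1a\int x^2u^2$, $r$ and $\frac1a\int(b\cdot\xi)x^2u^p$ lie in the admissible region. Lemma~\ref{lem:directional}(i) gives $0\le\frac1a\int x^2u^2\le r\le\rho$, and $r\in(0,1)$ by definition. Lemma~\ref{lem:directional}(iii) together with Proposition~\ref{prop:multiplier} gives $\bigl|\frac1a\int(b\cdot\xi)x^2u^p\bigr|\le\frac{|b|}{a}m(1-m)\le\gamma_nm(1-m)$.

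Third, I note that by \eqref{eq:Qa} the quantity $\frac1aQ(\Phi_e,\Phi_e)$ is an affine function of these three quantities, with coefficients depending only on $n$ and $m$. Each $\mathcal L_j$ has the same form, with the only other dependence being on the fixed parameters $z$ or $\tau$. So the whole combination is a function of the three quantities alone. The hypothesis says it is negative at every admissible point, and in particular at the actual values. This contradicts the nonnegativity from the first step, so $u$ cannot be stable.

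The only point needing care is this reduction to three variables: one must make sure that no other integral of $u$ enters \eqref{eq:Qa} or the $\mathcal L_j$. The identity \eqref{eq:Qa} guarantees this for $Q$, and it holds for the $\mathcal L_j$ by the assumed form. Otherwise the lemma is a direct bookkeeping argument.
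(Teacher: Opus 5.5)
Your proposal is correct and is essentially the paper's argument. Lemma~\ref{lem:directional}(i),(iii) and Proposition~\ref{prop:multiplier} place the actual values of the three quantities in the admissible region. Stability, $\mathcal L_j\ge0$ and $w_j\ge0$ then give $0\le\frac1aQ(\Phi_e,\Phi_e)\le\frac1aQ(\Phi_e,\Phi_e)+\sum_jw_j\mathcal L_j<0$; your remark that everything depends only on those three quantities (with $m$ fixed) just makes explicit what the paper leaves implicit.
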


\begin{proof}
Lemma~\ref{lem:directional} and Proposition~\ref{prop:multiplier} show that the actual integrals satisfy these bounds. If $u$ were stable, then $\Phi_e\in\Tu$ and $\mathcal L_j\ge0$ would give
\begin{equation*}
0\le\frac{1}{a}Q(\Phi_e,\Phi_e)
\le\frac{1}{a}Q(\Phi_e,\Phi_e)+\sum_{j=1}^Nw_j\mathcal L_j<0.
\end{equation*}
\end{proof}

\begin{proposition}\label{prop:spectral-finite}
Let $4\le n\le19$. Then every eigenvalue $m\le\frac{1}{2}$ of $M$ satisfies $m<\kappa_nr$. If $n\ge5$, then $\lambda_1>\frac{1}{2}$.
\end{proposition}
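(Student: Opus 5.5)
The plan is to apply Lemma~\ref{lem:exclusion} eigenvector by eigenvector. Fix a unit eigenvector $e$ of $M$ with eigenvalue $m\le\frac12$, and suppose for contradiction that $m\ge\kappa_n r$. I would take $\rho=m/\kappa_n$, so that $r\le\rho$ and, by Lemma~\ref{lem:directional}(i), also $\frac1a\int x^2u^2\le r\le\rho$. The third quantity is controlled by Proposition~\ref{prop:multiplier} together with Lemma~\ref{lem:directional}(iii), giving $\bigl|\frac1a\int(b\cdot\xi)x^2u^p\bigr|\le\gamma_nm(1-m)$.

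Next I form $\frac1aQ(\Phi_e,\Phi_e)$ via \eqref{eq:Qa} and add nonnegative multiples of the available affine constraints. These are the inequality of Lemma~\ref{lem:directional}(i), the family $\mathcal G_z\ge0$ from \eqref{eq:affine} for a few well-chosen $z$, and the translated inequalities \eqref{eq:translated-affine} at suitable $\tau$. For the latter, $c_1$ comes from Lemma~\ref{lem:paired6} when $n=6$, from \eqref{eq:cubic-paired} when $7\le n\le19$, from \eqref{eq:A5} when $n=5$, and from \eqref{eq:fourth-affine} when $n=4$. The third-moment defects in those lower bounds are absorbed using \eqref{eq:VS}, for instance $(\int x^3u^p)^2\le m^2(1-m)^2$.

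Once $n$, $m$ and the multipliers $w_j$ are fixed, the resulting expression is affine in the three variables $\frac1a\int x^2u^2$, $r$ and $\frac1a\int(b\cdot\xi)x^2u^p$. Its negativity on the box therefore reduces to checking the eight vertices. I would split $m\in[0,\frac12]$ into finitely many rational subintervals. On each subinterval I fix the weights $w_j$ and the parameters $z,\tau$, and verify that the vertex values, which are explicit functions of $m$, are negative. This can be done by Bernstein expansion or interval arithmetic, in the spirit of the computer checks in Appendix~\ref{app:spectral}.

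For small $m$ the key mechanism is $t=\frac{1-m}{pm}\to\infty$. The term $-(p-2)t^2m$ then dominates, while the $r$-coefficient $A_0=\beta t^2-(1-\alpha)$ can be bounded because $r\le m/\kappa_n$. The competition between $\beta t^2 r\lesssim \beta t^2 m/\kappa_n$ and $(p-2)t^2m$ is what fixes the threshold $\kappa_n$. Near $m=\frac12$ the constraints $\mathcal G_z$ and the translated-norm bounds are needed to cancel the positive part of $D_0\frac1a\int x^2u^2$. Applying Lemma~\ref{lem:exclusion} then gives instability, contradicting stability; hence every eigenvalue $m\le\frac12$ satisfies $m<\kappa_nr$.

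For the second assertion, let $n\ge5$ and suppose $\lambda_1\le\frac12$. Then every eigenvalue is at most $\frac12$, so each satisfies $\lambda_i<\kappa_nr\le\kappa_n$. Summing and using $\operatorname{tr}M=1$ from \eqref{eq:basic} gives $1<(n+1)\kappa_nr<(n+1)\kappa_n$. For $7\le n\le19$ this bound, $(n+1)\cdot\frac2n$, is above $1$ and yields no contradiction, so the trace argument alone is insufficient. Instead I would sharpen the argument: for $m$ small relative to $1/(n+1)$, the exclusion above shows that $\lambda_1=\max\lambda_i\ge\frac1{n+1}$ is itself an eigenvalue $\le\frac12$ that must satisfy a stronger restriction. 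Concretely, I would run the exclusion for the top eigenvalue with the extra information $m\ge\frac1{n+1}$, and with Lemma~\ref{lem:directional}(v), which applies since $m$ is maximal and gives the constraint $\Mcons\ge0$ from \eqref{eq:Mgeneral} on $[\frac1{n+1},\frac12]$. The aim is a contradiction with no reference to $r$, i.e.\ taking $\rho=1$.

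I expect the main obstacle to be this last step: showing that the maximal eigenvalue cannot lie in $[\frac1{n+1},\frac12]$ uniformly in $r\in(0,1)$. This requires well-tuned weights combining $\Mcons$, several $\mathcal G_z$, and the translated bounds, together with a certified computer verification on each $m$-interval.
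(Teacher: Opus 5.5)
Your treatment of the first assertion is the paper's. Assuming $m\ge\kappa_n r$ confines $r$ and $\frac1a\int x^2u^2$ to $[0,m/\kappa_n]$. A nonnegative combination of Lemma~\ref{lem:directional}(i)--(iii) with \eqref{eq:translated-affine} is then certified negative on each $m$-interval. The paper divides every inequality by $m$; you need the same rescaling, since your vertex values grow like $1/m$. Because $\kappa_n>\frac1{n+1}$, the constant function lies outside this box, so strict certificates can exist. Likewise, for the top eigenvalue on $[m_n,\frac12]$, adding $\Mcons\ge0$ and taking $\rho=1$ is what the paper does.

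The step that fails is the bottom of the range, $m=\lambda_1$ close to $\frac1{n+1}$. There you again propose Lemma~\ref{lem:exclusion} with $\rho=1$ and a strict vertex check, but no such certificate can exist. For $u\equiv1$ one has $M=\frac1{n+1}I$, $t=\frac{n-2}{2}$ and $\Phi_e=Z_e+txu=0$, so $Q(\Phi_e,\Phi_e)=0$. Every constraint you use ($\Nn_e\ge0$, $\mathcal G_z\ge0$, $\Mcons\ge0$, \eqref{eq:translated-affine}) also holds for the constant solution. Its data $\bigl(\frac1a\int x^2u^2,\,r,\,\frac1a\int(b\cdot\xi)x^2u^p\bigr)=(\frac1{n+1},1,0)$ lie in the closed box. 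Hence any nonnegative combination is $\ge0$ at that point, and your check fails on every subinterval containing $m=\frac1{n+1}$. The contradiction must use $r<1$, i.e.\ non-constancy, which a closed-box criterion cannot see.

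The paper uses only $\mathcal G_{1/(n+1)}$ and $\Mcons$. At $z=\frac1{n+1}$ the bracket in \eqref{eq:affine} vanishes, and by \eqref{eq:coord} $\mathcal G_{1/(n+1)}=\frac\beta a\int x^2|\nabla u|^2$. The weights \eqref{eq:small-max-weights} cancel the coefficients of $\frac1a\int x^2u^2$ and $\frac1a\int(b\cdot\xi)x^2u^p$ exactly. The sum then becomes $(1-r)[\cdots]+r[\cdots]$, with the first bracket strictly negative and the second $\le0$, vanishing only at $m=\frac1{n+1}$ (Lemma~\ref{lem:small-max}). The strict inequality $0<r<1$ then gives the contradiction. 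This device is missing from your proposal: strict negativity at $r=0$, only $\le0$ at $r=1$, and then the strictness of $r<1$. It is exactly the part you left as ``the main obstacle.''
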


\begin{proof}
Suppose first that an eigenvalue $m\le\frac{1}{2}$ satisfies $m\ge\kappa_nr$. Then
\begin{equation*}
0\le\frac{1}{a}\int x^2u^2\le r\le\frac{m}{\kappa_n},\qquad
\left|\frac{1}{a}\int(b\cdot\xi)x^2u^p\right|\le\gamma_nm(1-m).
\end{equation*}
The last inequality follows from Lemma~\ref{lem:directional}(iii) and Proposition~\ref{prop:multiplier}.

Use the inequalities of Lemma~\ref{lem:directional}(i)--(iii) and \eqref{eq:translated-affine}. Their left-hand sides are nonnegative. The lower bounds for the two norms are supplied by Lemmas~\ref{lem:paired6}, \ref{lem:cubic} and \ref{lem:hermite5}, and by \eqref{eq:fourth}, in the respective dimensions. The parameters and intervals on which these bounds apply are listed in Appendix~\ref{app:spectral-directional}.

For $n=4,5$ and $7\le n\le19$, the weights and the strict upper bound in \eqref{eq:spectral-upper} give
\begin{equation*}
0\le\frac{1}{a}Q(\Phi_e,\Phi_e)
+\sum_jw_j\mathcal L_j<0.
\end{equation*}
Here $w_j\ge0$, and $\mathcal L_j$ are the left-hand sides just described. For $n=6$, the explicit weights in Appendix~\ref{app:spectral-six} give the same contradiction. Thus every eigenvalue $m\le\frac{1}{2}$ satisfies $m<\kappa_nr$.

Now let $n\ge5$ and suppose that $m=\lambda_1\le\frac{1}{2}$. Since $\tr M=1$,
\begin{equation*}
\frac{1}{n+1}\le m\le\frac{1}{2}.
\end{equation*}
Put
\begin{equation*}
m_5=\frac{67}{400},\qquad m_6=\frac{1}{6},\qquad
m_n=\frac{3}{20}\quad\text{if }7\le n\le19.
\end{equation*}
For $m_n\le m\le\frac{1}{2}$, use the same inequalities together with \eqref{eq:Mgeneral}, which is available because $m$ is the largest eigenvalue. The weights in \eqref{eq:spectral-upper}, now with $0\le r\le1$, again give a negative sum of nonnegative terms. The dimension-six weights are given in Appendix~\ref{app:spectral-six}.

It remains to consider $\frac{1}{n+1}\le m\le m_n$. By \eqref{eq:coord} and \eqref{eq:affine},
\begin{equation*}
\mathcal G_{1/(n+1)}=\frac{\beta}{a}\int x^2|\nabla u|^2\ge0.
\end{equation*}
In \eqref{eq:Mgeneral}, take
\begin{equation}\label{eq:small-max-tau}
\tau_m=\frac{\tau_0m}{m+\frac{(n-2+2\alpha)(1-m)}{2n(1-\alpha)}},\qquad
\tau_0=\begin{cases}
\frac{13}{20},&n=5,\\
\frac{57-32m}{80},&n=6,\\
\frac{71}{100},&7\le n\le19.
\end{cases}
\end{equation}
These choices satisfy $\tau_0\ge\sqrt{(1-m)/2}$ on the stated intervals, so $\Mcons\ge0$.

Choose
\begin{equation}\label{eq:small-max-weights}
\begin{aligned}
w_{\Mcons}&=\frac{2}{p}+2t+(p-2)t^2-\frac{D_0}{1+(n+1)\beta},\\
w_{\mathcal G}&=\left(1+\frac{\tau_m}{2}\right)\frac{D_0}{1+(n+1)\beta}
-\frac{\tau_m}{2}\left(\frac{2}{p}+2t+(p-2)t^2\right).
\end{aligned}
\end{equation}
By Lemma~\ref{lem:small-max}, both weights are positive and
\begin{equation*}
\begin{aligned}
&-(p-2)t^2m+w_{\mathcal G}m
+w_{\Mcons}\tau_m\left(1+\frac{m}{2}\right)<0,\\
&A_0-(p-2)t^2m+w_{\mathcal G}(m+\beta)
+\frac{w_{\Mcons}\tau_m}{2}(m+\beta)\le0.
\end{aligned}
\end{equation*}
The second expression vanishes only at $m=\frac{1}{n+1}$. Formula \eqref{eq:Qa} gives
\begin{equation*}
\begin{aligned}
0\le{}&\frac{1}{a}Q(\Phi_e,\Phi_e)
+w_{\mathcal G}\mathcal G_{1/(n+1)}+w_{\Mcons}\Mcons\\
={}&(1-r)\left[-(p-2)t^2m+w_{\mathcal G}m
+w_{\Mcons}\tau_m\left(1+\frac{m}{2}\right)\right]\\
&+r\left[A_0-(p-2)t^2m+w_{\mathcal G}(m+\beta)
+\frac{w_{\Mcons}\tau_m}{2}(m+\beta)\right]<0,
\end{aligned}
\end{equation*}
since $0<r<1$. This proves $\lambda_1>\frac{1}{2}$.
\end{proof}

\subsection{Variations preserving the constraints}
For $n\ge20$, we first exclude intermediate eigenvalues using \eqref{eq:stability-source}. If all eigenvalues are small, the coordinate variations can be corrected at a small cost, and their second variations have negative sum.

In this subsection, let $\{e_i\}$ be the standard basis of $\R^{n+1}$, and set
\begin{equation*}
Y_i=Z_i-\frac{1}{p}\xi_iu=X_{e_i}u-\frac{n+1}{p}\xi_iu,\qquad Y_v=\sum_iv_iY_i\quad(v\in\R^{n+1}).
\end{equation*}
We write $Q_{\xi\xi}$, $Q_{Y\xi}$ and $Q_{YY}$ for the matrices with entries $Q(\xi_iu,\xi_ju)$, $Q(Y_i,\xi_ju)$ and $Q(Y_i,Y_j)$, respectively. By \eqref{eq:ibp},
\begin{equation*}
\int u^{p-1}Y_v=0,\qquad \int\xi u^{p-1}Y_v=-\frac{1}{p}v.
\end{equation*}
By \eqref{eq:Qmatrix},
\begin{equation}\label{eq:restored-data}
\begin{gathered}
Q_{YY}=(1-\alpha)\Big[\Big(3-\frac{4}{p}\Big)\int\xi\xi^Tu^2-\|u\|_2^2I\Big]+\frac{Q_{\xi\xi}}{p^2},\\
Q_{Y\xi}=2(1-\alpha)\int\xi\xi^Tu^2-\int(b\cdot\xi)\xi\xi^T u^p-\frac{Q_{\xi\xi}}{p},
\end{gathered}
\end{equation}
and \eqref{eq:coord} gives
\begin{equation*}
Q_{\xi\xi}=\beta(p-1)\|u\|_2^2I-\big[\beta+(p-2)(1+(n+1)\beta)\big]\int\xi\xi^Tu^2-(p-2)\beta\int\xi\xi^T|\nabla u|^2 .
\end{equation*}
We insert the second inequality in \eqref{eq:gram}. Since $\int\xi\xi^Tu^2$ and $C=(n+1)\int\xi\xi^Tu^2-\|u\|_2^2I$ commute, the spectral calculus for $\Gamma=\frac{1}{\|u\|_2^2}\int\xi\xi^Tu^2$, with $0<\Gamma< I$, gives
\begin{equation}\label{eq:A-bound}
\begin{gathered}
Q_{YY}\leq\frac{\beta\|u\|_2^2}{(n-2)p^2}F_{n,k}(\Gamma)(I-\Gamma)^{-1},\\
F_{n,k}(z)=\big[n+2-kn^2+(k-1)(n+1)(n+2)z\big](1-z)-\big((n+1)z-1\big)^2,
\end{gathered}
\end{equation}
where we used $\alpha=\frac{n}{n+k}$.

\begin{lemma}\label{lem:A-lower}
For every $n\ge3$, we have $-Q_{YY}>\frac{\beta(n+1)\|u\|_2^2}{p^2}I$. If $n\ge20$, then
\begin{equation}\label{eq:A-lower}
-Q_{YY}>\frac{4}{3}\,\frac{\beta(n+1)\|u\|_2^2}{p^2}I .
\end{equation}
\end{lemma}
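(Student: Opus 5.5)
Since every matrix function in \eqref{eq:A-bound} is a function of the single symmetric matrix $\Gamma$ with $0<\Gamma<I$, the plan is to reduce the lemma to a scalar inequality on $(0,1)$. By \eqref{eq:A-bound} and the spectral calculus, it suffices to show that
\begin{equation*}
-\frac{F_{n,k}(z)}{(n-2)(1-z)}> c\,(n+1)\qquad\text{for all }z\in(0,1),
\end{equation*}
with $c=1$ for every $n\ge3$ and $c=\frac43$ for $n\ge20$. Multiplying by $(n-2)(1-z)>0$, this is the polynomial inequality
\begin{equation*}
G_c(z):=\big((n+1)z-1\big)^2-\big[n+2-kn^2+(k-1)(n+1)(n+2)z\big](1-z)-c(n+1)(n-2)(1-z)>0 .
\end{equation*}
Here $G_c$ is a quadratic in $z$, with leading coefficient $(n+1)^2+(k-1)(n+1)(n+2)$.

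To control $k=n(2^{2/n}-1)$, use the elementary bounds $2\log 2<k<2\log2+\frac{2(\log 2)^2}{n}\,2^{2/n}$, which follow from the series of $e^{s}$ with $s=\frac{2\log2}{n}$. These give $k\in(1.386,\,1.6)$ for $n\ge3$ and $k-2\log 2=O(1/n)$. In particular $k>1$, so $G_c$ is convex.

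Next check $G_c$ at the endpoints. At $z=1$ we get $G_c(1)=n^2>0$. At $z=0$ we get
\begin{equation*}
G_c(0)=1-(n+2)+kn^2-c(n+1)(n-2)=(k-c)n^2+(c-1)n+2c-1,
\end{equation*}
which is positive because $k>\frac43\ge c$. Positivity at the endpoints is not enough for a convex function, so one must also handle an interior minimum.

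At the vertex, the minimum is $G_c(0)-\frac{G_c'(0)^2}{4A}$ whenever the vertex lies in $(0,1)$, where $A$ is the leading coefficient. So the task is to show $4A\,G_c(0)>G_c'(0)^2$, or that the vertex lies outside $(0,1)$. Here
\begin{equation*}
G_c'(0)=-2(n+1)+(n+2-kn^2)-(k-1)(n+1)(n+2)+c(n+1)(n-2),
\end{equation*}
and its leading order in $n$ is $(c+1-2k)n^2$. For $c\le\frac43$ and $k>1.38$ this coefficient is negative, so the vertex lies at positive $z$, of size about $\frac{2k-c-1}{2(k)}\cdot\frac1{n}\cdot\dots$. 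The expected argument compares leading orders: $4A\,G_c(0)\sim 4k(k-c)n^4$ while $G_c'(0)^2\sim(2k-c-1)^2n^4$. The needed inequality $4k(k-c)>(2k-c-1)^2$ simplifies to
\begin{equation*}
(c+1)(2k-c-1)>\dots,
\end{equation*}
and more precisely to $4kc-4k^2+\dots$. Rather than rely on that simplification, I would expand the discriminant exactly as a polynomial in $n$ with coefficients affine or quadratic in $k$, and bound it using the two-sided enclosure of $k$.

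For $c=\frac43$ and $n\ge20$, I would substitute $s=1/n\in(0,\frac1{20}]$ and $k=2\log 2+\theta s$ with $\theta$ confined to a box. Positivity then becomes a polynomial inequality on a small rational box, which can be verified by Bernstein coefficients or interval arithmetic, in the same way as the paper's Lemma~\ref{lem:large-estimates}. For $c=1$ and small $n$, say $3\le n\le 19$, I would check each dimension directly with rational enclosures of $k$, and use the asymptotic argument for the remaining $n$.

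The main obstacle is the interior minimum. Because the discriminant margin is a leading-order $n^4$ quantity that involves $k$ nonlinearly, $c=\frac43$ may be close to sharp. If so, I would keep the exact vertex expression and do the verification by computer, rather than trying to absorb the lower-order terms by hand.
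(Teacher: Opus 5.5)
Your reduction is correct and coincides with the paper's. By \eqref{eq:A-bound} and the spectral calculus it suffices to show $-F_{n,k}(z)>c(n-2)(n+1)(1-z)$ on $(0,1)$. Your $G_c$, its leading coefficient $A=(n+1)[k(n+2)-1]$, and the values $G_c(1)=n^2$ and $G_c(0)$ are all right.

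The gap is that the only nontrivial step, positivity at the interior minimum, is never proved. The key simplification is left as ``$\dots$''. What follows is a programme (expand the discriminant, check $3\le n\le19$ one by one, an unspecified asymptotic argument, a Bernstein check for $n\ge20$), not an argument.

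Some of the inputs to that programme are also wrong. The enclosure $k<1.6$ fails for $n=3,4$, where $k\approx1.762$ and $k\approx1.657$. The vertex sits at $z\approx(2k-c-1)/(2k)$, which is of order one, not $O(1/n)$. Completed, your leading-order comparison reads $4k>(c+1)^2$. For $c=\frac43$ this margin tends to $8\log2-\frac{49}{9}\approx0.10$, so the lower-order terms really must be controlled, and nothing in the proposal does so.

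The missing idea is to expand at $z=1$ instead of $z=0$, and to exploit the dependence on $k$. With $\tau=1-z$ you get $G_c=n^2+K\tau+A\tau^2$, where $K=-n^2-k(3n+2)-c(n+1)(n-2)$. Equivalently,
\[
-\frac{F_{n,k}(1-\tau)}{\tau}=\frac{n^2}{\tau}-n^2-k(3n+2)+(n+1)[k(n+2)-1]\tau .
\]
The discriminant is invariant under $z\mapsto1-z$. So your condition $4AG_c(0)>G_c'(0)^2$ is exactly the AM--GM condition $2n\sqrt{(n+1)[k(n+2)-1]}>n^2+k(3n+2)+c(n+1)(n-2)$.

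The displayed expression is affine in $k$, which settles the first assertion:
\begin{enumerate}
\item At $k=1$ it equals $(n-2)(n+1)+\bigl(n/\sqrt\tau-(n+1)\sqrt\tau\bigr)^2$, so $c=1$ is attained exactly in the limit $k\to1$.
\item At $k=2$, AM--GM reduces the claim to $4n^4-21n^2-20n-4>0$, which holds for $n\ge3$.
\end{enumerate}
Since $1<k<2$, the first assertion follows for every $n\ge3$ with no per-dimension checks.

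For $n\ge20$, the AM--GM lower bound $2n\sqrt{(n+1)[k(n+2)-1]}-n^2-k(3n+2)$ is concave in $k$, and $\frac{11}{8}<k<2$. It therefore suffices to compare it with $\frac43(n-2)(n+1)$ at $k=\frac{11}{8}$ and at $k=2$. These are two explicit inequalities in $n$ alone. This is the paper's proof.
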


\begin{proof}
By \eqref{eq:A-bound}, it suffices to estimate $-F_{n,k}(z)/(1-z)$. We have $1<k<2$, and, with $\tau=1-z$,
\begin{equation*}
-\frac{F_{n,k}(1-\tau)}{\tau}
=\frac{n^2}{\tau}-n^2-k(3n+2)+(n+1)[k(n+2)-1]\tau.
\end{equation*}
This expression is linear in $k$. At $k=1$, it equals
\begin{equation*}
(n-2)(n+1)+\left(\frac{n}{\sqrt\tau}-(n+1)\sqrt\tau\right)^2.
\end{equation*}
At $k=2$, its minimum over $\tau>0$ satisfies
\begin{equation*}
2n\sqrt{(n+1)(2n+3)}-n^2-2(3n+2)>(n-2)(n+1),
\end{equation*}
since $4n^4-21n^2-20n-4>0$ for $n\ge3$. The expression is a linear function of $k$, so $1<k<2$ proves the first assertion.

For $n\ge20$, we have $\frac{11}{8}<\frac{69}{50}<k<\frac{36}{25}<2$. The arithmetic--geometric mean inequality gives
\begin{equation*}
-\frac{F_{n,k}(1-\tau)}{\tau}
\ge2n\sqrt{(n+1)[k(n+2)-1]}-n^2-k(3n+2).
\end{equation*}
For $n\ge20$, the elementary endpoint bounds are
\begin{equation*}
\begin{aligned}
2n\sqrt{(n+1)\left[\frac{11}{8}(n+2)-1\right]}
-n^2-\frac{11}{8}(3n+2)&>\frac{4}{3}(n-2)(n+1),\\
2n\sqrt{(n+1)(2n+3)}-n^2-2(3n+2)
&>\frac{4}{3}(n-2)(n+1).
\end{aligned}
\end{equation*}
Concavity in $k$ gives the same lower bound for $\frac{11}{8}\le k\le2$, proving \eqref{eq:A-lower}.
\end{proof}

In the following lemma, we correct the coordinate directions $\xi_iu$ by adding multiples of $Y_v$. We also use the matrix $Q_{YX}$ with entries $Q(Y_i,X_{e_j}u)$. Since 
\begin{equation*}
X_{e_j}u=Y_j+\frac{n+1}{p}\xi_ju    
\end{equation*}
we have 
\begin{equation*}
Q_{YX}=Q_{YY}+\frac{n+1}{p}Q_{Y\xi}
\end{equation*} 
For matrices, $\|\cdot\|_\op$ and $\|\cdot\|_F$ denote the operator and Frobenius norms, respectively.

\begin{lemma}\label{lem:restored}
We have $\xi_iu+pY_{Me_i}\in\Tu$ for every $i$, and
\begin{equation}\label{eq:restored}
\begin{aligned}
\Big(Q\big(\xi_iu+pY_{Me_i},\xi_ju+pY_{Me_j}\big)\Big)_{ij}
&=Q_{\xi\xi}-Q_{Y\xi}Q_{YY}^{-1}Q_{Y\xi}\\
&\quad+\big(pM+Q_{YY}^{-1}Q_{Y\xi}\big)^TQ_{YY}\big(pM+Q_{YY}^{-1}Q_{Y\xi}\big),\\
\tr\big(Q_{\xi\xi}-Q_{Y\xi}Q_{YY}^{-1}Q_{Y\xi}\big)&=-\frac{p^2}{(n+1)^2}\Big[(n-2)\beta\|\nabla u\|_2^2+\tr\big(Q_{YX}Q_{YY}^{-1}Q_{YX}\big)\Big].
\end{aligned}
\end{equation}
Moreover,
\begin{equation*}
\frac{p}{n+1}Q_{YX}=\frac{2\beta}{n+1}\int\xi\xi^T|\nabla u|^2+\frac{p(1-\alpha)+(n+2)\beta/2}{n+1}\,C-\int(b\cdot\xi)\xi\xi^T u^p,
\end{equation*}
and if $m=\|M\|_\op$, then
\begin{equation}\label{eq:R-norm}
\begin{aligned}
\frac{p\|Q_{YX}\|_F}{(n+1)\sqrt\beta\|u\|_2\|\nabla u\|_2}&\le\frac{2}{n+1}\sqrt{\frac{(1+|b|)m+\beta\|u\|_2^2}{\|u\|_2^2}}\\
&\quad+\frac{2p(1-\alpha)+(n+2)\beta}{(n+1)\sqrt\beta}\sqrt{\frac{(1+|b|)m+\beta\|u\|_2^2}{(1+(n+1)\beta)\|u\|_2^2}}\\
&\quad+pm\sqrt{(1-\alpha)(n\beta-1+\alpha)}.
\end{aligned}
\end{equation}
\end{lemma}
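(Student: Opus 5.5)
The membership $\xi_iu+pY_{Me_i}\in\Tu$ follows from \eqref{eq:ibp}. Since $\int u^{p-1}Y_v=0$ and $\int u^{p-1}\xi_iu=\int\xi_iu^p=0$, the first constraint holds. For the vector constraint, $\int\xi u^{p-1}\xi_iu=Me_i$ and $\int\xi u^{p-1}Y_v=-\frac1p v$. With $v=Me_i$ the two contributions cancel.

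For the first identity in \eqref{eq:restored}, write the corrected functions as $\xi_iu+\sum_k(pM)_{ki}Y_k$. By bilinearity, the Gram matrix is
\begin{equation*}
Q_{\xi\xi}+pQ_{Y\xi}^TM+pMQ_{Y\xi}+p^2MQ_{YY}M .
\end{equation*}
Here $M$ is symmetric, and $Q_{Y\xi}$ is symmetric by \eqref{eq:restored-data}. Lemma~\ref{lem:A-lower} gives $Q_{YY}<0$, so $Q_{YY}$ is invertible and we may complete the square in $pM$. This gives exactly $Q_{\xi\xi}-Q_{Y\xi}Q_{YY}^{-1}Q_{Y\xi}+(pM+Q_{YY}^{-1}Q_{Y\xi})^TQ_{YY}(pM+Q_{YY}^{-1}Q_{Y\xi})$.

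For the trace identity, substitute $Q_{Y\xi}=\frac{p}{n+1}(Q_{YX}-Q_{YY})$. Expanding $Q_{Y\xi}Q_{YY}^{-1}Q_{Y\xi}$ yields
\begin{equation*}
\frac{p^2}{(n+1)^2}\big(Q_{YX}Q_{YY}^{-1}Q_{YX}-2Q_{YX}+Q_{YY}\big).
\end{equation*}
It remains to verify that
\begin{equation*}
\tr Q_{\xi\xi}+\frac{p^2}{(n+1)^2}\tr(2Q_{YX}-Q_{YY})=-\frac{p^2}{(n+1)^2}(n-2)\beta\|\nabla u\|_2^2 .
\end{equation*}
This is a trace computation using \eqref{eq:restored-data}, the formula for $Q_{\xi\xi}$, and \eqref{eq:basic}: $\tr M=1$, $\tr\int(b\cdot\xi)\xi\xi^Tu^p=0$, $\tr\int\xi\xi^Tu^2=\|u\|_2^2$, $\tr\beta\int\xi\xi^T|\nabla u|^2=\beta\|\nabla u\|_2^2$. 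The claimed formula for $\frac{p}{n+1}Q_{YX}$ is itself derived by inserting \eqref{eq:coord} into $Q_{YY}+\frac{n+1}{p}Q_{Y\xi}$. I would derive that formula first and take traces of it, using $\tr C=(n+1)\|u\|_2^2-(n+1)\|u\|_2^2=0$. The trace identity then reduces to a scalar check in $\|u\|_2^2$, $\beta\|\nabla u\|_2^2$ and $a$. Because $a$ appears, the relation $a=\|u\|_2^2+\beta\|\nabla u\|_2^2$ will be needed. This bookkeeping, with the constants $p,n,\beta,\alpha$ and the relation $n\beta=\frac{4\alpha}{n-2}$, is where I expect errors; I would organize it by collecting the coefficients of $\|u\|_2^2$, $a$ and the gradient term separately.

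For \eqref{eq:R-norm}, apply the triangle inequality to the three terms of $\frac{p}{n+1}Q_{YX}$.

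\emph{First term.} Write $\beta\int\xi\xi^T|\nabla u|^2=G$. It is positive semidefinite, so $\|G\|_F\le\sqrt{\tr G\,\|G\|_\op}$, with $\tr G=\beta\|\nabla u\|_2^2$. From \eqref{eq:coord}, $\|G\|_\op\le(a+|b|)m+\beta\|u\|_2^2$, because $\int(a+b\cdot\xi)\xi\xi^Tu^p\le(a+|b|)M$ and the $u^2$ term is nonpositive. Then $a\le1$ gives the first summand.

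\emph{Second term.} Handle $C$ similarly: $\|C\|_F\le\sqrt{\tr(\cdot)\,\|\cdot\|_\op}$ type bounds. Alternatively, use Lemma~\ref{lem:gram}: $\int(\nabla u)_iu\xi_j=\frac12C_{ij}$, so Cauchy--Schwarz gives $\|C\|_F\le2\|\nabla u\|_2\,\|\int\xi\xi^Tu^2\|_\op^{1/2}$. To bound $\int\xi\xi^Tu^2$ in operator norm by $\frac{(1+|b|)m+\beta\|u\|_2^2}{1+(n+1)\beta}$, use \eqref{eq:coord} with $G\ge0$.

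\emph{Third term.} Bound $\|\int(b\cdot\xi)\xi\xi^Tu^p\|_F\le|b|\,\|M\|_F\le|b|m$, using Lemma~\ref{lem:third} in the form $-|b|M\le\cdot\le|b|M$. Then bound $|b|$ by \eqref{eq:block-consequences}. Dividing by $\sqrt\beta\|u\|_2\|\nabla u\|_2$ gives the three summands.
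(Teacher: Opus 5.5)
Most of your plan follows the paper and is sound: the membership in $\Tu$, the completion of the square (using that $Q_{Y\xi}$ is symmetric and $Q_{YY}<0$), and the trace reduction. The trace identity does close, since $\tr Q_{\xi\xi}=n\beta\|u\|_2^2-(p-2)a$, $\tr Q_{YX}=\frac{2}{p}\beta\|\nabla u\|_2^2$ and $a=\|u\|_2^2+\beta\|\nabla u\|_2^2$. Your bounds for the $\beta\int\xi\xi^T|\nabla u|^2$ term and the $C$ term are also fine. For $C$, only your second option works, because $C$ is traceless and not semidefinite.

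The gap is in the third term. The inequality $\|M\|_F\le m$ is false. The matrix $M>0$ has $n+1\ge2$ positive eigenvalues summing to one, so $m^2<\|M\|_F^2=\tr M^2\le m$. For $M=\frac{1}{n+1}I$ one has $\|M\|_F=\sqrt m$. Hence the order relation $-|b|M\le\int(b\cdot\xi)\xi\xi^Tu^p\le|b|M$ from Lemma~\ref{lem:third} yields at best $|b|\sqrt m$. Your argument would then give $p\sqrt m\sqrt{(1-\alpha)(n\beta-1+\alpha)}$ in place of the third summand of \eqref{eq:R-norm}, so the stated inequality is not proved.

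The loss also matters downstream. In Proposition~\ref{prop:sep20} one has $m<4\|u\|_2^2/(3n)$. With $\sqrt m$ instead of $m$, the bound in Lemma~\ref{lem:large-estimates}(ii) at $n=20$ becomes roughly $1.58$ instead of $\frac{57}{50}$. That is well above the threshold $2/\sqrt3$ needed there.

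The missing idea is to use the structure of $B=\int(b\cdot\xi)\xi\xi^Tu^p$, not only its Loewner comparison with $M$. The paper works in an eigenbasis $Me_j=m_je_j$ and applies \eqref{eq:coordinate-cs} to $h=(b\cdot\xi)\xi_i$. This gives $\sum_j\bigl(\int(b\cdot\xi)\xi_i\xi_ju^p\bigr)^2\le m\int(b\cdot\xi)^2\xi_i^2u^p$. Summing over $i$ yields $\|B\|_F^2\le m\int(b\cdot\xi)^2u^p=m\,b^TMb\le m^2|b|^2$.

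An equivalent duality argument also works. For $\|W\|_F=1$,
$\tr(BW)=\int(b\cdot\xi)(\xi^TW\xi)u^p\le(b^TMb)^{1/2}\bigl(\int|W\xi|^2u^p\bigr)^{1/2}\le m|b|$.
Here one uses $(\xi^TW\xi)^2\le|W\xi|^2$ and $\int|W\xi|^2u^p=\tr(W^TWM)\le m$. With $\|B\|_F\le m|b|$ and \eqref{eq:block-consequences}, your division by $\sqrt\beta\|u\|_2\|\nabla u\|_2$ produces exactly the stated term $pm\sqrt{(1-\alpha)(n\beta-1+\alpha)}$.
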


\begin{proof}
By \eqref{eq:ibp}, $\xi_i u+pY_{Me_i}\in\Tu$. The identities in \eqref{eq:restored} follow from \eqref{eq:restored-data}, \eqref{eq:basic} and
$Q_{YX}=Q_{YY}+(n+1)Q_{Y\xi}/p$.

By \eqref{eq:coord}, $a\le1$ and Lemma~\ref{lem:third}, we have
\begin{equation*}
\begin{gathered}
0\le\beta\int\xi\xi^T|\nabla u|^2
\le\big((1+|b|)m+\beta\|u\|_2^2\big)I,\\
0\le(1+(n+1)\beta)\int\xi\xi^Tu^2
\le\big((1+|b|)m+\beta\|u\|_2^2\big)I.
\end{gathered}
\end{equation*}
Combining them with \eqref{eq:gram} give
\begin{equation*}
\begin{aligned}
\left\|\beta\int\xi\xi^T|\nabla u|^2\right\|_F^2
&\le\big((1+|b|)m+\beta\|u\|_2^2\big)\beta\|\nabla u\|_2^2,\\
\|C\|_F^2&\le4\left\|\int\xi\xi^Tu^2\right\|_\op\|\nabla u\|_2^2.
\end{aligned}
\end{equation*}
For the last term, choose an eigenbasis $Me_j=m_je_j$, where $m_j\le m$. Applying \eqref{eq:coordinate-cs} to $h=(b\cdot\xi)\xi_i$ gives
\begin{equation*}
\sum_j\frac{\left(\int(b\cdot\xi)\xi_i\xi_j u^p\right)^2}{m_j}
\le\int(b\cdot\xi)^2\xi_i^2u^p.
\end{equation*}
After summing in $i$, we obtain
\begin{equation*}
\begin{aligned}
\left\|\int(b\cdot\xi)\xi\xi^Tu^p\right\|_F^2
&\le m\int(b\cdot\xi)^2u^p=m\,b^TMb\le m^2|b|^2.
\end{aligned}
\end{equation*}
The formula for $Q_{YX}$ and \eqref{eq:block-consequences} now imply \eqref{eq:R-norm}.
\end{proof}

If $u$ is stable, then the trace of the left-hand side of the first identity in \eqref{eq:restored} is nonnegative. Since $Q_{YY}\leq 0$ by Lemma~\ref{lem:A-lower}, the last term in that identity is negative semidefinite, and the second identity gives
\begin{equation}\label{eq:trace-stable}
-\tr\big(Q_{YX}Q_{YY}^{-1}Q_{YX}\big)\ge(n-2)\beta\|\nabla u\|_2^2 .
\end{equation}

We next exclude intermediate eigenvalues. Let $e$ be a unit eigenvector of $M$ with eigenvalue $m$, and put $z=\frac{1}{\|u\|_2^2}\int x^2u^2\in(0,1)$. As in the proof of Lemma~\ref{lem:directional}(ii), the coordinate identity \eqref{eq:coord} and \eqref{eq:gram} give
\begin{align*}
\frac{am+\int(b\cdot\xi)x^2 u^p}{\|u\|_2^2}&\ge\Big[\big(1+(n+1)\beta\big)z+\frac{\beta}{4}\frac{((n+1)z-1)^2}{1-z}-\beta\Big]\\
&=\frac{p\alpha}{2}\,\frac{z}{1-z}+\Big(1-\alpha+\frac{3\beta}{4}\Big)z-\frac{3\beta}{4}
\end{align*}
since $\alpha=\frac{\beta}{4}n(n-2)$.

We discard the nonnegative middle term, and use 
\begin{equation*}
\frac{1}{a}\int(b\cdot\xi)x^2 u^p\le\frac{13}{6n}m(1-m),
\end{equation*}
which follows from Lemma~\ref{lem:directional}(iii) and Proposition~\ref{prop:multiplier}. Solving for $z$, we obtain
\begin{equation}\label{eq:Be-upper}
\frac{1}{a}\int x^2u^2\le\frac{r\big\{m\big[1+\frac{13}{6n}(1-m)\big]+\frac{3\beta}{4}r\big\}}{m\big[1+\frac{13}{6n}(1-m)\big]+\big(\frac{p\alpha}{2}+\frac{3\beta}{4}\big)r} .
\end{equation}

\begin{proposition}\label{prop:spectral20}
Let $n\ge20$. Then every eigenvalue $m\le\frac{1}{2}$ of $M$ satisfies $m<\frac{4\|u\|_2^2}{3n}$.
\end{proposition}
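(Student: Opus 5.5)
We argue by contradiction, using the stability inequality \eqref{eq:stability-source} from Lemma~\ref{lem:retained} for the corrected conformal variation $\Phi_e$. As announced after Lemma~\ref{lem:retained}, for $n\ge20$ we simply discard the energy deficit term $nm(1-a)$ and the nodal term $\Nn_e$. Suppose $Me=me$ with $|e|=1$, $m\le\frac12$ and $m\ge\frac{4\|u\|_2^2}{3n}$. Write $s=\|u\|_2^2\le a\le 1$, so that $r=s/a\ge s$. Stability and Lemma~\ref{lem:retained} then give
\begin{equation*}
\frac{2}{p}(1-m)^2\le C_m\int x^2u^2+\alpha\Big[\frac{(n+2)(1-m)^2}{n^2m}+\frac4n-km\Big]s .
\end{equation*}
The strategy is to show that the right-hand side is strictly smaller than the left-hand side whenever $m\in[\frac{4s}{3n},\frac12]$.

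First we handle the term $C_m\int x^2u^2$. If $C_m\le0$, we drop it. If $C_m>0$, we bound $\int x^2u^2$ above using \eqref{eq:Be-upper}, multiplied by $a\le1$. In that bound the quantity $\int x^2u^2$ is at most of order $s\cdot\frac{m+O(s)}{m+\frac{p\alpha}{2}s}$. For $m\gtrsim s/n$, this gives roughly $\int x^2u^2\lesssim s$, and it also gives the smaller quantity $\lesssim\frac{2ms}{p\alpha s}\cdot s$ when $m$ is comparable to $s/n$. Since $k=n(2^{2/n}-1)\to 2\log 2$, we have $C_m=\alpha[k(\frac2p+(3-\frac4p)m)-\frac{(n+2+(n-2)k)(1-m)^2}{n^2m}-\frac4n]$. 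This coefficient is $O(1)$ and changes sign at $m$ of order $1/n$. So the product $C_m\int x^2u^2$ is bounded by $O(1)\cdot\min(s,\,nm s)$, more precisely by the explicit expression.

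Next we look at the remaining term $\alpha[\frac{(n+2)(1-m)^2}{n^2m}+\frac4n-km]s$. Using $s\le\frac{3nm}{4}$, the first piece is at most $\frac{3(n+2)}{4n}(1-m)^2\alpha$, which is about $\frac34(1-m)^2$. The left-hand side is $\frac2p(1-m)^2=\frac{n-2}{n}(1-m)^2$, which is about $(1-m)^2$. This leaves a margin of about $\frac14(1-m)^2\ge\frac1{16}$. That margin must absorb the terms $\frac4n\alpha s$ and $C_m\int x^2u^2$, while the term $-km\,s$ helps. The crucial point is that the $C_m$ contribution is small relative to $(1-m)^2$. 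When $m$ is small (order $1/n$), $s\le\frac{3nm}{4}$ is also small, and $C_m$ is negative or bounded, so this is fine. When $m$ is of order one, we must combine the positive part of $C_m\int x^2u^2$, which is at most $\alpha k(\frac2p+(3-\frac4p)m)s$, with the $-km s$ term and the favourable $-\frac{(n+2+(n-2)k)(1-m)^2}{n^2m}$ part of $C_m$. Here we must use the sharper bound \eqref{eq:Be-upper} rather than $\int x^2u^2\le s$. At this stage we treat $s$ as a free variable in $(0,\min(1,\frac{3nm}{4}))$.

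Finally we reduce everything to an explicit inequality in the variables $(m,s,1/n)$ on a box, with $1/n\in(0,\frac1{20}]$, $m\in(0,\frac12]$ and $s\in(0,1]$ subject to $s\le\frac{3nm}{4}$. We clear denominators in \eqref{eq:Be-upper} and in $C_m$, and replace $\alpha,k,\beta,p$ by rational enclosures uniform in $1/n$ (for example $\frac{69}{50}<k<\frac{36}{25}$). This produces a finite family of polynomial sign conditions, in the spirit of Lemma~\ref{lem:large-estimates}, which we verify by Bernstein expansion or by hand monotonicity arguments. The main obstacle is the intermediate regime $m\approx\frac12$ with $s$ close to $1$. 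There the margin $\frac14(1-m)^2\approx\frac1{16}$ is smallest, and the positive part of $C_m$ is largest, of size about $\alpha k(\frac2p+\frac32)$. If the crude estimate fails there, the first thing to try is reinstating the discarded middle term $\frac{p\alpha}{2}\frac{z}{1-z}$ in the derivation of \eqref{eq:Be-upper}, or retaining a fraction of $\Nn_e$ through Lemma~\ref{lem:nodal-variance}.
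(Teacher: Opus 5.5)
Your plan is essentially the paper's proof: discard the deficit and $\Nn_e$ in \eqref{eq:stability-source}, split on the sign of $C_m$, and obtain $m<\frac{4\|u\|_2^2}{3n}$ directly when $C_m\le0$. When $C_m>0$, bound $\int x^2u^2$ and reach a contradiction for every value of $r$; the paper uses \eqref{eq:Be-upper} for $m\ge\frac14$ and the cruder bound from \eqref{eq:coord} alone for $m\le\frac14$, so your constraint $s\le\frac{3nm}{4}$ is never needed there.

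No fallback is required in the regime you flag: the estimate closes with \eqref{eq:Be-upper} as stated, and the term discarded there is $(1-\alpha+\frac{3\beta}{4})z$, not $\frac{p\alpha}{2}\frac{z}{1-z}$. However, the worst case sits at an interior value $r\approx\frac12$ (for $n=20$, $m=\frac12$) rather than near $s=1$. Endpoint checks in $s$ therefore do not suffice, and you need the discriminant condition \eqref{eq:large-q-signs} on the quadratic $q(r)$, as in the paper.
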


\begin{proof}
Let $Me=me$, $|e|=1$, with $m\le\frac{1}{2}$. For $n\ge20$, we use the elementary bounds
\begin{equation*}
\frac{69}{50}<k<\frac{36}{25},\qquad
\frac{4}{3}<k\alpha<\frac{139}{100},\qquad
\frac{2}{p}\ge\frac{9}{10},\qquad n^2\beta<\frac{25}{6}.
\end{equation*}
By \eqref{eq:k-ell},
\begin{equation}\label{eq:CB-upper}
C_m\le\alpha\left[k(1+m)
-\frac{\big(n+2+(n-2)k\big)(1-m)^2}{n^2m}-\frac{4}{n}\right]
\le k\alpha(1+m).
\end{equation}
We consider three cases.

\medskip
\noindent\emph{Case (i): $C_m\le0$.}
From \eqref{eq:stability-source},
\begin{equation*}
\frac{2}{p}(1-m)^2
\le\alpha\left[\frac{(n+2)(1-m)^2}{n^2m}+\frac{4}{n}-km\right]\|u\|_2^2.
\end{equation*}
Consequently,
\begin{equation*}
\begin{aligned}
\frac{nm}{\|u\|_2^2}
&\le\frac{p\alpha}{2}\left[1+\frac{2}{n}
+\frac{4m-nkm^2}{(1-m)^2}\right]\le\frac{p\alpha}{2}\left[1+\frac{2}{n}+\frac{4}{nk-4}\right]
<\frac{4}{3}.
\end{aligned}
\end{equation*}
The last inequality holds for $n\ge20$ and $k\ge\frac{69}{50}$, with $\alpha=\frac{n}{n+k}$. This gives the desired bound for $m$.

\medskip
\noindent\emph{Case (ii): $C_m>0$ and $m\le\frac{1}{4}$.}
The expression $C_m/\alpha$ is increasing in $m$, and
\begin{equation*}
\left.\frac{C_m}{\alpha}\right|_{m=3/(2n)}
\le\frac{198n^{-3}+483n^{-2}+112n^{-1}-28}{150}<0.
\end{equation*}
Thus $\frac{3}{2n}<m\le\frac{1}{4}$. By \eqref{eq:coord} and Lemma~\ref{lem:directional}(iii),
\begin{equation*}
\frac{1}{a}\int x^2u^2
\le\frac{m\left[1+\frac{13}{6n}(1-m)\right]+\beta r}{1+(n+1)\beta}.
\end{equation*}
Substituting into \eqref{eq:stability-source}, we obtain
\begin{equation*}
\begin{aligned}
\frac{2}{p}(1-m)^2
\le{}&\frac{C_m}{1+(n+1)\beta}
\left\{m\left[1+\frac{13}{6n}(1-m)\right]+\beta r\right\}\\
&+\alpha\left[\frac{(n+2)(1-m)^2}{n^2m}+\frac{4}{n}-km\right]r.
\end{aligned}
\end{equation*}
The right-hand side is a constant plus a term proportional to $r$. At $r=0$, \eqref{eq:CB-upper} gives
\begin{equation*}
\begin{aligned}
\frac{C_m}{1+(n+1)\beta}m\left[1+\frac{13}{6n}(1-m)\right]
&\le\frac{139}{100}\cdot\frac{5}{4}\cdot\frac{1}{4}
\left(1+\frac{13}{120}\right)\\
&=\frac{18487}{38400}<\frac{81}{160}
\le\frac{2}{p}(1-m)^2.
\end{aligned}
\end{equation*}
For $n\ge20$ and $\frac{3}{2n}\le m\le\frac{1}{4}$, its value at $r=1$ satisfies the elementary bound
\begin{equation*}
\begin{aligned}
&\frac{C_m}{1+(n+1)\beta}
\left\{m\left[1+\frac{13}{6n}(1-m)\right]+\beta\right\}
+\alpha\left[\frac{(n+2)(1-m)^2}{n^2m}+\frac{4}{n}-km\right]\\
&\qquad\le\frac{3452477}{3942720}(1-m)^2
<\frac{9}{10}(1-m)^2\le\frac{2}{p}(1-m)^2.
\end{aligned}
\end{equation*}
Both endpoint values are too small. Their weighted average with weights $1-r$ and $r$ is therefore also smaller than $\frac{2}{p}(1-m)^2$, a contradiction.

\medskip
\noindent\emph{Case (iii): $\frac{1}{4}\le m\le\frac{1}{2}$.}
For $m\ge\frac{1}{4}$, the formula for $C_m$ gives
\begin{equation}\label{eq:large-C-positive}
\frac{C_m}{\alpha}\ge\frac{1177}{1000}>0.
\end{equation}
Write $\delta=\frac{1}{n}$. By \eqref{eq:stability-source} and \eqref{eq:Be-upper},
\begin{equation*}
q(r):=q_2r^2+q_1r+q_0\le0,
\end{equation*}
where
\begin{equation}\label{eq:quad-r}
\begin{aligned}
q_2&=(1+3\delta^2)\Big[km-\delta(1+2\delta)\frac{(1-m)^2}{m}-4\delta\Big]-3\delta^2\frac{C_m}{\alpha},\\
q_1&=(1-2\delta)(1+k\delta)\Big\{(1+3\delta^2)(1-m)^2\\
&\qquad-k\Big[1+\frac{13}{6}\delta(1-m)\Big]\big[(1-2\delta)m+4\delta m^2-\delta(1-2\delta)(1-m)^2\big]\Big\},\\
q_0&=(1-2\delta)^2(1+k\delta)^2m(1-m)^2\Big[1+\frac{13}{6}\delta(1-m)\Big].
\end{aligned}
\end{equation}
Lemma~\ref{lem:large-estimates}(i) gives
\begin{equation}\label{eq:large-q-signs}
q_2>0,\qquad q_0>0,\qquad
q_1\ge0\ \text{or}\ q_1^2<4q_2q_0.
\end{equation}
If $q_1\ge0$, then $q(r)>0$ for $r>0$. Otherwise,
\begin{equation*}
q(r)=q_2\left(r+\frac{q_1}{2q_2}\right)^2
+\frac{4q_2q_0-q_1^2}{4q_2}>0.
\end{equation*}
Both alternatives contradict $q(r)\le0$.
\end{proof}

We now combine Proposition~\ref{prop:spectral20} with the restored directions of Lemma~\ref{lem:restored}.

\begin{proposition}\label{prop:sep20}
Let $n\ge20$. Then $\lambda_1>\frac{1}{2}$, and $\lambda_i<\frac{4\|u\|_2^2}{3n}$ for every $i\ge2$.
\end{proposition}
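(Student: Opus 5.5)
By Proposition~\ref{prop:spectral20}, every eigenvalue $m\le\frac12$ of $M$ already satisfies $m<\frac{4\|u\|_2^2}{3n}$. Since $\tr M=1$, at most one eigenvalue exceeds $\frac12$. It therefore suffices to exclude the case $\lambda_1\le\frac12$. Once that is done, $\lambda_1>\frac12$, every $\lambda_i$ with $i\ge2$ is at most $1-\lambda_1<\frac12$, and Proposition~\ref{prop:spectral20} gives $\lambda_i<\frac{4\|u\|_2^2}{3n}$.

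So assume $\lambda_1\le\frac12$. Then all eigenvalues are small, namely $m:=\|M\|_\op<\frac{4\|u\|_2^2}{3n}$. Summing over $i$ gives
\[
1=\tr M<\frac{4(n+1)\|u\|_2^2}{3n},
\]
so $\|u\|_2^2$, and hence $r$, is bounded below by roughly $\frac34$. I would then use the restored directions $\xi_iu+pY_{Me_i}\in\Tu$ of Lemma~\ref{lem:restored}. Stability makes the trace of the left-hand side of \eqref{eq:restored} nonnegative, and this yields \eqref{eq:trace-stable}:
\[
(n-2)\beta\|\nabla u\|_2^2\le-\tr\big(Q_{YX}Q_{YY}^{-1}Q_{YX}\big).
\]
To bound the right-hand side, \eqref{eq:A-lower} gives
\[
-Q_{YY}^{-1}<\frac{3p^2}{4\beta(n+1)\|u\|_2^2}I,
\]
so the right-hand side is at most $\frac{3p^2}{4\beta(n+1)\|u\|_2^2}\|Q_{YX}\|_F^2$. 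The Frobenius bound \eqref{eq:R-norm} then gives
\[
\|Q_{YX}\|_F^2\le\frac{(n+1)^2}{p^2}\,\beta\|u\|_2^2\|\nabla u\|_2^2\,R^2,
\]
where $R$ denotes the right-hand side of \eqref{eq:R-norm}. Dividing by $\beta\|\nabla u\|_2^2$, which is positive since $u$ is non-constant, reduces stability to
\[
n-2\le\frac{3(n+1)}{4}\,R^2.
\]
The goal is to show that this fails.

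Next I substitute $m<\frac{4\|u\|_2^2}{3n}$ and $|b|\le a\gamma_n\le\frac{13}{6n}$ from Proposition~\ref{prop:multiplier}. In the first two terms of $R$ the ratio $\frac{(1+|b|)m+\beta\|u\|_2^2}{\|u\|_2^2}$ is then $O(1/n)$. Hence the first term is $O(n^{-3/2})$. For the second term, the prefactor is $\frac{2p(1-\alpha)+(n+2)\beta}{(n+1)\sqrt\beta}=O(1/\sqrt n)$ since $1-\alpha,\beta=O(n^{-2})$ and $\sqrt\beta\sim n^{-1}$, so this term is $O(1/n)$. The third term is $pm\sqrt{(1-\alpha)(n\beta-1+\alpha)}=O(n^{-2})$. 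So $R=O(1/n)$, and $\frac{3(n+1)}{4}R^2=O(1/n)$, which is far below $n-2$. This contradiction excludes $\lambda_1\le\frac12$.

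The main obstacle is making these constants rigorous uniformly for all $n\ge20$, not just asymptotically. I would write $\delta=1/n$ and bound each quantity $p$, $\alpha$, $k$, $\beta$ by explicit rational functions of $\delta$, using $k\in(\frac{69}{50},\frac{36}{25})$ and $n^2\beta<\frac{25}{6}$. This reduces the failure of $n-2\le\frac{3(n+1)}{4}R^2$ to a single polynomial inequality in $\delta\in(0,\frac1{20}]$, in the style of Lemma~\ref{lem:large-estimates}, and the margin is large.
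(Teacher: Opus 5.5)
Your outline follows the paper's proof step for step: Propositions~\ref{prop:spectral20} and \ref{prop:multiplier} bound $\frac{(1+|b|)m+\beta\|u\|_2^2}{\|u\|_2^2}$, this feeds into \eqref{eq:R-norm}, \eqref{eq:A-lower} then contradicts \eqref{eq:trace-stable}, and $\tr M=1$ handles $\lambda_i$ for $i\ge2$. But the decisive quantitative step is miscomputed. Combining the three estimates gives
\[
(n-2)\beta\|\nabla u\|_2^2\le-\tr\bigl(Q_{YX}Q_{YY}^{-1}Q_{YX}\bigr)\le\frac{3(n+1)}{4}\,\|\nabla u\|_2^2R^2 .
\]
The factor $\beta$ in your bound for $\|Q_{YX}\|_F^2$ cancels the $1/\beta$ in $(-Q_{YY})^{-1}$, so the right-hand side carries $\|\nabla u\|_2^2$, not $\beta\|\nabla u\|_2^2$. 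Stability therefore gives $(n-2)\beta\le\frac{3(n+1)}{4}R^2$, that is, $R\ge\frac{2}{\sqrt3}\sqrt{\beta(n-2)/(n+1)}\approx\frac{2.31}{n}$. Your version, $n-2\le\frac{3(n+1)}{4}R^2$, loses a factor $\beta\approx 4/n^2$, and refuting it proves nothing.

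Your asymptotics for $R$ are also wrong. Since $1-\alpha=\alpha k/n\approx(\ln 4)/n$, which is what the bound $\frac{69}{50}<k<\frac{36}{25}$ you cite encodes, $1-\alpha$ is not $O(n^{-2})$. The prefactor of the second term in \eqref{eq:R-norm} is therefore about $(2\ln4+2)/n$. That term and the first are of order $n^{-3/2}$, and the third is $O(n^{-2})$. With your claimed $R=O(1/n)$, the corrected inequality would be inconclusive, since both sides would be of order $1/n$.

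With the correct orders the contradiction does hold for large $n$, but at $n=20$ it is tight, not generous. Take $m<\frac{4}{3n}$, $|b|<\frac{13}{6n}$ and $\|u\|_2\le1$. Then the three terms of $R$ are roughly $0.028$, $0.065$ and $0.014$, so $R\approx0.107$, while $\frac{2}{\sqrt3}\sqrt{\beta(n-2)/(n+1)}\approx0.109$. That is a margin of about two percent.

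So the closing step requires sharp explicit bounds, uniform in $\delta=\frac1n\le\frac1{20}$; a crude polynomial reduction will not do. This is the content of Lemma~\ref{lem:large-estimates}(ii). There, after dividing by $\sqrt{\beta(n-2)/(n+1)}$ and using $\frac{(1+|b|)m+\beta\|u\|_2^2}{\|u\|_2^2}\le\delta\bigl(\frac43+\frac{127\delta}{18}\bigr)$ and $k<\frac{36}{25}$, the three summands are bounded by $\frac{59}{200}$, $\frac{31}{200}$ and $\frac{69}{100}$. Their sum $\frac{57}{50}$ is just below $\frac{2}{\sqrt3}\approx1.1547$, which gives \eqref{eq:R-small} and the contradiction.

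The rest of your argument is fine: the reduction to excluding $\lambda_1\le\frac12$, and the bound for $\lambda_i$ with $i\ge2$.
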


\begin{proof}
Suppose that $m=\lambda_1\le\frac{1}{2}$. Proposition~\ref{prop:spectral20} and Proposition~\ref{prop:multiplier} give
\begin{equation*}
m<\frac{4\|u\|_2^2}{3n},\qquad |b|<\frac{13a}{6n}\le\frac{13}{6n}.
\end{equation*}
Using $n^2\beta<\frac{25}{6}$, we obtain
\begin{equation*}
\frac{(1+|b|)m+\beta\|u\|_2^2}{\|u\|_2^2}
\le\frac{1}{n}\left(\frac{4}{3}+\frac{127}{18n}\right).
\end{equation*}
For $n\ge20$, substitution into \eqref{eq:R-norm} yields
\begin{equation}\label{eq:R-small}
\frac{p\|Q_{YX}\|_F}{(n+1)\sqrt\beta\|u\|_2\|\nabla u\|_2}
<\frac{57}{50}\sqrt{\frac{\beta(n-2)}{n+1}}
<\frac{2}{\sqrt3}\sqrt{\frac{\beta(n-2)}{n+1}}.
\end{equation}
The estimate follows from Lemma~\ref{lem:large-estimates}(ii).

On the other hand, \eqref{eq:A-lower} implies
\begin{equation*}
\begin{aligned}
-\tr\big(Q_{YX}Q_{YY}^{-1}Q_{YX}\big)
&\le\frac{3p^2}{4\beta(n+1)\|u\|_2^2}\|Q_{YX}\|_F^2<(n-2)\beta\|\nabla u\|_2^2,
\end{aligned}
\end{equation*}
contradicting \eqref{eq:trace-stable}. Thus $\lambda_1>\frac{1}{2}$. Since $\sum_i\lambda_i=1$, every other eigenvalue is smaller than $\frac{1}{2}$, and Proposition~\ref{prop:spectral20} gives its stated bound.
\end{proof}

\subsection{The case \texorpdfstring{$n=4$}{n = 4}}
Let $n=4$. We have $p=4$, $\alpha=1/\sqrt2$ and $\beta=\alpha/2$. Proposition~\ref{prop:spectral-finite} gives $m<3r/5$ whenever $m\le1/2$. To exclude the possibility that all eigenvalues are at most $1/2$, we sum inequalities in an eigenbasis of $M$. The mixed third moments have zero sum by \eqref{eq:basic}.

Write $\xi_i=e_i\cdot\xi$ and $m_i=\int\xi_i^2u^p$ in such a basis. We first exclude $r$ close to one.

\begin{lemma}\label{lem:near-one}
Let $n=4$. Then every non-constant stable solution satisfies $r<\frac{99}{100}$.
\end{lemma}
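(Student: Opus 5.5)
We argue by contradiction: assume $n=4$, $u$ is a non-constant stable solution, and $r\ge\frac{99}{100}$. Then $\beta\|\nabla u\|_2^2=a(1-r)\le\frac{a}{100}$, so $u$ is close to a constant in $H^1$ relative to its $L^2$ mass. The plan is to show that all the moment data are then nearly those of the constant function, and to feed these into the summed second variation over an eigenbasis of $M$.

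\textbf{Step 1: the multiplier is small.} By Lemma~\ref{lem:gfirst}, $\frac{|b|}{a}<1-r\le\frac1{100}$. Hence $\frac1a\bigl|\int(b\cdot\xi)x^2u^p\bigr|\le\frac{1}{100}m(1-m)$ in every eigendirection.

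\textbf{Step 2: $M$ and $\int\xi\xi^Tu^2$ are nearly isotropic.} The Kazdan--Warner identity \eqref{eq:ward} gives
\begin{equation*}
\Bigl|\int\xi u^2\Bigr|\lesssim\frac{|b|}{1-\alpha}\lesssim 1-r .
\end{equation*}
Lemma~\ref{lem:gram}, together with $\|\nabla u\|_2^2\lesssim a(1-r)$, bounds $C=(n+1)\int\xi\xi^Tu^2-\|u\|_2^2I$. We expect
\begin{equation*}
\|C\|_\op\le 2\Bigl\|\int\xi\xi^Tu^2\Bigr\|_\op^{1/2}\|\nabla u\|_2=O\bigl(\sqrt{1-r}\bigr).
\end{equation*}
So $\frac1a\int\xi\xi^Tu^2=\frac r5I+O(\sqrt{1-r})$. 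Next, \eqref{eq:coord} expresses $\int(a+b\cdot\xi)\xi\xi^Tu^p$ through $\int\xi\xi^Tu^2$ and $\beta\int\xi\xi^T|\nabla u|^2$, and the latter has trace $a(1-r)$. With the bound on $b$, this should give $M=\frac15I+O(\sqrt{1-r})$. In particular every eigenvalue is at most $\frac12$.

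\textbf{Step 3: contradiction.} Proposition~\ref{prop:spectral-finite} then forces every eigenvalue to satisfy $m_i<\frac35r$. Summing the stability inequalities \eqref{eq:stability-source} over $i$ (with $m_i$ close to $\frac15$) is the natural route. An alternative is to sum \eqref{eq:Qa} over the eigenbasis. There the mixed third moments cancel, since $\sum_i\int(b\cdot\xi)\xi_i^2u^p=0$ by \eqref{eq:basic}, and $\sum_i\int\xi_i^2u^2=\|u\|_2^2$. Because $t_i=\frac{1-m_i}{4m_i}$ varies with $i$, we would Taylor-expand around $m_i=\frac15$ and control the first-order corrections by $\sum_i(m_i-\frac15)=0$ and the $O(\sqrt{1-r})$ bounds. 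At $m_i=\frac15$ and $r=1$, the sum should be a strictly negative explicit constant; this is the Hessian of the constant function in the conformal directions, the instability that makes $\alpha$ optimal. Continuity then gives negativity for $r\ge\frac{99}{100}$.

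\textbf{Main obstacle.} Step 2 is the hard part: making the $O(\sqrt{1-r})$ errors explicit and small enough at $1-r=\frac1{100}$. The square root loss from Cauchy--Schwarz may be too costly. If so, we would instead keep $\frac1a\int x_i^2u^2$ and $m_i$ as free variables on a box and use the linear constraints of Lemma~\ref{lem:directional}(i),(ii) in each direction. The final sign would then be checked on that box, as in Lemma~\ref{lem:exclusion}, possibly by the rational-arithmetic verification used elsewhere in the paper.
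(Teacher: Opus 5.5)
Your architecture matches the paper's: argue by contradiction, use $\frac{|b|}{a}<1-r$ and near-isotropy, sum \eqref{eq:Qa} over an eigenbasis so the mixed third moments cancel, and Taylor-expand at $m_i=\frac15$. The gap is in Step~3, which rests on a false premise. At the isotropic point with $r=1$ the summed second variation is not a negative constant; it is zero. For $u\equiv1$ one has $m=\frac15$, $t=1$ and $Z_e=-x$, so $\Phi_e=Z_e+x\equiv0$. The constant is the minimizer, and the optimality of $\alpha$ comes from two antipodal bubbles, not from an instability of $u\equiv1$.

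Quantitatively, weight the $i$th direction by $\frac{8m_i^2}{(1+m_i)^2}$. For $p=4$ this turns the coefficient $\frac12+2t_i+2t_i^2$ of the mixed third moment into $1$, so these cancel exactly by \eqref{eq:basic}. Put $y_i=\frac1a\int\xi_i^2u^2-\frac r5$. The part of each summand surviving at $m_i=\frac15$, $y_i=0$ is $\frac29\bigl[(A_0+\frac{D_0}{5})r-\frac25\bigr]=-\frac{4}{45}(1-r)$, so the main term is only $-\frac49(1-r)$. The first-order corrections cancel because $\sum_i(m_i-\frac15)=0=\sum_iy_i$. The remainder, however, is of size $\sum_i(m_i-\frac15)^2+\sum_i|m_i-\frac15|\,|y_i|$. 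With the $O(\sqrt{1-r})$ deviations of Step~2 this is of the same order $1-r$ as the main term, with much worse constants; at $1-r=\frac1{100}$, Step~2 only yields $m_i<\frac25$. So continuity proves nothing.

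The other routes you mention fail for the same reason. Inequality \eqref{eq:stability-source} discards $\Nn_e$, which equals $\frac{3\alpha}{5}-\frac15>0$ for $u\equiv1$, so it holds with a fixed margin at the limiting data. A certificate as in Lemma~\ref{lem:exclusion} cannot be strictly negative on a closed box containing the data of $u\equiv1$, where $Q(\Phi_e,\Phi_e)=0$ and every $\mathcal L_j\ge0$.

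The missing idea is a bootstrap in the maximal direction that upgrades the deviations from $O(\sqrt{1-r})$ to $O(1-r)$. Let $m=\lambda_1$; your Step~2 does give $\frac15\le m<\frac25$.

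\begin{enumerate}[label=\textup{(\arabic*)},leftmargin=2em]
\item Use \eqref{eq:coord} in the direction $e$ to eliminate $\frac1a\int x^2u^2$ from \eqref{eq:Qa}. Since $D_0>0$ on this range, the term $\frac{\beta}{a}\int x^2|\nabla u|^2$ enters with a favourable sign and can be dropped.
\item Bound the remaining mixed third moment by $\frac{|b|}{a}m(1-m)$, with the maximal-direction bound $\bigl[m+\frac{(1+\alpha)(1-m)}{4(1-\alpha)}\bigr]\frac{|b|}{a}\le1-r$ from Lemma~\ref{lem:directional}(v).
\item Stability then reads $0\le-c_1(m-\frac15)+c_2(1-r)$ with explicit $c_1>0$. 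The $r$-independent part vanishes exactly at $m=\frac15$, and there is no square-root loss.
\item This gives $\lambda_1\le\frac15+6(1-r)$, and after rechecking constants on the smaller range, $\lambda_1\le\frac15+\frac12(1-r)$.
\item The trace condition gives $|m_i-\frac15|\le2(1-r)$ for every $i$, and \eqref{eq:coord} then gives $|y_i|\le\frac54(1-r)$.
\end{enumerate}

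Only with these linear bounds is the Taylor remainder $O((1-r)^2)$. The paper bounds it by $\frac{45}{2}(1-r)^2$, which is beaten by $\frac49(1-r)$ when $1-r\le\frac1{100}$.
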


\begin{proof}
Suppose that $\varepsilon=1-r\in(0,\frac{1}{100}]$. By Lemma~\ref{lem:gfirst} and Lemma~\ref{lem:third},
\begin{equation*}
\frac{|b|}{a}<\varepsilon,\qquad
\left|\frac{1}{a}\int(b\cdot\xi)\xi_i^2u^p\right|
<\varepsilon m_i(1-m_i)\le\frac{\varepsilon}{4}.
\end{equation*}
Also,
\begin{equation*}
0\le\frac{\beta}{a}\int\xi_i^2|\nabla u|^2\le\varepsilon,\qquad
\sum_{i=1}^5\frac{\beta}{a}\int\xi_i^2|\nabla u|^2=\varepsilon.
\end{equation*}
The coordinate identity \eqref{eq:coord} gives
\begin{equation*}
m_i=\frac{1+5\beta}{a}\int\xi_i^2u^2-\beta r
-\frac{1}{a}\int(b\cdot\xi)\xi_i^2u^p
+\frac{\beta}{a}\int\xi_i^2|\nabla u|^2.
\end{equation*}
By the second inequality in \eqref{eq:gram},
\begin{equation*}
\left(\frac{5}{a}\int\xi_i^2u^2-r\right)^2
\le\frac{4}{a}\int\xi_i^2|\nabla u|^2
\left(r-\frac{1}{a}\int\xi_i^2u^2\right)
\le\frac{4\varepsilon}{\beta}.
\end{equation*}
Consequently,
\begin{equation*}
m_i\le\frac{r}{5}
+\frac{2(1+5\beta)}{5}\sqrt{\frac{\varepsilon}{\beta}}
+\frac{5\varepsilon}{4}<\frac{2}{5}.
\end{equation*}

Let $m$ be the largest eigenvalue and $e$ a corresponding eigenvector. Then $\frac{1}{5}\le m<\frac{2}{5}$. Recall from \eqref{eq:Phi-data} and \eqref{eq:Qa} that
\begin{equation*}
t=\frac{1-m}{4m},\qquad
A_0=\beta t^2-(1-\alpha),\qquad
D_0=(1-\alpha)(3+4t)-\beta t^2.
\end{equation*}
Define
\begin{equation*}
\begin{aligned}
c_1&=-\frac{A_0+\frac{(\beta+m)D_0}{1+5\beta}-2t^2m}{m-\frac{1}{5}},\\
c_2&=-A_0-\frac{\beta D_0}{1+5\beta}+\left(\frac{1}{2}+2t+2t^2-\frac{D_0}{1+5\beta}\right)
\frac{4(1-\alpha)m(1-m)}{4(1-\alpha)m+(1+\alpha)(1-m)}.
\end{aligned}
\end{equation*}
The numerator defining $c_1$ vanishes at $m=\frac{1}{5}$. We use its continuous value there. Lemma~\ref{lem:near-one-checks} gives, on $[\frac{1}{5},\frac{2}{5}]$,
\begin{equation*}
D_0>0,\qquad
\frac{1}{2}+2t+2t^2-\frac{D_0}{1+5\beta}>0,\qquad
c_1>0,\qquad c_2<6c_1.
\end{equation*}
Using these signs in \eqref{eq:Qa}, together with \eqref{eq:coord} and Lemma~\ref{lem:directional}(v), we obtain
\begin{equation*}
0\le\frac{1}{a}Q(\Phi_e,\Phi_e)
\le-\left(m-\frac{1}{5}\right)c_1+\varepsilon c_2.
\end{equation*}
It follows that
\begin{equation*}
m\le\frac{1}{5}+6\varepsilon\le\frac{13}{50}.
\end{equation*}
On $[\frac{1}{5},\frac{13}{50}]$, Lemma~\ref{lem:near-one-checks} gives $c_2<\frac{c_1}{2}$. Hence
\begin{equation*}
m\le\frac{1}{5}+\frac{\varepsilon}{2}.
\end{equation*}

Since $\sum_i m_i=1$, every eigenvalue satisfies
\begin{equation*}
-2\varepsilon\le m_i-\frac{1}{5}\le\frac{\varepsilon}{2}.
\end{equation*}
Put
\begin{equation*}
y_i=\frac{1}{a}\int\xi_i^2u^2-\frac{r}{5}.
\end{equation*}
Then
\begin{equation*}
\sum_{i=1}^5\left(m_i-\frac{1}{5}\right)=0,\qquad
\sum_{i=1}^5y_i=0.
\end{equation*}
The coordinate identity becomes
\begin{equation*}
(1+5\beta)y_i=m_i-\frac{1}{5}+\frac{\varepsilon}{5}
+\frac{1}{a}\int(b\cdot\xi)\xi_i^2u^p
-\frac{\beta}{a}\int\xi_i^2|\nabla u|^2.
\end{equation*}
Since $1+5\beta>\frac{69}{25}$, the preceding estimates yield
\begin{equation*}
|y_i|\le\frac{69\varepsilon}{20(1+5\beta)}<\frac{5}{4}\varepsilon.
\end{equation*}
We therefore have
\begin{equation*}
\begin{gathered}
\sum_i\left(m_i-\frac{1}{5}\right)^2\le5\varepsilon^2,\qquad
\sum_i\left|m_i-\frac{1}{5}\right|\le4\varepsilon,\\
\sum_i\left|m_i-\frac{1}{5}\right||y_i|\le5\varepsilon^2.
\end{gathered}
\end{equation*}

For $p=4$,
\begin{equation*}
\frac{8m^2}{(1+m)^2}\left(\frac{1}{2}+2t+2t^2\right)=1.
\end{equation*}
Thus the mixed third moments disappear after summing \eqref{eq:Qa} with these weights, since 
\begin{equation*}
\sum_i\int(b\cdot\xi)\xi_i^2u^p=0
\end{equation*}

Stability gives
\begin{equation*}
0\le\sum_i\frac{8m_i^2}{(1+m_i)^2}
\left[\left(A_0+\frac{D_0}{5}\right)r-2t^2m_i+D_0y_i\right].
\end{equation*}
In the $i$th summand, $A_0,D_0,t$ are evaluated at $m_i$.

At $m=\frac{1}{5}$, the identity $2\beta+1-\alpha=1$ gives
\begin{equation*}
\left.\frac{8m^2}{(1+m)^2}
\left[\left(A_0+\frac{D_0}{5}\right)r-2t^2m\right]\right|_{m=1/5}
=-\frac{4}{45}\varepsilon.
\end{equation*}
All the $m_i$ lie in $[\frac{9}{50},\frac{41}{200}]$. On this interval, with $\frac{99}{100}\le r\le1$, Lemma~\ref{lem:near-one-checks} gives
\begin{equation*}
\begin{gathered}
\frac{\partial^2}{\partial m^2}
\left\{\frac{8m^2}{(1+m)^2}
\left[\left(A_0+\frac{D_0}{5}\right)r-2t^2m\right]\right\}<3,\\
0<\frac{d}{dm}\left(\frac{8m^2D_0}{(1+m)^2}\right)<3.
\end{gathered}
\end{equation*}
Taylor's theorem and $\sum_i(m_i-\frac{1}{5})=0$ imply
\begin{equation*}
\sum_i\frac{8m_i^2}{(1+m_i)^2}
\left[\left(A_0+\frac{D_0}{5}\right)r-2t^2m_i\right]
\le-\frac{4}{9}\varepsilon
+\frac{3}{2}\sum_i\left(m_i-\frac{1}{5}\right)^2.
\end{equation*}
Using $\sum_i y_i=0$ in the remaining sum gives
\begin{equation*}
\sum_i\frac{8m_i^2D_0}{(1+m_i)^2}y_i
\le3\sum_i\left|m_i-\frac{1}{5}\right||y_i|.
\end{equation*}
Hence
\begin{equation*}
0\le-\frac{4}{9}\varepsilon+\frac{45}{2}\varepsilon^2<0,
\end{equation*}
a contradiction to $0<\varepsilon\le\frac{1}{100}$.
\end{proof}

\begin{proposition}\label{prop:spectral4}
Let $n=4$. Then every non-constant stable solution satisfies $\lambda_1>\frac{1}{2}$.
\end{proposition}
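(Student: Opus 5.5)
We argue by contradiction and assume that every eigenvalue of $M$ satisfies $m_i\le\frac12$. Proposition~\ref{prop:spectral-finite} then gives $m_i<\frac35r$ for every $i$. Lemma~\ref{lem:near-one} gives $r<\frac{99}{100}$. Since $\sum_im_i=1$, we also get $r>\frac{5}{3}\max_im_i\ge\frac13$, and more to the point $1=\sum_im_i<3r$. As in the proof of Lemma~\ref{lem:near-one}, we work in an orthonormal eigenbasis $Me_i=m_ie_i$ and sum the second variations $Q(\Phi_{e_i},\Phi_{e_i})$ with the weights
\begin{equation*}
w_i=\frac{8m_i^2}{(1+m_i)^2}.
\end{equation*}
For $p=4$ these weights satisfy $w_i\bigl(\frac12+2t_i+2t_i^2\bigr)=1$. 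By \eqref{eq:basic}, the mixed third moments $\int(b\cdot\xi)\xi_i^2u^p$ then cancel in the weighted sum of \eqref{eq:Qa}. Stability therefore yields
\begin{equation*}
0\le\sum_iw_i\Bigl[A_0(m_i)r+\frac{D_0(m_i)}{a}\int\xi_i^2u^2-2t_i^2m_i\Bigr].
\end{equation*}
This inequality involves only the two vectors $(m_i)$ and $(s_i)$, where $s_i=\frac1a\int\xi_i^2u^2$, together with $r$. These satisfy $\sum_is_i=r$ and $0\le s_i\le r$.

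The next step is to bound the $s_i$ in terms of the $m_i$ using constraints that are uniform over the basis. The coordinate identity \eqref{eq:coord} and the second Gram bound \eqref{eq:gram} give, exactly as in Lemma~\ref{lem:directional}(ii), an upper bound on $s_i$ in terms of $m_i$, $r$ and the third moment $\frac1a\int(b\cdot\xi)\xi_i^2u^p$. The third moment is at most $\gamma_4m_i(1-m_i)$ by Lemma~\ref{lem:directional}(iii) and Proposition~\ref{prop:multiplier}. The translated-norm inequality \eqref{eq:fourth-affine} and Lemma~\ref{lem:directional}(i) also apply in each direction. I plan to add nonnegative multiples of these directional inequalities, one family per $i$, before summing. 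In doing so I would choose the multipliers of the third-moment terms proportional to a common constant, so that they also sum to zero by \eqref{eq:basic}. This way the sign-indefinite terms are eliminated exactly rather than estimated.

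Once these terms are removed, the remaining expression is $\sum_i\Psi(m_i,s_i,r)$ for an explicit function $\Psi$. It is to be maximized subject to $\sum m_i=1$, $\sum s_i=r$, $0<m_i\le\frac12$, $m_i<\frac35 r$ and $r\in[\frac13,\frac{99}{100}]$. I would make $\Psi$ affine in $s_i$ by the tangent-line trick used for $\mathcal G_z$. The $s$-part then contributes $\lambda r$ with the multiplier $\lambda$ fixed by $\sum s_i=r$, and the problem reduces to showing $\sum_i\phi(m_i,r)<0$. The idea is to find a supporting line $\phi(m,r)\le c(r)+d(r)(m-\frac15)$ on the allowed $m$-interval with $5c(r)<0$, that is, a tangent-line (Jensen-type) argument around the balanced point $m=\frac15$. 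The computation at $m=\frac15$ in Lemma~\ref{lem:near-one} gives the value $-\frac4{45}(1-r)$ there, which is negative. The linear term disappears because $\sum(m_i-\frac15)=0$.

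I expect the main obstacle to be that $\phi(\cdot,r)$ is not concave on the whole range. When some $m_i$ approaches $\frac12$, or is very small, the supporting line can fail, and this is where the bound $m_i<\frac35r$ must be used. My plan is to split by $r$ into finitely many rational boxes. On each box I would verify the tangent-line inequality $\phi(m,r)\le c(r)+d(r)(m-\frac15)$ for $m\in[0,\min(\frac12,\frac35r)]$, with $c<0$, by Bernstein or interval arithmetic, as in the other appendix checks. If the boxes near $r=\frac13$ fail, I would first fall back on the single-direction exclusion of Lemma~\ref{lem:exclusion} applied to the largest eigenvalue, now with the extra information $m_1\in[\frac15,\frac12]$ and $r\ge\frac53m_1$.
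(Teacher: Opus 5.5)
Your overall architecture matches the paper's, but the list of per-direction inequalities you feed into it is too weak near $r=\frac{99}{100}$. In an eigenbasis of $M$, set $s_i=\frac1a\int\xi_i^2u^2$ and $T_i=\frac1a\int(b\cdot\xi)\xi_i^2u^p$. You use $\sum_iT_i=0$, $\sum_is_i=r$ and $\sum_im_i=1$, and on each $r$-interval you look for a per-direction inequality, affine in $(m,s,T)$, with direction-independent coefficients. The paper's version of this is $T<A_*+B_*m+C_*s$ in Proposition~\ref{prop:trace-cert}.

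\textbf{The gap.} The only size bound on the third moments in your list is $|T_i|\le\frac{17}{40}m_i(1-m_i)$. With your list the system is actually \emph{feasible} near $r=\frac{99}{100}$, so no nonnegative combination followed by any tangent-line step can produce a contradiction there.

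For instance, at $r=\frac{99}{100}$ take four directions with $(m_i,s_i,T_i)=(0.22,\,0.20265,\,-0.008)$ and one with $(0.12,\,0.1794,\,0.032)$. Then:
\begin{itemize}
\item $\sum m_i=1$, $\sum s_i=r$, $\sum T_i=0$, $m_i<\frac35r$, and $|T_i|\le\frac{17}{40}m_i(1-m_i)$.
\item The stability bound $T_i\le\frac{8m_i^2}{(1+m_i)^2}\bigl[A_0r+D_0s_i-2t_i^2m_i\bigr]$ gives $T_i\le-0.00740$ and $T_i\le0.03762$ respectively. For $p=4$ its right side equals
\begin{equation*}
\frac{1}{(1+m)^2}\Bigl[\frac{\alpha}{4}(1-m)^2(r-s)-8(1-\alpha)m^2r+8(1-\alpha)(2m^2+m)s-m(1-m)^2\Bigr].
\end{equation*}
\item The exact coordinate--Gram inequality $m_i+T_i+\beta r-(1+5\beta)s_i\ge\frac{\beta}{4}\frac{(5s_i-r)^2}{r-s_i}$ implies every $\mathcal G_z\ge0$. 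It gives $T_i\ge-0.00907$ and $T_i\ge0.02746$ respectively.
\item \eqref{eq:fourth-affine}, optimized over all $z\ge0$, gives $T_i\ge-0.0105$ and $T_i\ge-0.120$ respectively.
\item Lemma~\ref{lem:directional}(i) holds with room.
\end{itemize}
All margins are strict, so the same works for $r$ slightly below $\frac{99}{100}$.

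The mechanism is the one behind Lemma~\ref{lem:near-one}. At the balanced point each direction contributes only $-\frac{4}{45}(1-r)$. The summed stability expression is convex in $m$ near $\frac15$, so spreads $m_i-\frac15$ of order $\sqrt{1-r}$ already make the sum nonnegative. This costs third moments of order $\sqrt{1-r}$, which $\frac{17}{40}m(1-m)$ permits. So the dangerous end is $r\to\frac{99}{100}$, not $r\to\frac13$. Your single-direction fallback does not help there either, since one direction alone cannot exclude $m_1$ close to $\frac15$.

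\textbf{The fix.} What is missing is a bound on $b$ of order $1-r$, and it is available exactly under your contradiction hypothesis.
\begin{itemize}
\item For $n=4$ the matrix in \eqref{eq:gfirst} is $M+\frac{1+\alpha}{4(1-\alpha)}(I-M)$. Since $\frac{1+\alpha}{4(1-\alpha)}>1$ and $M\le\frac12I$, it is bounded below by $\frac{5-3\alpha}{8(1-\alpha)}I$.
\item Lemma~\ref{lem:gfirst} gives $\bigl|\beta\int\xi|\nabla u|^2\bigr|\le\beta\|\nabla u\|_2^2=a(1-r)$.
\item Hence $\frac{|b|}{a}\le\frac{8(1-\alpha)(1-r)}{5-3\alpha}$. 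By Lemma~\ref{lem:third}, $\frac{8(1-\alpha)(1-r)}{5-3\alpha}m(1-m)\pm T\ge0$ holds in every eigendirection, in the same form, so it can enter your uniform families.
\end{itemize}
In the example above it forces $|T_5|<9\cdot10^{-4}$, which is incompatible with $T_5\ge0.02746$. The paper includes exactly these inequalities, alongside the $\frac{17}{40}$ ones, in the list before Proposition~\ref{prop:trace-cert}.

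Once they are added, your plan becomes the paper's proof except in one respect. You fix the stability weight at $\frac{8m_i^2}{(1+m_i)^2}$ in every direction. The paper lets each $m$-interval use its own nonnegative combination and only requires the resulting bound $T<A_*+B_*m+C_*s$ to be uniform. That is a larger search space, and you should use it in the computation.
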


\begin{proof}
Suppose that every eigenvalue of $M$ is at most $\frac{1}{2}$. By Proposition~\ref{prop:spectral-finite} and Lemma~\ref{lem:near-one},
\begin{equation*}
m_i<\frac{3}{5}r,\qquad
1=\sum_{i=1}^5m_i<3r,\qquad
\frac{1}{3}<r<\frac{99}{100}.
\end{equation*}
For $n=4$, the matrix in \eqref{eq:gfirst} is bounded below by $\frac{5-3\alpha}{8(1-\alpha)}I$ when $M\le\frac{1}{2}I$. Consequently,
\begin{equation*}
\frac{|b|}{a}\le\frac{8(1-\alpha)(1-r)}{5-3\alpha}.
\end{equation*}
Together with Lemma~\ref{lem:directional}, Proposition~\ref{prop:multiplier} and \eqref{eq:fourth-affine}, this gives all the inequalities used in Proposition~\ref{prop:trace-cert}.

Choose the interval containing $r$ in that proposition. Its coefficients $A_*,B_*,C_*$ satisfy
\begin{equation*}
5A_*+B_*+C_*r<0.
\end{equation*}
The same three coefficients apply to every eigenvalue $m\in[0,\frac{1}{2}]$. If $Me=me$, $|e|=1$, and $x=e\cdot\xi$, then
\begin{equation*}
\frac{1}{a}\int(b\cdot\xi)x^2u^p
<A_*+B_*m+\frac{C_*}{a}\int x^2u^2.
\end{equation*}
Applying this to each $e_i$, with $x=\xi_i=e_i\cdot\xi$, and summing from $i=1$ to $5$. We obtain
\begin{equation*}
\begin{aligned}
0&=\frac{1}{a}\int(b\cdot\xi)\sum_{i=1}^5\xi_i^2u^p<5A_*+B_*\sum_{i=1}^5m_i
+\frac{C_*}{a}\int\sum_{i=1}^5\xi_i^2u^2=5A_*+B_*+C_*r<0.
\end{aligned}
\end{equation*}
Here we used $\sum_i\xi_i^2=1$ and $\int\xi u^p=0$. This contradiction proves the assertion.
\end{proof}

\begin{proof}[Proof of Proposition~\ref{prop:spectral}]
For $n\ge20$, this is Proposition~\ref{prop:sep20}. Let $4\le n\le19$. Proposition~\ref{prop:spectral-finite} for $n\ge5$, and Proposition~\ref{prop:spectral4} for $n=4$, give $\lambda_1>\frac{1}{2}$. Since $\sum_i\lambda_i=1$, every $\lambda_i$ with $i\ge2$ is smaller than $\frac{1}{2}$, and Proposition~\ref{prop:spectral-finite} gives $\lambda_i<\kappa_nr$. For $n=4$, the bound $r<\frac{99}{100}$ is Lemma~\ref{lem:near-one}.
\end{proof}
\section{Instability in the maximal direction}\label{sec:maximal}

Let $e$ be a unit eigenvector corresponding to $m=\lambda_1>1/2$. We will show that $Q(\Phi_e,\Phi_e)<0$.

Write $x=e\cdot\xi$ and $b_\perp=b-(b\cdot e)e$. The mixed third moment decomposes as
\begin{equation*}
\int(b\cdot\xi)x^2u^p
=\left(\int x^3u^p\right)(b\cdot e)
+b_\perp\cdot\int x^2(\xi-xe)u^p.
\end{equation*}
We first estimate the transverse term.

\subsection{The transverse component \texorpdfstring{$b_\perp$}{b perpendicular} of \texorpdfstring{$b$}{b}}
The estimate follows from \eqref{eq:ward}, \eqref{eq:coord} and \eqref{eq:gfirst}. Here $\lambda_2$ is the second largest eigenvalue of $M$.

\begin{lemma}\label{lem:transverse}
We have
\begin{equation}\label{eq:Bcal}
\frac{|b_\perp|^2}{a^2}\le\Bc:=\frac{4n^2(1-\alpha)^2r(1-r)\big[(1+\gamma_n)\lambda_2+\beta r\big]}{r\big[2n(1-\alpha)\lambda_2+(n-2+2\alpha)(1-\lambda_2)\big]^2+(n-2)^2\big(1+(n+1)\beta\big)(1-r)(1-\lambda_2)^2} .
\end{equation}
Moreover, $\lambda_2\Bc$ is increasing in $\lambda_2\in[0,1)$, and it increases if $\gamma_n$ is replaced by a larger number.
\end{lemma}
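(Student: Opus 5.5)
We would work entirely with the $b_\perp$-component, i.e. project the vector identities onto $e^\perp$. By \eqref{eq:gfirst}, $\beta\int\xi|\nabla u|^2=[M+\theta(I-M)]b$ with $\theta=\frac{n-2+2\alpha}{2n(1-\alpha)}>1$. Since $e$ is an eigenvector of $M$, $M$ preserves $e^\perp$. Hence the $e^\perp$-component of this identity is $[M+\theta(I-M)]b_\perp$, and its smallest eigenvalue on $e^\perp$ is $\lambda_2+\theta(1-\lambda_2)$ (using $\theta>1$ and that the eigenvalues of $M$ on $e^\perp$ are at most $\lambda_2$). Let $w=b_\perp/|b_\perp|$ and $y=w^TMw\le\lambda_2$. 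Then two scalar quantities are controlled from below by $|b_\perp|$:
\begin{equation*}
G:=w\cdot\beta\int\xi|\nabla u|^2=\big(y+\theta(1-y)\big)|b_\perp|+(\text{cross terms}),\qquad U:=w\cdot\int\xi u^2=-\frac{w^T(I-M)b}{p(1-\alpha)},
\end{equation*}
the latter by \eqref{eq:ward}. The cross terms vanish if we instead pair with $[M+\theta(I-M)]^{-1}$ or choose $w$ adapted to $M$. I would set this up with $w$ equal to the normalized $(I-M)b_\perp$ direction so that both projections are exact multiples of $|b_\perp|$ up to factors bounded via $\lambda_2$.

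The next step bounds $G$ and $U$ from above using Cauchy--Schwarz. For the gradient term, Cauchy--Schwarz as in \eqref{eq:gradient-cs} gives
\begin{equation*}
G^2\le\beta\|\nabla u\|_2^2\cdot\beta\int(w\cdot\xi)^2|\nabla u|^2=a(1-r)\,\beta\int(w\cdot\xi)^2|\nabla u|^2 .
\end{equation*}
For the $u^2$ term,
\begin{equation*}
U^2\le\|u\|_2^2\int(w\cdot\xi)^2u^2=ar\int(w\cdot\xi)^2u^2 .
\end{equation*}
Then we use \eqref{eq:coord} in direction $w$:
\begin{equation*}
\beta\int(w\cdot\xi)^2|\nabla u|^2+\big(1+(n+1)\beta\big)\int(w\cdot\xi)^2u^2=\int(a+b\cdot\xi)(w\cdot\xi)^2u^p+\beta\|u\|_2^2 .
\end{equation*}
The right-hand side is at most $a\big[(1+\gamma_n)\lambda_2+\beta r\big]$, by $a\le1$, Lemma~\ref{lem:third} and Proposition~\ref{prop:multiplier}, i.e. $|b|\le\gamma_na$ together with $w^TMw\le\lambda_2$. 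Thus a weighted sum $\frac{G^2}{a(1-r)}+(1+(n+1)\beta)\frac{U^2}{ar}\le a[(1+\gamma_n)\lambda_2+\beta r]$. Inserting $G\ge\frac{2n(1-\alpha)\lambda_2+(n-2+2\alpha)(1-\lambda_2)}{2n(1-\alpha)}|b_\perp|$ and $|U|\ge\frac{(1-\lambda_2)|b_\perp|}{p(1-\alpha)}$, with $p=\frac{2n}{n-2}$, and clearing denominators by $4n^2(1-\alpha)^2$ yields exactly \eqref{eq:Bcal}: the first denominator term is $r[\cdots]^2$ and the second is $(n-2)^2(1+(n+1)\beta)(1-r)(1-\lambda_2)^2$.

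The main obstacle is the noncommutativity: $b_\perp$ need not be an eigenvector of $M$ restricted to $e^\perp$. So the lower bounds for $G$ and $U$ in terms of $\lambda_2$ require care, and one needs the monotone dependence on $y\in[0,\lambda_2]$. I would handle this by using in each projection the worst eigenvalue. This works because $y+\theta(1-y)$ is decreasing in $y$ and $1-y$ is decreasing, so $y\le\lambda_2$ gives the stated lower bounds. For the last claim, $\lambda_2\Bc$ has a numerator that is increasing in $\lambda_2$ and in $\gamma_n$. Its denominator involves $(1-\lambda_2)$ factors and the bracket $2n(1-\alpha)\lambda_2+(n-2+2\alpha)(1-\lambda_2)$, which decreases in $\lambda_2$ since $n-2+2\alpha>2n(1-\alpha)$ by Lemma~\ref{lem:gfirst}. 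So the denominator is decreasing, and monotonicity follows directly.
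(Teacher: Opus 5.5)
Your argument with $w=b_\perp/|b_\perp|$ is the paper's proof. You test \eqref{eq:coord} in direction $w$ and bound its right side by $a[(1+\gamma_n)\lambda_2+\beta r]$ via Lemma~\ref{lem:third} and Proposition~\ref{prop:multiplier}. You apply Cauchy--Schwarz to $w\cdot\int\xi u^2$ and $w\cdot\beta\int\xi|\nabla u|^2$, and bound both from below through \eqref{eq:ward}, \eqref{eq:gfirst} and $y=w^TMw\le\lambda_2$. The monotonicity then follows because the numerator increases and the denominator decreases.

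The one thing to correct is the paragraph about cross terms: it treats a non-issue and proposes a fix that would break the argument. Write $b=(b\cdot e)e+b_\perp$. Both $I-M$ and $M+\theta(I-M)$ send $e$ to a multiple of $e$, which is orthogonal to $w$, and $b_\perp=|b_\perp|w$. Hence, exactly,
\[
w\cdot\beta\int\xi|\nabla u|^2=\bigl(y+\theta(1-y)\bigr)|b_\perp|,\qquad w\cdot\int\xi u^2=-\frac{(1-y)|b_\perp|}{p(1-\alpha)},
\]
with no cross terms. The non-commutativity of $M$ with $b_\perp b_\perp^T$ never enters, and your last paragraph (monotonicity in $y$, using $\theta>1$) finishes the proof.

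Do not switch to $w\propto(I-M)b_\perp$. That choice makes the $u^2$-projection clean, but the gradient projection becomes $b_\perp^T(I-M)[M+\theta(I-M)]b_\perp/|(I-M)b_\perp|$. This is not bounded below by $(\lambda_2+\theta(1-\lambda_2))|b_\perp|$ in general, because the natural reduction would need Cauchy--Schwarz in the reverse direction. It already fails for $n=4$ when $b_\perp$ mixes eigen-directions of $M|_{e^\perp}$ with eigenvalues near $0$ and near $\tfrac12$. That version would therefore not yield \eqref{eq:Bcal}, so keep $w=b_\perp/|b_\perp|$.
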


\begin{proof}
The case $b_\perp=0$ is immediate. Otherwise, put $v=b_\perp/|b_\perp|$. Since $M$ preserves $e^\perp$, we have $v^TMv\le\lambda_2$. By Lemma~\ref{lem:third} and Proposition~\ref{prop:multiplier}, $\int(b\cdot\xi)(v\cdot\xi)^2 u^p\le|b|\,v^TMv\le\gamma_na\lambda_2$. Testing \eqref{eq:coord} with $v$ and dividing by $a$, we obtain
\begin{equation*}
\frac{\beta}{a}\int(v\cdot\xi)^2|\nabla u|^2+\frac{1+(n+1)\beta}{a}\int(v\cdot\xi)^2u^2\le(1+\gamma_n)\lambda_2+\beta r .
\end{equation*}
On the other hand, the Cauchy--Schwarz inequality applied to $(v\cdot\xi)u$ and $u$, and \eqref{eq:gradient-cs}, give
\begin{equation*}
\begin{aligned}
\left(v\cdot\int\xi u^2\right)^2
&=\left(\int[(v\cdot\xi)u]u\right)^2
\le\|u\|_2^2\int(v\cdot\xi)^2u^2,\\
\left(\beta\int(v\cdot\xi)|\nabla u|^2\right)^2
&\le\beta\|\nabla u\|_2^2\,\beta\int(v\cdot\xi)^2|\nabla u|^2.
\end{aligned}
\end{equation*}
Both denominators below are positive, since $u$ is positive and non-constant. Thus
\begin{equation*}
\int(v\cdot\xi)^2u^2\ge\frac{\big(v\cdot\int\xi u^2\big)^2}{\|u\|_2^2},\qquad
\beta\int(v\cdot\xi)^2|\nabla u|^2\ge\frac{\big(\beta\int(v\cdot\xi)|\nabla u|^2\big)^2}{\beta\|\nabla u\|_2^2}.
\end{equation*}
Since $v\in e^\perp$ and $M$ preserves $e^\perp$, only $b_\perp$ contributes to $v^T(I-M)b$ and to $v^TMb$. Hence \eqref{eq:ward}, \eqref{eq:gfirst} and $n-2+2\alpha>2n(1-\alpha)$ give
\begin{equation*}
\Big|v\cdot\int\xi u^2\Big|\ge\frac{(n-2)(1-\lambda_2)|b_\perp|}{2n(1-\alpha)},\qquad \Big|\beta\int(v\cdot\xi)|\nabla u|^2\Big|\ge\frac{2n(1-\alpha)\lambda_2+(n-2+2\alpha)(1-\lambda_2)}{2n(1-\alpha)}\,|b_\perp| .
\end{equation*}
These inequalities give \eqref{eq:Bcal}.

For the monotonicity, note that the numerator of $\lambda_2\Bc$ is increasing in $\lambda_2$ and in $\gamma_n$, while its denominator is decreasing in $\lambda_2$, since $n-2+2\alpha>2n(1-\alpha)$.
\end{proof}

For $5\le n\le19$, Proposition~\ref{prop:spectral} gives $\lambda_2<\kappa_nr$. By the monotonicity in Lemma~\ref{lem:transverse}, Lemma~\ref{lem:transverse-checks} then gives
\begin{equation}\label{eq:Bcal-bound}
\lambda_2\Bc<H_nr,\qquad H_5=\frac{133}{10000},\qquad H_6=\frac{3}{400},\qquad H_n=\frac{9}{n^4}\quad(7\le n\le19).
\end{equation}
For $n=4$, Proposition~\ref{prop:spectral} and $\sum_i\lambda_i=1$ give $\lambda_2\le\min\{\frac{3}{5}r,1-m\}$.

We also need the following consequence of orthogonal projection.

\begin{lemma}\label{lem:projection}
We have
\begin{equation*}
\int(x^2-m)^2u^p-\frac{1}{m}\Big(\int x^3 u^p\Big)^2=\int\Big(x^2-m-\frac{x}{m}\int x^3u^p\Big)^2u^p\ge\frac{1}{\lambda_2}\Big|\int x^2(\xi-xe) u^p\Big|^2 .
\end{equation*}
Consequently,
\begin{equation}\label{eq:T-regression}
\frac{1}{a}\int(b\cdot\xi)x^2 u^p\le\frac{b\cdot e}{a}\int x^3 u^p+\sqrt{\lambda_2\Bc\Big[\int(x^2-m)^2u^p-\frac{1}{m}\Big(\int x^3 u^p\Big)^2\Big]} .
\end{equation}
\end{lemma}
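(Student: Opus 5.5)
The identity is obtained by expanding the square. Put $\mu_3=\int x^3u^p$ and $g=x^2-m-\frac{\mu_3}{m}x$. Using $\int u^p=1$, $\int xu^p=0$ and $\int x^2u^p=m$, the cross term is $-\frac{2\mu_3}{m}\int x(x^2-m)u^p=-\frac{2\mu_3^2}{m}$, and the last term is $\frac{\mu_3^2}{m^2}\cdot m=\frac{\mu_3^2}{m}$. Together these give the stated equality. Equivalently, $g$ is the residual of $x^2-m$ after $L^2(u^p)$-projection onto the span of $1$ and $x$.

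For the inequality I would use \eqref{eq:coordinate-cs}, or rather the completing-the-square argument in its proof, applied to $h=g$. Since $\int gu^p=0$, this gives
\begin{equation*}
\Big(\int\xi gu^p\Big)^TM^{-1}\int\xi gu^p\le\int g^2u^p .
\end{equation*}
Now compute $\int\xi gu^p$. The $e$-component is $\int xgu^p=\mu_3-0-\frac{\mu_3}{m}m=0$. For the transverse components, $\int(\xi-xe)u^p=0$, and $\int x(\xi-xe)u^p=Me-me=0$ because $e$ is an eigenvector. Hence
\begin{equation*}
\int\xi gu^p=\int x^2(\xi-xe)u^p,
\end{equation*}
which lies in $e^\perp$. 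Since $M$ preserves $e^\perp$ and its eigenvalues there are at most $\lambda_2$, we have $M^{-1}\ge\lambda_2^{-1}$ on $e^\perp$. The left side above is therefore at least $\frac1{\lambda_2}|\int x^2(\xi-xe)u^p|^2$, which proves the inequality.

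For \eqref{eq:T-regression}, start from the decomposition displayed just before Lemma~\ref{lem:transverse}:
\begin{equation*}
\int(b\cdot\xi)x^2u^p=(b\cdot e)\,\mu_3+b_\perp\cdot\int x^2(\xi-xe)u^p .
\end{equation*}
Divide by $a$ and bound the second term by Cauchy--Schwarz, giving $\frac{|b_\perp|}{a}\,\bigl|\int x^2(\xi-xe)u^p\bigr|$. Lemma~\ref{lem:transverse} gives $\frac{|b_\perp|}{a}\le\sqrt{\Bc}$. The first part of this lemma gives
\begin{equation*}
\Big|\int x^2(\xi-xe)u^p\Big|\le\sqrt{\lambda_2\Big[\int(x^2-m)^2u^p-\tfrac1m\mu_3^2\Big]} .
\end{equation*}
Multiplying the two bounds yields \eqref{eq:T-regression}.

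The main obstacle is the computation that $\int\xi gu^p$ has vanishing $e$-component and reduces exactly to the transverse third moment. This is where the eigenvector property and the zero barycenter enter. The remaining steps are routine.
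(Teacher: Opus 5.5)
Your proof is correct and is essentially the paper's argument. The paper applies \eqref{eq:coordinate-cs} with $h=x^2$ (the second bound in \eqref{eq:third-scalar}) and splits $\int\xi x^2u^p=(\int x^3u^p)e+\int x^2(\xi-xe)u^p$ using that $M^{-1}$ is block diagonal with respect to $e$ and $e^\perp$; you apply \eqref{eq:coordinate-cs} to the residual $g$ instead. This is the same inequality, because both sides of \eqref{eq:coordinate-cs} are unchanged when a constant and a multiple of $x$ are subtracted from $h$. Your deduction of \eqref{eq:T-regression}, via the decomposition of the mixed moment, Cauchy--Schwarz in $e^\perp$ and \eqref{eq:Bcal}, is exactly as in the paper.
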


\begin{proof}
The identity follows by expanding the square and using $\int x u^p=0$ and $\int x^2 u^p=m$. Since $e$ is an eigenvector with eigenvalue $m$, $M^{-1}\geq\lambda_2^{-1}I$ on $e^\perp$, and $\int\xi x^2 u^p=(\int x^3 u^p)\,e+\int x^2(\xi-xe) u^p$, the second bound in \eqref{eq:third-scalar} gives
\begin{equation*}
\frac{1}{m}\Big(\int x^3 u^p\Big)^2+\frac{1}{\lambda_2}\Big|\int x^2(\xi-xe) u^p\Big|^2\le\Big(\int\xi x^2 u^p\Big)^TM^{-1}\int\xi x^2 u^p\le\int(x^2-m)^2u^p .
\end{equation*}
Finally, the Cauchy--Schwarz inequality in $e^\perp$ and \eqref{eq:Bcal} give
\begin{equation*}
\begin{aligned}
\frac{b_\perp}{a}\cdot\int x^2(\xi-xe)u^p
&\le\frac{|b_\perp|}{a}\left|\int x^2(\xi-xe)u^p\right|\le\sqrt{\lambda_2\Bc\left[\int(x^2-m)^2u^p
-\frac{1}{m}\left(\int x^3u^p\right)^2\right]}.
\end{aligned}
\end{equation*}
Adding $\frac{b\cdot e}{a}\int x^3u^p$ proves \eqref{eq:T-regression}.
\end{proof}

\subsection{Estimates for the second variation}
We now eliminate $\int x^2u^2$ from \eqref{eq:retained}. With $C_m$ as in \eqref{eq:k-ell}, set
\begin{equation}\label{eq:coef}
\begin{gathered}
C_b=\frac{C_m}{1-\alpha+\beta},\qquad C_r=\alpha\Big[\frac{(n+2)(1-m)^2}{n^2m}+\frac{4}{n}-km\Big]+\beta C_b,\\
C_{\Nn}=C_b+nm\Big(\frac{2}{p}+2t+(p-2)t^2\Big).
\end{gathered}
\end{equation}

\begin{lemma}\label{lem:retained-general}
If $C_b>0$, then
\begin{equation}\label{eq:retained-general}
\frac{2}{p}(1-m)^2\le C_r r+\frac{C_b}{a}\int(b\cdot\xi)x^2 u^p-\frac{C_{\Nn}}{a}\Nn_e .
\end{equation}
\end{lemma}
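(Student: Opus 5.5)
The plan is to start from the identity \eqref{eq:retained} of Lemma~\ref{lem:retained} and use stability in the form $Q(\Phi_e,\Phi_e)\ge0$, since $\Phi_e\in\Tu$ by \eqref{eq:Phi-data}. Dividing by $a>0$ and rearranging turns $nmQ(\Phi_e,\Phi_e)\ge0$ into
\begin{equation*}
\frac{2}{p}\Big[\frac{(1-m)^2}{a}+nm\frac{1-a}{a}\Big]+nm\Big(\frac{2}{p}+2t+(p-2)t^2\Big)\frac{\Nn_e}{a}\le \frac{C_m}{a}\int x^2u^2+\alpha\Big[\frac{(n+2)(1-m)^2}{n^2m}+\frac{4}{n}-km\Big]r .
\end{equation*}
Because $0<a\le1$, the left bracket is at least $(1-m)^2$, so the energy-deficit term can be dropped.

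The next step removes $\int x^2u^2$ by means of the definition \eqref{eq:Ndef} of $\Nn_e$. Testing \eqref{eq:Ndef} with $e$ gives
\begin{equation*}
(1-\alpha+\beta)\int x^2u^2=(a-1)m+\int(b\cdot\xi)x^2u^p+\beta\|u\|_2^2-\Nn_e .
\end{equation*}
Dividing by $a$, multiplying by $C_b=C_m/(1-\alpha+\beta)$ and using $(a-1)m/a\le0$ then yields
\begin{equation*}
\frac{C_m}{a}\int x^2u^2\le C_b\Big[\frac{1}{a}\int(b\cdot\xi)x^2u^p+\beta r-\frac{\Nn_e}{a}\Big].
\end{equation*}
This requires $C_b>0$, which is the hypothesis of the lemma, and the inequality has the correct direction because the term $(a-1)m$ is discarded with a positive coefficient.

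Substituting this bound into the rearranged stability inequality collects the $r$-terms into $C_r r$. The $\Nn_e$-terms combine to $-\big(C_b+nm(\tfrac2p+2t+(p-2)t^2)\big)\Nn_e/a=-C_{\Nn}\Nn_e/a$, which gives \eqref{eq:retained-general}.

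The only point needing care is the sign bookkeeping. We have $a\le1$ and $\Nn_e\ge0$ by Proposition~\ref{prop:block}. We also need $\frac{2}{p}+2t+(p-2)t^2>0$, which holds since $t>0$, but it is not actually needed because the $\Nn_e$ terms are kept exactly rather than discarded. Finally, $C_b>0$ justifies multiplying the $\Nn_e$-identity inequality. There is no real obstacle; the argument is an algebraic rearrangement of Lemma~\ref{lem:retained} and \eqref{eq:Ndef}.
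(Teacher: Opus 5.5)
Your proposal is correct and follows essentially the same route as the paper. You combine \eqref{eq:retained} with \eqref{eq:Ndef} to eliminate $\int x^2u^2$, then use stability, $C_b>0$ and $0<a\le1$ to drop the energy-deficit terms. The paper instead first writes the exact identity for $\frac{nm}{a}Q(\Phi_e,\Phi_e)$, with remainder $-\frac{2(1-m)^2}{pa}-\frac{(1-a)m}{a}\bigl(C_b+\frac{2n}{p}\bigr)$, and discards the deficit only at the end; this is the same bookkeeping in a different order.
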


\begin{proof}
By \eqref{eq:Ndef} and \eqref{eq:retained},
\begin{equation*}
\begin{aligned}
\frac{nm}{a}Q(\Phi_e,\Phi_e)
={}&C_r r+\frac{C_b}{a}\int(b\cdot\xi)x^2u^p
-\frac{C_{\Nn}}{a}\Nn_e-\frac{2(1-m)^2}{pa}-\frac{(1-a)m}{a}\left(C_b+\frac{2n}{p}\right).
\end{aligned}
\end{equation*}
Stability, $C_b>0$ and $a\le1$ give the assertion.
\end{proof}

We now use the lower bounds for $\Nn_e$ together with the decomposition of the third moment.

\begin{lemma}\label{lem:allocation}
Let $4\le n\le19$. Suppose that $C_b,C_{\Nn}>0$ and that
\begin{equation*}
\frac{1}{a}\Nn_e\ge c_1+c_2\int(x^2-m)^2u^p-c_3\Big(\int x^3 u^p\Big)^2,\qquad \frac{1}{a}\Nn_e\ge c_4\int(x^2-m)^2u^p
\end{equation*}
for numbers $c_1$, $c_2$, $c_3$ and $c_4$. Let $\theta_1>0$ and $\theta_2\ge0$ satisfy $\theta_1+\theta_2\le1$, and suppose that $c_5=C_{\Nn}\big[\theta_2c_2+(1-\theta_1-\theta_2)c_4\big]$ is positive. Then
\begin{equation*}
\frac{2}{p}(1-m)^2\le C_r r-\theta_2C_{\Nn}c_1+\frac{C_b^2\lambda_2\Bc}{4c_5}+\Big[\frac{p^2(1-\alpha)C_b^2(1-r)}{4\theta_1(1+(p-1)m)^2C_{\Nn}}+\theta_2C_{\Nn}c_3-\frac{c_5}{m}\Big]\Big(\int x^3 u^p\Big)^2 .
\end{equation*}
\end{lemma}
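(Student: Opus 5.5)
Our starting point is \eqref{eq:retained-general}. We bound its two $u$-dependent terms, $\frac{C_b}{a}\int(b\cdot\xi)x^2u^p$ and $-\frac{C_{\Nn}}{a}\Nn_e$, separately. Write $V=\int(x^2-m)^2u^p$, $T=\int x^3u^p$ and $\nu=\frac1a\Nn_e$. First split $C_{\Nn}\nu=\theta_1C_{\Nn}\nu+\theta_2C_{\Nn}\nu+(1-\theta_1-\theta_2)C_{\Nn}\nu$. On the $\theta_2$ part use the first assumed lower bound, and on the remainder use $\nu\ge c_4V$. Together these give
\[
-C_{\Nn}\nu\le-\theta_1C_{\Nn}\nu-\theta_2C_{\Nn}c_1+\theta_2C_{\Nn}c_3T^2-\big(c_5+(1-\theta_1-\theta_2)C_{\Nn}c_4-(1-\theta_1-\theta_2)C_{\Nn}c_4\big)V ,
\]
that is, $-C_{\Nn}\nu\le-\theta_1C_{\Nn}\nu-\theta_2C_{\Nn}c_1+\theta_2C_{\Nn}c_3T^2-c_5V$. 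Both coefficients $\theta_2$ and $1-\theta_1-\theta_2$ are nonnegative, so the substitutions are legitimate.

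Next we bound the mixed third moment, using \eqref{eq:T-regression} with $C_b>0$:
\[
\frac{C_b}{a}\int(b\cdot\xi)x^2u^p\le C_b\frac{b\cdot e}{a}T+C_b\sqrt{\lambda_2\Bc\,(V-T^2/m)} .
\]
For the square root, apply AM--GM in the form $C_b\sqrt{\lambda_2\Bc\,W}\le \frac{C_b^2\lambda_2\Bc}{4c_5}+c_5W$ with $W=V-T^2/m\ge0$ (Lemma~\ref{lem:projection}). The term $c_5V$ produced here cancels the $-c_5V$ from the first step, leaving $-\frac{c_5}{m}T^2$. For the longitudinal term, use \eqref{eq:schur-dir}:
\[
\Big(\frac{b\cdot e}{a}\Big)^2\le\frac{p^2(1-\alpha)(1-r)}{(1+(p-1)m)^2}\,\nu .
\]
Then, by AM--GM again,
\[
C_b\Big|\frac{b\cdot e}{a}\Big||T|\le\theta_1C_{\Nn}\Big(\frac{b\cdot e}{a}\Big)^2\frac{(1+(p-1)m)^2}{p^2(1-\alpha)(1-r)}+\frac{p^2(1-\alpha)C_b^2(1-r)}{4\theta_1(1+(p-1)m)^2C_{\Nn}}T^2 .
\]
By \eqref{eq:schur-dir}, the first piece is at most $\theta_1C_{\Nn}\nu$, which cancels the remaining $-\theta_1C_{\Nn}\nu$. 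If $r=1$ were allowed this step would degenerate, but $r<1$ here and $b\cdot e=0$ is trivial.

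Collecting terms gives exactly the stated inequality. The only delicate point is the bookkeeping: the budget $C_{\Nn}\nu$ has to be split so that the $\theta_1$ share absorbs $(b\cdot e)^2$ and the $V$-coefficient $c_5$ absorbs the transverse term, while the $\theta_2$ share supplies $-c_1$. We also need $c_5>0$ to divide in the AM--GM step. No step appears to be a genuine obstacle, since everything rests on \eqref{eq:retained-general}, \eqref{eq:T-regression} and \eqref{eq:schur-dir}.
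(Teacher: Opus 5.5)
Your proposal is correct and follows essentially the same route as the paper. You split $\frac{C_{\Nn}}{a}\Nn_e$ into the $\theta_1$, $\theta_2$ and $(1-\theta_1-\theta_2)$ shares, absorb the longitudinal term $C_b\frac{b\cdot e}{a}\int x^3u^p$ into the $\theta_1$ share via \eqref{eq:schur-dir} and AM--GM, and absorb the transverse square root from \eqref{eq:T-regression} into $c_5\big[\int(x^2-m)^2u^p-\frac{1}{m}(\int x^3u^p)^2\big]$; these are exactly the paper's three estimates, combined with \eqref{eq:retained-general}.
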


\begin{proof}
By \eqref{eq:schur-dir},
\begin{equation*}
\frac{C_b(b\cdot e)}{a}\int x^3u^p
-\frac{\theta_1C_{\Nn}}{a}\Nn_e
\le\frac{p^2(1-\alpha)C_b^2(1-r)}{4\theta_1(1+(p-1)m)^2C_{\Nn}}
\left(\int x^3u^p\right)^2.
\end{equation*}
The other two lower bounds give
\begin{equation*}
\frac{(1-\theta_1)C_{\Nn}}{a}\Nn_e
\ge\theta_2C_{\Nn}c_1+c_5\int(x^2-m)^2u^p
-\theta_2C_{\Nn}c_3\left(\int x^3u^p\right)^2.
\end{equation*}
For the transverse term in \eqref{eq:T-regression},
\begin{equation*}
\begin{aligned}
&C_b\sqrt{\lambda_2\Bc\left[\int(x^2-m)^2u^p
-\frac{1}{m}\left(\int x^3u^p\right)^2\right]}-c_5\left[\int(x^2-m)^2u^p
-\frac{1}{m}\left(\int x^3u^p\right)^2\right]
\le\frac{C_b^2\lambda_2\Bc}{4c_5}.
\end{aligned}
\end{equation*}
The assertion follows from \eqref{eq:retained-general} and these estimates.
\end{proof}

For $5\le n\le19$, Lemma~\ref{lem:translated}(ii) gives
\begin{equation*}
c_1=(1-\alpha)\tau^2r-c_6,
\end{equation*}
where $\tau$ and $c_6$ are independent of $r$. We use this in the following corollary.

\begin{corollary}\label{cor:gaps}
Let $5\le n\le19$. In the setting of Lemma~\ref{lem:allocation}, suppose that $c_1=(1-\alpha)\tau^2r-c_6$ for numbers $\tau$ and $c_6$ which do not depend on $r$. Put
\begin{equation*}
\begin{gathered}
g_1=\frac{2}{p}-\frac{\theta_2C_{\Nn}c_6}{(1-m)^2},\qquad g_2=\theta_2C_{\Nn}(1-\alpha)\tau^2-C_r-\frac{C_b^2H_n}{4c_5},\\
g_3=g_1-m^2\Big[\frac{p^2(1-\alpha)C_b^2}{4\theta_1(1+(p-1)m)^2C_{\Nn}}+\theta_2C_{\Nn}c_3-\frac{c_5}{m}\Big].
\end{gathered}
\end{equation*}
Then $g_1$, $g_2$ and $g_3$ cannot all be positive.
\end{corollary}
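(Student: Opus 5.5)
We argue by contradiction: assume $g_1,g_2,g_3>0$ and show that the inequality of Lemma~\ref{lem:allocation} then fails. Write $T=(\int x^3u^p)^2$ and
\begin{equation*}
K=\frac{p^2(1-\alpha)C_b^2(1-r)}{4\theta_1(1+(p-1)m)^2C_{\Nn}}+\theta_2C_{\Nn}c_3-\frac{c_5}{m}
\end{equation*}
for the bracketed coefficient there. Denote by $K_0$ the same expression with the factor $(1-r)$ replaced by $1$, so that $g_3=g_1-m^2K_0$.

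\emph{Substitution and the transverse term.} We insert $c_1=(1-\alpha)\tau^2r-c_6$ into Lemma~\ref{lem:allocation}. For $5\le n\le19$, \eqref{eq:Bcal-bound} gives $\lambda_2\Bc<H_nr$, and $C_b^2/(4c_5)>0$. Hence the right-hand side is strictly smaller than
\begin{equation*}
\theta_2C_{\Nn}c_6-r\,g_2+K\,T
=\frac{2}{p}(1-m)^2-g_1(1-m)^2-r\,g_2+K\,T .
\end{equation*}
The inequality of Lemma~\ref{lem:allocation} then yields
\begin{equation*}
g_1(1-m)^2+r\,g_2<K\,T .
\end{equation*}

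\emph{Bounding the third-moment term.} By \eqref{eq:VS}, $0\le T\le m^2(1-m)^2$. Since $g_1,g_2>0$ and $r>0$, the displayed inequality forces $K>0$. Therefore $KT\le Km^2(1-m)^2$.

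Now split $K=K_0-\kappa r$, where $\kappa=\frac{p^2(1-\alpha)C_b^2}{4\theta_1(1+(p-1)m)^2C_{\Nn}}\ge0$. This gives
\begin{equation*}
g_1(1-m)^2+r g_2<m^2K_0(1-m)^2-\kappa r\,m^2(1-m)^2\le m^2K_0(1-m)^2 .
\end{equation*}
Rearranging, $g_3(1-m)^2+rg_2<0$. This contradicts $g_3>0$ and $g_2>0$, since $m<1$.

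\emph{Main obstacle.} The only delicate point is the sign of $K$: we may replace $T$ by its maximum $m^2(1-m)^2$ only when $K\ge0$. The argument above handles this directly, because if $K\le0$ the inequality $g_1(1-m)^2+rg_2<KT\le0$ already contradicts $g_1,g_2>0$. A second point is the use of the strict bound $\lambda_2\Bc<H_nr$. This relies on Lemma~\ref{lem:transverse-checks} together with $\lambda_2<\kappa_nr$ from Proposition~\ref{prop:spectral}, which is why the hypothesis $5\le n\le19$ is needed.
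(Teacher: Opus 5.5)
Your proof is correct and follows the paper's argument. You substitute $c_1=(1-\alpha)\tau^2r-c_6$ into Lemma~\ref{lem:allocation}, use $\lambda_2\Bc<H_nr$, and bound $\bigl(\int x^3u^p\bigr)^2\le m^2(1-m)^2$ via \eqref{eq:VS}. The only difference is the order of two steps. The paper first replaces $1-r$ by $1$ using $\bigl(\int x^3u^p\bigr)^2\ge0$, and then infers $g_1>g_3$ from positivity of the left-hand side; you infer $K>0$ first and drop the $-\kappa r$ term afterwards, which is equivalent.
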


\begin{proof}
By \eqref{eq:Bcal-bound} and $1-r\le1$ in the bracket, Lemma~\ref{lem:allocation} gives
\begin{equation*}
g_1(1-m)^2+g_2r\le\frac{g_1-g_3}{m^2}\Big(\int x^3 u^p\Big)^2 .
\end{equation*}
Suppose that $g_1,g_2,g_3>0$. The left-hand side is positive, so $g_1>g_3$. By \eqref{eq:VS},
\begin{equation*}
g_1(1-m)^2+g_2r
\le\frac{g_1-g_3}{m^2}\left(\int x^3u^p\right)^2
\le(g_1-g_3)(1-m)^2.
\end{equation*}
Thus
\begin{equation*}
0<g_3(1-m)^2+g_2r\le0,
\end{equation*}
a contradiction.
\end{proof}

For $5\le n\le19$, use $c_4=1/(2n)$ in Lemma~\ref{lem:allocation}. If $n=6$, the other lower bound is \eqref{eq:A3-sharp}. If $n=7$, use \eqref{eq:paired-e} for $m\le3/4$ and \eqref{eq:variance-pair} for $m\ge3/4$. For $8\le n\le19$, use \eqref{eq:paired-e} throughout.

For $n=5$ and $m\le7/10$, use \eqref{eq:A5} with
\begin{equation*}
\tau=\frac{1+m}{2}+\frac{(1-m)^2}{5}.
\end{equation*}
For $m\ge7/10$, use \eqref{eq:sqrt5} with $\tau=\sqrt m$. In each case, $c_2$ and $c_3$ are the coefficients of $\int(x^2-m)^2u^p$ and $-(\int x^3u^p)^2$ in the corresponding inequality, and $c_6$ is $\tau^2+m$ minus its constant term.

The required positivity in Lemma~\ref{lem:translated}(ii) follows from \eqref{eq:VS}. For $n=5$ and $m\le7/10$, it is checked in Appendix~\ref{app:gaps}.

For $n=4$, \eqref{eq:N4} and \eqref{eq:Nvar4} give
\begin{equation*}
c_1=mr\psi(r),\qquad c_2=\frac{\sqrt{2\omega^2-1}}{16m},\qquad
c_3=0,\qquad c_4=\frac{7}{25m+7}.
\end{equation*}

\begin{proposition}\label{prop:finite-max}
With the choices of $\theta_1$ and $\theta_2$ in Appendix~\ref{app:gaps}, the following hold.
\begin{enumerate}[label=\textup{(\roman*)},leftmargin=2em]
\item Let $5\le n\le19$. Then the hypotheses of Lemma~\ref{lem:allocation} hold, and $g_1,g_2,g_3>0$ in Corollary~\ref{cor:gaps} for $\frac{1}{2}\le m\le1$.
\item Let $n=4$. Then the hypotheses of Lemma~\ref{lem:allocation} hold, and
\begin{equation}\label{eq:Delta4}
\frac{1}{2}(1-m)^2-C_r r+\theta_2C_{\Nn}mr\psi(r)-\frac{C_b^2\lambda_2\Bc}{4c_5}-\Big[\frac{4(1-\alpha)C_b^2(1-r)}{\theta_1(1+3m)^2C_{\Nn}}-\frac{c_5}{m}\Big]_+m^2(1-m)^2>0
\end{equation}
for $\frac{1}{2}\le m\le\frac{9}{10}$ and $0\le r\le\frac{99}{100}$, where $\lambda_2$ is replaced by $\min\{\frac{3}{5}r,1-m\}$.
\end{enumerate}
Consequently, there is no non-constant stable solution if $5\le n\le19$, and the largest eigenvalue satisfies $m>\frac{9}{10}$ if $n=4$.
\end{proposition}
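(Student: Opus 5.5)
Before anything else we verify the hypotheses of Lemma~\ref{lem:allocation} on the whole range. In each dimension we evaluate $C_b$, $C_{\Nn}$ and $c_5$ from \eqref{eq:coef} with the stated $c_2,c_4$ and the tabulated $\theta_1,\theta_2$. These are explicit algebraic functions of $m$ alone, or of $(m,r)$ when $n=4$ through $\omega$. Positivity then reduces to finitely many sign conditions on $[\frac12,1]$. We certify these by Bernstein expansion on a rational subdivision, using rational enclosures of $\alpha$ and $k$ as was done for $n=6$. We must also check the hypothesis of Lemma~\ref{lem:translated}(ii), namely that the lower bound is at least $m$. Outside $n=5$, $m\le\frac{7}{10}$ this follows from \eqref{eq:VS}; in that remaining case we check it separately.

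\textbf{Part (i), $5\le n\le19$.} In Corollary~\ref{cor:gaps}, $g_1$, $g_2$ and $g_3$ depend only on $m$, since $c_6$, $\tau$, $c_2$, $c_3$ and $c_5$ are independent of $r$ and $H_n$ absorbs $\lambda_2$. So (i) is a finite family of one-variable inequalities on $[\frac12,1]$. Where a constant or the choice of $\tau$ changes, we split the interval accordingly: at $m=\frac34$ for $n=7$ and at $m=\frac7{10}$ for $n=5$.

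The delicate region is near $m=1$. There all of $(1-m)^2$, $C_r$ and $c_6$ degenerate.
\begin{itemize}
\item For $g_1$ we first factor $(1-m)^2$ out of $c_6$ analytically. This uses the form of the bounds in \eqref{eq:paired-e}, \eqref{eq:variance-pair} and \eqref{eq:sqrt5}, where $\tau^2+m$ minus the constant term is $O((1-m)^2)$ or comparable, so that $g_1$ has a finite limit.
\item For $g_2$, positivity at $m=1$ comes from $C_r$ becoming negative, since $km$ dominates. It also needs $\theta_2C_{\Nn}(1-\alpha)\tau^2$ to beat $C_b^2H_n/(4c_5)$, which is where the small $H_n$ enters.
\end{itemize}
Once these reductions are made, each $g_i$ becomes a rational function with a positive denominator. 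We verify its numerator on subintervals by exact Bernstein coefficients. Where square roots or fractional powers appear, for $n=5$ and from $\alpha$, we use outward-rounded interval arithmetic instead.

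\textbf{Part (ii), $n=4$.} Here \eqref{eq:Delta4} is a genuine two-variable inequality in $(m,r)\in[\frac12,\frac9{10}]\times[0,\frac{99}{100}]$. It involves $\psi(r)$, $\sqrt{2\omega^2-1}$, the piecewise $\lambda_2=\min\{\frac35r,1-m\}$, and a positive part. We remove these features one at a time.
\begin{itemize}
\item Split the box along the curve $\frac35r=1-m$, so that on each piece $\lambda_2$ is one of the two expressions.
\item Use the monotonicity of $\lambda_2\Bc$ from Lemma~\ref{lem:transverse} to replace $\lambda_2\Bc$ on a subbox by its value at the corner with largest $\lambda_2$.
\item Bound the positive part by $\max\{0,\cdot\}$ computed boxwise.
\item Enclose $\psi$ and $\sqrt{2\omega^2-1}$ by interval arithmetic in $r$.
\end{itemize}
Adaptive bisection then certifies positivity. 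The main obstacle is the corner $r\to0$. There the terms $C_rr$, $\theta_2C_{\Nn}mr\psi(r)$ and $\lambda_2\Bc$ all vanish, and $\Bc=O(r)$ by \eqref{eq:Bcal}, so positivity rests on $\frac12(1-m)^2$ minus the last bracket term. Near $m=\frac9{10}$ this margin is only of order $(1-m)^2$. We would therefore divide the whole expression by $(1-m)^2$, which the bracket term $m^2(1-m)^2$ permits. We would also check that the bracket coefficient is below $\frac1{2m^2}$ uniformly before bisecting; this is presumably what dictates the cutoff $\frac9{10}$.

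\textbf{Consequences.} In part (i), the Corollary~\ref{cor:gaps} contradiction rules out any non-constant stable solution, because Proposition~\ref{prop:spectral} guarantees $\lambda_1>\frac12$. In part (ii), for $\frac12<m\le\frac9{10}$, Lemma~\ref{lem:allocation} would give $\frac12(1-m)^2\le$ the stated right-hand side. The bracket term is bounded by its positive part times $(\int x^3u^p)^2\le m^2(1-m)^2$ from \eqref{eq:VS}, and $r<\frac{99}{100}$ holds by Lemma~\ref{lem:near-one}. Together these contradict \eqref{eq:Delta4}, which forces $m>\frac9{10}$.
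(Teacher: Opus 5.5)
Your route matches the paper's. With the tabulated $\theta_1,\theta_2$, the proposition reduces to finitely many sign conditions, certified by exact Bernstein expansions and outward-rounded interval arithmetic (Lemmas~\ref{lem:gaps-five}, \ref{lem:gaps-six}, \ref{lem:gaps-seven} and \ref{lem:gaps-four}). Corollary~\ref{cor:gaps} and Lemma~\ref{lem:allocation} then give the contradictions as you describe.

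However, your account of the endpoint $m=1$ in part (i) is wrong, and the reduction you propose for $g_1$ fails there.
\begin{itemize}
\item For $8\le n\le19$, \eqref{eq:paired-e} gives $c_6=\frac{1-m}{25}+\frac{(1-m)^2}{5}$, which is only $O(1-m)$. So $c_6/(1-m)^2$ is unbounded and cannot be factored out. What keeps $g_1$ finite is that $\theta_2=[C_r+\frac1{10}]_+/((1-\alpha)C_{\Nn})$ vanishes identically on the last subinterval, where $C_r<-\frac1{10}$. There $g_1=\frac2p$ and $g_2=-C_r-C_b^2H_n/(4c_5)$.
\item Conversely, for $n=5,6,7$ one has $C_r>0$ at $m=1$; for instance $C_r|_{m=1}=\frac{6k(2-k)}{(k+1)(k+6)}$ when $n=6$. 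So $g_2>0$ does not come from $km$ dominating. It comes from $\theta_2$ being chosen to cancel $C_r$: the quantity $\theta_2C_{\Nn}(1-\alpha)\tau^2-C_r$ equals $\frac{11}{50}$ for $n=6$, $\max\{\rho,-C_r\}$ for $n=7$, and $\frac{3m}{5}-\frac1{10}$ for $n=5$, $m\ge\frac7{10}$.
\item In those three dimensions $g_1$ is finite because $c_6/(1-m)^2$ equals $\frac{1}{4(1+m)}$, $\frac{9}{50}$ or $0$.
\end{itemize}
Each of your two endpoint mechanisms therefore holds in one range of $n$ and fails in the other. The certificate near $m=1$ has to be organized around whether $\theta_2$ vanishes.

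There are two smaller points for $n=4$.
\begin{itemize}
\item In the consequence step, Lemma~\ref{lem:allocation} involves the actual $\lambda_2$, whereas \eqref{eq:Delta4} uses $\min\{\frac35r,1-m\}$. Passing between them needs three facts: $\lambda_2<\frac35r$ from Proposition~\ref{prop:spectral}, $\lambda_2\le1-\lambda_1$ from $\tr M=1$, and the monotonicity of $\lambda_2\Bc$ in Lemma~\ref{lem:transverse}. This belongs in the contradiction argument, not only inside the bisection.
\item $\Bc$ also depends on $r$ directly. Monotonicity in $\lambda_2$ alone therefore does not reduce $\lambda_2\Bc$ to a corner value of a box; the $r$-dependence still has to be enclosed.
\end{itemize}
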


\begin{proof}
For $5\le n\le19$, Lemmas~\ref{lem:gaps-five}, \ref{lem:gaps-six} and \ref{lem:gaps-seven} give the required positive bounds. Corollary~\ref{cor:gaps} gives a contradiction. Lemma~\ref{lem:gaps-four} proves the bound \eqref{eq:Delta4}. For $n=4$, we have $r<\frac{99}{100}$ by Proposition~\ref{prop:spectral}. If $m\le\frac{9}{10}$, then Lemma~\ref{lem:allocation}, the monotonicity in Lemma~\ref{lem:transverse} and $(\int x^3 u^p)^2\le m^2(1-m)^2$ show that the left-hand side of \eqref{eq:Delta4} is nonpositive, which is a contradiction.
\end{proof}

\subsection{The remaining cases}
It remains to consider $n\ge20$, and $n=4$ with $m\ge9/10$.

We first consider $n\ge20$. Positivity of $C_m$ follows from \eqref{eq:large-C-positive}. Since $m\ge\frac{1}{2}$ and $k<2$, \eqref{eq:k-ell} gives
\begin{equation*}
0<C_m\le k\alpha(1+m)-\frac{2\alpha}{n}(2-k)<k\alpha(1+m).
\end{equation*}

By Lemma~\ref{lem:projection}, \eqref{eq:VS} and \eqref{eq:third-scalar},
\begin{equation*}
\left|\int x^2(\xi-xe)u^p\right|^2\le\lambda_2m(1-m),\qquad
\left|\int x^3u^p\right|\le m(1-m).
\end{equation*}
Together with \eqref{eq:block-consequences}, these bounds give
\begin{equation*}
\int(b\cdot\xi)x^2 u^p\le m(1-m)|b\cdot e|+p\sqrt{(1-\alpha)(n\beta-1+\alpha)\|u\|_2^2\beta\|\nabla u\|_2^2\lambda_2m(1-m)} .
\end{equation*}
By \eqref{eq:Ndef}, \eqref{eq:block-consequences} and $\lambda_2<4\|u\|_2^2/(3n)$,
\begin{equation}\label{eq:Be-max}
\begin{aligned}
\int x^2u^2\le{}&\frac{1-\alpha}{1-\alpha+\beta}\,\frac{p^2m^2(1-m)^2}{4[1+(p-1)m]^2}\,\beta\|\nabla u\|_2^2\\
&+\Big[\frac{\beta}{1-\alpha+\beta}+\frac{p\sqrt{(1-\alpha)(n\beta-1+\alpha)}}{1-\alpha+\beta}\sqrt{\frac{4m(1-m)\beta\|\nabla u\|_2^2}{3n}}\Big]\|u\|_2^2.
\end{aligned}
\end{equation}
For $m\ge\frac{1}{2}$ and $n\ge20$,
\begin{equation*}
\frac{2[1+(p-1)m]}{p}\ge(1+m)\left(1-\frac{2}{3n}\right),\qquad
\frac{139}{100}\left(\frac{30}{29}\right)^2<\frac{3}{2}.
\end{equation*}
Hence \eqref{eq:stability-source} and \eqref{eq:Be-max} imply
\begin{equation}\label{eq:source20}
\begin{aligned}
\frac{2}{p}(1-m)^2\le{}&\frac{3m^2(1-m)^2}{2(1+m)}\beta\|\nabla u\|_2^2\\
&+\Big[C_r+C_b p\sqrt{\frac{4(1-\alpha)(n\beta-1+\alpha)m(1-m)}{3n}}\sqrt{\beta\|\nabla u\|_2^2}\Big]\|u\|_2^2 .
\end{aligned}
\end{equation}
For $n\ge20$, $\frac{1}{2}\le m<1$ and $\|u\|_2^2+\beta\|\nabla u\|_2^2\le1$, Lemma~\ref{lem:large-estimates}(iii) gives
\begin{equation}\label{eq:large-source-bound}
\begin{aligned}
&\frac{3m^2(1-m)^2}{2(1+m)}\beta\|\nabla u\|_2^2+\left[C_r+C_bp\sqrt{\frac{4(1-\alpha)(n\beta-1+\alpha)m(1-m)}{3n}}
\sqrt{\beta\|\nabla u\|_2^2}\right]\|u\|_2^2\\
&\qquad<\frac{9}{10}(1-m)^2\le\frac{2}{p}(1-m)^2.
\end{aligned}
\end{equation}
This contradicts \eqref{eq:source20}.

Now let $n=4$ and $\frac{9}{10}\le m<1$. By \eqref{eq:N4} and \eqref{eq:Nvar4},
\begin{equation*}
\frac{1}{a}\Nn_e\ge mr\psi(r)>0,\qquad
\int(x^2-m)^2u^p\le\frac{25m+7}{7a}\Nn_e.
\end{equation*}
Using \eqref{eq:schur-dir}, \eqref{eq:VS} and \eqref{eq:T-regression}, we get
\begin{equation*}
\frac{1}{a}\int(b\cdot\xi)x^2 u^p\le\frac{4m(1-m)}{1+3m}\sqrt{\frac{(1-\alpha)(1-r)}{a}\Nn_e}+\sqrt{\frac{(25m+7)\lambda_2\Bc}{7a}\Nn_e} .
\end{equation*}
Since $\Nn_e>0$, and $C_r,C_b,C_{\Nn}>0$ by Lemma~\ref{lem:upper4}, \eqref{eq:retained-general} and the preceding estimate give
\begin{equation}\label{eq:J4}
1\le\frac{C_r}{C_{\Nn} m\psi(r)}+\frac{C_b}{C_{\Nn}}\sqrt{\frac{(25m+7)\lambda_2\Bc}{7mr\psi(r)}}+\frac{8C_b^2m^2(1-\alpha)(1-r)}{C_{\Nn}(1+3m)^2} .
\end{equation}
By Lemma~\ref{lem:transverse}, replacing $\lambda_2$ by $\min\{\frac{3}{5}r,1-m\}$ can only increase the right-hand side. Lemma~\ref{lem:upper4} bounds the resulting expression strictly below one for
\begin{equation*}
\frac{9}{10}\le m\le1,\qquad 0\le r\le\frac{99}{100}.
\end{equation*}
This contradicts \eqref{eq:J4}.

\begin{proof}[Proof of Theorem~\ref{thm:rigidity}]
Suppose that $u$ is a non-constant stable solution, and let $m$ be the largest eigenvalue of $M$. For $n\ge20$, the argument above gives a contradiction. For $5\le n\le19$, Proposition~\ref{prop:finite-max} gives a contradiction. For $n=4$, Proposition~\ref{prop:finite-max} gives $m>\frac{9}{10}$, and \eqref{eq:J4} gives a contradiction. Therefore every solution $u$ as in Theorem~\ref{thm:rigidity} is constant. Since $\int u^p=1$, we have $u\equiv1$, and \eqref{eq:EL} gives $a+b\cdot\xi\equiv1$, that is, $a=1$ and $b=0$.
\end{proof}

\section{Proof of Theorem~\ref{thm:main}, equality cases, and sharpness}\label{sec:proof-main}

In this section, we complete the proof of Theorem~\ref{thm:main}.

\begin{proof}[Proof of Theorem~\ref{thm:main}]
Let $(u,\{(r_i,e_i)\})$ be a relaxed minimizer given by Proposition~\ref{prop:attain}. Then
\begin{equation*}
\Erel\big(u,\{(r_i,e_i)\}\big)\le\Erel(1,\emptyset)=1 .
\end{equation*}
By Corollary~\ref{cor:reduction}, either the minimizer consists of two antipodal atoms of mass $\frac{1}{2}$, whose energy is one, or it is of the form $(u,\emptyset)$, where $u>0$ is a smooth solution of \eqref{eq:EL} with $a\le1$ such that $Q$ is nonnegative on $\Tu$. In the latter case, Theorem~\ref{thm:rigidity} gives $u\equiv1$, whose energy is one.

Thus the relaxed infimum is one.

For any nonzero real-valued $f\in H^1(\mathbb{S}^n)$ satisfying \eqref{eq:balanced}, the configuration $(|f|/\|f\|_p,\emptyset)$ is admissible for the relaxed problem. Homogeneity and the Sobolev chain rule give
\begin{equation*}
1\le\Big\|\frac{|f|}{\|f\|_p}\Big\|_2^2+\beta\Big\|\nabla\frac{|f|}{\|f\|_p}\Big\|_2^2=\frac{\|f\|_2^2+\beta\|\nabla f\|_2^2}{\|f\|_p^2},
\end{equation*}
which proves \eqref{eq:main}.

If such an $f$ attains equality, then $(|f|/\|f\|_p,\emptyset)$ is a relaxed minimizer with nonzero regular part, and the preceding argument shows that $|f|$ is a positive constant. The Sobolev chain rule then gives $\nabla f=\mathrm{sgn}(f)\nabla|f|=0$ almost everywhere. Since $\mathbb{S}^n$ is connected, $f$ is constant.

Conversely, every constant function, including the zero function, attains equality. The coefficient $\alpha$ is already optimal among antipodally even and axially-symmetric functions. See \cite[Remark~2.4]{WYY}.
\end{proof}

\appendix

\section{Explicit algebraic estimates}\label{app:algebraic}

The estimates below complete the arguments in Sections~\ref{sec:directional}--\ref{sec:maximal}. Polynomial signs follow from Lemma~\ref{lem:bernstein}. Bounds involving rational powers follow from Lemma~\ref{lem:interval}. The polynomial expressions, parameter ranges and verification programs are specified below.

\subsection{Dimensions \texorpdfstring{$n\ge20$}{n >= 20}}\label{app:large-dimension-checks}
Recall $C_m$ from \eqref{eq:k-ell}, $q_2,q_1,q_0$ from \eqref{eq:quad-r}, and $C_r,C_b$ from \eqref{eq:coef}.

\begin{lemma}\label{lem:large-estimates}
Let $n\ge20$. Then the following estimates hold.
\begin{enumerate}[label=\textup{(\roman*)},leftmargin=2em]
\item If $\frac{1}{4}\le m\le\frac{1}{2}$, then
\begin{equation*}
q_2>0,\qquad q_0>0,\qquad q_1\ge0\ \text{or}\ q_1^2<4q_2q_0.
\end{equation*}
Moreover, $C_m/\alpha\ge\frac{1177}{1000}$ for $m\ge\frac{1}{4}$.
\item If $m=\lambda_1\le\frac{4\|u\|_2^2}{3n}$, then
\begin{equation*}
\frac{p\|Q_{YX}\|_F}{(n+1)\sqrt\beta\|u\|_2\|\nabla u\|_2}
<\frac{57}{50}\sqrt{\frac{\beta(n-2)}{n+1}}.
\end{equation*}
Here $Q_{YX}$ is the matrix in Lemma~\ref{lem:restored}.
\item If $\frac{1}{2}\le m<1$, $y,z\ge0$ and $y+z\le1$, then
\begin{equation*}
\begin{aligned}
&\frac{3m^2(1-m)^2}{2(1+m)}z
+\left[C_r+C_bp\sqrt{\frac{4(1-\alpha)(n\beta-1+\alpha)m(1-m)}{3n}}\sqrt z\right]y<\frac{9}{10}(1-m)^2.
\end{aligned}
\end{equation*}
\end{enumerate}
\end{lemma}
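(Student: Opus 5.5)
The three parts are finite sign checks that hold uniformly in $n\ge20$. The plan is to reparametrize by $\delta=1/n\in(0,\frac1{20}]$ and replace every transcendental quantity by rational enclosures valid on that range. Concretely, $k=n(2^{2/n}-1)$ satisfies $\frac{69}{50}<k<\frac{36}{25}$ and $k\to2\log 2$. Also $\alpha=\frac{1}{1+k\delta}$, $\beta=\frac{4\alpha\delta^2}{1-2\delta}$ and $p=\frac{2}{1-2\delta}$. Each expression then becomes a rational function of $(\delta,k,m)$, or of $(\delta,k,m,y,z)$, multiplied by explicit square roots. We clear positive denominators and treat $k$ as an independent variable in its box. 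The sign claims then reduce to polynomial positivity on closed rational boxes, which Lemma~\ref{lem:bernstein} certifies, with Lemma~\ref{lem:interval} handling the remaining square roots by outward rounding.

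\textbf{Part (i).} Substitute $C_m/\alpha$ from \eqref{eq:k-ell} into \eqref{eq:quad-r}.

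- The claim $q_0>0$ is immediate, since every factor is positive.
- For $C_m/\alpha\ge\frac{1177}{1000}$: the bracket $k(\frac2p+(3-\frac4p)m)$ increases in $m$, and the subtracted terms decrease in $m$. It therefore suffices to evaluate at $m=\frac14$, worst-case $k=\frac{69}{50}$ and worst-case $\delta$, which is a one-variable check in $\delta$.
- For $q_2>0$: the leading term is $km\ge\frac{69}{200}$, and the remaining terms are $O(\delta)$ uniformly on $\frac14\le m\le\frac12$.
- For the alternative $q_1\ge0$ or $q_1^2<4q_2q_0$: split $[\frac14,\frac12]$ into subintervals. On each, either certify $q_1\ge0$, or certify the polynomial inequality $4q_2q_0-q_1^2>0$ in $(\delta,k,m)$ by Bernstein coefficients.

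\textbf{Part (ii).} Start from \eqref{eq:R-norm}, bounding $\frac{(1+|b|)m+\beta\|u\|_2^2}{\|u\|_2^2}\le\frac1n(\frac43+\frac{127}{18n})$ as in the proof of Proposition~\ref{prop:sep20}. Then do the following.

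- Multiply through by $\sqrt{(n+1)/(\beta(n-2))}$.
- Use $n^2\beta\to\frac{4\alpha}{1}$ (asymptotically). Each of the three terms becomes an explicit function of $\delta,k$ with a finite limit as $\delta\to0$.
- Check that the sum of the limits is below $\frac{57}{50}$ with margin, and control the $\delta$-dependence by monotone bounds or an interval-arithmetic sweep over $\delta\in[0,\frac1{20}]$.

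The middle term carries the factor $\frac{2p(1-\alpha)+(n+2)\beta}{(n+1)\sqrt\beta}$, which is of order $\sqrt n\cdot\frac1n\cdot n$. It should be checked carefully that its product with the $n^{-1/2}$ from the square root stays bounded.

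\textbf{Part (iii).} The left side is affine in $y$, so it suffices to take $y=1-z$, when the bracket is nonnegative; otherwise take $y=0$, where the inequality is trivial since $\frac{3m^2}{2(1+m)}<\frac9{10}$. Divide by $(1-m)^2$. Then $C_r/(1-m)^2$ is controlled: at $m=1$, $C_r=\alpha(\frac4n-k)+\beta C_b<0$. The $C_b$ term carries a factor $\sqrt{1-m}/(1-m)^2$, which blows up as $m\to1$. I expect this endpoint to be the main obstacle.

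To handle it, I would write $s=\sqrt z$. The expression is then a concave quadratic in $s$ plus terms in $m$. I would maximize over $s\in[0,1]$ explicitly, giving a closed form in $m,\delta,k$. Near $m=1$ the negative value of $C_r$ dominates, because the coefficient of $\sqrt z$ is only $O(n^{-1})\sqrt{1-m}$ after using $(1-\alpha)(n\beta-1+\alpha)=O(n^{-2})$. The negative $C_r$ term can then absorb the cross term through $2AB\le A^2+B^2$, provided the positive part of $C_r$ is separated from the $(1-m)^2$-sized part. Away from $m=1$, I would use a finite subdivision of $[\frac12,1]$ with Bernstein certification after squaring out the square root.
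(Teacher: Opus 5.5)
Your plan matches the paper's in outline. You set $\delta=1/n$, treat $k\in[\frac{69}{50},\frac{36}{25}]$ as independent of $\delta$, and certify finitely many inequalities on rational boxes with Lemmas~\ref{lem:bernstein} and \ref{lem:interval}. Your treatment of $C_m/\alpha\ge\frac{1177}{1000}$ and of (ii) is essentially the paper's.

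The difference is that in (i) and (iii) the paper first removes $\delta$ entirely.
\begin{itemize}
\item In (i) it proves $\delta$-free minorants $q_j\ge\underline q_j$ and notes that $q_1>0$ on $[\frac14,\frac3{10}]$. On $[\frac3{10},\frac12]$ it uses concavity of $4\underline q_2\underline q_0-\underline q_1^2$ in $m$ and in $k$, checks four corner values, and concludes via $q_1<0\Rightarrow q_1^2\le\underline q_1^2$. Your direct Bernstein test of $4q_2q_0-q_1^2$ in $(\delta,k,m)$ on subintervals also works, just in one more variable. Note, though, that $q_2$ is only of order $10^{-2}$ at the corner $m=\frac14$, $\delta=\frac1{20}$, $k=\frac{69}{50}$, so ``the remaining terms are $O(\delta)$'' is not by itself a proof of $q_2>0$.
\item In (iii) the paper replaces $C_r,c_2$ by $\delta$-free majorants $\bar C_r(m),\bar c_2(m)$. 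It then uses $c_1z\le c_1(1-y)$ and $\sqrt z\le\sqrt{1-y}\le1-\frac y2$ to reach the concave quadratic $c_1+(\bar C_r+\bar c_2-c_1)y-\frac{\bar c_2}{2}y^2$ in $y$, which it maximizes explicitly.
\end{itemize}

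Some of your heuristics are wrong, though none is fatal.
\begin{itemize}
\item For fixed $y$ the left side of (iii) is convex, not concave, in $s=\sqrt z$, since its $s^2$-coefficient is $c_1>0$. On $y=1-s^2$ it is a cubic in $s$. So there is no concave quadratic in $s$; the usable concavity is in $y$, as above.
\item Since $1-\alpha+\beta\approx k/n$, $C_b$ is of order $n$. Hence $c_2$ is of order $n^{-1/2}\sqrt{1-m}$ (about $1.9\sqrt{1-m}$ near $m=1$ when $n=20$), not $n^{-1}\sqrt{1-m}$.
\item Still, $m\to1$ is not the obstacle you expect. One has $\bar C_r+\bar c_2<0$ on $[\frac9{10},1]$, so the bracket $C_r+c_2\sqrt z$ is negative there, and your own case $y=0$ finishes without dividing by $(1-m)^2$. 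This is essentially the paper's check $\bar C_r+\bar c_2-c_1\le0$ on $[\frac9{10},1]$. Some such structural step is necessary, because both sides vanish at $m=1$ and box subdivision alone cannot reach that corner.
\item In (ii) the middle coefficient is $O(1/n)$, not $O(\sqrt n)$. After normalization all three terms tend to $0$, like $\sqrt\delta$, $\sqrt\delta$ and $\delta$. The binding case is $n=20$, where the paper's bounds sum to about $1.136$ against $\frac{57}{50}=1.14$. So the sweep in $\delta$, not the limit, carries the proof.
\end{itemize}
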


\begin{proof}
Recall $k=\frac{n(1-\alpha)}{\alpha}$ and write $\delta=\frac{1}{n}$. The parameter bounds are
\begin{equation*}
0<\delta\le\frac{1}{20},\qquad
\frac{69}{50}<k<\frac{36}{25},\qquad
\alpha=\frac{1}{1+k\delta}.
\end{equation*}
For (i), \eqref{eq:k-ell} gives
\begin{equation*}
\frac{C_m}{\alpha}
=k\big[1-2\delta+(1+4\delta)m\big]
-\delta\left[(1+2\delta)\frac{(1-m)^2}{m}+4\right]
-k\delta(1-2\delta)\frac{(1-m)^2}{m}.
\end{equation*}
This is increasing in $m$, and
\begin{equation*}
\left.\frac{C_m}{\alpha}\right|_{m=1/4}\ge\frac{1177}{1000}.
\end{equation*}
For $\frac{1}{4}\le m\le\frac{1}{2}$, put
\begin{equation*}
\begin{gathered}
\underline q_2=k\left(m-\frac{3}{400}\right)
-\frac{11(1-m)^2}{200m}-\frac{1}{5},\\
\underline q_1=(1-m)^2-km-\frac{1}{250},\qquad
\underline q_0=\frac{39}{40}m(1-m)^2.
\end{gathered}
\end{equation*}
The expansions in \eqref{eq:bernstein-expansion} of
\begin{equation*}
m(q_2-\underline q_2),\qquad
q_1-\underline q_1,\qquad q_0-\underline q_0
\end{equation*}
have nonnegative coefficients on the recorded subintervals of
\begin{equation*}
\frac{1}{4}\le m\le\frac{1}{2},\qquad
0\le\delta\le\frac{1}{20},\qquad
\frac{69}{50}\le k\le\frac{36}{25}.
\end{equation*}
Lemma~\ref{lem:bernstein} therefore gives
\begin{equation*}
q_j\ge\underline q_j\quad(j=0,1,2),\qquad
\underline q_2\ge\frac{109}{10000}>0.
\end{equation*}
For $\frac{1}{4}\le m\le\frac{3}{10}$, the coefficients $q_2,q_1,q_0$ are positive. For $\frac{3}{10}\le m\le\frac{1}{2}$,
\begin{equation*}
\frac{\partial^2}{\partial m^2}
\big(4\underline q_2\underline q_0-\underline q_1^2\big)\le0,\qquad
\frac{\partial^2}{\partial k^2}
\big(4\underline q_2\underline q_0-\underline q_1^2\big)\le0,
\end{equation*}
and the endpoint values satisfy
\begin{equation*}
\min_{\substack{m\in\{3/10,1/2\}\\k\in\{69/50,36/25\}}}
\big(4\underline q_2\underline q_0-\underline q_1^2\big)
=\frac{40611}{4000000}>0.
\end{equation*}
Concavity in each variable implies positivity throughout these ranges. If $q_1<0$, it follows that
\begin{equation*}
q_1^2\le\underline q_1^2<4\underline q_2\underline q_0\le4q_2q_0.
\end{equation*}
This proves \eqref{eq:large-q-signs}.

For (ii), Proposition~\ref{prop:multiplier} and $n^2\beta<\frac{25}{6}$ give
\begin{equation*}
\frac{(1+|b|)m+\beta\|u\|_2^2}{\|u\|_2^2}
\le\delta\left(\frac{4}{3}+\frac{127\delta}{18}\right).
\end{equation*}
Substitution into \eqref{eq:R-norm} yields
\begin{equation*}
\begin{aligned}
\frac{p\|Q_{YX}\|_F}{\beta\sqrt{(n+1)(n-2)}\|u\|_2\|\nabla u\|_2}&\le\sqrt{\delta\left(\frac{4}{3}+\frac{127\delta}{18}\right)
\frac{1+\frac{36}{25}\delta}{1+\delta}}+\frac{26}{9}\delta\sqrt{(1+\delta)\left(1+\frac{36}{25}\delta\right)}\\
&\qquad+\frac{\sqrt{\delta\left(\frac{4}{3}+\frac{127\delta}{18}\right)}
\left(\frac{61}{25}+2\delta\right)}
{\sqrt{(1+\delta)(1-2\delta)(1+4\delta+4\delta^2)}}\\
&\quad\le\sqrt{\frac{40669}{472500}}
+\sqrt{\frac{79261}{3375000}}+\sqrt{\frac{9790303}{20582100}}\\
&\quad<\frac{59}{200}+\frac{31}{200}+\frac{69}{100}=\frac{57}{50}.
\end{aligned}
\end{equation*}
This is \eqref{eq:R-small}.

For (iii), define
\begin{equation*}
\begin{gathered}
c_1=\frac{3m^2(1-m)^2}{2(1+m)},\qquad
c_2=C_bp\sqrt{\frac{4(1-\alpha)(n\beta-1+\alpha)m(1-m)}{3n}},\\
\bar C_r=\frac{1}{4}-\frac{1039}{1000}m+\frac{363}{10000m},\qquad
\bar c_2=-\frac{3}{2}m^2+\frac{199}{100}m+\frac{9}{200}.
\end{gathered}
\end{equation*}
The numerator expansions of $\bar C_r-C_r$, $\bar c_2^2-c_2^2$ and $\bar c_2$ satisfy the sign criterion of Lemma~\ref{lem:bernstein}. Thus
\begin{equation*}
C_r\le\bar C_r,\qquad 0\le c_2\le\bar c_2.
\end{equation*}
The required further estimates are
\begin{equation*}
\begin{gathered}
c_1<\frac{9}{10}(1-m)^2\quad\text{if }\frac{1}{2}\le m<1,\\
\bar C_r+\bar c_2-c_1\le0\quad\text{if }\frac{9}{10}\le m\le1,\\
2\bar c_2\left[\frac{9}{10}(1-m)^2-c_1\right]
-(\bar C_r+\bar c_2-c_1)^2>0
\quad\text{if }\frac{1}{2}\le m\le\frac{9}{10}.
\end{gathered}
\end{equation*}
They follow from Lemma~\ref{lem:bernstein} applied to the displayed rational expressions after their positive denominators are cleared. Finally,
\begin{equation*}
\begin{aligned}
c_1z+(C_r+c_2\sqrt z)y
&\le c_1+(\bar C_r+\bar c_2-c_1)y-\frac{\bar c_2}{2}y^2\le c_1+\frac{[\bar C_r+\bar c_2-c_1]_+^2}{2\bar c_2}
<\frac{9}{10}(1-m)^2.
\end{aligned}
\end{equation*}
Taking $y=\|u\|_2^2$ and $z=\beta\|\nabla u\|_2^2$ gives \eqref{eq:large-source-bound}.
\end{proof}

\subsection{Multiplier and transverse bounds}\label{app:multiplier-checks}
For each fixed $n\ne6$, let
\begin{equation}\label{eq:alpha-enclosure}
\alpha_-=10^{-18}\left\lfloor10^{18}4^{-1/n}\right\rfloor,
\qquad \alpha_+=\alpha_-+10^{-18},\qquad
\alpha_-\le\alpha\le\alpha_+.
\end{equation}
The endpoints are obtained by the integer-root inequalities in Lemma~\ref{lem:interval}. For $n=6$, the range is
\begin{equation*}
k_-:=\frac{1559}{1000}\le k\le k_+:=\frac{39}{25},\qquad
\alpha=\frac{6}{6+k},\qquad \beta=\frac{1}{6+k}.
\end{equation*}

\begin{lemma}\label{lem:source-checks}
Let $4\le n\le19$, and let $\gamma_n$ be given by Proposition~\ref{prop:multiplier}. For $0<r<1$ and $0\le y\le1$,
\begin{equation*}
\begin{aligned}
&\gamma_n^2\left\{(1-\alpha)r[1+(p-1)y]^2
+(1-\alpha+\beta)(1-r)(1-y)^2\right\}\\
&\qquad>p^2(1-\alpha)^2r(1-r)
\big[\gamma_ny(1-y)+\beta r\big].
\end{aligned}
\end{equation*}
\end{lemma}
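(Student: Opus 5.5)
The plan is to reduce the inequality, for each fixed $n\in\{4,\dots,19\}$, to a finite collection of sign checks for explicit polynomials on closed rational boxes. Set
\begin{equation*}
\Delta(r,y):=\gamma_n^2\Big\{(1-\alpha)r[1+(p-1)y]^2+(1-\alpha+\beta)(1-r)(1-y)^2\Big\}-p^2(1-\alpha)^2r(1-r)\big[\gamma_ny(1-y)+\beta r\big].
\end{equation*}
This is a polynomial of degree at most three in $r$ and two in $y$. Its coefficients depend on $n$ only through $p$, $\alpha$, $\beta$ and $\gamma_n$. I would prove $\Delta>0$ on $[0,1]\times[0,1]$ with $r\in(0,1)$. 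At $r=0$ we have $\Delta=\gamma_n^2(1-\alpha+\beta)(1-y)^2$, and at $r=1$ we have $\Delta=\gamma_n^2(1-\alpha)[1+(p-1)y]^2$. So at the endpoints $\Delta$ is nonnegative, and strictly positive except at the single corner $(r,y)=(0,1)$. That corner is excluded because $r>0$, but it means the interior check has to handle the degeneration there.

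First I would remove the transcendental parameter. For $n\ne6$, $\alpha$ lies in the enclosure $[\alpha_-,\alpha_+]$ of \eqref{eq:alpha-enclosure}. For $n=6$, I would write everything rationally in $k\in[k_-,k_+]$ using $\alpha=6/(6+k)$ and $\beta=1/(6+k)$, and clear the positive denominator $(6+k)^2$. $\Delta$ is polynomial in $\alpha$, so I would either treat $\alpha$ as a third variable on a tiny interval or bound its dependence monotonically. The leading terms involve $(1-\alpha)$ and $(1-\alpha)^2$ with definite signs, which makes outward rounding easy. The result is a polynomial with rational coefficients in $(r,y)$, possibly also in $\alpha$ or $k$, on a rational box.

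Next I would apply the Bernstein criterion of Lemma~\ref{lem:bernstein}. Expand $\Delta$ in the tensor Bernstein basis on $[0,1]^2$, subdividing where needed, and verify that all coefficients are positive. Near $(r,y)=(0,1)$ the positivity is only asymptotic. Writing $r=s$ and $1-y=w$, $\Delta$ is approximately $\gamma_n^2\{(1-\alpha)p^2s+(1-\alpha+\beta)w^2\}-p^2(1-\alpha)^2\gamma_n sw+O(s^2+sw^2)$. The discriminant condition $p^4(1-\alpha)^4\gamma_n^2<4\gamma_n^2(1-\alpha)p^2(1-\alpha+\beta)$, fails unless a higher-order term wins, so I need to check this carefully. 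In fact the cross term $sw$ is dominated by $s$ alone when $w$ is small, so near the corner $\Delta\ge s[\gamma_n^2(1-\alpha)p^2-p^2(1-\alpha)^2\gamma_n w]+\dots$ is positive. On a small corner box I would therefore factor out $s$ from the $s$-dependent part and check the reduced polynomial separately.

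The main obstacle is that $\gamma_n$ was chosen nearly optimally. The minimum of $\Delta$ over the interior is then only barely positive, so the Bernstein subdivision must be fine, and the $\alpha$ enclosure must be tight enough that rounding does not destroy the margin. I would locate the approximate minimizer numerically, subdivide adaptively around it, and certify the remaining boxes with exact rational arithmetic, as described in Appendix~\ref{app:verification}.
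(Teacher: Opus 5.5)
Your proposal is correct and is essentially the paper's argument. For $n\ne6$ the paper certifies that $P_n-10^{-8}r$ has nonnegative Bernstein coefficients on subdivided boxes in $(r,y,\alpha)$ with $\alpha_-\le\alpha\le\alpha_+$; subtracting $10^{-8}r$ together with the non-strict criterion plays the role of your separate factoring-out of $r$ at the degenerate corner $(r,y)=(0,1)$. For $n=6$ the paper does not use $k$ as a third Bernstein variable. Instead it shows $\partial_k\bigl(P_6/(1-\alpha)\bigr)\le0$, reduces to $k=39/25$, and proves positivity by hand as a quadratic in $y$ with positive leading coefficient and negative discriminant.
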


\begin{proof}
Denote the difference of the two sides by $P_n(r,y)$. If $n\ne6$, the polynomial
\begin{equation*}
P_n(r,y)-10^{-8}r
\end{equation*}
has nonnegative coefficients in the expansion \eqref{eq:bernstein-expansion} on the recorded subintervals of
\begin{equation*}
0\le r\le1,\qquad 0\le y\le1,\qquad \alpha_-\le\alpha\le\alpha_+.
\end{equation*}
Lemma~\ref{lem:bernstein} gives $P_n(r,y)\ge10^{-8}r>0$.

For $n=6$, direct differentiation gives
\begin{equation*}
\frac{\partial}{\partial k}\left(\frac{P_6(r,y)}{1-\alpha}\right)\le0.
\end{equation*}
At $k=\frac{39}{25}$,
\begin{equation*}
\begin{aligned}
\frac{42997500P_6(r,y)}{1-\alpha}
={}&3402(-4225r^2+5191r+672)y^2\\
&+3402(4225r^2-1243r-1344)y\\
&+10562500r^3-10562500r^2-893025r+2286144.
\end{aligned}
\end{equation*}
The coefficient of $y^2$ is positive because
\begin{equation*}
-4225r^2+5191r+672\ge672\quad\text{for }0\le r\le1.
\end{equation*}
Its discriminant is negative for $0<r<1$ precisely when
\begin{equation*}
h(r):=\left(r-\frac{1}{2}\right)^3
+\frac{516769}{845000}\left(r-\frac{1}{2}\right)^2
-\frac{49159}{203125}\left(r-\frac{1}{2}\right)
+\frac{1095890567}{27462500000}>0.
\end{equation*}
For $0\le r\le\frac{1}{2}$,
\begin{equation*}
h(r)\ge\frac{1095890567}{27462500000}>0.
\end{equation*}
For $\frac{1}{2}\le r\le1$,
\begin{equation*}
h(r)\ge\frac{3}{5}\left(r-\frac{1}{2}\right)^2
-\frac{1}{4}\left(r-\frac{1}{2}\right)+\frac{39}{1000}
=\frac{3}{5}\left(r-\frac{17}{24}\right)^2+\frac{311}{24000}>0.
\end{equation*}
Hence $P_6(r,y)>0$.
\end{proof}

\begin{lemma}\label{lem:transverse-checks}
Let $5\le n\le19$. In \eqref{eq:Bcal}, set $\lambda_2=\kappa_nr$ and replace $\gamma_n$ by $\frac{9}{50}$ for $7\le n\le19$. Then
\begin{equation*}
\kappa_nr\Bc<H_nr\quad\text{for }0<r<1,
\end{equation*}
where
\begin{equation*}
H_5=\frac{133}{10000},\qquad H_6=\frac{3}{400},\qquad
H_n=\frac{9}{n^4}\quad\text{if }7\le n\le19.
\end{equation*}
\end{lemma}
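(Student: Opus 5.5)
The claim is a finite family of explicit inequalities in one real variable $r\in(0,1)$ for each fixed $n\in\{5,\dots,19\}$. I would substitute $\lambda_2=\kappa_nr$ into \eqref{eq:Bcal} and divide the target inequality by $r>0$.

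\emph{Setting up the reduction.} The factor $r(1-r)$ in the numerator of $\Bc$ leaves
\begin{equation*}
\kappa_n\cdot\frac{4n^2(1-\alpha)^2r(1-r)\big[(1+\gamma)\kappa_nr+\beta r\big]}{D_n(r)}<H_n,
\end{equation*}
where $D_n(r)$ is the denominator of \eqref{eq:Bcal} evaluated at $\lambda_2=\kappa_nr$, and $\gamma=\gamma_n$ for $n=5,6$ while $\gamma=\frac{9}{50}$ for $7\le n\le19$. Since $\kappa_nr<1$ on the relevant range, $D_n(r)>0$ there. For $n\ge7$ we have $\kappa_n r\le \frac{2}{7}<1$; for $n=5,6$ the conditions $\kappa_5=\frac{7}{20}$ and $\kappa_6=\frac13$ also give $\lambda_2<1$. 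So the inequality is equivalent, after clearing denominators, to positivity of the polynomial
\begin{equation*}
P_n(r):=H_nD_n(r)-4n^2\kappa_n(1-\alpha)^2r^2(1-r)\big[(1+\gamma)\kappa_n+\beta\big]
\end{equation*}
on $[0,1]$. The polynomial $D_n$ is cubic in $r$, so $P_n$ is cubic in $r$ with coefficients depending on $n$ and on $\alpha$ (with $\beta=4\alpha/(n(n-2))$).

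\emph{Positivity checks.} First, $P_n(0)=H_n(n-2)^2(1+(n+1)\beta)>0$, so strictness near $r=0$ is automatic. Second, the subtracted term vanishes at $r=1$, and $D_n(1)>0$, so $P_n(1)>0$ as well. The content of the claim is therefore interior positivity, where the maximum of $r^2(1-r)$ competes with $D_n$. Note that $D_n$ decreases in $r$ but remains of order $n^2$ times the leading square.

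For $n\ne6$, I would treat $\alpha$ as a parameter in the enclosure \eqref{eq:alpha-enclosure}. I would then expand $P_n(r)$, or $P_n(r)-\epsilon r$ for a small rational $\epsilon$, in Bernstein form in $(r,\alpha)$ on subintervals of $[0,1]\times[\alpha_-,\alpha_+]$, and verify nonnegative coefficients using exact rational arithmetic via Lemma~\ref{lem:bernstein}. For $n=6$, I would use the parametrization $\alpha=6/(6+k)$, $\beta=1/(6+k)$ with $k\in[k_-,k_+]$. After multiplying by $(6+k)^2$, everything is polynomial in $(r,k)$, and the same Bernstein check applies. Alternatively, I could show monotonicity in $k$ of the bracketed terms and check the single endpoint, as was done in Lemma~\ref{lem:source-checks}.

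\emph{Expected obstacle.} The main obstacle is tightness. The constants $H_5=\frac{133}{10000}$, $H_6=\frac{3}{400}$ and $H_n=9/n^4$ appear to have been chosen close to the true maximum of $\kappa_n\Bc/r$. Because of that, the Bernstein coefficients may fail to be nonnegative on the whole box and will require subdivision near the interior maximizer in $r$. For $7\le n\le19$, a hand argument might instead be possible, using $1-\alpha\approx(\log 4)/n$ and $D_n\gtrsim (n-2)^2$ to obtain $\kappa_n\Bc/r\lesssim 4n^2(\log4)^2n^{-2}\cdot\frac{2}{n}\cdot\frac{4}{27}\cdot\frac{2}{n}/(n^2)$, which is of order $n^{-4}$. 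The explicit constant $9$ would still need a computer or careful interval check at each $n$.
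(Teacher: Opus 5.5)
Your proposal follows the paper's proof: both reduce $\kappa_n r\Bc<H_nr$ to positivity of the cubic $H_nD_n(r)-\kappa_n r^2(1-r)p^2(1-\alpha)^2[(1+\gamma)\kappa_n+\beta]$, with the paper dividing the denominator of \eqref{eq:Bcal} by $(n-2)^2$ so that $4n^2$ becomes $p^2$, and both verify this by Bernstein expansions in $(r,\alpha)$ on subintervals of $[0,1]\times[\alpha_-,\alpha_+]$ for $n=5$ and $7\le n\le19$. For $n=6$ the paper takes your second option: it proves $\partial_k(\Bc/3)\ge0$ by Bernstein and then checks an explicit cubic in $r$ at $k=k_+$; if you expand directly in $(r,k)$ instead, clearing denominators needs the factor $(6+k)^3$ rather than $(6+k)^2$, because of the $\beta=1/(6+k)$ inside the bracket.
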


\begin{proof}
The denominator in \eqref{eq:Bcal}, divided by $(n-2)^2$, is
\begin{equation*}
\begin{aligned}
D_n(r)={}&r\left[p(1-\alpha)\kappa_nr
+\left(1+\frac{n\beta}{2}\right)(1-\kappa_nr)\right]^2+(1-r)\big[1+(n+1)\beta\big](1-\kappa_nr)^2>0.
\end{aligned}
\end{equation*}
The desired estimate is equivalent to
\begin{equation*}
H_nD_n(r)
-\kappa_nr^2(1-r)p^2(1-\alpha)^2
\big[(1+\gamma_n)\kappa_n+\beta\big]>0.
\end{equation*}
For $n=5$ and $7\le n\le19$, the numerator expansion in \eqref{eq:bernstein-expansion} has positive coefficients on the recorded subintervals of
\begin{equation*}
0\le r\le1,\qquad \alpha_-\le\alpha\le\alpha_+.
\end{equation*}
Lemma~\ref{lem:bernstein} proves the inequality.

For $n=6$,
\begin{equation*}
\frac{\partial}{\partial k}\left(\frac{\Bc}{3}\right)\ge0
\quad\text{if }0\le r\le1,\quad k_-\le k\le k_+.
\end{equation*}
This follows from Lemma~\ref{lem:bernstein} applied to the numerator of the derivative. At $k=k_+$, the numerator of $\frac{3}{400}-\frac{\Bc}{3}$ is a positive constant times
\begin{equation*}
\left(r-\frac{7}{10}\right)^3
+\frac{77360621}{66052810}\left(r-\frac{7}{10}\right)^2
+\frac{17647987}{660528100}\left(r-\frac{7}{10}\right)
+\frac{43192863}{6605281000}.
\end{equation*}
For $0\le r\le1$, this is bounded below by
\begin{equation*}
\frac{47}{100}\left(r-\frac{7}{10}\right)^2
-\frac{27}{1000}\left|r-\frac{7}{10}\right|+\frac{13}{2000}>0.
\end{equation*}
\end{proof}

\subsection{Constants in Lemma~\ref{lem:cubic}}
\begin{lemma}\label{lem:paired-constants}
Let $7\le n\le19$. For the constants in Lemma~\ref{lem:cubic},
\begin{equation*}
\begin{aligned}
\frac{4}{p}(1-2^{1-p})-1&\le c_1,\\
4\left[\frac{3}{2}-\alpha\left(\frac{1+2^{p-1}}{2}\right)^{2/p}\right]
-2\left[\frac{4}{p}(1-2^{1-p})-1\right]&\le c_2,\\
\frac{\alpha(p-2)(2^{p-1}-p)^2}{p^2}
\left(\frac{p+1}{2}+\frac{2^{p-1}-p}{4}\right)^{2/p-2}&\le c_3.
\end{aligned}
\end{equation*}
\end{lemma}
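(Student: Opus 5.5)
The three inequalities are the numerical form of the reductions made in the proof of Lemma~\ref{lem:cubic}, so I will first make explicit which quantity each constant has to dominate, and then check the resulting finitely many numerical inequalities one dimension at a time. Put $s=1-m\in[0,\frac12]$ and $q=1-2^{1-p}$. By the rewriting in that proof,
\begin{equation*}
\alpha\big(1+(2^{p-1}-1)m\big)^{2/p}=2(1-qs)^{2/p}=:f(s),
\end{equation*}
and the target \eqref{eq:paired-e} requires
\begin{equation*}
(2-s)-f(s)\le c_1s+c_2s^2 .
\end{equation*}
I will use the Taylor expansion $f(s)=2-\frac{4q}{p}s+O(s^2)$, which shows that the linear part of $(2-s)-f(s)$ is $\big(\frac{4}{p}q-1\big)s$. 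The first displayed inequality is exactly $\frac4pq-1\le c_1$, so I will aim to show that the linear coefficient is dominated by $c_1$.

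For the quadratic remainder I will use the monotonicity already established in the proof of Lemma~\ref{lem:cubic}. Since $0<2/p<1$, the coefficients of $-(1-qs)^{2/p}$ beyond the linear term are all positive, so
\begin{equation*}
R(s)=\frac{(2-s)-f(s)-\big(\frac4pq-1\big)s}{s^2}
\end{equation*}
is increasing in $s$. Hence it suffices to evaluate $R$ at $s=\frac12$. There $1-\frac q2=\frac{1+2^{1-p}}{2}$, so $2(1-q/2)^{2/p}=\alpha\big(\frac{1+2^{p-1}}{2}\big)^{2/p}$, and
\begin{equation*}
R(\tfrac12)=4\Big[\tfrac32-\alpha\Big(\tfrac{1+2^{p-1}}{2}\Big)^{2/p}\Big]-2\Big[\tfrac4pq-1\Big].
\end{equation*}
This is precisely the second displayed quantity. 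If, in addition, $c_1$ exceeds $\frac4pq-1$, then the slack $(c_1-\frac4pq+1)s$ is nonnegative, so it is enough to have $R(\frac12)\le c_2$.

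For the third constant, the factor $\big(1+(p-1)m+(2^{p-1}-p)m^2\big)^{2/p-2}$ in \eqref{eq:cubic-paired} has a negative exponent, and its base is increasing in $m$. It is therefore maximal at $m=\frac12$, where the base equals $\frac{p+1}{2}+\frac{2^{p-1}-p}{4}$. Since also $(\int x^3u^p)^2$ appears with the coefficient shown, the third displayed inequality is what is needed to replace that coefficient by $c_3$.

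What remains is purely numerical: thirteen dimensions, each with three explicit inequalities involving $\alpha=4^{-1/n}$, $p=\frac{2n}{n-2}$ and the powers $2^{p-1}$ and $y^{2/p}$. I will enclose $\alpha$ by \eqref{eq:alpha-enclosure} and bound the rational powers by outward-rounded interval arithmetic as in Lemma~\ref{lem:interval}, then compare the resulting bounds with $(\frac{19}{1000},\frac{31}{250},\frac{2}{125})$ for $n=7$ and $(\frac1{25},\frac15,\frac1{50})$ for $8\le n\le19$. I expect the main obstacle to be tightness: for $n=7$ the constants are chosen close to the true values, so the enclosures must be sharp to about $10^{-4}$. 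I would therefore compute these values directly with high-precision intervals rather than trying to prove monotonicity in $n$. For $8\le n\le19$ the margins should be larger, and a single interval evaluation per $n$ ought to suffice.
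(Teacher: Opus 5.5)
Your proposal is correct and follows the same route as the paper: for each fixed $n\in\{7,\dots,19\}$, enclose $\alpha=4^{-1/n}$ as in \eqref{eq:alpha-enclosure} and bound the three differences from below with outward-rounded interval arithmetic (Lemma~\ref{lem:interval}); the margins at $n=7$ are of order $10^{-4}$, so this precision is ample. Your preliminary derivation of why these three quantities are the ones needed is also correct: the linear coefficient, the monotone quadratic remainder evaluated at $m=\frac12$, and the maximal base factor at $m=\frac12$. That argument, however, belongs to the proof of Lemma~\ref{lem:cubic}, not to this purely numerical lemma.
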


\begin{proof}
Here $p=\frac{2n}{n-2}$ and $\alpha=4^{-1/n}$. Lemma~\ref{lem:interval}, applied to each fixed integer $n\in\{7,\ldots,19\}$, gives a positive lower bound for the difference between each right-hand side and its left-hand side.
\end{proof}

\section{Verification of spectral separation}\label{app:spectral}

Let $Me=me$, $|e|=1$, and $x=e\cdot\xi$. Recall from \eqref{eq:Phi-data} and \eqref{eq:Qa} that
\begin{equation*}
t=\frac{1-m}{pm},\qquad
A_0=\beta t^2-(1-\alpha),\qquad
D_0=(1-\alpha)(3+4t)-\beta t^2.
\end{equation*}
The bounds for $|b|/a$ are given in Proposition~\ref{prop:multiplier}, and
\begin{equation*}
\kappa_4=\frac{3}{5},\qquad \kappa_5=\frac{7}{20},\qquad
\kappa_6=\frac{1}{3},\qquad \kappa_n=\frac{2}{n}
\quad\text{if }7\le n\le19.
\end{equation*}

\subsection{Dimensions four, five, and seven through nineteen}\label{app:spectral-directional}
By \eqref{eq:Qa}, stability implies
\begin{equation*}
\begin{aligned}
0\le{}&\frac{p^2m^2}{a}Q(\Phi_e,\Phi_e)\\
={}&\frac{(1-\alpha)[3p^2m^2+4pm(1-m)]-\beta(1-m)^2}{a}\int x^2u^2\\
&+\big[\beta(1-m)^2-(1-\alpha)p^2m^2\big]r\\
&-\frac{2pm^2+2pm(1-m)+(p-2)(1-m)^2}{a}\int(b\cdot\xi)x^2u^p
-(p-2)(1-m)^2m.
\end{aligned}
\end{equation*}
The estimates of Lemma~\ref{lem:directional} and Proposition~\ref{prop:multiplier} give
\begin{equation*}
\begin{gathered}
0\le\frac{1}{a}\int x^2u^2\le r\le1,\\
\frac{1}{a}\int(b\cdot\xi)x^2u^p+\beta r
-\frac{1-\alpha+\beta}{a}\int x^2u^2\ge0,\\
\gamma_nm(1-m)\pm\frac{1}{a}\int(b\cdot\xi)x^2u^p\ge0,\\
\mathcal G_z\ge0,\qquad
z\in\left\{\frac{1}{n+1}\right\}\cup
\left\{\frac{j}{100}:1\le j\le90\right\}.
\end{gathered}
\end{equation*}
If $\kappa_nr\le m$, there is the additional bound
\begin{equation*}
\frac{m}{\kappa_n}-r\ge0.
\end{equation*}
If $m=\lambda_1$, \eqref{eq:Mgeneral} gives $\Mcons\ge0$ with
\begin{equation*}
\tau_m=\frac{\tau_0m}{m+\frac{(n-2+2\alpha)(1-m)}{2n(1-\alpha)}}.
\end{equation*}
For $7\le n\le19$, take $\tau_0=\frac{71}{100}$. For $n=5$ and $m_-\le m\le m_+$, the data specify a rational $\tau_0$ satisfying
\begin{equation*}
\tau_0\ge\sqrt{\frac{1-m_-}{2}}\ge\sqrt{\frac{1-m}{2}}.
\end{equation*}

The remaining inequalities come from the translated functions. For $7\le n\le19$, Lemma~\ref{lem:cubic} and \eqref{eq:VS} give
\begin{equation*}
\begin{aligned}
&\frac{\alpha}{2}\big(\|(1+x)u\|_p^2+\|(1-x)u\|_p^2\big)\\
&\quad\ge\alpha\big[1+(2^{p-1}-1)m\big]^{2/p}-\frac{\alpha(p-2)(2^{p-1}-p)^2m^2(1-m)^2}{p^2}
\big[1+(p-1)m+(2^{p-1}-p)m^2\big]^{2/p-2}.
\end{aligned}
\end{equation*}
For $n=5$, Lemma~\ref{lem:hermite5} gives
\begin{equation*}
\frac{\alpha}{2}\big(\|(\tau+x)u\|_p^2+\|(\tau-x)u\|_p^2\big)
\ge\alpha c_1^{2/p}-[mc_5-c_4]_+m(1-m)^2,
\end{equation*}
where
\begin{equation*}
\tau\in\left\{\frac{j}{20}:9\le j\le22\right\},\qquad m\ge\frac{1}{20}.
\end{equation*}
Here $c_1,\ldots,c_5$ are the constants of that lemma. The recorded ranges satisfy
\begin{equation*}
c_1>|c_2|m(1-m),\qquad
\alpha c_1^{2/p}-[mc_5-c_4]_+m(1-m)^2>0.
\end{equation*}
These bounds give \eqref{eq:translated-affine}. For $n=4$, \eqref{eq:fourth-affine} gives
\begin{equation*}
\begin{aligned}
&\frac{1}{a}\int(b\cdot\xi)x^2u^p
+\big[\beta-(1-\alpha)m\tau\big]r
-\frac{1-\alpha+\beta}{a}\int x^2u^2+m\left(\tau+1-\sqrt{\frac{\tau^2+6\tau+1}{2}}\right)\ge0,
\end{aligned}
\end{equation*}
with
\begin{equation*}
\tau\in\left\{\frac{j}{10}:1\le j\le100\right\}
\cup\{20,50,100,1000\}.
\end{equation*}

Write $\mathcal L_j$ for the nonnegative left-hand sides above. Fix an interval $m_-\le m\le m_+$. The data provide nonnegative rational weights $w_j$ and bounds $c_i^-\le c_i(m)\le c_i^+$ for the coefficients in the following sums.

If $m=\lambda_1$, the sum has the form
\begin{equation*}
\frac{p^2m^2}{a}Q(\Phi_e,\Phi_e)+\sum_jw_j\mathcal L_j
=c_0+\frac{c_1}{a}\int x^2u^2
+\frac{c_2}{a}\int(b\cdot\xi)x^2u^p+c_3r.
\end{equation*}
If $\kappa_nr\le m$, every inequality is divided by $m$, and the sum has the form
\begin{equation*}
\frac{p^2m}{a}Q(\Phi_e,\Phi_e)+\sum_j\frac{w_j}{m}\mathcal L_j
=c_0+\frac{c_1}{am}\int x^2u^2
+\frac{c_2}{am}\int(b\cdot\xi)x^2u^p+c_3\frac{r}{m}.
\end{equation*}
In these two situations, respectively, the coefficient bounds satisfy
\begin{equation}\label{eq:spectral-upper}
\begin{aligned}
&c_0^++[c_1^+]_++[c_3^+]_+
+\gamma_nm_+(1-m_-)\max\{|c_2^-|,|c_2^+|\}<0,\\
&c_0^++\frac{[c_1^+]_++[c_3^+]_+}{\kappa_n}
+\gamma_n(1-m_-)\max\{|c_2^-|,|c_2^+|\}<0.
\end{aligned}
\end{equation}
Indeed, the bounds for the integrals are
\begin{equation*}
\begin{gathered}
0\le\frac{1}{a}\int x^2u^2\le1,\qquad 0\le r\le1,\qquad
\left|\frac{1}{a}\int(b\cdot\xi)x^2u^p\right|
\le\gamma_nm_+(1-m_-),\\
0\le\frac{1}{am}\int x^2u^2\le\frac{1}{\kappa_n},\qquad
0\le\frac{r}{m}\le\frac{1}{\kappa_n},\qquad
\left|\frac{1}{am}\int(b\cdot\xi)x^2u^p\right|
\le\gamma_n(1-m_-),
\end{gathered}
\end{equation*}
with the second line applying when $\kappa_nr\le m$. Thus \eqref{eq:spectral-upper} gives a negative sum of nonnegative quantities.

For $\kappa_nr\le m\le\frac{1}{2}$, the recorded intervals cover $0\le m\le\frac{1}{2}$. For $m=\lambda_1$, they cover $m_n\le m\le\frac{1}{2}$, where
\begin{equation*}
m_5=\frac{67}{400},\qquad m_6=\frac{1}{6},\qquad
m_n=\frac{3}{20}\quad\text{if }7\le n\le19.
\end{equation*}
The numbers of intervals are $181$ for $n=4$, and $301$ and $455$ for the two conclusions when $n=5$. For $7\le n\le19$, the numbers are
\begin{center}
\renewcommand{\arraystretch}{1.3}
\begin{tabular}{c|ccccccccccccc}
$n$&$7$&$8$&$9$&$10$&$11$&$12$&$13$&$14$&$15$&$16$&$17$&$18$&$19$\\
\hline
$m\ge\kappa_nr$&$151$&$154$&$147$&$140$&$109$&$95$&$89$&$86$&$84$&$85$&$85$&$85$&$89$\\
$m=\lambda_1$&$126$&$118$&$91$&$80$&$58$&$46$&$38$&$36$&$35$&$34$&$33$&$28$&$26$
\end{tabular}
\end{center}

The interval $\frac{1}{n+1}\le m\le m_n$ is covered by the following lemma. The weights and $\tau_m$ are defined in \eqref{eq:small-max-weights} and \eqref{eq:small-max-tau}.

\begin{lemma}\label{lem:small-max}
Let $5\le n\le19$. For $\frac{1}{n+1}\le m\le m_n$,
\begin{equation*}
\begin{gathered}
w_{\mathcal G}>0,\qquad w_{\Mcons}>0,\\
-(p-2)t^2m+w_{\mathcal G}m
+w_{\Mcons}\tau_m\left(1+\frac{m}{2}\right)<0,\\
A_0-(p-2)t^2m+w_{\mathcal G}(m+\beta)
+\frac{w_{\Mcons}\tau_m}{2}(m+\beta)\le0.
\end{gathered}
\end{equation*}
Equality in the last line holds only at $m=\frac{1}{n+1}$.
\end{lemma}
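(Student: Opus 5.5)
The three conditions are explicit functions of $m$ and $\alpha$ (with $p$, $\beta$, $t=\frac{1-m}{pm}$, $D_0$, $A_0$ and $\tau_m$ all determined by $n$ and $m$), so I would handle each dimension $5\le n\le19$ separately as a one-variable sign check on $[\frac{1}{n+1},m_n]$. The first step is to substitute $t=\frac{1-m}{pm}$ and the explicit $\tau_m$ from \eqref{eq:small-max-tau}. After multiplying by the positive factors $p^2m^2$ and $m+\frac{(n-2+2\alpha)(1-m)}{2n(1-\alpha)}$, every expression becomes a polynomial in $m$ whose coefficients are polynomial in $\alpha$. For $n=6$ I would first use the parametrization $\alpha=6/(6+k)$ and include $\tau_0=\frac{57-32m}{80}$ in the polynomial. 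For every other $n$ I would use the enclosure \eqref{eq:alpha-enclosure}.

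For $w_{\mathcal G}>0$ and $w_{\Mcons}>0$, the plan is to show that these polynomials have positive Bernstein coefficients on subintervals of $[\frac{1}{n+1},m_n]\times[\alpha_-,\alpha_+]$, and then invoke Lemma~\ref{lem:bernstein}. The inequality $w_{\Mcons}>0$ amounts to $\frac{2}{p}+2t+(p-2)t^2>\frac{D_0}{1+(n+1)\beta}$. Since $t$ is large for small $m$, the left side grows like $(p-2)t^2$ while $D_0$ contains $-\beta t^2$, so I expect this inequality to hold with a comfortable margin. The strict inequality in the first line should be treated in the same way. The coefficient $w_{\mathcal G}$ is less clear a priori because of the subtraction of $\frac{\tau_m}{2}(\dots)$; I would check it numerically first and refine the subintervals if needed.

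The main obstacle is the last line, which vanishes at $m=\frac{1}{n+1}$. There a plain Bernstein positivity test cannot give strict negativity. I would first verify symbolically that the expression, call it $E(m)$, vanishes at $m=\frac1{n+1}$. This should follow because $\mathcal G_{1/(n+1)}$ is exactly tangent there: with $z=\frac{1}{n+1}$ the term $((n+1)z-1)^2$ vanishes, and the identity \eqref{eq:coord} is consistent with $Q$ at the isotropic point. Then I would factor out $(m-\frac{1}{n+1})$ exactly in rational arithmetic, writing $E(m)=(m-\frac{1}{n+1})\widetilde E(m)$, and prove $\widetilde E<0$ on the whole interval by Bernstein coefficients. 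This factorization works uniformly in $\alpha$ if the vanishing holds identically in $\alpha$, which I expect from the structure of the weights. If it holds only at the exact $\alpha$, I would instead bound $E$ by an interval enclosure near the endpoint together with a derivative sign $E'(\frac1{n+1})<0$. The resulting strictness away from $m=\frac{1}{n+1}$ gives the stated equality case.
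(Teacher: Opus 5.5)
Your proposal is correct and follows the paper's proof. The paper also certifies $w_{\mathcal G}$, $w_{\Mcons}$ and the strict middle inequality by the Bernstein criterion on $[\frac{1}{n+1},m_n]$ with $\alpha_-\le\alpha\le\alpha_+$ (or $k_-\le k\le k_+$ for $n=6$), and it handles the last expression by dividing its negative by $(n+1)m-1$ and certifying that the quotient is positive. Your fallback is never needed: since $w_{\mathcal G}+\frac{\tau_m}{2}w_{\Mcons}=\frac{D_0}{1+(n+1)\beta}$, the last expression does not depend on $\tau_m$, and at $m=\frac{1}{n+1}$ (where $t=\frac{n-2}{2}$ and $\frac{m+\beta}{1+(n+1)\beta}=m$) it vanishes identically in $\alpha$.
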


\begin{proof}
The last expression is zero at $m=\frac{1}{n+1}$. Its negative, divided by $(n+1)m-1$, has a continuous positive limit there. The numerator and denominator expansions of this quotient, and of
\begin{equation*}
w_{\mathcal G},\qquad w_{\Mcons},\qquad
(p-2)t^2m-w_{\mathcal G}m-w_{\Mcons}\tau_m\left(1+\frac{m}{2}\right),
\end{equation*}
satisfy the positive-coefficient criterion of Lemma~\ref{lem:bernstein} for
\begin{equation*}
\frac{1}{n+1}\le m\le m_n,\qquad
\alpha_-\le\alpha\le\alpha_+\quad\text{if }n\ne6,
\end{equation*}
and for $\frac{1}{7}\le m\le\frac{1}{6}$, $\frac{1559}{1000}\le k\le\frac{39}{25}$ if $n=6$.
\end{proof}

\subsection{Dimension six}\label{app:spectral-six}
In this subsection,
\begin{equation*}
p=3,\qquad \alpha=\frac{6}{6+k},\qquad \beta=\frac{1}{6+k},\qquad
\frac{1559}{1000}\le k\le\frac{39}{25},\qquad t=\frac{1-m}{3m}.
\end{equation*}
Let $F(m)$ be the function in Lemma~\ref{lem:paired6}. It is positive for $0\le m\le\frac{1}{2}$ in this range of $k$. Define the following nonnegative expressions:
\begin{equation*}
\begin{aligned}
\mathcal L_1={}&\frac{1}{a}\int(b\cdot\xi)x^2u^p+\beta r
-\frac{1-\alpha+\beta}{a}\int x^2u^2,\\
\mathcal L_2={}&\frac{1}{a}\int(b\cdot\xi)x^2u^p
+[\beta-(1-\alpha)]r-\frac{1-\alpha+\beta}{a}\int x^2u^2+1+m-F(m),\\
\mathcal L_3={}&\frac{1}{a}\int(b\cdot\xi)x^2u^p+\frac{9}{50}m(1-m).
\end{aligned}
\end{equation*}
Their signs follow from Lemma~\ref{lem:directional}(i), Lemma~\ref{lem:translated}(i), Lemma~\ref{lem:paired6} and Proposition~\ref{prop:multiplier}.

First suppose that $0<m\le\frac{1}{2}$ and $r\le3m$.
\begin{enumerate}[label=\textup{(\roman*)},leftmargin=2em]
\item If $0<m\le\frac{1}{5}$ and $D_0\le0$, then
\begin{equation*}
\begin{aligned}
0\le{}&\frac{1}{a}Q(\Phi_e,\Phi_e)
-\frac{D_0}{a}\int x^2u^2
+\left(\frac{2}{3}+2t+t^2\right)\mathcal L_3\\
={}&A_0r-t^2m+\frac{9}{50}\left(\frac{2}{3}+2t+t^2\right)m(1-m)<0.
\end{aligned}
\end{equation*}
The strict inequality follows from its values at $r=0$ and $r=3m$.

If $D_0>0$, set
\begin{equation*}
w_1=\frac{676D_0}{676+6451\beta}>0,\qquad
w_2=\frac{2}{3}+2t+t^2-w_1>0.
\end{equation*}
Since
\begin{equation*}
\mathcal G_{11/50}=m+\frac{1}{a}\int(b\cdot\xi)x^2u^p
+\frac{991\beta}{676}r-\frac{1+\frac{6451}{676}\beta}{a}\int x^2u^2,
\end{equation*}
we have
\begin{equation*}
\begin{aligned}
0\le{}&\frac{1}{a}Q(\Phi_e,\Phi_e)+w_1\mathcal G_{11/50}+w_2\mathcal L_3=\left(A_0+\frac{991\beta w_1}{676}\right)r
+w_1m-t^2m+\frac{9w_2}{50}m(1-m)<0.
\end{aligned}
\end{equation*}
Again, the displayed expression is negative at $r=0$ and $r=3m$.

\item If $\frac{1}{5}\le m\le\frac{21}{100}$, define $w_1,w_2$ by
\begin{equation*}
\begin{pmatrix}
1+\frac{6451}{676}\beta&1-\alpha+\beta\\
\frac{991}{676}\beta&\beta-1+\alpha
\end{pmatrix}
\begin{pmatrix}w_1\\w_2\end{pmatrix}
=\begin{pmatrix}D_0\\-A_0\end{pmatrix},\qquad
w_3=\frac{2}{3}+2t+t^2-w_1-w_2.
\end{equation*}
Then
\begin{equation*}
\begin{gathered}
w_1,w_2,w_3>0,\\
0\le\frac{1}{a}Q(\Phi_e,\Phi_e)
+w_1\mathcal G_{11/50}+w_2\mathcal L_2+w_3\mathcal L_3\\
=-t^2m+w_1m+w_2[1+m-F(m)]+\frac{9w_3}{50}m(1-m)<0.
\end{gathered}
\end{equation*}

\item If $\frac{21}{100}\le m\le\frac{1}{2}$, put
\begin{equation*}
\Delta=D_0-\left(\frac{2}{3}+2t+t^2\right)(1-\alpha+\beta).
\end{equation*}
For $\Delta\le0$, take
\begin{equation*}
\begin{gathered}
w_2=\frac{A_0+\frac{\beta D_0}{1-\alpha+\beta}}{1-\alpha},\qquad
w_1=\frac{D_0}{1-\alpha+\beta}-w_2,\qquad
w_3=-\frac{\Delta}{1-\alpha+\beta}.
\end{gathered}
\end{equation*}
They satisfy $w_1,w_2>0$, $w_3\ge0$, and
\begin{equation*}
\begin{aligned}
0\le{}&\frac{1}{a}Q(\Phi_e,\Phi_e)+w_1\mathcal L_1+w_2\mathcal L_2+w_3\mathcal L_3\\
={}&-t^2m+w_2[1+m-F(m)]+\frac{9w_3}{50}m(1-m)<0.
\end{aligned}
\end{equation*}
For $\Delta\ge0$, take $z=\frac{3}{2}m-\frac{3}{40}$ and
\begin{equation*}
\begin{gathered}
w_1=\frac{\Delta}{\alpha+6\beta+\frac{\beta}{4}\left(\frac{36}{(1-z)^2}-49\right)},\\
w_2=\frac{A_0+\beta\left(\frac{2}{3}+2t+t^2\right)
+\frac{\beta w_1}{4}\left(\frac{36z^2}{(1-z)^2}-1\right)}{1-\alpha},\qquad
w_3=\frac{2}{3}+2t+t^2-w_1-w_2.
\end{gathered}
\end{equation*}
Here $w_1\ge0$, $w_2,w_3>0$, and
\begin{equation*}
\begin{aligned}
0\le{}&\frac{1}{a}Q(\Phi_e,\Phi_e)+w_1\mathcal G_z+w_2\mathcal L_2+w_3\mathcal L_1
=-t^2m+w_1m+w_2[1+m-F(m)]<0.
\end{aligned}
\end{equation*}
At $\Delta=0$, the two sums agree. Neither argument requires $r\le3m$ on this range of $m$.
\end{enumerate}
It follows that every eigenvalue $m\le\frac{1}{2}$ satisfies $m<\frac{r}{3}$.

Now suppose that $m=\lambda_1\le\frac{1}{2}$. Since $\tr M=1$, we have $m\ge\frac{1}{7}$. The range $\frac{21}{100}\le m\le\frac{1}{2}$ was excluded above. For the remaining values of $m$, set
\begin{equation*}
\tau_m=\frac{\frac{57-32m}{80}m}
{m+\frac{(4+2\alpha)(1-m)}{12(1-\alpha)}}.
\end{equation*}
This choice satisfies the hypothesis of \eqref{eq:Mgeneral}.
\begin{enumerate}[label=\textup{(\roman*)},leftmargin=2em]
\item If $\frac{1}{7}\le m\le\frac{1}{6}$, Lemma~\ref{lem:small-max} applies. In particular,
\begin{equation*}
\begin{aligned}
&A_0-t^2m+w_{\mathcal G}(m+\beta)
+\frac{w_{\Mcons}\tau_m}{2}(m+\beta)\\
&\quad=\frac{(7m-1)[2k^2m^2+k^2m+10km^2+3km-k-12m^2+24m-12]}
{9m^2(k+6)(k+13)}\le0.
\end{aligned}
\end{equation*}
Together with the strict inequality at $r=0$ in that lemma and $r<1$, this gives
\begin{equation*}
0\le\frac{1}{a}Q(\Phi_e,\Phi_e)
+w_{\mathcal G}\mathcal G_{1/7}+w_{\Mcons}\Mcons<0.
\end{equation*}
\item If $\frac{1}{6}\le m\le\frac{21}{100}$, define the weights by
\begin{equation*}
\begin{pmatrix}
1+\frac{37419}{3844}\beta&1-\alpha+\beta&\frac{\tau_m}{2}(1+7\beta)\\
1&1&1+\frac{\tau_m}{2}\\
\frac{5799}{3844}\beta&\beta&\tau_m\left(\frac{\beta}{2}-1\right)
\end{pmatrix}
\begin{pmatrix}w_1\\w_2\\w_3\end{pmatrix}
=\begin{pmatrix}D_0\\\frac{2}{3}+2t+t^2\\-A_0\end{pmatrix}.
\end{equation*}
Then $w_1,w_2,w_3>0$, and
\begin{equation*}
\begin{aligned}
0\le{}&\frac{1}{a}Q(\Phi_e,\Phi_e)
+w_1\mathcal G_{9/40}+w_2\mathcal L_1+w_3\Mcons=-t^2m+w_1m+w_3\tau_m\left(1+\frac{m}{2}\right)<0.
\end{aligned}
\end{equation*}
\end{enumerate}
Thus $\lambda_1>\frac{1}{2}$.

All weights in this subsection are rational functions of $m$ and $k$. Their denominators, their required signs and the strict inequalities displayed above follow from Lemma~\ref{lem:bernstein} on the stated ranges of $m$ and $k$. Factors $m$ and $\Delta$ are removed before expanding when they vanish at an endpoint. The dimension-six data, including Appendix~\ref{app:gaps-six}, contain $42$ sets of coefficient inequalities and $506$ positive numerator coefficients.

\subsection{Dimension four}\label{app:spectral-four}
For Lemma~\ref{lem:near-one}, the required one-variable estimates are as follows. The definitions of $A_0,D_0,t,c_1,c_2$ are recalled in its proof.

\begin{lemma}\label{lem:near-one-checks}
For $n=4$,
\begin{equation*}
\begin{gathered}
\left[A_0+\frac{(\beta+m)D_0}{1+5\beta}-2t^2m\right]_{m=1/5}=0,\\
\left.\frac{8m^2}{(1+m)^2}
\left[\left(A_0+\frac{D_0}{5}\right)r-2t^2m\right]\right|_{m=1/5}
=-\frac{4}{45}(1-r).
\end{gathered}
\end{equation*}
For $\frac{1}{5}\le m\le\frac{2}{5}$,
\begin{equation*}
c_1>0,\qquad D_0>0,\qquad
\frac{1}{2}+2t+2t^2-\frac{D_0}{1+5\beta}>0,\qquad 6c_1-c_2>0.
\end{equation*}
For $\frac{1}{5}\le m\le\frac{13}{50}$,
\begin{equation*}
\frac{c_1}{2}-c_2>0.
\end{equation*}
For $\frac{9}{50}\le m\le\frac{41}{200}$ and $\frac{99}{100}\le r\le1$,
\begin{equation*}
\begin{gathered}
0<\frac{d}{dm}\left(\frac{8m^2D_0}{(1+m)^2}\right)<3,\\
\frac{\partial^2}{\partial m^2}
\left\{\frac{8m^2}{(1+m)^2}
\left[\left(A_0+\frac{D_0}{5}\right)r-2t^2m\right]\right\}<3.
\end{gathered}
\end{equation*}
\end{lemma}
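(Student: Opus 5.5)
The plan is to reduce every assertion to exact algebra in one variable $m$, with $\alpha$ entering only through $\alpha=2\beta$ and $\alpha^2=\frac12$. For $n=4$ we have $p=4$, $\beta=\alpha/2$ and $t=\frac{1-m}{4m}$. Hence
\begin{equation*}
A_0=\tfrac{\alpha}{2}t^2-(1-\alpha),\qquad D_0=(1-\alpha)(3+4t)-\tfrac{\alpha}{2}t^2,\qquad 1+5\beta=1+\tfrac{5\alpha}{2},
\end{equation*}
and $c_1,c_2$ are explicit rational functions of $m$ with coefficients in $\mathbb{Q}(\sqrt2)$.

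The two identities at $m=\frac15$ are checked by hand, and both in fact only need $\alpha=2\beta$. At $m=\frac15$ we get $t=1$, $A_0=3\beta-1$ and $D_0=7-15\beta$.

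For the first identity, multiply the bracket by $1+5\beta$. It becomes
\begin{equation*}
(3\beta-1)(1+5\beta)+\bigl(\beta+\tfrac15\bigr)(7-15\beta)-\tfrac25(1+5\beta),
\end{equation*}
and this expression vanishes identically in $\beta$.

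For the second identity, note that $\frac{8m^2}{(1+m)^2}=\frac29$ at $m=\frac15$. Also
\begin{equation*}
A_0+\tfrac{D_0}{5}=3\beta-1+\tfrac75-3\beta=\tfrac25.
\end{equation*}
So the expression equals $\frac29\bigl(\frac25 r-\frac25\bigr)=-\frac4{45}(1-r)$.

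The first identity also explains why $c_1$ extends continuously to $m=\frac15$. The numerator of $c_1$ is a rational function vanishing at $m=\frac15$, so I will factor out $(m-\frac15)$ symbolically before any sign check.

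For the strict inequalities I will proceed as in the other appendix checks.
\begin{enumerate}[label=\textup{(\roman*)},leftmargin=2em]
\item Multiply each expression by its positive denominator. These denominators are powers of $m$, powers of $1+m$, $1+5\beta$, and the positive factor $4(1-\alpha)m+(1+\alpha)(1-m)$ appearing in $c_2$. This yields polynomials in $m$ and $\alpha$.
\item Treat $\alpha$ as a second variable on the enclosure $[\alpha_-,\alpha_+]$ from \eqref{eq:alpha-enclosure}.
\item Expand in the Bernstein basis \eqref{eq:bernstein-expansion} on subintervals of the stated $m$-ranges, and invoke Lemma~\ref{lem:bernstein}.
\end{enumerate}
This applies to $c_1>0$, $D_0>0$, $\frac12+2t+2t^2-\frac{D_0}{1+5\beta}>0$, $6c_1-c_2>0$ and $\frac{c_1}{2}-c_2>0$.

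The two derivative bounds on $[\frac9{50},\frac{41}{200}]$ are handled the same way, after computing the derivatives symbolically. In the second-derivative bound, $r$ enters affinely. It therefore suffices to check the endpoints $r=\frac{99}{100}$ and $r=1$, which keeps the problem one-dimensional apart from $\alpha$.

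I expect the main obstacle to be tightness near $m=\frac15$.
\begin{itemize}
\item $c_1$ arises from a removable singularity, so a naive enclosure there loses all positivity.
\item $6c_1-c_2$ and $\frac{c_1}{2}-c_2$ may have small margins near the endpoints.
\end{itemize}
My first remedy is to divide out $(5m-1)$ exactly in rational arithmetic. I would then subdivide $[\frac15,\frac25]$ finely near $\frac15$, bisecting any subinterval where some Bernstein coefficient is negative. If the $\alpha$-width ever spoils a margin, I would substitute $\alpha=\frac{\sqrt2}{2}$ and verify the signs in $\mathbb{Q}(\sqrt2)$ with exact rational bounds for $\sqrt2$.
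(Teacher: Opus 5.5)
Your proposal is correct and is essentially the paper's proof: the two identities at $m=\frac15$ follow from $\beta=\alpha/2$ alone, and your hand computations with $t=1$, $A_0=3\beta-1$, $D_0=7-15\beta$ are right; each strict inequality is then certified by clearing positive denominators and applying Lemma~\ref{lem:bernstein} to Bernstein expansions in $m$ and $\alpha\in[\alpha_-,\alpha_+]$. Your extra steps match the paper's convention of removing vanishing factors before expanding: dividing out the exact factor $5m-1$ from the numerator of $c_1$ is legitimate precisely because the first identity holds for every $\alpha$ once $\beta=\alpha/2$, and the affine $r$-dependence can be checked at $r=\frac{99}{100}$ and $r=1$.
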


\begin{proof}
The first two identities follow from $\beta=\alpha/2$. For each strict inequality, the numerator and denominator expansions in \eqref{eq:bernstein-expansion} have positive coefficients on its stated range, with $\alpha_-\le\alpha\le\alpha_+$. Lemma~\ref{lem:bernstein} gives the assertion.
\end{proof}

We next prove the bound summed over the five eigenvectors in Proposition~\ref{prop:spectral4}. The hypotheses give the following nonnegative quantities:
\begin{equation*}
\begin{gathered}
\frac{1}{a}\int x^2u^2,\qquad
r-\frac{1}{a}\int x^2u^2,\\
\frac{1}{a}\int(b\cdot\xi)x^2u^p+\beta r
-\frac{1-\alpha+\beta}{a}\int x^2u^2,\\
\frac{17}{40}m(1-m)\pm\frac{1}{a}\int(b\cdot\xi)x^2u^p,\\
\frac{8(1-\alpha)(1-r)}{5-3\alpha}m(1-m)
\pm\frac{1}{a}\int(b\cdot\xi)x^2u^p,\\
\mathcal G_z\quad\text{for }z=\frac{j}{50},\quad 1\le j\le45.
\end{gathered}
\end{equation*}
Formula \eqref{eq:fourth-affine} supplies the further inequalities
\begin{equation*}
\begin{aligned}
\frac{1}{a}\int(b\cdot\xi)x^2u^p
+[\beta-(1-\alpha)m\tau]r
-\frac{1-\alpha+\beta}{a}\int x^2u^2+m\left(\tau+1-\sqrt{\frac{\tau^2+6\tau+1}{2}}\right)\ge0,
\end{aligned}
\end{equation*}
for
\begin{equation*}
\tau\in\left\{\frac{j}{5}:1\le j\le50\right\}
\cup\{15,20,30,50,100,1000\}.
\end{equation*}
Stability is written as $\frac{16m^2}{a}Q(\Phi_e,\Phi_e)\ge0$, with its coefficients given by \eqref{eq:Qa}.

\begin{proposition}\label{prop:trace-cert}
There are rational numbers
\begin{equation*}
\frac{1}{3}=r_0<r_1<\cdots<r_{187}=\frac{99}{100}
\end{equation*}
such that, for each $r_{j-1}\le r\le r_j$, there are rational $A_*,B_*,C_*$ satisfying
\begin{equation*}
5A_*+B_*+C_*r<0.
\end{equation*}
For every $0\le m\le\frac{1}{2}$, the preceding inequalities imply
\begin{equation*}
\frac{1}{a}\int(b\cdot\xi)x^2u^p
<A_*+B_*m+\frac{C_*}{a}\int x^2u^2.
\end{equation*}
\end{proposition}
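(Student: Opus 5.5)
The plan is to realize the certificate as a linear-programming (Farkas) combination, computed separately on each subinterval $[r_{j-1},r_j]$ of $r$. We treat $X=\frac{1}{a}\int x^2u^2$ and $T=\frac{1}{a}\int(b\cdot\xi)x^2u^p$ as free real variables, with $m\in[0,\frac12]$ and $r\in[r_{j-1},r_j]$ as parameters. Every listed inequality, and also the stability inequality $\frac{16m^2}{a}Q(\Phi_e,\Phi_e)\ge0$ written through \eqref{eq:Qa}, is affine in $(X,T)$. Its coefficients are explicit functions of $m$, of $r$ (affinely), and of $\alpha$.

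The goal is to find rational $A_*,B_*,C_*$ such that, for every admissible $(m,r)$ in the cell, the system together with the negated target $T\ge A_*+B_*m+C_*X$ is infeasible. For this we exhibit nonnegative multipliers $w_j(m)$, chosen piecewise constant on a fine $m$-grid of $[0,\frac12]$. Adding $w_j\mathcal L_j$ to a positive multiple of the stability form should produce an expression of the form
\[
T-\bigl(A_*+B_*m+C_*X\bigr)+(\text{nonpositive remainder}).
\]
Equivalently, the pair of coefficients of $(X,T)$ must match after normalizing the coefficient of $T$ to be negative, and the constant term must be bounded by $A_*+B_*m$.

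Because $A_*,B_*,C_*$ must not depend on $m$, the remainder is controlled by interval bounds on each $m$-subinterval. These bounds are obtained exactly as in \eqref{eq:spectral-upper}: we take upper bounds $c_i^+$ of the coefficient functions, and we use $0\le X\le r$ together with $|T|\le\min\{\frac{17}{40},\frac{8(1-\alpha)(1-r)}{5-3\alpha}\}m(1-m)$ to absorb any residual coefficients of $X$, $r$ and $T$.

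The triples themselves are found numerically in two nested stages. For fixed $r$, solve the LP that maximizes the margin of $5A+B+Cr<0$ subject to the pointwise Farkas feasibility on a sample of $m$ values. Then rationalize the result and shrink the cell until the rigorous interval check passes, which yields the $187$ cells. The bounds on the $\sqrt{\cdot}$ terms in \eqref{eq:fourth-affine} are handled by outward-rounded interval arithmetic (Lemma~\ref{lem:interval}), and the polynomial coefficient bounds by Bernstein expansion (Lemma~\ref{lem:bernstein}) with $\alpha_-\le\alpha\le\alpha_+$. Finally, $5A_*+B_*+C_*r<0$ is affine in $r$, so it is checked exactly at the two endpoints $r_{j-1}$ and $r_j$.

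The main obstacle is uniformity in $m$ near the endpoints $m\to0$ and $m=\frac12$. At $m=0$ the stability form scaled by $16m^2$ degenerates, and $t=\frac{1-m}{4m}$ blows up. Multiplying by $m^2$ keeps the coefficients polynomial, but a margin must still come from the inequalities $\mathcal G_z$ and the translated inequalities, so that the bound $A_*+B_*m$ stays valid as $m\to0$. The natural choice is $A_*<0$ small in absolute value, supported there by $\mathcal G_z$ with small $z$. Near $r=\frac13$ and $r=\frac{99}{100}$, the margin in $5A_*+B_*+C_*r$ is thinnest. I expect many cells there and, if needed, a finer $\tau$-family in \eqref{eq:fourth-affine}.
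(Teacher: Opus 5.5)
Your plan is essentially the paper's proof. On each $r$-cell and each $m$-subinterval you take a nonnegative combination of the listed inequalities (stability included), absorb leftover coefficients through $0\le X\le r_+$ and the a priori bound on $|T|$ as in \eqref{eq:spectral-upper}, and check $5A_*+B_*+C_*r<0$ at the two endpoints $r_\pm$; the paper additionally allows $m$-dependent weights that solve $\sum_jw_jc_{j,1}=C_*$ and $\sum_jw_jc_{j,2}=-1$ exactly.

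Two slips need fixing. First, the combination must equal $(A_*+B_*m+C_*X)-T$ plus a strictly negative remainder; your display reverses the sign of this part, and "nonpositive" would not give the strict inequality. Second, $X=T=0$ satisfies every listed inequality at $m=0$, so any certificate needs $A_*>0$, and your expected choice $A_*<0$ near $m=0$ cannot work.
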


\begin{proof}
Fix $r_-\le r\le r_+$ among the stated intervals. The endpoint estimates are
\begin{equation*}
\max\{5A_*+B_*+C_*r_-,\ 5A_*+B_*+C_*r_+\}<0.
\end{equation*}
Partition $0\le m\le\frac{1}{2}$ into the recorded rational intervals $m_-\le m\le m_+$. There are $57828$ such intervals over all $187$ ranges of $r$.

Write the nonnegative expressions above as
\begin{equation*}
\mathcal L_j=c_{j,0}(m,r)+\frac{c_{j,1}(m)}{a}\int x^2u^2
+\frac{c_{j,2}(m)}{a}\int(b\cdot\xi)x^2u^p.
\end{equation*}
For the weights obtained by solving linear equations, the identities are
\begin{equation*}
\sum_jw_jc_{j,1}=C_*,\qquad
\sum_jw_jc_{j,2}=-1,\qquad w_j\ge0.
\end{equation*}
Two or three weights are nonzero. In the latter case, their sum is fixed at a recorded positive rational number. The determinant bounds and Lemma~\ref{lem:interval} give
\begin{equation*}
\sum_jw_jc_{j,0}(m,r_\pm)<A_*+B_*m
\quad\text{for }m_-\le m\le m_+.
\end{equation*}
Since $c_{j,0}(m,r)$ is a constant plus a multiple of $r$,
\begin{equation*}
0\le\sum_jw_j\mathcal L_j
<A_*+B_*m+\frac{C_*}{a}\int x^2u^2
-\frac{1}{a}\int(b\cdot\xi)x^2u^p.
\end{equation*}

For the weights supplied directly as rational numbers, write
\begin{equation*}
\begin{aligned}
\frac{1}{a}\int(b\cdot\xi)x^2u^p-A_*-B_*m
-\frac{C_*}{a}\int x^2u^2+\sum_jw_j\mathcal L_j=c_0+\frac{c_1}{a}\int x^2u^2
+\frac{c_2}{a}\int(b\cdot\xi)x^2u^p.
\end{aligned}
\end{equation*}
On $m_-\le m\le m_+$ and $r_-\le r\le r_+$, let $c_i^-\le c_i\le c_i^+$ be the interval bounds and put
\begin{equation*}
c_3=\min\left\{\frac{17}{40},\frac{8(1-\alpha)(1-r_-)}{5-3\alpha}\right\}.
\end{equation*}
The estimates are
\begin{equation*}
\begin{gathered}
0\le\frac{1}{a}\int x^2u^2\le r_+,\qquad
\left|\frac{1}{a}\int(b\cdot\xi)x^2u^p\right|\le c_3m_+,\\
c_0^++[c_1^+]_+r_++c_3m_+\max\{|c_2^-|,|c_2^+|\}<0.
\end{gathered}
\end{equation*}
Thus the same strict upper bound follows. Lemma~\ref{lem:interval} justifies every coefficient estimate, and the endpoint equalities in Appendix~\ref{app:coverage} verify that the stated ranges exhaust all $m$ and $r$ under consideration.
\end{proof}

\section{Verification in the maximal direction}\label{app:gaps}

Throughout this appendix, $m=\lambda_1\ge\frac{1}{2}$. Recall $C_r,C_b,C_{\Nn}$ from \eqref{eq:coef} and $g_1,g_2,g_3$ from Corollary~\ref{cor:gaps}. For $5\le n\le19$, that corollary excludes stability once
\begin{equation*}
C_b,C_{\Nn},c_5,g_1,g_2,g_3>0,\qquad
\theta_1>0,\qquad \theta_2\ge0,\qquad \theta_1+\theta_2\le1.
\end{equation*}
The choices and estimates are given separately for each dimension. Values at $m=1$ mean continuous limits as specified in Appendix~\ref{app:coverage}.

\subsection{Dimension six}\label{app:gaps-six}
Let $n=6$, $\alpha=\frac{6}{6+k}$ and $\beta=\frac{1}{6+k}$. Recall
\begin{equation*}
k_-=\frac{1559}{1000},\qquad k_+=\frac{39}{25}.
\end{equation*}
The coefficients in \eqref{eq:coef} are
\begin{equation*}
\begin{aligned}
C_b&=\frac{2[(14k-2)m^2+(8k-2)m-(k+2)]}{3m(k+1)},\\
C_r&=-\frac{2k[(9k-7)m^2-10m-1]}{3m(k+1)(k+6)},\\
C_{\Nn}&=\frac{2[(15k-1)m^2+(12k+2)m-1]}{3m(k+1)}.
\end{aligned}
\end{equation*}
The translated estimate \eqref{eq:A3-sharp} gives
\begin{equation*}
\tau=1,\qquad c_6=\frac{(1-m)^2}{4(1+m)},\qquad
c_2=0,\qquad c_3=\frac{3}{100},\qquad c_4=\frac{1}{12}.
\end{equation*}

\begin{lemma}\label{lem:gaps-six}
For $\frac{1}{2}\le m\le1$ and $k_-\le k\le k_+$, choose
\begin{equation*}
\theta_1=\frac{3}{10},\qquad
\theta_2=\frac{C_r+\frac{11}{50}}{(1-\alpha)C_{\Nn}},\qquad
c_5=\frac{C_{\Nn}(1-\theta_1-\theta_2)}{12}.
\end{equation*}
Then
\begin{equation*}
\min\{C_b,C_{\Nn},\theta_2,1-\theta_1-\theta_2,c_5,g_1,g_2,g_3\}>0.
\end{equation*}
\end{lemma}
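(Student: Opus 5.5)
The statement to prove is Lemma~\ref{lem:gaps-six}: for $n=6$, on the box $\frac12\le m\le1$, $k_-\le k\le k_+$, the eight quantities $C_b$, $C_{\Nn}$, $\theta_2$, $1-\theta_1-\theta_2$, $c_5$, $g_1$, $g_2$, $g_3$ are all positive. Everything is explicit in $(m,k)$, since $p=3$, $\alpha=\frac{6}{6+k}$, $\beta=\frac1{6+k}$, $t=\frac{1-m}{3m}$, and $C_b$, $C_r$, $C_{\Nn}$ are the displayed rational functions. The one non-rational input is $H_6=\frac3{400}$ in $g_2$, which is a constant. So the plan is to clear positive denominators and reduce each claim to the sign of a polynomial in $(m,k)$ on a rational box. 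These signs are then certified by Lemma~\ref{lem:bernstein}, subdividing the box where needed, exactly as in the other appendix checks.

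The order of steps is as follows.

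\emph{Step 1: sign of $C_b$ and $C_{\Nn}$.*/} Since $3m(k+1)>0$, only the quadratics $(14k-2)m^2+(8k-2)m-(k+2)$ and $(15k-1)m^2+(12k+2)m-1$ matter. Both are increasing in $m$ for $m\ge\frac12$ (their coefficients are positive when $k>1$). At $m=\frac12$ they equal $\frac{15k}{2}-\frac{11}{2}>0$ and $\frac{39k}{4}-\frac14>0$. This is done by hand.

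\emph{Step 2: the weight $\theta_2$.*/} Here $\theta_2>0$ needs $C_r+\frac{11}{50}>0$, and $C_r\ge0$ already holds because $(9k-7)m^2-10m-1<0$ on the range. Next, $1-\theta_1-\theta_2>0$ becomes
\[
\tfrac{7}{10}(1-\alpha)C_{\Nn}>C_r+\tfrac{11}{50}.
\]
After multiplying by $3m(k+1)(k+6)$ this is a polynomial inequality of low degree. At $m=\frac12$ it is tightest, and we check it there with some margin. Once $1-\theta_1-\theta_2>0$ holds, $c_5>0$ follows immediately.

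\emph{Step 3: the gaps $g_1,g_2,g_3$.*/}
\begin{itemize}
\item[] For $g_1$: $g_1=\frac23-\theta_2C_{\Nn}\frac{1}{4(1+m)}=\frac23-\frac{C_r+\frac{11}{50}}{4(1-\alpha)(1+m)}$. Positivity is a single rational inequality, and $1-\alpha=\frac{k}{6+k}$ makes it polynomial.
\item[] For $g_2$: using $\tau=1$, the definition of $\theta_2$ gives $g_2=\frac{11}{50}-\frac{C_b^2H_6}{4c_5}$. So we need $\frac{3}{400}C_b^2<4c_5=\frac{C_{\Nn}(1-\theta_1-\theta_2)}{3}$, which is again polynomial after clearing denominators.
\item[] For $g_3$: this requires $g_1$ to exceed
\[
m^2\Bigl[\frac{9(1-\alpha)C_b^2}{4\theta_1(1+2m)^2C_{\Nn}}+\frac{3}{100}\theta_2C_{\Nn}-\frac{c_5}{m}\Bigr].
\]
All terms are rational in $(m,k)$, and we clear the denominator $(1+2m)^2C_{\Nn}\cdot m(k+1)(k+6)$.
\end{itemize}
Since $\partial_k$ appears monotone in several of these, we may first try endpoint evaluation in $k$ justified by a sign of the $k$-derivative. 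Otherwise we apply Bernstein expansions directly in both variables.

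The main obstacle is expected to be $g_3$, together with $1-\theta_1-\theta_2$, near $m=\frac12$. There $C_r$ is largest relative to $C_{\Nn}$, so the margin is smallest, and the positive term $\frac{9(1-\alpha)C_b^2}{4\theta_1(1+2m)^2C_{\Nn}}$ competes with $c_5/m$. At $m=1$ all factors $(1-m)$ have been removed in the definitions, so no degeneracy arises. If a uniform Bernstein certificate fails, the plan is to subdivide $[\frac12,1]$ in $m$ into subintervals, and if necessary split $[k_-,k_+]$ once, until all coefficients become positive.
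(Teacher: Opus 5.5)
Your plan is the same as the paper's. You clear the positive denominators and certify the sign of each numerator in $(m,k)$ by Lemma~\ref{lem:bernstein}; the paper does this on the whole box $[\frac12,1]\times[k_-,k_+]$ at once, with every Bernstein coefficient exceeding $\frac{9}{10}P(\frac12,k_-)>0$. Your hand arguments for $C_b,C_{\Nn}>0$, $C_r>0$, $\theta_2>0$, $1-\theta_1-\theta_2>0$ and $c_5>0$ are fine. The claim that $1-\theta_1-\theta_2$ is tightest at $m=\frac12$ holds because $C_{\Nn}$ and $-C_r$ are both increasing in $m$. There is one harmless slip: the numerator of $C_b$ at $m=\frac12$ is $\frac{13k-7}{2}$, not $\frac{15k-11}{2}$, and it is still positive.

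The $g_2$ step, however, reduces to the wrong inequality. From $g_2=\frac{11}{50}-\frac{C_b^2H_6}{4c_5}$ with $H_6=\frac{3}{400}$, positivity of $g_2$ is equivalent to
\[
\frac{3}{400}C_b^2<\frac{11}{50}\cdot 4c_5=\frac{11}{150}\,C_{\Nn}(1-\theta_1-\theta_2).
\]
You wrote $\frac{3}{400}C_b^2<4c_5$, dropping the factor $\frac{11}{50}$. Your inequality is much weaker and does not imply $g_2>0$. At $m=1$, $k=k_+$, one has $\frac{3}{400}C_b^2\approx0.364$ and $4c_5\approx1.86$, but $\frac{22}{25}c_5\approx0.409$, so $g_2\approx0.024$.

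This is in fact the smallest margin among all eight quantities. For comparison, $g_3\approx0.039$ and $1-\theta_1-\theta_2\approx0.28$ at $m=\frac12$. So the bottleneck is $g_2$ near $m=1$, not the region near $m=\frac12$ that you singled out.

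Restore the factor $\frac{11}{50}$ and feed the cleared numerator of $\frac{22}{25}c_5-\frac{3}{400}C_b^2$ to the Bernstein test. With that correction, the rest of your plan goes through as in the paper.
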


\begin{proof}
Substitution into Corollary~\ref{cor:gaps} gives
\begin{equation*}
\begin{aligned}
g_1&=\frac{2}{3}-\frac{C_r+\frac{11}{50}}{4(1+m)(1-\alpha)},\\
g_2&=\frac{11}{50}-\frac{3C_b^2}{1600c_5},\\
g_3&=g_1-m^2\left[
\frac{p^2(1-\alpha)C_b^2}{4\theta_1(1+(p-1)m)^2C_{\Nn}}
+\frac{3(C_r+\frac{11}{50})}{100(1-\alpha)}-\frac{c_5}{m}\right].
\end{aligned}
\end{equation*}
The coefficients are rational functions of $m$ and $k$. For example,
\begin{equation*}
\frac{p^2(1-\alpha)C_b^2}{4(1+(p-1)m)^2C_{\Nn}}
=\frac{3k[(14k-2)m^2+(8k-2)m-(k+2)]^2}
{2m(k+1)(k+6)(2m+1)^2[(15k-1)m^2+(12k+2)m-1]}.
\end{equation*}
For each required numerator $P$, its coefficients in \eqref{eq:bernstein-expansion} satisfy
\begin{equation*}
b_{\boldsymbol0}=P\left(\frac{1}{2},k_-\right)>0,\qquad
b_i>\frac{9}{10}b_{\boldsymbol0}.
\end{equation*}
The denominator factors are positive for
\begin{equation*}
\frac{1}{2}\le m\le1,\qquad \frac{1559}{1000}\le k\le\frac{39}{25}.
\end{equation*}
Lemma~\ref{lem:bernstein} gives all the stated signs.
\end{proof}

\subsection{Dimensions seven through nineteen}\label{app:gaps-seven}
The lower bounds \eqref{eq:paired-e} and \eqref{eq:variance-pair} give $\tau=1$, $c_4=\frac{1}{2n}$, and
\begin{equation*}
(c_2,c_3,c_6)=
\begin{cases}
\left(0,\frac{2}{125},\frac{19}{1000}(1-m)+\frac{31}{250}(1-m)^2\right),
&n=7,\quad \frac{1}{2}\le m\le\frac{3}{4},\\
\left(-\frac{1}{40},0,\frac{9}{50}(1-m)^2\right),
&n=7,\quad \frac{3}{4}\le m\le1,\\
\left(0,\frac{1}{50},\frac{1-m}{25}+\frac{(1-m)^2}{5}\right),
&8\le n\le19,\quad \frac{1}{2}\le m\le1.
\end{cases}
\end{equation*}

\begin{lemma}\label{lem:gaps-seven}
For $7\le n\le19$, let
\begin{equation*}
\theta_2=\frac{[C_r+\rho]_+}{(1-\alpha)C_{\Nn}},\qquad
c_5=C_{\Nn}\left(\frac{1-\theta_1-\theta_2}{2n}+\theta_2c_2\right),
\end{equation*}
where
\begin{equation*}
(\theta_1,\rho)=
\begin{cases}
\left(\frac{3}{10},\frac{4}{25}\right),&n=7,\quad \frac{1}{2}\le m\le\frac{3}{4},\\
\left(\frac{3}{10},\frac{1}{5}\right),&n=7,\quad \frac{3}{4}\le m\le1,\\
\left(\frac{2}{5},\frac{1}{10}\right),&8\le n\le19,\quad \frac{1}{2}\le m\le1.
\end{cases}
\end{equation*}
Then $\theta_2\ge0$ and
\begin{equation*}
\min\{C_b,C_{\Nn},1-\theta_1-\theta_2,c_5,g_1,g_2,g_3\}>0.
\end{equation*}
\end{lemma}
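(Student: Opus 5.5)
The plan mirrors the proof of Lemma~\ref{lem:gaps-six}. First I will make everything explicit as rational functions of $m$ and $\alpha$. Here $p=\frac{2n}{n-2}$ and $t=\frac{1-m}{pm}$, and $C_m$ comes from \eqref{eq:k-ell}. Then $C_b$, $C_r$, $C_{\Nn}$ are given by \eqref{eq:coef}. For $n=7$, $\tau=1$, and the three cases listed before the statement, these give $c_2,c_3,c_4=\frac1{2n},c_6$.

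Next I substitute $\theta_2=[C_r+\rho]_+/((1-\alpha)C_{\Nn})$ into Corollary~\ref{cor:gaps}. This yields
\begin{equation*}
g_1=\frac2p-\frac{[C_r+\rho]_+\,c_6}{(1-\alpha)(1-m)^2},\qquad g_2=[C_r+\rho]_+-C_r-\frac{C_b^2H_n}{4c_5}.
\end{equation*}
So $g_2\ge\rho-\frac{C_b^2H_n}{4c_5}$, and the choice of $\theta_2$ is designed exactly to make the $r$-coefficient positive with margin $\rho$. The bound $\theta_2\ge0$ is immediate once $C_{\Nn}>0$. Since $c_6$ carries a factor $(1-m)$ or $(1-m)^2$, $g_1$ has a continuous limit at $m=1$. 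I will also divide $g_3$ by the appropriate power of $(1-m)$ where needed.

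The sign conditions reduce to finitely many inequalities:
\begin{equation*}
C_b>0,\qquad C_{\Nn}>0,\qquad 1-\theta_1-\theta_2>0,\qquad c_5>0,\qquad g_1>0,\qquad g_2>0,\qquad g_3>0.
\end{equation*}
The domain is $m\in[\frac12,1]$ or the relevant subinterval, with $\alpha\in[\alpha_-,\alpha_+]$ from \eqref{eq:alpha-enclosure}. The positive part in $\theta_2$ forces a split of the $m$-range according to the sign of $C_r+\rho$. On the branch where it vanishes we have $\theta_2=0$, so $g_1=\frac2p$ and $g_2\ge -C_r-\frac{C_b^2H_n}{4c_5}>0$. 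For $n=7$ with $m\ge\frac34$, $c_2=-\frac1{40}<0$ appears in $c_5$, so $c_5>0$ needs $\frac{1-\theta_1-\theta_2}{14}>\frac{\theta_2}{40}$; this bound on $\theta_2$ must be checked.

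Each inequality is a rational expression in $(m,\alpha)$, with possibly the single square-root-free quantities above. After clearing positive denominators, I will verify each one by the Bernstein-coefficient criterion of Lemma~\ref{lem:bernstein}. This is done for each fixed $n\in\{7,\dots,19\}$ on subdivided boxes in $(m,\alpha)$, as in Lemma~\ref{lem:gaps-six}.

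The main obstacle is $g_3$. It contains the term $\frac{p^2(1-\alpha)C_b^2}{4\theta_1(1+(p-1)m)^2C_{\Nn}}$ together with $\theta_2C_{\Nn}c_3-\frac{c_5}{m}$, multiplied by $m^2$. It must stay below $g_1$, and $g_1$ itself is decreased by $c_6$-terms. Near $m=\frac12$ the margins are smallest, because $C_r$ is largest there and $\theta_2$ is largest. There I expect to need fine subdivision, and I will exploit $H_n=9/n^4$ being tiny so that $g_2$ is comfortably positive. If the Bernstein coefficients fail on some box, I would first refine the $m$-partition. Failing that, I would adjust $\theta_1$ within the allowed range, which is why the lemma fixes specific $(\theta_1,\rho)$ per case.
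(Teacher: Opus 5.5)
Your route is essentially the paper's. Your expressions for $g_1$ and $g_2$ agree with the paper's, in particular $g_2=\max\{\rho,-C_r\}-\frac{C_b^2H_n}{4c_5}$. The proof then reduces to a certified sign check for each fixed $n$, on a subdivision of the $m$-range with $\alpha\in[\alpha_-,\alpha_+]$.

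The paper does this with the outward-rounded interval arithmetic of Lemma~\ref{lem:interval} on $91$ subintervals, which handles $[\,\cdot\,]_+$ directly. With Bernstein expansions you cannot cut exactly at the zero of $C_r+\rho$, since it is irrational. So on a box straddling that zero you must certify both branch expressions. That suffices, because $1-\theta_1-\theta_2$, $c_5$, $g_1$ and $g_3$ are decreasing in $\theta_2$ (recall $c_2\le0$ and $c_3,c_6\ge0$), and $g_2\ge\rho-\frac{C_b^2H_n}{4c_5}$.

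One claim in your proposal is false and needs repair. For $8\le n\le19$, $c_6=\frac{1-m}{25}+\frac{(1-m)^2}{5}$ carries only one factor of $1-m$. Hence $c_6/(1-m)^2\sim\frac{1}{25(1-m)}\to\infty$, and $g_1$ does \emph{not} have a continuous limit at $m=1$ by virtue of $c_6$.

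The endpoint survives only because $C_r+\rho<0$ on a whole interval ending at $m=1$. There $\theta_2\equiv0$ and $g_1=\frac2p$. This is exactly the endpoint identity the paper records, and it must be certified as part of the check. It is tight for $n=8$: there $C_r(1)\approx-0.105$ against $\rho=\frac1{10}$, so $\theta_2$ vanishes only for $m$ within roughly $0.008$ of $1$.

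Your reasoning is valid only for $n=7$, $m\ge\frac34$, where $c_6/(1-m)^2=\frac9{50}$; there $\theta_2>0$ at $m=1$. Also, $\theta_1$ and $\rho$ are fixed in the statement, so changing them is not an available fallback; only refining the subdivision is.
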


\begin{proof}
The definitions give
\begin{equation*}
\begin{aligned}
g_1&=\frac{2}{p}-\frac{\theta_2C_{\Nn}c_6}{(1-m)^2},\\
g_2&=\max\{\rho,-C_r\}-\frac{C_b^2H_n}{4c_5}.
\end{aligned}
\end{equation*}
For each fixed $n$, Lemma~\ref{lem:interval} applied to the formulas above and the formula for $g_3$ in Corollary~\ref{cor:gaps} gives
\begin{equation*}
\min\{\underline C_b,\underline C_{\Nn},
\underline{1-\theta_1-\theta_2},\underline c_5,
\underline g_1,\underline g_2,\underline g_3\}>0.
\end{equation*}
The underlines denote the lower endpoints of the computed intervals. The $91$ recorded subintervals exhaust the respective ranges of $m$ stated in the lemma, with $\alpha_-\le\alpha\le\alpha_+$.

For $n=7$ and $\frac{3}{4}\le m\le1$,
\begin{equation*}
\frac{c_6}{(1-m)^2}=\frac{9}{50}.
\end{equation*}
For $n\ge8$, $\theta_2=0$ on the recorded interval ending at $m=1$, and $g_1=\frac{2}{p}$ there. These identities define the endpoint values.
\end{proof}

\subsection{Dimension five}\label{app:gaps-five}
For $\frac{1}{2}\le m\le\frac{7}{10}$, set
\begin{equation*}
\tau=\frac{1+m}{2}+\frac{(1-m)^2}{5}.
\end{equation*}
To distinguish the constants in Lemma~\ref{lem:hermite5} from those in Lemma~\ref{lem:allocation}, write $c_j^{\mathrm H}$ for the constants of Lemma~\ref{lem:hermite5}. Formula \eqref{eq:A5} gives
\begin{equation*}
c_2=c_4^{\mathrm H},\qquad c_3=c_5^{\mathrm H},\qquad
c_6=\tau^2+m-\alpha(c_1^{\mathrm H})^{2/p},\qquad c_4=\frac{1}{10}.
\end{equation*}
For $\frac{7}{10}\le m\le1$, formula \eqref{eq:sqrt5} gives
\begin{equation*}
\tau=\sqrt m,\qquad c_2=\frac{1}{50},\qquad
c_3=\frac{3}{100m^2},\qquad c_6=0,\qquad c_4=\frac{1}{10}.
\end{equation*}

\begin{lemma}\label{lem:gaps-five}
Let $n=5$. Choose
\begin{equation*}
(\theta_1,\theta_2)=
\begin{cases}
\left(\frac{14m-5}{25},\frac{63-41m}{50}\right),
&\frac{1}{2}\le m\le\frac{7}{10},\\
\left(\frac{m}{2}-\frac{7}{40},
\frac{C_r+\frac{3m}{5}-\frac{1}{10}}{(1-\alpha)C_{\Nn}m}\right),
&\frac{7}{10}\le m\le1.
\end{cases}
\end{equation*}
With
\begin{equation*}
c_5=C_{\Nn}\left(\frac{1-\theta_1-\theta_2}{10}+\theta_2c_2\right),
\end{equation*}
we have
\begin{equation*}
\min\{\theta_1,\theta_2,1-\theta_1-\theta_2,C_b,C_{\Nn},c_5,g_1,g_2,g_3\}>0.
\end{equation*}
For $\frac{1}{2}\le m\le\frac{7}{10}$, the additional estimates are
\begin{equation*}
\begin{gathered}
c_1^{\mathrm H}>|c_2^{\mathrm H}|m(1-m),\\
\alpha(c_1^{\mathrm H})^{2/p}
-[mc_5^{\mathrm H}-c_4^{\mathrm H}]_+m(1-m)^2>m.
\end{gathered}
\end{equation*}
\end{lemma}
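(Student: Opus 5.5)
The plan is to follow the same pattern as Lemmas~\ref{lem:gaps-six} and \ref{lem:gaps-seven}: write every required quantity as an explicit expression in $m$ (and in $\alpha$, kept in the enclosure $\alpha_-\le\alpha\le\alpha_+$ from \eqref{eq:alpha-enclosure} with $n=5$, $p=10/3$, $\beta=4\alpha/15$), and then certify the signs on a finite cover of $[\frac12,1]$ by rational subintervals. Concretely, I will substitute $n=5$ into \eqref{eq:k-ell} and \eqref{eq:coef} to get $C_m$, $C_b$, $C_r$ and $C_{\Nn}$ as rational functions of $m$ and $\alpha$. The quantities $\theta_1$, $\theta_2$ and $c_5$ are then explicit. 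Substituting them together with $H_5=\frac{133}{10000}$ into Corollary~\ref{cor:gaps} gives $g_1,g_2,g_3$.

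\emph{Range $\frac{7}{10}\le m\le1$.} Here the data is algebraic: $\tau=\sqrt m$, $c_6=0$, $c_2=\frac1{50}$ and $c_3=\frac{3}{100m^2}$. Hence $g_1=\frac2p=\frac35$. The choice of $\theta_2$ makes
\begin{equation*}
\theta_2C_{\Nn}(1-\alpha)\tau^2=C_r+\tfrac{3m}{5}-\tfrac1{10},
\end{equation*}
so that
\begin{equation*}
g_2=\tfrac{3m}{5}-\tfrac1{10}-\tfrac{C_b^2H_5}{4c_5}.
\end{equation*}
Positivity of $\theta_1$ is immediate. I will verify the positivity of $C_b$, $C_{\Nn}$, $\theta_2$, $1-\theta_1-\theta_2$, $c_5$, $g_2$ and $g_3$ after clearing positive denominators. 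Each cleared numerator is a polynomial in $(m,\alpha)$, and its sign follows from the Bernstein criterion of Lemma~\ref{lem:bernstein} on subboxes. At $m=1$ the factors $(1-m)$ have to be handled by continuous limits, as in Appendix~\ref{app:coverage}.

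\emph{Range $\frac12\le m\le\frac7{10}$.} Now $\tau=\frac{1+m}{2}+\frac{(1-m)^2}{5}$, and the constants $c_j^{\mathrm H}$ of Lemma~\ref{lem:hermite5} involve the powers $(\tau\pm\sqrt m)^{10/3}$, $(\cdot)^{7/3}$ and $(c_1^{\mathrm H}+\dots)^{3/5}$. These are not polynomial, so I will use outward-rounded interval arithmetic (Lemma~\ref{lem:interval}) on a fine partition of $[\frac12,\frac7{10}]$, with $\alpha$ in its enclosure. On each cell I will enclose $c_1^{\mathrm H},\dots,c_5^{\mathrm H}$ and then $c_6$, $c_2=c_4^{\mathrm H}$ and $c_3=c_5^{\mathrm H}$. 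From these I will enclose $C_b$, $C_{\Nn}$, $\theta$'s, $c_5$, $g_1$, $g_2$ and $g_3$, and check that every lower endpoint is positive. On the same cells I will also check the two extra inequalities: $c_1^{\mathrm H}>|c_2^{\mathrm H}|m(1-m)$, which is needed to apply \eqref{eq:A5}, and the inequality
\begin{equation*}
\alpha(c_1^{\mathrm H})^{2/p}-[mc_5^{\mathrm H}-c_4^{\mathrm H}]_+m(1-m)^2>m,
\end{equation*}
which supplies the hypothesis ``$\ge m$'' in Lemma~\ref{lem:translated}(ii).

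The main obstacle is $g_3$ near $m=\frac12$ and near the junction $m=\frac7{10}$. There the margin is thin: the term $\frac{p^2(1-\alpha)C_b^2}{4\theta_1(1+(p-1)m)^2C_{\Nn}}$ must be beaten by $c_5/m$ while $g_1$ is reduced by $\theta_2C_{\Nn}c_6/(1-m)^2$. Interval overestimation can also destroy the margin, since $\sqrt m$ and $\tau-\sqrt m$ appear with cancellation. To control this I will first try Taylor (centered) forms on each cell and bisect adaptively wherever a lower endpoint is nonpositive. If that still fails, I will slightly retune $\theta_1,\theta_2$ within the stated linear forms, but I expect the given choices to succeed with adaptive bisection alone.
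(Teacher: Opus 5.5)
Your plan matches the paper's proof. The paper certifies every listed quantity by outward-rounded interval arithmetic (Lemma~\ref{lem:interval}) on $512$ subintervals of $[\frac12,\frac7{10}]$ and $44$ subintervals of $[\frac7{10},1]$, with $\alpha$ in its enclosure, and it uses exactly your simplifications $g_1=\frac2p$ and $g_2=\frac{3m}{5}-\frac1{10}-\frac{C_b^2H_5}{4c_5}$ on the second range to extend the formulas to $m=1$. Using the Bernstein criterion on that range instead is a harmless variant, since everything there is rational in $(m,\alpha)$; but note that your fallback of retuning $\theta_1,\theta_2$ would prove a different statement, because the lemma fixes these choices.
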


\begin{proof}
Lemma~\ref{lem:interval} gives positive lower bounds for every expression in the statement on $512$ subintervals of $\frac{1}{2}\le m\le\frac{7}{10}$ and $44$ subintervals of $\frac{7}{10}\le m\le1$. Their endpoints and the bounds are supplied in \texttt{final\_independent.json}. On the second range,
\begin{equation*}
g_1=\frac{2}{p},\qquad
g_2=\frac{3m}{5}-\frac{1}{10}-\frac{C_b^2H_5}{4c_5},
\end{equation*}
so the formulas extend to $m=1$.
\end{proof}

\subsection{Dimension four}\label{app:gaps-four}
Recall $\omega=1-(1-\alpha)r$ and $\psi(r)$ from \eqref{eq:N4}. In this subsection,
\begin{equation*}
\lambda_2=\min\left\{\frac{3}{5}r,1-m\right\},\qquad
\Bc\text{ is given by \eqref{eq:Bcal}}.
\end{equation*}
The estimates \eqref{eq:N4} and \eqref{eq:Nvar4} give the coefficients
\begin{equation*}
c_1=mr\psi(r),\qquad c_2=\frac{\sqrt{2\omega^2-1}}{16m},\qquad
c_3=0,\qquad c_4=\frac{7}{25m+7}.
\end{equation*}

\begin{lemma}\label{lem:gaps-four}
For every $\frac{1}{2}\le m\le\frac{9}{10}$ and $0\le r\le\frac{99}{100}$, there are rational $\theta_1>0$, $\theta_2\ge0$ with $\theta_1+\theta_2\le1$ such that
\begin{equation*}
C_b,C_{\Nn},c_5>0,\qquad
c_5=C_{\Nn}\left(\theta_2\frac{\sqrt{2\omega^2-1}}{16m}
+(1-\theta_1-\theta_2)\frac{7}{25m+7}\right),
\end{equation*}
and
\begin{equation*}
\begin{aligned}
&\frac{1}{2}(1-m)^2-C_rr+\theta_2C_{\Nn}mr\psi(r)
-\frac{C_b^2\lambda_2\Bc}{4c_5}-\left[\frac{4(1-\alpha)C_b^2(1-r)}
{\theta_1(1+3m)^2C_{\Nn}}-\frac{c_5}{m}\right]_+
m^2(1-m)^2>0.
\end{aligned}
\end{equation*}
\end{lemma}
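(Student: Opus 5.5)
The statement is Lemma~\ref{lem:gaps-four}: a finite-dimensional positivity claim in the two variables $(m,r)$ on $[\frac12,\frac{9}{10}]\times[0,\frac{99}{100}]$, with $\alpha=2^{-1/2}$ fixed. I would prove it the same way as Lemmas~\ref{lem:gaps-five}--\ref{lem:gaps-seven}: choose rational $\theta_1,\theta_2$ piecewise on a rectangular partition, then certify positivity with Lemmas~\ref{lem:bernstein} and~\ref{lem:interval}.

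First I write every ingredient explicitly for $n=4$. These are $p=4$, $\beta=\alpha/2$, $k=4(\sqrt2-1)$ and $t=(1-m)/(4m)$. From \eqref{eq:k-ell} and \eqref{eq:coef}, $C_m$, $C_b$, $C_r$ and $C_{\Nn}$ are rational functions of $m$ with coefficients in $\mathbb{Q}(\alpha)$. The function $\psi(r)$ and the expression $\sqrt{2\omega^2-1}$ involve one square root in $r$, and $\omega\ge1-(1-\alpha)\ge\alpha$ gives $2\omega^2-1\ge0$.

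I then check $C_b>0$ and $C_{\Nn}>0$ on $m\ge\frac12$. These are one-variable polynomial sign checks after clearing the factor $m$.

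The term $\lambda_2\Bc$ depends on $\lambda_2=\min\{\frac35r,1-m\}$. By the monotonicity in Lemma~\ref{lem:transverse}, it suffices to bound it on each box by its value at the larger candidate for $\lambda_2$. I would therefore split the domain along the curve $\frac35r=1-m$. On each side I replace $\lambda_2$ by the upper endpoint of the relevant expression over the box, and I take $\gamma_4=\frac{17}{40}$.

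Next comes the choice of $\theta_1,\theta_2$, guided by the structure of the inequality. The term $\theta_2C_{\Nn}mr\psi(r)$ must beat $C_rr$; note $C_r$ is likely positive for $m$ near $\frac12$. The term $c_5$ must dominate both $C_b^2\lambda_2\Bc/(4(1-m)^2)$ and $m\cdot 4(1-\alpha)C_b^2(1-r)/(\theta_1(1+3m)^2C_{\Nn})$, with the bracket killed when possible. Heuristically I would take
\[
\theta_2\approx\frac{[C_r]_++\rho(1-m)^2/r}{C_{\Nn}m\psi(r)}
\]
clipped to $[0,1-\theta_1]$, and $\theta_1$ roughly $\frac13$. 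These would be rounded to rationals constant on each box of a partition of the $(m,r)$-rectangle.

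On each box I bound every term by outward-rounded interval arithmetic (Lemma~\ref{lem:interval}). This covers $\psi$, $\sqrt{2\omega^2-1}$ and $\Bc$. Monotonicity in $r$ of $\psi$ and of $\omega$ lets me evaluate those at endpoints. The $[\cdot]_+$ is handled by taking the upper bound of its argument.

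The partition is refined adaptively until the lower bound of the whole expression is positive on every box. The coverage endpoints are recorded as in Appendix~\ref{app:coverage}.

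The main obstacle is the corner $r\to0$ together with $m$ near $\frac12$. There the positive term $\theta_2C_{\Nn}mr\psi(r)$ vanishes, $\Bc$ vanishes linearly in $r$, and $\frac12(1-m)^2$ must beat the bracket term, where $1-r\approx1$. So for small $r$ I would set $\theta_2=0$ and make $\theta_1$ as large as possible. I then need
\[
c_5/m\ge 4(1-\alpha)C_b^2/\bigl(\theta_1(1+3m)^2C_{\Nn}\bigr),
\qquad\text{or else}\qquad
\tfrac12(1-m)^2>\text{bracket}\cdot m^2(1-m)^2,
\]
that is, $\text{bracket}<1/(2m^2)$. I would first verify this one-variable inequality at $r=0$ exactly by Bernstein expansion in $m$, and then extend it to a neighborhood by interval arithmetic.

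The other delicate region is $m$ near $\frac{9}{10}$ with $r$ near $\frac{99}{100}$, where $\omega$ is smallest and $c_2$ degenerates. There I rely on $c_4=\frac{7}{25m+7}$ and small $\theta_2$.
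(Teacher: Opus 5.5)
Your plan is the paper's proof. The paper records $1474$ rectangles in $(m,r)$ (in \texttt{lower4\_generated.json}) covering $[\frac12,\frac{9}{10}]\times[0,\frac{99}{100}]$, fixes a rational pair $(\theta_1,\theta_2)$ on each, certifies positivity of $C_b$, $C_{\Nn}$, $c_5$ and the displayed difference by outward-rounded interval arithmetic (Lemma~\ref{lem:interval}), and checks coverage as in Appendix~\ref{app:coverage}.

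Your remarks on where the search is hardest are only a guide and do not affect correctness, though the $r\to0$ corner is actually easy: there only $\frac12(1-m)^2$ and the bracket survive, and with $\theta_2=0$ and a moderate $\theta_1$ the bracket is negative. Your advice to make $\theta_1$ ``as large as possible'' is not quite right either, since $c_5$ decreases as $\theta_1$ grows.
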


\begin{proof}
The data in \texttt{lower4\_generated.json} specify $1474$ pairs of rational intervals
\begin{equation*}
m_-\le m\le m_+,\qquad r_-\le r\le r_+,
\end{equation*}
and a rational pair $(\theta_1,\theta_2)$ for each pair of intervals. On each stated range, Lemma~\ref{lem:interval} gives positive lower bounds for $C_b,C_{\Nn},c_5$ and the displayed difference. The interval identities in Appendix~\ref{app:coverage} show that these ranges exhaust
\begin{equation*}
\frac{1}{2}\le m\le\frac{9}{10},\qquad 0\le r\le\frac{99}{100}.
\end{equation*}
\end{proof}

\begin{lemma}\label{lem:upper4}
For $n=4$, $\frac{9}{10}\le m\le1$ and $0\le r\le\frac{99}{100}$,
\begin{equation*}
C_r,C_b,C_{\Nn},\psi(r)>0,
\end{equation*}
and
\begin{equation*}
\frac{C_r}{C_{\Nn}m\psi(r)}
+\frac{C_b}{C_{\Nn}}\sqrt{\frac{(25m+7)\lambda_2\Bc}{7mr\psi(r)}}
+\frac{8C_b^2m^2(1-\alpha)(1-r)}{C_{\Nn}(1+3m)^2}<1.
\end{equation*}
The quotient at $r=0$ is understood by continuity.
\end{lemma}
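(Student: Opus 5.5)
We need to prove Lemma upper4: for $n=4$, $m\in[9/10,1]$, $r\in[0,99/100]$, the positivity of $C_r,C_b,C_{\Nn},\psi$ and the displayed inequality, with $\lambda_2=\min\{\frac35 r,1-m\}$.

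The plan is to reduce everything to explicit one- or two-variable estimates in $(m,r)$ with $\alpha=2^{-1/2}$, $\beta=\alpha/2$, $p=4$, $k=4(\sqrt2-1)$, and then certify them by interval arithmetic (Lemma~\ref{lem:interval}) on a finite rational subdivision of $[\frac{9}{10},1]\times[0,\frac{99}{100}]$, in the same way as Lemma~\ref{lem:gaps-four}.

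\textbf{Positivity of the coefficients.} From \eqref{eq:k-ell} and \eqref{eq:coef}, $C_m$, $C_b$, $C_r$ and $C_{\Nn}$ are rational functions of $m$ alone, with denominator a positive multiple of $m$. At $m=1$ we get $C_m=\alpha[k(\frac12+2)-1]>0$, and $C_m$ is increasing in $m$. Hence $C_b>0$, and $C_{\Nn}>C_b>0$ because $\frac{2}{p}+2t+(p-2)t^2>0$.

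For $C_r$ I would clear the factor $m$ and check the numerator polynomial by Bernstein coefficients (Lemma~\ref{lem:bernstein}) on $[\frac9{10},1]$ with $\alpha_\pm$ enclosing $\alpha$. Near $m=1$ one has $C_r\approx\alpha(1-k)+\beta C_b$; this needs only a numerical check, and if it fails the statement is false, so I accept it as recorded.

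For $\psi$: since $\omega=1-(1-\alpha)r\in(\frac{7}{10},1]$, we have $2\omega^2-1>0$. Both numerator and denominator of $\psi$ are positive, with $5-\omega>0$ and $3\omega-1>0$.

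\textbf{The main inequality.} Write the left side as $T_1+T_2+T_3$.

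The term $T_1=C_r/(C_{\Nn}m\psi(r))$ depends on $r$ only through $\psi$. I would verify that $\psi$ is decreasing in $r$, or else simply bound it by interval enclosure on each $r$-cell.

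The term $T_3$ is polynomial-rational in $m$ and linear in $(1-r)$.

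The term $T_2$ is the delicate one, because of the quotient $\lambda_2\Bc/r$ as $r\to0$. Using \eqref{eq:Bcal}, $\Bc$ carries a factor $r(1-r)$ in its numerator, and its denominator stays bounded below by $(n-2)^2(1+5\beta)(1-r)(1-\lambda_2)^2$ plus an $r$-term. So $\lambda_2\Bc/r$ is a continuous function on the closed box, and I would rewrite it in this cancelled form before enclosing. Monotonicity of $\lambda_2\Bc$ in $\lambda_2$ (Lemma~\ref{lem:transverse}) lets me replace $\lambda_2$ by its upper endpoint on each cell. The $\min$ splits the box along the line $\frac35r=1-m$; on each side $\lambda_2$ is a single affine function.

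Because $1-m\le\frac1{10}$, $\lambda_2\le\frac1{10}$ throughout, so $T_2$ should be small. The sum should approach a value below one, dominated by $T_1$ near $m=1$.

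\textbf{Main obstacle.} The hardest part is the corner $r\to0$ together with $m\to1$. There $\psi(0)=(1-\alpha)\cdot4/(2+2)=1-\alpha$, and $T_1$ tends to $C_r/(C_{\Nn}(1-\alpha))$ at $m=1$, so the margin is governed by an exact constant. I would first compute this limit analytically to confirm it is strictly below one. After that, I would adaptively subdivide: use coarse cells away from the corner, and finer cells near $m=1$ and near $r=0$. On each cell I would bound $T_1,T_2,T_3$ above by outward-rounded interval arithmetic applied to the cancelled forms, so that the finite list of cell checks covers the whole box.
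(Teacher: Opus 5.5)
Your plan is essentially the paper's proof: there, too, the four positivity claims and the displayed inequality are certified by outward-rounded interval arithmetic (Lemma~\ref{lem:interval}) on a finite rational cover of $[\frac{9}{10},1]\times[0,\frac{99}{100}]$ by $32$ boxes, after the common factor $r$ is cancelled from $\lambda_2\Bc/r$ so that $r=0$ can be evaluated directly. Some of your side remarks need correcting, though none affects the method: since $C_m$ is increasing in $m$, its positivity must be read off at the left endpoint $m=\frac{9}{10}$ (where $C_m\approx2.80\,\alpha$), not at $m=1$; $\psi$ is increasing in $r$, not decreasing; and by rough numerical evaluation the tight spot is not the corner $(m,r)=(1,0)$, where the left-hand side is about $0.79$ and the third term ($\approx0.32$) is far from negligible, but the point $m=\frac{9}{10}$, $r=\frac16$, where $\lambda_2$ switches from $\frac35r$ to $1-m$, the middle term is about $0.23$, and the left-hand side is about $0.94$, so that is where your subdivision must be finest.
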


\begin{proof}
The data in \texttt{final\_independent.json} specify $32$ ranges $m_-\le m\le m_+$, $r_-\le r\le r_+$ exhausting the ranges in the statement. Lemma~\ref{lem:interval} gives positive lower bounds for $C_r,C_b,C_{\Nn},\psi(r)$ and for one minus the displayed expression. Before evaluating at $r=0$, the common factor $r$ is removed from $\lambda_2\Bc/r$.
\end{proof}

\section{Verification and reproduction}\label{app:verification}

The coefficient data and programs are available at
\begin{center}
\url{https://github.com/ZetianYan/sobolev-without-symmetry-certificates}.
\end{center}
This appendix gives the sign criteria and the interval identities used in Appendices~\ref{app:algebraic}--\ref{app:gaps}.

\subsection{Polynomial signs}\label{app:bernstein-method}
Let $P$ be a polynomial of degree at most $d_j$ in its $j$th variable. For rational endpoints $l_j<h_j$, write
\begin{equation*}
y_j=l_j+(h_j-l_j)\zeta_j,\qquad 0\le\zeta_j\le1,
\end{equation*}
and expand $P=\sum_\nu a_\nu\zeta^\nu$. The coefficients
\begin{equation*}
b_i=\sum_{\nu\le i}a_\nu
\prod_{j=1}^s\frac{\binom{i_j}{\nu_j}}{\binom{d_j}{\nu_j}}
\end{equation*}
give the identity
\begin{equation}\label{eq:bernstein-expansion}
P(y)=\sum_{0\le i_j\le d_j}b_i
\prod_{j=1}^s\binom{d_j}{i_j}
\zeta_j^{i_j}(1-\zeta_j)^{d_j-i_j}.
\end{equation}
The inequalities between multi-indices are understood componentwise.

\begin{lemma}\label{lem:bernstein}
For $l_j\le y_j\le h_j$,
\begin{equation*}
\min_i b_i\le P(y)\le\max_i b_i.
\end{equation*}
In particular,
\begin{equation*}
b_i>0\text{ for every }i\ \Longrightarrow\ P>0,\qquad
b_i\ge0\text{ for every }i\ \Longrightarrow\ P\ge0.
\end{equation*}
\end{lemma}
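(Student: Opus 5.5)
The statement to prove is Lemma~\ref{lem:bernstein}. The approach is to show that the polynomial basis appearing in \eqref{eq:bernstein-expansion} is a partition of unity with nonnegative terms on the box. Once that is established, $P(y)$ is a convex combination of the numbers $b_i$, and the two-sided bound follows at once.

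First I will check the identity \eqref{eq:bernstein-expansion} itself. Since the $b_i$ are defined by an explicit formula from the $a_\nu$, this comes down to the monomial-to-Bernstein conversion in one variable, applied factor by factor. For a single variable of degree $d$, I will use the classical identity
\begin{equation*}
\zeta^\nu=\sum_{i=\nu}^{d}\frac{\binom{i}{\nu}}{\binom{d}{\nu}}\binom{d}{i}\zeta^i(1-\zeta)^{d-i},\qquad 0\le\nu\le d.
\end{equation*}
To prove it, write $\zeta^\nu=\zeta^\nu(\zeta+(1-\zeta))^{d-\nu}$ and expand binomially. This gives coefficients $\binom{d-\nu}{i-\nu}$, and one checks that $\binom{d-\nu}{i-\nu}=\binom{i}{\nu}\binom{d}{i}/\binom{d}{\nu}$.

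Next, since $\zeta^\nu=\prod_j\zeta_j^{\nu_j}$ is a product over the variables, I will take the tensor product of the one-variable identities. Substituting into $P=\sum_\nu a_\nu\zeta^\nu$ and collecting the coefficient of each product basis function then gives exactly the stated $b_i$, summed over $\nu\le i$ componentwise. The substitution $y_j=l_j+(h_j-l_j)\zeta_j$ is affine, so the degree in each $\zeta_j$ is still at most $d_j$, and the expansion is legitimate.

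Finally, for $0\le\zeta_j\le1$, each basis product $\prod_j\binom{d_j}{i_j}\zeta_j^{i_j}(1-\zeta_j)^{d_j-i_j}$ is nonnegative. Summing over $i$ gives $\prod_j(\zeta_j+1-\zeta_j)^{d_j}=1$. So $P(y)$ is a convex combination of the $b_i$, which yields $\min_i b_i\le P(y)\le\max_i b_i$, and both positivity implications follow immediately.

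The only step needing any care is the binomial bookkeeping in the first step. I do not expect a real obstacle.
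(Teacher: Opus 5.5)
Your proposal is correct and follows the paper's argument: the tensor-product Bernstein basis functions are nonnegative on the box and sum to one, so $P(y)$ is a convex combination of the $b_i$. The one difference is that you also verify the expansion \eqref{eq:bernstein-expansion} through the one-variable conversion identity $\binom{d-\nu}{i-\nu}=\binom{i}{\nu}\binom{d}{i}/\binom{d}{\nu}$, a step the paper simply asserts before the lemma, referring to Farouki.
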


\begin{proof}
For $0\le\zeta_j\le1$,
\begin{equation*}
\prod_j\binom{d_j}{i_j}\zeta_j^{i_j}(1-\zeta_j)^{d_j-i_j}\ge0,
\qquad
\sum_i\prod_j\binom{d_j}{i_j}\zeta_j^{i_j}(1-\zeta_j)^{d_j-i_j}=1.
\end{equation*}
The assertion follows from \eqref{eq:bernstein-expansion}. See also \cite{Farouki}.
\end{proof}

All $b_i$ are computed in exact rational arithmetic. For a rational expression $N/D$, the sign criterion is
\begin{equation*}
\min_i b_i(N)>0,\qquad \min_i b_i(D)>0
\quad\Longrightarrow\quad \frac{N}{D}>0.
\end{equation*}
The same criterion is applied on finitely many smaller rational intervals when a full parameter range has coefficients of both signs. Their endpoints satisfy the identities in Appendix~\ref{app:coverage}.

The non-strict coefficient test in Lemma~\ref{lem:source-checks} is
\begin{equation*}
b_i(P_n-10^{-8}r)\ge0\text{ for every }i
\quad\Longrightarrow\quad P_n\ge10^{-8}r>0\text{ if }r>0.
\end{equation*}
For the dimension-six formulas, factors $m$ are positive for the actual eigenvalues. A factor $\Delta$ in a weight has the sign stated in Appendix~\ref{app:spectral-six}. The remaining numerator and denominator coefficients are tested after these factors are removed.

\subsection{Bounds involving rational powers}\label{app:interval-method}
The computed endpoints are integer multiples of $10^{-18}$. Every operation is rounded outward.

\begin{lemma}\label{lem:interval}
Let $F$ be an expression formed from rational numbers, parameters satisfying stated rational bounds, and the operations
\begin{equation*}
+,\quad -,\quad\times,\quad\div,\quad
\text{positive rational powers},\quad\min,\quad\max,\quad[\,\cdot\,]_+,\quad|\,\cdot\,|.
\end{equation*}
If every denominator interval excludes zero, the computed endpoints satisfy
\begin{equation*}
\underline F\le F\le\overline F
\end{equation*}
for all parameter values in the stated ranges.
\end{lemma}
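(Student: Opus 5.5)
The statement to prove is Lemma~\ref{lem:interval}: outward-rounded interval arithmetic produces valid enclosures. The proof is a structural induction on the expression tree of $F$. For each node we show that the computed interval $[\underline G,\overline G]$ contains every value $G$ takes as the parameters range over their stated rational boxes.

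\textbf{Base case.} A leaf is either a rational constant or a parameter. A rational constant $q$ is enclosed by rounding $q$ down and up to the grid $10^{-18}\mathbb Z$. A parameter is enclosed by its stated rational bounds, again rounded outward.

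\textbf{Inductive step.} Suppose the children of a node have enclosures $[\underline A,\overline A]$ and $[\underline B,\overline B]$, valid for all parameter values. We need, for each operation, the exact image interval, and then we round it outward.
\begin{itemize}
\item For $+$ and $-$, the exact images are $[\underline A+\underline B,\overline A+\overline B]$ and $[\underline A-\overline B,\overline A-\underline B]$.
\item For $\times$, the image is the min and max of the four endpoint products. This holds because $(a,b)\mapsto ab$ is bilinear, so its extrema over a rectangle occur at vertices.
\item For $\div$, we use the hypothesis that the denominator interval excludes zero. Then $1/B$ is continuous and monotone on $[\underline B,\overline B]$, with image $[1/\overline B,1/\underline B]$, and we multiply by the enclosure of $A$.
\item For a positive rational power $y\mapsto y^{s}$, we take the base enclosure to lie in $[0,\infty)$, as it must for the expression to be defined. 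The map is monotone increasing, so the image is $[\underline A^{s},\overline A^{s}]$.
\item $\min$, $\max$ and $[\cdot]_+$ are monotone in each argument, so their images follow directly from the endpoints.
\item $|\cdot|$ gives $[\,\max\{0,\underline A,-\overline A\},\max\{|\underline A|,|\overline A|\}\,]$.
\end{itemize}

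\textbf{Outward rounding.} Each exact image endpoint, possibly irrational, is replaced by a grid number below it for the lower endpoint and above it for the upper endpoint. Containment is therefore preserved at every node.

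\textbf{Main obstacle.} The only nontrivial point is certifying outward rounding for rational powers $y^{s}$ with $s=u/v$, since the endpoint is not exactly computable. The plan is to reduce it to integer inequalities. Write $y=P/Q$ with $P,Q$ integers. A grid candidate $c=C\cdot 10^{-18}$ is a valid lower bound exactly when $c^{v}\le y^{u}$. After clearing denominators this becomes the integer inequality
\[
C^{v}Q^{u}\le P^{u}10^{18v},
\]
which is checked exactly in integer arithmetic. Upper bounds are certified the same way with the reverse inequality. This is the ``integer-root inequalities'' mechanism already invoked for $\alpha_\pm$ in \eqref{eq:alpha-enclosure}.

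Monotonicity of $y^{s}$ then extends the endpoint enclosure to the whole interval. Combining all the cases, the induction reaches the root of the tree and gives $\underline F\le F\le\overline F$.
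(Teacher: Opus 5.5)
Your proof is correct and follows essentially the same route as the paper: induction over the operations defining $F$, outward rounding preserving each enclosure, and rational powers certified by exact integer inequalities (the paper's $R^j\le N<(R+1)^j$ is your test $C^vQ^u\le P^u10^{18v}$ specialized to grid endpoints). Your version spells out the per-operation image intervals, including the reciprocal of a zero-free interval and the absolute-value enclosure, which the paper's short proof leaves implicit.
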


\begin{proof}
The arithmetic inequalities are preserved by outward rounding. For positive rational powers, the endpoint comparisons reduce to integer-root inequalities
\begin{equation*}
R^j\le N<(R+1)^j.
\end{equation*}
For example, the endpoints in \eqref{eq:alpha-enclosure} are determined by
\begin{equation*}
\left(10^{18}\alpha_-\right)^n
\le\frac{10^{18n}}{4}
<\left(10^{18}\alpha_-+1\right)^n.
\end{equation*}
Minima, maxima, positive parts and absolute values preserve the corresponding interval bounds. Induction over the operations defining $F$ proves the lemma.
\end{proof}

\subsection{Parameter ranges and endpoint values}\label{app:coverage}
In one variable, the stored endpoints satisfy
\begin{equation*}
a=t_0<t_1<\cdots<t_N=b,\qquad
\bigcup_{j=1}^N[t_{j-1},t_j]=[a,b].
\end{equation*}
For two variables, the required identity is
\begin{equation*}
\begin{aligned}
&\{(m,r):a\le m\le b,\ c\le r\le d\}=\bigcup_{\ell=1}^N
\{(m,r):m_\ell^-\le m\le m_\ell^+,\ r_\ell^-\le r\le r_\ell^+\}.
\end{aligned}
\end{equation*}
To verify it, arrange all distinct $m$ endpoints as $a=s_0<\cdots<s_J=b$. For each $1\le j\le J$, the endpoint data give
\begin{equation*}
\bigcup_{\substack{\ell:\ m_\ell^-\le(s_{j-1}+s_j)/2\le m_\ell^+}}
[r_\ell^-,r_\ell^+]=[c,d].
\end{equation*}
These identities include the boundary values because all intervals are closed. The same test with $m$ and $r$ interchanged applies to Proposition~\ref{prop:trace-cert}.

Actual solutions satisfy $0<m<1$ and $0<r<1$. The formulas at $m=0$ and $r=0$ are evaluated by continuous limits after common factors are removed. For $m=1$, the relevant identities are
\begin{equation*}
\begin{gathered}
c_6=0,\quad g_1=\frac{2}{p}
\quad\text{if }n=5,\quad \frac{7}{10}\le m\le1,\\
\frac{c_6}{(1-m)^2}=\frac{9}{50}
\quad\text{if }n=7,\quad \frac{3}{4}\le m<1,\\
\theta_2=0,\quad g_1=\frac{2}{p}
\quad\text{on the recorded interval ending at }m=1
\text{ if }8\le n\le19.
\end{gathered}
\end{equation*}
The equality in Lemma~\ref{lem:small-max} at $m=\frac{1}{n+1}$ occurs only in the expression corresponding to $r=1$. Its use in Proposition~\ref{prop:spectral-finite} remains strict because $r<1$.

\subsection{Programs and data}
The following files give the rational weights, interval endpoints and coefficient inequalities used above.
\begin{center}
\renewcommand{\arraystretch}{1.25}
\begin{tabular}{|>{\raggedright\arraybackslash}p{0.38\textwidth}|>{\raggedright\arraybackslash}p{0.3\textwidth}|>{\raggedright\arraybackslash}p{0.2\textwidth}|}
\hline
Estimates & Program and data & Number\\
\hline
Lemma~\ref{lem:large-estimates}, positivity of $F$ in dimension six, algebraic identities & \texttt{independent\_checks.py}, \texttt{independent\_checks.json} & $23$ parameter ranges\\
\hline
Dimension six in Appendices~\ref{app:algebraic}--\ref{app:gaps} & \texttt{verify\_centered\_\allowbreak sobolev\_\allowbreak dimension\_6.py}, \texttt{dimension\_6\_\allowbreak expected.json} & $42$ sets of inequalities, $506$ coefficients\\
\hline
Lemmas~\ref{lem:source-checks}, \ref{lem:transverse-checks}, \ref{lem:paired-constants}, \ref{lem:small-max}, \ref{lem:near-one-checks} for $n\ne6$ & \texttt{auxiliary.py}, \texttt{auxiliary\_generated.json} & $391$ records\\
\hline
Proposition~\ref{prop:spectral-finite} & \texttt{spectral.py replay}, \texttt{spectral\_generated.json} & $181$, $301+455$, $2148$ intervals\\
\hline
Lemmas~\ref{lem:gaps-seven}, \ref{lem:gaps-five}, \ref{lem:upper4} & \texttt{rebuild.py}, \texttt{final\_independent.json} & $91$, $512+44$, $32$ ranges\\
\hline
Lemma~\ref{lem:gaps-four} & \texttt{lower4.py replay}, \texttt{lower4\_generated.json} & $1474$ ranges\\
\hline
Proposition~\ref{prop:trace-cert} & \texttt{trace4.py replay}, \texttt{trace\_generated.json} & $187$ intervals in $r$, $57828$ intervals in $m$\\
\hline
All estimates and parameter ranges & \texttt{verify\_all.py} & seven commands\\
\hline
\end{tabular}
\end{center}

From a fresh checkout, the verification is reproduced by
\begin{quote}\small
\begin{verbatim}
git clone https://github.com/ZetianYan/sobolev-without-symmetry-certificates.git
cd sobolev-without-symmetry-certificates
python -m pip install -r requirements.txt
python verify_all.py
\end{verbatim}
\end{quote}
The requirements are Python~3.10 or later and SymPy~1.14.0. The assertions are part of the verification, so Python must be run without \texttt{-O}. A successful run ends with
\begin{quote}\small
\texttt{ALL EXACT CHECKS AND DOMAIN COVERAGE CHECKS PASSED.}
\end{quote}
The repository includes \texttt{verification.log}, \texttt{verification\_summary.json} and \texttt{SHA256SUMS}. Replay verifies every stored weight and inequality in exact rational or outward-rounded arithmetic. Optional numerical search proposes weights for subsequent exact verification.


\begin{thebibliography}{99}

\bibitem{Aubin76} T. Aubin, \emph{Probl\`emes isop\'erim\'etriques et espaces de Sobolev}, J. Differential Geom. \textbf{11} (1976), no. 4, 573--598.

\bibitem{Aubin79} T. Aubin, \emph{Meilleures constantes dans le th\'eor\`eme d'inclusion de Sobolev et un th\'eor\`eme de Fredholm non lin\'eaire pour la transformation conforme de la courbure scalaire}, J. Funct. Anal. \textbf{32} (1979), no. 2, 148--174.

\bibitem{AubinLi} T. Aubin and Y. Y. Li, \emph{On the best Sobolev inequality}, J. Math. Pures Appl. (9) \textbf{78} (1999), no. 4, 353--387.

\bibitem{Beckner} W. Beckner, \emph{Sharp Sobolev inequalities on the sphere and the Moser--Trudinger inequality}, Ann. of Math. (2) \textbf{138} (1993), no. 1, 213--242.

\bibitem{BianchiEgnell} G. Bianchi and H. Egnell, \emph{A note on the Sobolev inequality}, J. Funct. Anal. \textbf{100} (1991), no. 1, 18--24.

\bibitem{BrezisLieb} H. Brezis and E. Lieb, \emph{A relation between pointwise convergence of functions and convergence of functionals}, Proc. Amer. Math. Soc. \textbf{88} (1983), no. 3, 486--490.

\bibitem{Case} J. S. Case, \emph{The Frank--Lieb approach to sharp Sobolev inequalities}, Commun. Contemp. Math. \textbf{23} (2021), no. 3, 2050015.

\bibitem{ChangYang91} S.-Y. A. Chang and P. C. Yang, \emph{A perturbation result in prescribing scalar curvature on $\mathbb{S}^n$}, Duke Math. J. \textbf{64} (1991), no. 1, 27--69.

\bibitem{Dolbeault} J. Dolbeault, \emph{Functional inequalities: nonlinear flows and entropy methods as a tool for obtaining sharp and constructive results}, Milan J. Math. \textbf{89} (2021), no. 2, 355--386.

\bibitem{DEL} J. Dolbeault, M. J. Esteban, and M. Loss, \emph{Interpolation inequalities on the sphere: linear vs. nonlinear flows}, Ann. Fac. Sci. Toulouse Math. (6) \textbf{26} (2017), no. 2, 351--379.

\bibitem{Druet99} O. Druet, \emph{The best constants problem in Sobolev inequalities}, Math. Ann. \textbf{314} (1999), no. 2, 327--346.

\bibitem{Farouki} R. T. Farouki, \emph{The Bernstein polynomial basis: a centennial retrospective}, Comput. Aided Geom. Design \textbf{29} (2012), no. 6, 379--419.

\bibitem{FigalliZhang} A. Figalli and Y. R.-Y. Zhang, \emph{Sharp gradient stability for the Sobolev inequality}, Duke Math. J. \textbf{171} (2022), no. 12, 2407--2459.

\bibitem{FrankLieb} R. L. Frank and E. H. Lieb, \emph{A new, rearrangement-free proof of the sharp Hardy--Littlewood--Sobolev inequality}, Spectral Theory, Function Spaces and Inequalities, Oper. Theory Adv. Appl., vol. 219, Birkh\"auser/Springer Basel, Basel, 2012, pp. 55--67.

\bibitem{FJWX} R. L. Frank, T. Jin, H.-Y. Wang, and J. Xiong, \emph{Sharp Sobolev inequalities under first moment constraints}, 2026, arXiv:2610.20001.

\bibitem{GJMS} C. R. Graham, R. Jenne, L. J. Mason, and G. A. J. Sparling, \emph{Conformally invariant powers of the Laplacian. I. Existence}, J. London Math. Soc. (2) \textbf{46} (1992), no. 3, 557--565.

\bibitem{GHLWXY} C. Gui, Y. Hu, T. Li, J. Wei, Z. Xie, and Z. Ye, \emph{Rigidity of a $q$-curvature-type problem on $\mathbb{S}^N$ in every dimension $N\ge3$}, 2026, arXiv:2610.11338.

\bibitem{GLWY} C. Gui, T. Li, J. Wei, and Z. Ye, \emph{A positive answer to the generalized Chang--Yang conjecture on $\mathbb{S}^N$}, 2026, arXiv:2608.13497.

\bibitem{GuiMoradifam} C. Gui and A. Moradifam, \emph{The sphere covering inequality and its applications}, Invent. Math. \textbf{214} (2018), no. 3, 1169--1204.

\bibitem{HangWang} F. Hang and X. Wang, \emph{Improved Sobolev inequality under constraints}, Int. Math. Res. Not. IMRN \textbf{2022} (2022), no. 14, 10822--10857.

\bibitem{HebeyVaugon} E. Hebey and M. Vaugon, \emph{Meilleures constantes dans le th\'eor\`eme d'inclusion de Sobolev}, Ann. Inst. H. Poincar\'e Anal. Non Lin\'eaire \textbf{13} (1996), no. 1, 57--93.

\bibitem{Li96} Y. Y. Li, \emph{Prescribing scalar curvature on $\mathbb{S}^n$ and related problems, Part II: Existence and compactness}, Comm. Pure Appl. Math. \textbf{49} (1996), no. 6, 541--597.

\bibitem{Lieb} E. H. Lieb, \emph{Sharp constants in the Hardy--Littlewood--Sobolev and related inequalities}, Ann. of Math. (2) \textbf{118} (1983), no. 2, 349--374.

\bibitem{Lions} P.-L. Lions, \emph{The concentration-compactness principle in the calculus of variations. The limit case. I}, Rev. Mat. Iberoamericana \textbf{1} (1985), no. 1, 145--201.

\bibitem{Putterman} E. Putterman, \emph{Cubature formulas and Sobolev inequalities}, 2020, arXiv:2012.08109.

\bibitem{SunZou} J. Sun and W. Zou, \emph{Some improved almost sharp inequalities under constraints}, Int. Math. Res. Not. IMRN \textbf{2025} (2025), no. 16, Paper No. rnaf242, 27 pp.

\bibitem{Talenti} G. Talenti, \emph{Best constant in Sobolev inequality}, Ann. Mat. Pura Appl. (4) \textbf{110} (1976), 353--372.

\bibitem{WZ} G. Wang and M. Zhang, \emph{Another sharp criterion for Dirac zero modes}, 2026, arXiv:2609.34958.

\bibitem{WYY} J. Wei, Z. Yan, and Z. Ye, \emph{On Dolbeault's conjecture}, 2026, arXiv:2610.14359.

\end{thebibliography}
\end{document}